\pdfoutput=1

\documentclass[10pt]{article}

\makeatletter
\renewcommand{\maketitle}{\bgroup\setlength{\parindent}{0pt}
\begin{flushleft}
  \textbf{\@title}

  \@author
\end{flushleft}\egroup
} %%%%%
\def\footnoterule{\kern-3\p@
  \hrule \@width 2in \kern 2.6\p@} % the \hrule is .4pt high
\makeatother

\usepackage{preamble}
\usepackage{courier}
\usepackage[normalem]{ulem}
\usepackage[square,numbers]{natbib}

\newtheorem*{theorem1}{Theorem 1}
\newtheorem*{theorem2}{Theorem 2}
\newtheorem*{theorem3}{Theorem 3}
\newtheorem*{theorem4}{Theorem 4}
\newtheorem*{theorem5}{Theorem 5}

\title{{\Large \textbf{Some results on NIP groups and their Ellis groups}}}
\author{Atticus Stonestrom, Notre Dame\ \ \ \ \ \ \ \ \ \ \ \ {\footnotesize (Email: \texttt{atticusstonestrom@yahoo.com})}}
\date{}

\begin{document}
\maketitle

{\small\noindent\textbf{Abstract:} This paper has several parts. We begin by developing a theory of `piecewise (strong) f-genericity' in NIP groups, where we call a definable set piecewise (strong) f-generic if some union of finitely many translates of it is (strong) f-generic. Our main result here is that, in an NIP group, the definable sets that are not piecewise (strong) f-generic form an ideal. Our hope is that the corresponding piecewise (strong) f-generic types can provide a substitute in arbitrary NIP groups for the (strong) f-generic types of definably amenable NIP groups, and in the rest of the paper we give several applications. As a warmup, we give a new and slightly improved proof of Gismatullin's theorem on the `existence' of $G^\infty$ in NIP groups.

Two of the applications deal with the Ellis group of an NIP group. Let $T$ be an NIP theory, $G$ a definable group of $T$, and $M$ a model of $T$. By the `Ellis group of $G(M)$' we mean the Ellis group of the $G(M)$-flow of global types concentrated on $G$ and finitely satisfiable in $M$. In our first result we show that the size of the Ellis group of $G(M)$ is bounded above by $2^{|T|}$, independent of the choice of $M$, giving a substantial step forwards on the open question of whether the isomorphism type of the Ellis group of $G(M)$ is independent of the choice of $M$. In fact we obtain much stronger structural information on the Ellis group than just a bound on its size, showing that, for any elementary substructure $M_0\preccurlyeq M$, the natural restriction map from the Ellis group of $G(M)$ to $S_G(M_0)$ is injective. A crucial tool is the recent theorem of Chernikov-Gannon-Krupi\'nski and Basso-Zucker that the $\tau$-topology on the Ellis group is Hausdorff. This is also the starting point for our second application, which gives much more precise information on the finer structure of this group.

In the second application, we show that, if $T$ and $M$ are countable and the formulas of $T$ have uniformly bounded VC-codensity (a natural assumption satisfied by o-minimal theories, the theories of the $p$-adics, and the theories of algebraically closed valued fields), then the Ellis group of $G(M)$ has `finite Archimedean rank', meaning that its connected component is profinite-by-Lie, or equivalently that it is an inverse limit of compact Lie groups of bounded dimension. This result is unsurprisingly greatly inspired by Hrushovski's theorem that certain automorphism groups associated to NIP formulas and definable measures are of finite Archimedean rank, though there are some important differences of setting and methods.

Along the way to our second application, we give a general result about VC-sets in compact Hausdorff groups, that we hope will be of broader interest. Suppose $G$ is a compact Hausdorff group and that there is a countable family $(B_i:i\in I)$ of Borel subsets of $G$ such that (1) every open subset of $G$ is a union of some of the $B_i$, and (2) there is a fixed $\delta$ such that, for each $i\in I$, the family of translates $(gB_i:g\in G)$ has VC-density at most $\delta$. Then we prove that $G$ has finite Archimedean rank; more precisely, we show that $G$ is an inverse limit of compact Lie groups of dimension at most $(4\delta)^2$. This part of the paper can be read independently of the rest, and does not require any model theory.

In our last result we use our techniques to obtain a `local' result valid in arbitrary NIP groups: we show that, for any `bi-invariant' formula $\phi(x,y)$, the group $G/G^{00}_\phi$ has finite Archimedean rank. More precisely, if the VC-codensity of $\phi(x,y)$ is at most $\delta$, then $G/G^{00}_\phi$ is an inverse limit of compact Lie groups of dimension at most $(4\delta)^2$. This connects to, though is different than, a question of Hrushovski's, and motivated by Hrushovski's question and recent results of Pekmezci we finish the paper with some discussion of the possibility of removing the global NIP assumption. \newline

\noindent\textbf{Acknowledgements:} This material developed out of an (ongoing) project of mine to give a structure theory for NIP approximate groups (in fact this paper can be seen as a `part 1' of that project), and I would like to thank Artem Chernikov, Gabe Conant, Krzysztof Krupi\'nski, Samaria Montenegro, Alf Onshuus, Devrim Pekmezci, Frank Wagner, and Julia Wolf for many helpful conversations and comments on both that project and this one. Above all I am especially grateful to my PhD supervisor Anand Pillay, who has been a constant and invaluable source of feedback and encouragement.

In spring of 2025 in the Notre Dame model theory seminar we read the paper \cite{hrushovski} together, and I learned the proof of Hrushovski's theorem on local $G/G^{00}$ then in order to present it in the seminar; I am grateful to the participants of the seminar for numerous clarifying discussions, and in particular to Yuyan He, Anand Pillay, and Nick Ramsey for their presentations on the other sections of that paper.  \newline

\noindent\textbf{AI Declaration:}
I am deeply opposed to the ways AI is now becoming heavily used in mathematics research, and \uline{I strongly request that the central new concept I develop in this paper, piecewise strong f-genericity, not be investigated in any capacity with AI, nor used in any paper that includes proofs produced by AI or otherwise involves substantial AI contributions.} If you have any question about the notion (or any question about this paper in general), please reach out to me instead of asking AI.

AI played no role in the vast majority of the paper, though it did do one crucial thing. My first and only interaction with it was while finishing Section \ref{boundedness of the ellis group section}. I had already proved Theorem \ref{downstairs lemma dynamical language} and Lemma \ref{downstairs lemma for ideal}, and strongly suspected that, together with Fact \ref{tau-topology is hausdorff}, they should imply Theorem \ref{restriction map ellis group theorem}, which is what I was aiming to prove. Unlike the other topics that appear in this paper, I am very much not an expert in topological dynamics. In fact, to give a complete disclaimer, I have not (yet) learned the details of Glasner's structure theorem for tame, metrizable, minimal flows from \cite{glasner_tameness}, and I don't (yet) understand the details of the proof of Fact \ref{tau-topology is hausdorff} from \cite{chernikov_gannon_krupinski} and \cite{basso_zucker}; instead I have used Fact \ref{tau-topology is hausdorff} as a `black box' throughout this project. (This is the only result in the paper that I use as a black box, and with everything else I cite to use I am intimately familiar with the proofs. This is not the sort of thing that I would have made explicit in the past, but now with the advent of AI it seems important to be explicit about these things.) I am of course completely comfortable and familiar with the basic definitions of topological dynamics, but I have never learned any of the deeper structure theory. Thus, though I suspected Theorem \ref{restriction map ellis group theorem} should follow from Lemma \ref{downstairs lemma for ideal} from a standard argument present in the topological dynamics literature, I was not (and still am not) familiar enough with the literature to know where to look. Hence, after having been stuck for a while trying to prove it myself, I decided out of curiosity to try discussing it with AI. (At the time my views on AI were nascent and not yet so clearly formed.) After some discussion, to my surprise, the AI claimed to give me a proof, which I did not really understand, using equicontinuous factors and Theorem 8.13 in \cite{basso_zucker}. The AI cited a specific computation from the proof of Theorem 8.13 in \cite{basso_zucker}, which in turns cites pages 205-207 from Chapter 14 in \cite{auslander_book}. While trying to understand this computation, I began by reading the relevant material from Chapter 14 in \cite{auslander_book}, and immediately realized that Lemma 12 from page 205, which was completely elementary, was exactly what I needed to make my original naive attempts to directly show that Lemma \ref{downstairs lemma for ideal} implies Theorem \ref{restriction map ellis group theorem} work. So in fact the AI's argument used vastly more machinery than necessary and I end up not using it. However, I would almost surely have never found Lemma 12 from \cite{auslander_book} without the AI alerting me that the computation in \cite{basso_zucker} was relevant to my question, and so I need to give a significant acknowledgement to the AI for that. I deeply regret not instead discussing the question with an expert in topological dynamics, who would undoubtedly have been able to quickly answer it. No other part of this paper involved the use of AI, neither in the mathematical content nor in the writing. \newline

\noindent\textbf{Notation:} Throughout, $T$ will denote a complete theory in a first-order language $L$, with a monster model $\mathfrak{C}$. \uline{For notational convenience, we will assume throughout that $\dcl(\varnothing)$ is a model, which we denote by $M_0$.} Throughout, $G$ will be a $\varnothing$-definable group of $T$, and for any model $M\preccurlyeq\mathfrak{C}$ we use $G(M)$ to denote the $M$-points of $G$. As usual we will often notationally identify $G$ with $G(\mathfrak{C})$ when it does not cause confusion. For a parameter set $C\subseteq\mathfrak{C}$, we use $S_G(C)$ to denote the space of all complete types over $C$ that are concentrated on $G$. For tuples $a,b$, and a small parameter set $C\subset\mathfrak{C}$, we write $a\equiv_C b$ to mean $\tp(a/C)=\tp(b/C)$. We write $\ind$ for non-forking independence and $\ind^u$ for coheir independence. We write $[n]$ to mean $\{1,\dots,n\}$, so that in particular $[0]=\varnothing$. Finally, we will assume throughout almost the entire paper that $T$ is NIP, except when explicitly stated otherwise. %} %Finally, we use one piece of non-standard notation: given parameter sets $A,B$, instead of using $AB$ to denote their union, we write $(A,B)$, and likewise we use $(a,b)$ to denote concatenation of tuples $a,b$. This is to avoid ambiguity with respect to the group operation.

} %small commented out

\section{Introduction} 
The development of a structure theory for NIP groups, ie for groups definable in NIP theories, has historically often required doing at least one of two things: 1) studying groups definable in subclasses of NIP structures of particular interest, the main ones being o-minimal structures, the $p$-adics, and algebraically closed valued fields (all of which have been enormously fruitful and successful endeavours), or 2) working in arbitrary NIP theories but putting some structural restrictions on the definable groups one studies. (There is of course much intersection between the two approaches, and note that the latter approach originated from the former, in particular in the solution from \cite{hrushovski_peterzil_pillay} to the highly influential `Pillay's conjecture' on groups definable in o-minimal structures.) For approach (2), the most general of the assumptions under which a rich structure theory has been developed is that of definable amenability, which appears already in \cite{hrushovski_peterzil_pillay}; in analogue with the notion of amenability for a discrete group, a definable group is said to be `definably amenable' if there is a translation-invariant finitely-additive probability measure on the Boolean algebra of its definable subsets. Some of the principal examples of definably amenable NIP groups are NIP groups that are amenable as abstract groups (so for example abelian or more generally solvable groups), stable groups, and compact groups definable in o-minimal expansions of the reals or in the $p$-adics. The structure theory of definably amenable NIP groups is extremely satisfactory and well-developed, with the main arc of its development being completed over the course of the papers \cite{hrushovski_peterzil_pillay}, \cite{hrushovski_pillay}, and \cite{chernikov_simon}. Nevertheless, many important examples of NIP groups are not definably amenable, the most basic being $\mathrm{SL}_2$ as a definable group in the reals or in the $p$-adics. So it is a natural goal to try to develop some structure theory for arbitrary definable groups in arbitrary NIP structures; we aim in this paper to make some progress in this goal.

In this paper we complete an important step, which is to identify and prove the existence of an appropriate ideal of `small' definable sets in arbitrary NIP groups. A proper ideal of formulas naturally gives rise to `wide' types, ie types that do not imply any formula in the ideal, and the use of such wide types, for appropriate ideals, is a highly persistent theme in the model theory of definable groups; the most famous example are perhaps the `generic types' of stable groups, where a definable subset of a group is said to be generic if finitely many translates of it cover the group. Two other very important examples are `f-genericity' and `strong f-genericity', where we say a definable subset $\phi(x,b)$ of a group is `f-generic' if no left translate of it forks over any model containing $b$, and `strong f-generic' if no left translate of it forks over any model. When forking and dividing coincide over models, the first notion has a purely combinatorial characterization; $\phi(x,b)$ is f-generic if and only if it does not `$G$-divide', where a subset $D$ of an abstract group $G$ is said to $G$-divide if there is some $k\in\omega$ and arbitrarily long finite sequences $g_1,\dots,g_n\in G$ such that $\bigcap_{i\in s}g_iD=\varnothing$ for all $s\subseteq[n]$ of a size $k$. The notion of `strong f-genericity', however, does not have such a combinatorial description, and it involves the model-theoretic notion of forking quite essentially.

In a stable group, a definable set is generic if and only if it is f-generic if and only if it is strong f-generic, and the non-generic sets form an ideal. A fundamental result of Pillay from \cite{pillay_simple} is that, for a group definable in a simple theory, a definable set is f-generic if and only if it is strong f-generic, and the non-f-generic sets form an ideal. (In \cite{pillay_simple}, and in most of the literature on simple theories, the notion of f-genericity is called `genericity', but in modern hindsight it is better to call it f-genericity.) On the other hand, for an NIP group, a fundamental insight of Hrushovski and Pillay from \cite{hrushovski_pillay} is that the good behavior of strong f-genericity is equivalent to definable amenability; this insight was very fruitfully developed upon by Chernikov and Simon in \cite{chernikov_simon} to resolve a number of open problems on definably amenable NIP groups. Altogether, by results from \cite{hrushovski_pillay}, \cite{chernikov_simon}, and my own paper \cite{stonestrom}, an NIP group is definably amenable if and only if the non-f-generic sets form an ideal if and only if the non-strong-f-generic sets form an ideal. On the other hand, even in the definably amenable NIP setting, f-genericity and strong f-genericity need not coincide, as shown in \cite{chernikov_simon}.

Of a slightly different flavor, insofar as it does not deal with model-theoretic `dividing lines' like simplicity or NIP, is Hrushovski's notion of an S1 translation-invariant ideal, most famously applied in the paper \cite{hrushovski_stable_groups} to obtain enormous breakthroughs in the study of finite approximate groups. Translation-invariance of an ideal of definable subsets of a group $G$ means the obvious thing: if $\phi(x,b)$ is small then so is $g\phi(x,b)$ for all $g\in G$. The S1 condition is a bit more subtle; we say that the ideal is S1 if, for any $b_1,b_2$ starting an indiscernible sequence, if $\phi(x,b_1)\wedge\phi(x,b_2)$ is small then so is $\phi(x,b_1)$. (This is basically the `chain condition' from simple theories.) Existence of an S1 translation-invariant ideal has significant consequences and applications; alongside \cite{hrushovski_stable_groups}, some examples of different flavors can be found in \cite{montenegro_onshuus_simon} and in \cite{palacin}, \cite{amador_daniel_1}, and \cite{amador_daniel_2}.

On the other hand, it is easy to see that an S1 ideal must contain every forking formula (Lemma 2.9 in \cite{hrushovski_stable_groups}), and so a translation-invariant S1 ideal must contain every formula that is not strong f-generic. In particular, if there do not exist strong f-generic types, then there cannot exist translation-invariant S1 ideals, and so in particular in a non-definably amenable NIP group there is no hope of having a translation-invariant S1 ideal. Nevertheless, in this paper, we will show that in any NIP group there are a pair of ideals, one translation-invariant and one S1, which have strong applications. %(Outside NIP, our hopes here are also partly inspired by the aforementioned paper \cite{montenegro_onshuus_simon} of Montenegro, Onshuus, and Simon, which shows that, in an NTP$_2$ group admitting strong f-generic types, the non-strong-f-generic formulas form an S1 translation-invariant ideal, and combines this with a generalization of the `stabilizer theorem' of \cite{hrushovski_stable_groups} to obtain structural results on groups with strong f-generics in bounded PRC fields.)

With this context we can now state our first main result. \uline{Suppose for notational convenience that $\dcl(\varnothing)=M_0$ is a model}, so that a definable subset of a group $G$ is strong f-generic if and only if no left translate of it forks over $M_0$. Motivated by the terminology of a `piecewise syndetic' set in ergodic theory, let us call a definable set `piecewise (strong) f-generic' if a union of finitely many translates of it is (strong) f-generic, and `piecewise strong f-generic with witnesses in $M_0$' if a union of finitely many translates of it by elements of $G(M_0)$ is strong f-generic. Now our first main theorem can be stated as follows:

\begin{theorem1} Let $G$ be an NIP group. Then:
    \begin{enumerate}
    \item The definable subsets of $G$ that are not piecewise f-generic form a translation-invariant ideal. In fact, a definable set is piecewise f-generic if and only if it is `weakly generic' in the sense of Newelski, meaning that its union with some non-generic definable set is generic.
    \item The definable subsets of $G$ that are not piecewise strong f-generic form a translation-invariant ideal.
    \item The definable subsets of $G$ that are not piecewise strong f-generic with witnesses in $M_0$ form an S1 ideal.
    \end{enumerate}
\end{theorem1} (1) is Proposition \ref{piecewise f-genericity coincides with weak genericity}, (2) is Theorem \ref{piecewise strong f-generic sets are an ideal}, and (3) is Theorem \ref{non-wide sets are an s1 ideal}. Though not explicitly observed anywhere in the literature, (1) is basically an immediate consequence of results already in the literature, in particular of arguments from Section 3.3 of \cite{chernikov_simon}. On the other hand, (2) and (3) are quite new results. The proofs are a bit involved, but involve a number of techniques that we hope will be of interest and use more generally. A key role is played by the fact that NIP theories satisfy a kind of `bounded weight' with respect to dp-rank; see Fact \ref{bounded weight}. We ask two questions, Question \ref{ntp2 question} and Question \ref{automorphism group question}, coming out of proof, which we also hope will be of interest.

In general, our hope is that the notions above can be a valuable tool for studying arbitrary NIP groups, just as f-genericity and strong f-genericity are valuables tools for studying definably amenable NIP groups. To this end, we give two applications of our machinery, both to the study of the `Ellis groups' of an NIP group, which we describe below. In our applications, the purely combinatorial notion of piecewise f-genericity (equivalently, by (1), weak genericity) is too weak, and we will really need the stronger model-theoretic machinery from (2) and (3). Our perspective here is that the ideals identified in (2) and (3) should together serve as the substitute for translation-invariant S1 ideals in the study of NIP groups. Due to the cumbersomeness of the terminology, throughout the paper \uline{we will use the word `wide' to abbreviate `piecewise strong f-generic with witnesses in $M_0$'}.

Let us now describe our two main applications. One highly fruitful approach to the study of definable groups has been through the lens of topological dynamics, a perspective first introduced by Newelski in \cite{newelski_1} and \cite{newelski_2}. Recall that, if $G$ is a (discrete) group, then a $G$-flow is an action of $G$ by homeomorphisms on a non-empty compact Hausdorff space $X$; we denote the flow by $(G,X)$. A basic construction in topological dynamics is the `Ellis semigroup' $E(G,X)$ of the flow $(G,X)$, which is the closure of the set $\{\pi_g:g\in G\}$ in the space of functions $X^X$, where $\pi_g:X\to X$ is the map $x\mapsto gx$ and $X^X$ is equipped with the Tychonoff topology. $E(G,X)$ is naturally a $G$-flow and a `left-topological semigroup' under composition, and on general grounds it contains a minimal left ideal $\mathcal{I}$ (equivalently, a minimal $G$-subflow). Again on general grounds, $\mathcal{I}$ contains an idempotent $u$, and the set $u\mathcal{I}$ is a group. Moreover, by a remarkable construction of Ellis, this group can be endowed with a natural topology, called the $\tau$-topology, which coarsens the subspace topology inherited from $X^X$ and with respect to which $u\mathcal{I}$ becomes a $T_1$ quasicompact `semitopological group', where `semitopological' means that the group operation is continuous in each variable. The isomorphism type of this semitopological group is independent of the choice of $u$ and $\mathcal{I}$, and in model theory we call it the `Ellis group' of the flow $(G,X)$. (We give here an obligatory warning that this is not standard terminology in topological dynamics.)

Newelski observed that, given a theory $T$, a definable group $G$ of $T$, and a model $M\models T$, there is a naturally associated flow $(G(M),S_G(M))$ of the (discrete) group of $M$-points $G(M)$ acting by translations on the (compact) space of complete types $S_G(M)$, and initiated a program to study the definable group $G$ by studying the topological dynamics of this flow. In many applications, it is more natural to study a different model-theoretic $G(M)$-flow, which is the space $S^{\mathrm{fs}}_G(\mathfrak{C},M)$ of complete types over a saturated model $\mathfrak{C}$ concentrated on $G$ and finitely satisfiable in $M$. The advantage of working with this flow is that it is canonically isomorphic as a $G(M)$-flow to its own Ellis semigroup. (The semigroup operation is given by pushing forward the `Morley product' under the group operation, so for any $M\preccurlyeq N\prec\mathfrak{C}$ we have $(pq)|_N=\tp(ab/N)$ where $b\models q|_{N}$ and $a\models p|_{(N,b)}$.) The Ellis group of this flow now gives a $T_1$ quasicompact semitopological group associated to $G$ and $M$. Throughout this paper, we will use some convenient but non-standard terminology, and call this group the `Ellis group of $G(M)$'; we will denote it $u\mathcal{I}$, where $\mathcal{I}$ is a minimal ideal of $S^{\mathrm{fs}}_G(\mathfrak{C},M)$ and $u\in\mathcal{I}$ is an idempotent.

On the other hand, there is another abstract operation that associates a compact (Hausdorff) topological group to $G$ and $M$, introduced by Pillay in \cite{pillays_conjecture}: taking the quotient $G(\mathfrak{C})/G^{00}_M(\mathfrak{C})$ and giving it the so-called `logic topology', where $G^{00}_M(\mathfrak{C})$ is the smallest subgroup of $G(\mathfrak{C})$ type-definable over $M$ and of `bounded index', ie of index smaller than the degree of saturation of $\mathfrak{C}$. The `logic topology' on $G(\mathfrak{C})/G^{00}_M(\mathfrak{C})$ is given by taking a set to be closed if and only if its preimage under the quotient map $G(\mathfrak{C})\to G(\mathfrak{C})/G^{00}_M(\mathfrak{C})$ is type-definable over some small set. One can show that this makes $G(\mathfrak{C})/G^{00}_M(\mathfrak{C})$ is a compact Hausdorff topological group, and that the isomorphism type of this group does not depend on the choice of monster model $\mathfrak{C}$, so we just denote it by $G/G^{00}_M$. `Pillay's conjecture', from \cite{pillays_conjecture}, already referred to above, was that, when $T$ is o-minimal, $G/G^{00}_M$ is a real Lie group, and when $G$ is additionally `definably compact' then the dimension of $G/G^{00}_M$ as a Lie group coincides with the `o-minimal dimension' of $G$. (The first part of the conjecture was solved soon afterwards in \cite{berarducci_otero_peterzil_pillay}, while the second part was not solved fully until \cite{hrushovski_peterzil_pillay}; we refer to the excellent survey \cite{peterzil} for a much more thorough description of the history.)

A fundamental insight of Newelski was to suspect a connection between the Ellis group of $G(M)$ and the group $G/G^{00}_M$. He showed the existence of a canonical epimorphism $u\mathcal{I}\to G/G^{00}_M$, and conjectured that, at least in good circumstances (for example in NIP theories), this map should be an isomorphism. A natural counterexample in an NIP theory was found in \cite{pillay_penazzi_gismatullin}, which showed that the Ellis group of $G=\mathrm{SL}_2$ in $M=(\mathbb{R},+,\cdot)$ is cyclic of order two, despite the fact that $G/G^{00}_M$ is trivial. (Similarly, it was later shown in \cite{pillay_penazzi_yao} that the Ellis group of $G=\mathrm{SL}_2$ in $M=(\mathbb{Q}_p,+,\cdot)$ is isomorphic to $(\widehat{\mathbb{Z}},+)$, the profinite completion of $(\mathbb{Z},+)$, despite the fact that $G/G^{00}_M$ is trivial.) Pillay thus revised Newelski's conjecture to ask whether it was true for \textit{definably amenable} NIP groups. Positive answers in a number of special cases were proven in several papers, notably \cite{chernikov_pillay_simon}, and the question was eventually positively solved in its entirety by Chernikov and Simon in \cite{chernikov_simon}; this is one of several major theorems from that paper.

\begin{fact}\label{newelski conjecture for definably amenable nip groups}
    Suppose that $T$ is NIP and that $G$ is definably amenable. For any model $M$, the Ellis group of $G(M)$ is isomorphic to $G/G^{00}_M$.
\end{fact}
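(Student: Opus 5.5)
Since this is the Chernikov--Simon theorem, the plan follows their strategy: use strong f-generic types as the substitute for generics. Fix a model $M$ and work in $S^{\mathrm{fs}}_G(\mathfrak{C},M)$. Newelski's canonical map $\pi:u\mathcal{I}\to G/G^{00}_M$ sends $p$ to the coset of $G^{00}_M$ containing any realization of $p$. It is a surjective group homomorphism on general grounds, so the whole task is injectivity. Concretely, we must show that if $p\in u\mathcal{I}$ is concentrated on $G^{00}_M$, then $p=u$.

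The first step uses definable amenability. Under NIP, definable amenability means there are global strong f-generic types, and the non-strong-f-generic definable sets form a translation-invariant ideal. Using this, I would show that every type in a minimal left ideal $\mathcal{I}$ of $S^{\mathrm{fs}}_G(\mathfrak{C},M)$ is strong f-generic. The argument: pick any strong f-generic $q$ finitely satisfiable in $M$, which exists because the translate of a strong f-generic type by $G(M)$-coheirs stays strong f-generic. Then $\mathcal{I}=S^{\mathrm{fs}}_G(\mathfrak{C},M)*r$ for any $r\in\mathcal{I}$. Since strong f-genericity is preserved under left multiplication by finitely satisfiable types (translation invariance plus closure), the set of strong f-generic types contains a minimal ideal, and hence all of them, since minimal ideals are isomorphic via right multiplication.

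The second step is the key structural fact: a global strong f-generic type $p$ has $\mathrm{Stab}(p)=G^{00}$, and in particular $G^{00}=G^{00}_M$ by NIP. Moreover, $p$ is $G^{00}$-invariant and is determined by its restriction to $M$ together with its $G^{00}$-coset. For this I would invoke the Chernikov--Simon machinery: strong f-generics are exactly the $G^{00}$-invariant types, and via a generic measure each $G^{00}$-coset in a given $M$-invariant orbit carries a unique such type compatible with the coheir structure. Granting this, take $p\in u\mathcal{I}$ with $\pi(p)=1$. Then $p$ and $u$ are both strong f-generic coheirs of $M$ lying in the same $G^{00}$-coset. Writing $p=p*u$ and using that $u$ is an idempotent with $\pi(u)=1$, one then shows that $p$ and $u$ coincide on every formula $\phi(x,b)$. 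To do this, realize $a\models u$ and $g\models p|_{Mab}$; then $ga$ lies in the same coset as $a$ over the relevant base, and the $G^{00}$-invariance of the type forces $\tp(ga/\mathfrak{C})=\tp(a/\mathfrak{C})$.

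I expect the main obstacle to be exactly this last coincidence: showing that a strong f-generic coheir is determined by its $G^{00}$-coset within the minimal ideal. Invariance under $G^{00}$-translation does not by itself pin down the type, since many types are $G^{00}$-invariant. What I would try first is to use that all elements of $u\mathcal{I}$ have the same restriction pattern, namely $q*\mathcal{I}$ for $q$ in the ideal, combined with the NIP fact that $M$-invariant strong f-generics are definable from their $G^{00}$-coset via a generic Keisler measure $\mu$. In the definably amenable case, $p(\phi)$ is computed from $\mu$-weights of $\phi$ restricted to cosets. If $p\neq u$, I would derive a contradiction by producing, from a formula separating them, a definable set that divides along a $G^{00}$-invariant indiscernible sequence, using the NIP bound on alternation.
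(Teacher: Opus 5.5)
There is a genuine gap, and it starts in your first step. It is not true that the elements of a minimal left ideal of $S^{\mathrm{fs}}_G(\mathfrak{C},M)$ are strong f-generic. In general no f-generic type is finitely satisfiable in $M$ at all.

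Take $G=(\mathbb{R},+)$ and $M=(\mathbb{R},+,\cdot)$. A bounded interval $(-c,c)$ has pairwise disjoint translates by $3c,6c,9c,\dots$, so it $G$-divides. Hence the only f-generic global types are the types at $+\infty$ and $-\infty$ of $\mathfrak{C}$, and neither is finitely satisfiable in $\mathbb{R}$. The minimal ideals of $S^{\mathrm{fs}}_G(\mathfrak{C},\mathbb{R})$ are the fixed points $\{u^+\}$ and $\{u^-\}$, where $u^+$ is the coheir of $(+\infty)|_{\mathbb{R}}$. This $u^+$ contains the bounded formula $0<x<c$ for every positive infinite $c\in\mathfrak{C}$, so it is not even f-generic.

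Your existence argument does not repair this. For $q$ finitely satisfiable and $p$ strong f-generic, $q*p$ is $M$-invariant but need not be finitely satisfiable; here $u^+*(+\infty)=+\infty$. The rest of Step 1 is sound: a closed $G(M)$-invariant subset of $E$ is a left ideal, by continuity of $x\mapsto x*s$. Only non-emptiness fails.

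This is exactly why, as the paper's footnote on the proof of Theorem 5.7 of \cite{chernikov_simon} explains, the strong f-generic type $p$ is kept in $S^{\mathrm{inv}}_G(\mathfrak{C},M)$. One then works with $F_M(p)$, which is the unique coheir of $p|_M$ when $M$ is its own Shelah expansion, choosing $p$ so that $F_M(p)$ is almost periodic. Information about $p$ reaches $u\mathcal{I}$ only through $F_M$, which preserves restrictions to $M$ and commutes with the $G(M)$-action. It is never obtained by asserting that elements of $u\mathcal{I}$ are f-generic. Relatedly, the $G^{00}$-invariant types are the f-generic ones; strong f-genericity also needs invariance over the base.

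Your final step is invalid too, even in cases where Step 1 happens to hold. $G^{00}$-invariance of $u$ controls $ga$ only when $a$ realizes $u$ over (a model containing) $g$. In the product $p*u$ the order is reversed: $g$ is realized over $\mathfrak{C}a$, and nothing follows.

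Concretely, take $G=SO_2$ over $M=\mathbb{R}$. Let $0^+$ and $0^-$ be the coheirs at the counterclockwise and clockwise edges of the infinitesimal neighbourhood of the identity. Both are f-generic idempotents in the same minimal ideal, and both concentrate on $G^{00}$. Yet $0^-*0^+$ is the limit of the translates $m\cdot 0^+$ as the standard rotation $m$ tends to the identity clockwise, and that limit is $0^-\neq 0^+$.

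Your sketch uses only $\pi(p)=1$ and the $G^{00}$-invariance of $u$, never $p\in u\mathcal{I}$ (and of course $0^-\notin 0^+\mathcal{I}$). So it would equally "prove" $0^-*0^+=0^+$. The injectivity of $\pi$, which is the entire content of the statement, is therefore not established. Your closing suggestion about measures and alternation would have to exploit membership in the single group $u\mathcal{I}$, and it does not yet indicate how.
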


For arbitrary NIP groups, not definably amenable ones, there are several ways of weakening Newelski's conjecture. A very natural one is motivated by a result of Shelah from \cite{shelah_g00}, which shows that, in an NIP theory, $G^{00}_M=G^{00}_N$ for any models $M$ and $N$. The (in my view slightly confusing) terminology for this is to say that `$G^{00}$ exists', and we then just write $G^{00}$ to mean the common $G^{00}_M$. So, by Fact \ref{newelski conjecture for definably amenable nip groups}, when $G$ is definably amenable and $T$ is NIP the Ellis groups of $G(M)$ and $G(N)$ are both isomorphic to $G/G^{00}=G/G^{00}_M=G/G^{00}_N$. So one can ask if, even without a definable amenability assumption, one gets an analogous result for arbitrary NIP groups:
\begin{question}\label{newelski question}
    Suppose that $T$ is NIP and that $M\preccurlyeq N$ are models. Are the Ellis groups of $G(M)$ and $G(N)$ isomorphic?
\end{question} To my knowledge this question has appeared independently in three places. The history is a bit complicated, and our approach here is very different than than any of the other papers in the literature that deal with it, so I defer to Section \ref{history of question} below for a description of the history and a survey of the currently known results. For now, let me just remark that, by results of Jagiella, Yao, and Zhang in \cite{yao_long}, \cite{jagiella}, and \cite{yao_zhang}, a positive answer is known when $M$ is an o-minimal expansion of $\mathbb{R}$ and when $M=\mathbb{Q}_p$.

Another way of weakening Newelski's conjecture, posed by Krupi\'nski and Pillay as Conjecture 5.3 in \cite{krupinski_pillay}, is to ask whether, if $T$ is NIP, the $\tau$-topology on the Ellis group of $G(M)$ is Hausdorff. Krupi\'nski and Pillay were motivated by an earlier theorem of their own from \cite{krupinski_pillay_early}, in which they gave a refinement of Newelski's epimorphism from the Ellis group of $G(M)$ to $G/G^{00}_M$. Letting $u\mathcal{I}$ denote the Ellis group of $G(M)$, \cite{krupinski_pillay_early} showed that in arbitrary theories there are (canonical) group epimorphisms and topological quotient maps $$u\mathcal{I}\to u\mathcal{I}/H(u\mathcal{I})\to G/G^{\infty}_M\to G/G^{00}_M,$$ where $u\mathcal{I}/H(u\mathcal{I})$ is the maximal Hausdorff quotient of $u\mathcal{I}$ and $G^{\infty}_M$ is the smallest bounded-index subgroup of $G$ \textit{invariant} over $M$. This result was an important breakthrough in understanding the relation between the Ellis group of $G(M)$ and the group $G/G^{00}_M$. Among other things it implies that any case where any one of these arrows is not an isomorphism gives a counterexample to Newelski's conjecture. The original counterexamples to Newelski's conjecture give examples where the middle arrow is not an isomorphism, and the first example where the last arrow is not an isomorphism had been given by Conversano and Pillay in \cite{conversano_pillay}, who showed that $\widetilde{\mathrm{SL}_2(\mathbb{R})}$ is such an example. Now Conjecture 5.3 from \cite{krupinski_pillay}, which in later work became known as the `revised Newelski conjecture', asked whether, in an NIP theory, at least the first arrow in this sequence is an isomorphism.

Chernikov, Gannon, and Krupi\'nski positively solved this question for countable $M$ in \cite{chernikov_gannon_krupinski}, and using tools from set theory Basso and Zucker extended this to arbitrary $M$ in \cite{basso_zucker}. In fact, both \cite{chernikov_gannon_krupinski} and \cite{basso_zucker} prove the analogous theorems, on Hausdorffness of the $\tau$-topology, for the more general class of `tame minimal flows' (metrizable ones in \cite{chernikov_gannon_krupinski} and arbitrary ones in \cite{basso_zucker}), where following \cite{glasner_tameness} a flow $(G,X)$ is said to be `tame' if, for any continuous real-valued map $f\in C(X)$, and for any sequence $g_i\in G,i\in\omega$, the sequence $(f\circ g_i:i\in\omega)$ in the Banach space $(C(X),||\cdot||_\mathrm{sup})$ is not an `$\ell^1$-sequence' in the sense of \cite{rosenthal}; see e.g. Section 4 of \cite{krupinski_rzepecki}. An essential observation, first noted in \cite{chernikov_simon}, is that, if $T$ is NIP, then the $G(M)$-flows $S_G(M)$ and $S_G^{\mathrm{fs}}(\mathfrak{C},M)$ are tame, and the results of \cite{chernikov_gannon_krupinski} and \cite{basso_zucker} on tame minimal flows yield the results on NIP groups by virtue of that connection.

Though the `revised Newelski conjecture' has been resolved, the other weakening of Newelski's conjecture, Question \ref{newelski question}, remains open. Our first major application of Theorem 1 is to give a substantial step in the direction of a positive answer.

\begin{theorem2}
    Suppose that $T$ is NIP and that $M_0$ is a fixed model of size $|T|$. For any elementary extension $M\succcurlyeq M_0$, let $E$ be the Ellis semigroup $S^{\mathrm{fs}}_G(\mathfrak{C},M)$. Then the restriction map $E\to S_G(M_0)$ taking a global type $p$ to its restriction $p|_{M_0}$ is injective on any Ellis group $u\mathcal{I}\subseteq E$. In particular, $|u\mathcal{I}|\leqslant 2^{|T|}$, regardless of the choice of model $M$.
\end{theorem2} This is Theorem \ref{restriction map ellis group theorem}. Its proof relies heavily on the machinery of piecewise strong f-genericity developed in Theorem 1, just as the proof of Fact \ref{newelski conjecture for definably amenable nip groups} in \cite{chernikov_simon} relied heavily on the machinery of strong f-genericity, and also uses the positive solution to the `revised Newelski conjecture'. Theorem 2, despite giving substantial structural information about the Ellis group, is still not enough for us to answer Question \ref{newelski question}, and in particular we do \textit{not} obtain a map from the Ellis group of $G(M)$ to that of $G(M_0)$. Nevertheless Theorem 2 does show that the Ellis groups of $G(M)$ cannot vary too much even as $M$ varies. Note that this fails even in some of the simplest examples outside NIP, due to the epimorphism from the Ellis group of $G(M)$ to $G/G^{00}_M$. Indeed, whenever $G^{00}$ does not `exist', then on general grounds $G/G^{00}_M$ becomes arbitrarily large as $M$ becomes arbitrarily large, and so the Ellis group of $G(M)$ must then become arbitrarily large too. %Even in some of the simplest examples of non-NIP groups $G^{00}$ will not `exist'; one natural example is the so-called `extraspecial $p$-group'.

An important technical tool that we use in the proof of Theorem 2 is the existence in NIP theories of a certain `retraction map' $F_M:S^{\mathrm{inv}}(\mathfrak{C},M)\to S^{\mathrm{fs}}(\mathfrak{C},M)$ from the space of global types invariant over $M$ to the space of global types finitely satisfiable in $M$; see Section \ref{honest definitions section} for the definition. The retraction map $F_M$ is what will allow us to transfer results about global wide types, which live naturally in $S^{\mathrm{inv}}_G(\mathfrak{C},M_0)$, to the Ellis group of $G(M)$, which lives in $S^{\mathrm{fs}}_G(\mathfrak{C},M)$. This idea is influenced by techniques from all three of the papers \cite{chernikov_pillay_simon}, \cite{chernikov_simon}, and \cite{kyle_tomasz}, although it differs significantly from all of them in that we are attempting to compare Ellis groups across different models. In some sense the use is closest to the proof of Fact \ref{newelski conjecture for definably amenable nip groups} in \cite{chernikov_simon},\footnote[1]{In the proof of Theorem 5.7 in \cite{chernikov_simon} Chernikov and Simon do not actually explicitly use the language of the retraction map. Instead they work over a fixed model $M_0$ and assume that $M_0$ is equal to its Shelah expansion; in this case the retraction map $F_{M_0}:S^{\mathrm{inv}}(\mathfrak{C},M_0)\to S^{\mathrm{fs}}(\mathfrak{C},M_0)$ is quite transparent, and just takes a global $M_0$-invariant type $q$ to the unique global coheir of $q|_{M_0}$. However, their proof would work exactly the same without assuming that $M_0$ is equal to its own Shelah expansion by replacing every instance of $q|_{M_0}$ by $F_{M_0}(q)$. In this language, the proof would proceed by working with a strong f-generic type $p$ such that $F_{M_0}(p)$ is almost periodic in the Ellis semigroup $S^{\mathrm{fs}}_G(\mathfrak{C},M_0)$. In our case, we will work with a model $M\succcurlyeq M_0$ and apply $F_M$ to a global wide type; a wide type $p$ is $M_0$-invariant and hence in particular $M$-invariant, so it makes sense to apply the retraction map to it, but we caution that if $M$ is a proper elementary extension of $M_0$ then $F_M(p)\neq F_{M_0}(p)$ unless $p$ is finitely satisfiable.} but my inspiration to use the language of the retraction map, which turns out to be very convenient, comes from the recent paper \cite{kyle_tomasz} of Gannon and Rzepecki.\footnote[2]{See Remark \ref{invariant ellis group remarks} for some brief speculation on how one might use the machinery from Theorem 1 if one wanted to study the `invariant Ellis subgroups', in the sense of \cite{kyle_tomasz}, for a non-definably-amenable NIP group. As in \cite{kyle_tomasz} and the related paper \cite{pillay_invariant_ellis_group}, the appropriate notion there would be `right' piecewise f-genericity.}

Our second major application gives a very strong structural result on the Ellis group of $G(M_0)$, assuming that $T$ and $M_0$ are countable and that $T$ satisfies a strengthening of NIP that one might call `bounded VC-codensity': this condition means that, for any tuple of variables $x$, there is some $d$ depending only on $x$ such that the VC-codensity of every formula $\phi(x,y)$ is at most $d$. (See Section \ref{vc theory section} for definitions.) This is quite a natural strengthening of NIP, and by significant results in \cite{five_authors} and \cite{basu_patel} it is satisfied by o-minimal theories, the theories of the $p$-adic fields, and the theories of algebraically closed valued fields. See also \cite{guingona_laskowski}, which studies related notions and gives other examples. We prove then the following:

\begin{theorem3}Suppose that $T$ and $M_0$ are countable and that $T$ has bounded VC-codensity. Then the Ellis group of $G(M_0)$ has `finite Archimedean rank', meaning that it is profinite-by-Lie-by-profinite.
\end{theorem3} This is Theorem \ref{FAR main theorem}. Unsurprisingly, our proof draws heavily on Hrushovski's techniques from Section 7 of \cite{hrushovski}. In Theorem 7.12 there, Hrushovski shows that, in an arbitrary theory, given an NIP formula $\phi$ and a definable measure $\mu$ on $\phi$-formulas with parameters from a single Shelah strong type, then a certain natural associated compact automorphism group $\mathcal{G}_{\mu,\phi}$ has finite Archimedean rank. The definition of this group is a bit involved and we do not give it here, but by the standard technique of adding a new sort for a torsor (see Question \ref{automorphism group question} below for related discussion) Hrushovski's result yields that, given a definable group $G$ and an NIP formula $\phi(x,y)$ such that $G$ is definably amenable with respect to $\phi$-formulas, ie admits a definable translation-invariant Keisler measure $\mu$ on the Boolean algebra of definable subsets of $G$ generated by the instances of $\phi$, certain quotients of $G$ by bounded-index subgroups associated to $\phi$ and $\mu$ are of finite Archimedean rank. (Any individual NIP formula has bounded VC-codensity, so in Hrushovski's case it is enough to assume just that the single formula $\phi$ is NIP. Our need to assume uniformly bounded VC-codensity comes from the fact that we are proving a theorem about a `global' object, the Ellis group, not about a `local' one; in Section \ref{local g00 section} we will give a `local' result that is closer in spirit to the pursuits in \cite{hrushovski}, and we will state the connections with the results and questions from \cite{hrushovski} there.)

Many of our key ideas in the proof of Theorem 3 come from Hrushovski's proof of Theorem 7.12 in his paper. Nevertheless, there are some important differences in approach and methods. The most notable one is that we do not make any definable amenability assumption, whereas Hrushovski's theorem assumes the presence of the invariant measure $\mu$. We avoid this assumption through one of the core ideas of Chernikov, Gannon, and Krupi\'nski from \cite{chernikov_gannon_krupinski}. In the latter paper, as mentioned above, the authors prove that, assuming NIP, the $\tau$-topology on the Ellis group is Hausdorff; in particular the Ellis group is a compact Hausdorff topological group, and so admits (finite) Haar measure. A key insight from \cite{chernikov_gannon_krupinski} is that the Haar measure on the Ellis group of a non-definably amenable NIP group can, for certain applications, replace the translation-invariant measure of a definably amenable NIP group, and the paper \cite{chernikov_gannon_krupinski} develops much useful machinery for applications of this kind, which we will rely critically on here. 

The ideas of \cite{hrushovski} and \cite{chernikov_gannon_krupinski} are a crucial starting point and inspiration for us, but we also require a few new insights in the proof of Theorem 3. One of the main ones is to find an appropriate substitute for a metric space canonically associated to the NIP formula $\phi$ and the definable measure $\mu$ that appears in Hrushovski's proof; for this purpose, we will use certain pseudometrics naturally associated to Borel subsets of a compact group. Another important new insight is to work not with the Haar measure on the Ellis group but rather with the Haar measure on its connected component; this is essentially how we eliminate the analogue of the need that appears in Hrushovski's theorem to restrict to a single Shelah strong type. In fact, we isolate these two main ideas of ours to obtain a completely general theorem about VC-sets in compact Hausdorff groups, which is Theorem \ref{general far theorem} in Section \ref{vc sets far section} and which may be of broader interest:
\begin{theorem4}Suppose $G$ is a compact Hausdorff group, and suppose there is a countable family $(B_i:i\in I)$ of Borel subsets of $G$ such that (1) every open subset of $G$ can be written as a union of some of the $B_i$, and (2) there is some $\delta$ such that, for each $i\in I$, the family of left translates $(gB_i:g\in G)$ has VC-density at most $\delta$. Then $G$ is an inverse limit of compact Lie groups of dimension at most $(4\delta)^2$.
\end{theorem4} Theorem 4 is one of the key ingredients in the proof of Theorem 3. Now, the most obvious major difference between Theorem 3 and Hrushovski's results is that we work with the Ellis group rather than with $G/G^{00}$. Despite being a more general setting, insofar as $G/G^{00}$ is a quotient of the Ellis group, once one has access to Haar measure on the Ellis group it is actually significantly more convenient to work with the Ellis group than to work with $G/G^{00}$, since the Ellis group is a set of types and so the VC-theoretic consequence of NIP are very transparent (see Lemma \ref{theta is vc set in ellis group}). This makes it easy to satisfy condition (2) in the hypotheses of Theorem 4, and it is not clear if there is an analogue in $G/G^{00}$; see right after Theorem \ref{local g/g00 main theorem} for some discussion of this. On the other hand, there are also significant difficulties that come from working with the Ellis group that do not arise in working with $G/G^{00}$, a critical one of which is that the $\tau$-topology on the Ellis group of $G(M_0)$ will in general not be second-countable even when $M_0$ is a countable model of a countable theory. One major consequence of our Theorem 2 is that, if $T$ is countable and NIP and $M_0$ is a countable model, then the $\tau$-topology on the Ellis group of $G(M_0)$ has a well-behaved and explicit countable `basis' of Borel subsets (with `basis' in quotation marks because the sets are not open); see Corollary \ref{tau-open is countable union}. This plays an essential role in our proof of Theorem 3, and is what allows us to satisfy condition (1) in the hypotheses of Theorem 4. The need to have access to Theorem 2 is part of the reason we need to make a global NIP assumption, unlike in Hrushovski's case where he can work locally; see Question \ref{local ellis group question} for more on this. %See the end of Section \ref{FAR section} for some discussion of how one might try to prove a similar result for the `local Ellis group' associated to a bi-invariant NIP formula $\phi$, as developed in \cite{devrim}, though to me it does not seem like it would be a straightforward adaptation.

In the last section of the paper, we use our techniques from the proof of Theorem 3 to prove a result for arbitrary NIP theories, not just ones of bounded VC-codensity. Given a definable group $G$, say that a formula $\phi(x,y)$ is `bi-invariant' if $\phi(x,y)$ implies $x\in G$ and any left or right translate of an instance of $\phi$ by an element of $G$ is equivalent to another instance of $\phi$. By a $\phi$-formula we mean a Boolean combination of instances of $\phi$. A folklore result is that, if $\phi(x,y)$ is bi-invariant and NIP, then there is a smallest bounded-index subgroup of $G$ type-definable by $\phi$-formulas, which is normal in $G$ by bi-invariance of $\phi$, and which we will call $G^{00}_\phi$. (For pseudofinite $G$ this object is thoroughly studied in work of Conant and Pillay in \cite{conant_pillay}. That paper develops a theory of what might be called `local fsg', which then plays an important role in the `NIP arithmetic regularity' theorem of Conant, Pillay, and Terry \cite{conant_pillay_terry}.) Using our techniques from the proof of Theorem 3 we obtain the following:
\begin{theorem5}
Suppose that $T$ is NIP and that $\phi(x,y)$ is a bi-invariant formula. Then $G/G^{00}_\phi$ has finite Archimedean rank. More precisely, if the VC-codensity of $\phi(x,y)$ is at most $\delta$, then $G/G^{00}_\phi$ is an inverse limit of compact Lie groups of dimension at most $(4\delta)^2$.
\end{theorem5}This is Theorem \ref{local g/g00 main theorem}. We give much further discussion in Section \ref{local g00 section}; in particular we discuss a possible strategy to remove the global NIP assumption in this result, discuss the precise connections with the results from Section 7 of \cite{hrushovski}, and discuss connections with results of Pekmezci from \cite{devrim}. Theorem 4 connects with Question 7.18(1) from \cite{hrushovski}, though it is different than it in a few ways, and we discuss this in detail too.

Theorems 1, 2, 3, 4, and 5 are the main results of this paper, with each one corresponding to its own dedicated section. Let me now make one last remark to conclude. I originally developed much of the material in this paper as part of a project to give a structure theory for approximate groups definable in NIP structures; with a bit of additional care needed in the proofs, all of the material of Section \ref{piecewise f-generic section}, Section \ref{existence of g000 section}, and Section \ref{main consequence for ellis group section} goes through in the setting of NIP approximate groups, ie in the case where $G$ is not a definable group but rather a $\bigvee$-definable group $\langle X\rangle$ generated by some approximate group $X$ definable in an NIP structure. The beautiful paper \cite{krupinski_pillay} of Krupi\'nski and Pillay develops Ellis theory for groups of this form $\langle X\rangle$; here the Ellis groups will not be (quasi)compact but rather \textit{locally} (quasi)compact semitopological groups, and Krupi\'nski and Pillay show that the maximal Hausdorff quotients of these Ellis groups are exactly the universal `generalized locally compact models' for $X$ in the sense of \cite{hrushovski_lascar_group}. One of my major goals in my project on NIP approximate groups is to give some structural results for the Ellis groups in this setting; this is also closely related to Question 7.18(2) of \cite{hrushovski}. It was in working on this aim that I realized the machinery of Theorem 1 could also have strong and significant consequences for the Ellis groups of definable groups, and that is the origin of this paper here. The project on NIP approximate groups is still ongoing, and far from complete, so I do not include the material from it in this paper. Nevertheless throughout Section \ref{piecewise f-generic section} and Section \ref{existence section} I will include a bit of extra material, which is not necessary or relevant for the results of this paper, but which will be useful to have as a reference in the paper on NIP approximate groups; I indicate this material explicitly throughout. I will also already remark here that, again with some additional care needed in the proofs, the appropriate analogues of Theorem \ref{restriction map ellis group theorem} and Corollary \ref{tau-open is countable union} hold in the maximal Hausdorff quotient of the Ellis group of an NIP approximate group, and will play an important role in the work there.

\subsection{History of Question \ref{newelski question}}\label{history of question}
Let us discuss Question \ref{newelski question} in greater detail; the history is a bit elaborate and so we separate this discussion from the rest of the introduction. We will not actually use any of the results or work mentioned in this section, but they give some context for our Theorem 2. Question \ref{newelski question} clearly has roots in Newelski's paper \cite{newelski_2}, in which Newelski addresses the question of how, in an arbitrary theory, the Ellis groups of $G(M)$ and $G(N)$ relate to one another for $M\preccurlyeq N$. In fact, Newelski wishes to ask a more precise question about the relationship between the Ellis groups of $G(M)$ and $G(N)$, and for it to make sense he needs to put a technical assumption on the relationship between $M$ and $N$ in order to have a natural map $S^{\mathrm{fs}}_G(\mathfrak{C},N)\to S^{\mathrm{fs}}_G(\mathfrak{C},M)$. We will not state the assumption here, though in the case where $M$ is equal to its own `Shelah expansion', meaning that every type over $M$ is definable, the assumption is satisfied for any $N\succcurlyeq M$. (In particular this applies if $M$ is an o-minimal expansion of $\mathbb{R}$ or if $M=\mathbb{Q}_p$.)

To my knowledge, Question \ref{newelski question} appears independently in three places. On the one hand, in 2018, Jagiella published the paper \cite{jagiella}, Conjecture 1.6 of which is exactly Question \ref{newelski question}. (Jagiella attributes the question `generally' to \cite{newelski_2}, though in fact \cite{newelski_2} does not discuss general NIP theories at all.) On the other hand, after I finished writing the present paper, Krzysztof Krupi\'nski informed me that he and Pierre Simon had, around the same time, independently formulated this question and made some partial progress on it, advertising the question in Paris at the 2018 IHP semester on model theory. Finally, much more recently, perhaps unaware of the above history, Basso and Zucker independently restated Question \ref{newelski question} as Question 11.10 in \cite{basso_zucker}.

Let me summarize the known results on Question \ref{newelski question} that I am aware of. First, Krupi\'nski has shared with me the nature of his results with Simon, which are not public; among other things, for appropriate $M\preccurlyeq N$, they constructed under NIP a semigroup homomorphism $S^{\mathrm{fs}}_G(\mathfrak{C},M)\to S^{\mathrm{fs}}_G(\mathfrak{C},N)$ which is a section of Newelski's above-mentioned map $S^{\mathrm{fs}}_G(\mathfrak{C},N)\to S_G^{\mathrm{fs}}(\mathfrak{C},M)$. They hoped to be able to use this map to attack Question \ref{newelski question}. Nonetheless the progress there is only partial, and in particular they were not able to show boundedness of the size of the Ellis group of $G(M)$, which is the main point of our Theorem 2. The work of Krupi\'nski and Simon is the only work I am aware of that attempts to answer Question \ref{newelski question} for arbitrary NIP groups.

Jagiella's paper \cite{jagiella} is of a very different flavor, and is an instance of approach (2) from the first paragraph of the introduction. Motivated by the examples of $\mathrm{SL}_2$ discussed above, and more generally by the Iwasawa decompositions of semisimple Lie groups, Jagiella in \cite{jagiella} uses Fact \ref{newelski conjecture for definably amenable nip groups} to make progress on Question \ref{newelski question} for NIP groups $G$ that satisfy a certain decomposition as a Zappa-Szép product of an fsg subgroup $K$ and a definably extremely amenable subgroup $H$. Jagiella gives quite an explicit description of the Ellis group of a group of this kind, showing that it is isomorphic to a subgroup of $K/K^{00}$. As is the case with our Theorem 2, and to an even more pronounced degree, despite getting very strong structural information on the Ellis group in his setting, Jagiella's methods nonetheless do not manage to resolve Question \ref{newelski question} there (see the discussion around Question 4.15 in his paper).

There is also a significant body of work on Question \ref{newelski question} in the spirit of approach (1) from the first paragraph of the introduction. In the o-minimal case, Jagiella is able to combine his techniques in \cite{jagiella} with results of Yao from \cite{yao_long} and with the structure theory on groups in o-minimal structures developed by Conversano in \cite{conversano_1} and \cite{conversano_2} to give a positive answer to Question \ref{newelski question} whenever $M$ is an o-minimal expansion of $\mathbb{R}$. More recently, in \cite{yao_zhang}, Yao and Zhang have given a positive answer to Question \ref{newelski question} for $M=\mathbb{Q}_p$. To my knowledge the papers \cite{jagiella} and \cite{yao_zhang} summarize all of the theories where a positive answer to Question \ref{newelski question} is known, although as discussed in \cite{yao_zhang} it seems one would expect analogues for $M=\mathbb{C}((t))$, following the work of Kirk \cite{kirk}.

\section{Preliminaries}
As in the rest of the paper, \uline{assume throughout this section that $T$ is NIP}, expect when explicitly stated otherwise. We organize the preliminaries in order of how they appear in the paper; \ref{forking nip section} and \ref{connected components section} are the main preliminaries for Section \ref{piecewise f-generic section}, \ref{honest definitions section} and \ref{topological dynamics section} are the main preliminaries for Section \ref{existence section}, and \ref{vc theory section} and \ref{compact groups section} are the main preliminaries for Section \ref{FAR section}.

\subsection{Forking in NIP Theories}\label{forking nip section}
We recall here some properties of non-forking independence in NIP theories. Recall that a formula $\phi(x,b)$ with parameters from $\mathfrak{C}$ `divides' over a parameter set $A\subset\mathfrak{C}$ if there is some $k\in\omega$ and some sequence $(b_i:i\in\omega)$ such that $b_i\equiv_A b$ for all $i\in\omega$ and such that the family of formulas $(\phi(x,b_i):i\in\omega)$ is $k$-inconsistent. A partial type divides over $A$ if it implies a formula that divides over $A$, and it `forks' over $A$ if it implies a finite disjunction of formulas that each divide over $A$. We write $a\ind_A b$ to mean that $\tp(a/A,b)$ does not fork over $A$. We say that a global type $p(x)\in S(\mathfrak{C})$ is $M$-invariant over a small model $M\prec\mathfrak{C}$ if, for any tuples $b,b'$ from $\mathfrak{C}$ with $b\equiv_M b'$, we have $p(x)\vdash\phi(x,b)$ if and only if $p(x)\vdash\phi(x,b')$. Given two global $M$-invariant types $p(x)$ and $q(y)$, the Morley product $p(x)\otimes q(y)$, also denoted $p_x\otimes q_y$ or just $p\otimes q$ if the variables are clear, is defined to be the unique global $M$-invariant type whose restriction to any $N\succcurlyeq M$ is equal to $\tp(a,b/N)$ for some/any $b\models q|_N$ and $a\models p|_{(N,b)}$. The Morley product is an associative operation on $M$-invariant types. Given a global $M$-invariant type $p(x)$ and a linear order $I$, we similarly define $p^{\otimes I}$ to be the unique global $M$-invariant type in variables $(x_i:i\in I)$ with the property that, if $(a_i:i\in I)\models p^{\otimes I}|_N$ for $N\succcurlyeq M$, then $a_i\models p|_{(N,a_{<i})}$ for each $i\in I$. Finally, given a model $M$, a tuple $a$, and a parameter set $B$, we write $a\ind_M^u B$ to mean that $\tp(a/M,B)$ is a `coheir' of $\tp(a/M)$, ie that $\tp(a/M,B)$ is finitely satisfiable in $M$. We use $S^{\mathrm{inv}}(\mathfrak{C},M)$ and $S^{\mathrm{fs}}(\mathfrak{C},M)$ to refer to the spaces of global $M$-invariant and $M$-finitely-satisfiable types.

A basic fact that holds an any theory is that $\ind$ and $\ind^u$ satisfy `right extension', which is most convenient to state as follows:

\begin{fact}\label{right extension}
Let $M$ be a small model, $a$ a tuple, and $B$ a parameter set. For any cardinal $\kappa$, if $a\ind_M B$ then we may find a $\kappa$-saturated model $M'\supseteq (M,B)$ such that $a\ind_M M'$, and if $a\ind_M^u B$ then we may find a $\kappa$-saturated model $M'\supseteq (M,B)$ such that $a\ind_M^u M'$.
\end{fact}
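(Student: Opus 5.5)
The plan is to prove both halves the same way. First extend $\tp(a/M,B)$ to a complete type over some $\kappa$-saturated model containing $(M,B)$, keeping the relevant independence property. Then realize that type and move the realization back onto $a$ by an automorphism fixing $(M,B)$.

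First choose any small $\kappa$-saturated model $N\supseteq(M,B)$ inside $\mathfrak{C}$. This is possible since $\mathfrak{C}$ is a monster model and $\kappa$ is small relative to it.

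\emph{Forking case.} The main step is to show that $\tp(a/M,B)$ extends to a complete type $q\in S(N)$ that does not fork over $M$. Consider the partial type
$$\Sigma(x)=\tp(a/M,B)\cup\{\neg\psi(x,c):\psi(x,c)\text{ an }L(N)\text{-formula forking over }M\}.$$
I will check that $\Sigma$ is consistent by compactness. If it were not, then $\tp(a/M,B)$ would imply a finite disjunction $\psi_1\vee\dots\vee\psi_m$ of formulas each forking over $M$. Each $\psi_j$ in turn implies a finite disjunction of formulas dividing over $M$. So $\tp(a/M,B)$ would fork over $M$, contradicting $a\ind_M B$. Now take any completion $q$ of $\Sigma$ over $N$. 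It contains no formula forking over $M$, so $q$ does not fork over $M$.

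\emph{Coheir case.} Here the analogous step uses an ultrafilter. The sets $\phi(M)$, for $\phi\in\tp(a/M,B)$, are nonempty because $\tp(a/M,B)$ is finitely satisfiable in $M$. They are closed under finite intersections, so they form a filter base on $M^{|x|}$. Extend this base to an ultrafilter $\mathcal{U}$, and let
$$q=\{\phi(x,c)\in L(N):\phi(M,c)\in\mathcal{U}\}.$$
This $q$ is a complete type over $N$, it extends $\tp(a/M,B)$, and every formula in it is satisfied in $M$, so $q$ is finitely satisfiable in $M$.

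In both cases, let $a'\models q$. Then $a'\equiv_{M,B}a$, so there is $\sigma\in\mathrm{Aut}(\mathfrak{C}/M,B)$ with $\sigma(a')=a$. Set $M'=\sigma(N)$. This $M'$ is $\kappa$-saturated, contains $(M,B)$, and $\tp(a/M')=\sigma(q)$. Non-forking over $M$ and finite satisfiability in $M$ are both preserved by automorphisms fixing $M$. Hence $a\ind_M M'$, respectively $a\ind^u_M M'$, as required.

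The only nontrivial point is the consistency of $\Sigma$ in the forking case. This is exactly where forking, rather than dividing, is needed: the formulas forking over $M$ form an ideal, which is what lets the compactness argument go through. Everything else is routine.
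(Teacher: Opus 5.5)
Your proof is correct. The compactness argument (using that forking formulas form an ideal) and the ultrafilter construction each extend $\mathrm{tp}(a/M,B)$ to a small $\kappa$-saturated $N\supseteq(M,B)$ while preserving non-forking, respectively finite satisfiability, over $M$; an automorphism over $(M,B)$ that moves a realization onto $a$ then carries $N$ to the required $M'$. The paper quotes this as a basic fact without proof, and your argument is the standard one it relies on.
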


Now let's turn to the facts specific in the NIP setting. Forking in NIP theories satisfies two fundamental properties; the first is from \citep{shelah_forking} and the second is from \citep{chernikov_kaplan}.
\begin{fact}\label{non-forking iff invariant}
    If $p(x)\in S(\mathfrak{C})$ is a global type and $M\prec\mathfrak{C}$ is a small model, then $p(x)$ does not fork over $M$ if and only if it is $M$-invariant.
\end{fact}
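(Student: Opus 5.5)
The plan is to prove the two directions separately. The direction ``$M$-invariant $\Rightarrow$ non-forking over $M$'' holds in any theory. The direction ``non-forking over $M$ $\Rightarrow$ $M$-invariant'' uses NIP through the finiteness of alternation along indiscernible sequences.

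\emph{Invariant implies non-forking.} Suppose $p$ is $M$-invariant and, for contradiction, $p\vdash\bigvee_{j<n}\psi_j(x,d_j)$ with each $\psi_j(x,d_j)$ dividing over $M$. Since $p$ is complete, it contains some $\psi_j(x,d_j)$, so it suffices to show that $p$ contains no formula dividing over $M$. Suppose $\psi(x,d)\in p$, and let $(d_i:i\in\omega)$ be any sequence with $d_i\equiv_M d$. By $M$-invariance, $\psi(x,d_i)\in p$ for every $i$. Hence $\{\psi(x,d_i):i\in\omega\}$ is realised by any realisation of $p$ restricted to the relevant parameters, so it is not $k$-inconsistent for any $k$. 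Thus $\psi(x,d)$ does not divide over $M$, and $p$ does not fork over $M$.

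\emph{Non-forking implies invariant (under NIP).} Suppose $p$ is not $M$-invariant. Then there are $b\equiv_M b'$ and a formula $\phi(x,y)$ with $\phi(x,b)\wedge\neg\phi(x,b')\in p$; I will find a formula in $p$ that divides over $M$.
\begin{enumerate}
\item Let $q$ be a global coheir of $\tp(b/M)$, which exists since $M$ is a model. Choose $c_0,c_1,\dots$ with $c_i\models q|_{(M,b,b',c_{<i})}$.
\item The sequences $I=(b,c_0,c_1,\dots)$ and $I'=(b',c_0,c_1,\dots)$ are both $M$-indiscernible. This is the standard fact that a sequence in which each term realises the $M$-invariant type $q$ over $M$ together with the earlier terms is $M$-indiscernible, using that $b\models q|_M$ and $b'\models q|_M$.
\item Since $p$ is complete, it contains either $\phi(x,c_0)$ or $\neg\phi(x,c_0)$. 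Hence $p$ contains either $\phi(x,b)\wedge\neg\phi(x,c_0)$ or $\phi(x,c_0)\wedge\neg\phi(x,b')$. Treat the first case; the second is symmetric, using $I'$ and relabelling so that the pair $(c_0,b')$ plays the role of $(b,c_0)$.
\end{enumerate}

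Set $e_0=(b,c_0)$ and $e_i=(c_{2i-1},c_{2i})$ for $i\geqslant 1$. By indiscernibility of $I$, all $e_i$ have the same type over $M$ as $(b,c_0)$. Consider the formulas $\chi(x,e_i)=\phi(x,e_{i,1})\wedge\neg\phi(x,e_{i,2})$.

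If $a$ realised $k$ of these formulas, then the truth value of $\phi(a,y)$ would alternate at least $2k-1$ times along the indiscernible sequence $I$. By NIP, there is a bound $N_\phi$ on the number of alternations of $\phi(a,y)$ along any indiscernible sequence. So for $k$ large relative to $N_\phi$, the family $(\chi(x,e_i):i\in\omega)$ is $k$-inconsistent. Hence $\chi(x,b,c_0)\in p$ divides over $M$, and $p$ forks over $M$, giving the contrapositive.

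The main obstacle is step 2: making sure the constructed sequences are genuinely $M$-indiscernible with $b$ (resp.\ $b'$) placed first. This requires the coheir $q$ to be taken over $M$ only, so that both $b$ and $b'$ realise $q|_M$, and the $c_i$ to realise $q$ over everything built before them, including both $b$ and $b'$. With that in place, the NIP alternation bound does the rest.
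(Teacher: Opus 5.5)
Your proof is correct. The paper gives no proof of this fact and only cites it from Shelah, and your argument is the standard one from the literature: the easy direction holds in any theory, and for the hard direction a coheir Morley sequence yields $c_0$ such that both $(b,c_0)$ and $(b',c_0)$ start $M$-indiscernible sequences, after which the NIP alternation bound shows that $\phi(x,y_1)\wedge\neg\phi(x,y_2)$ divides.

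The one thing to tidy is the second case. It is cleanest stated as the first case applied to $\neg\phi$, using the pair $(b',c_0)$ in the order in which it appears in $I'$. Alternatively, if you keep $(c_0,b')$ as the base pair, you must reverse the later pairs as well, taking $e_i=(c_{2i},c_{2i-1})$.
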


\begin{fact}\label{forking=dividing}
    Let $M\prec\mathfrak{C}$ be a small model. Then an $L(\mathfrak{C})$-formula forks over $M$ if and only if it divides over $M$. %In particular, $\ind^f_M=\ind^d_M$.
\end{fact}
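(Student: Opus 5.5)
The direction ``divides $\Rightarrow$ forks'' is immediate from the definitions, since a dividing formula is in particular a one-term disjunction of dividing formulas. So the content is the converse. I would prove it by contradiction, using a Kim's-lemma-style argument for \emph{strict Morley sequences}, in the spirit of Chernikov--Kaplan.

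Suppose $\phi(x,b)$ does not divide over $M$, but $\phi(x,b)\vdash\bigvee_{j<n}\psi_j(x,c_j)$ with each $\psi_j(x,c_j)$ dividing over $M$. Write $e=(b,c_0,\dots,c_{n-1})$. The plan has four steps.

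\begin{enumerate}
\item \textbf{A strictly invariant extension.} Produce a global $M$-invariant type $q\supseteq\tp(e/M)$ which is \emph{strictly invariant}: for every small $B\supseteq M$ and every $e'\models q|_B$, we have $B\ind_M e'$. In NIP I would obtain $q$ as follows. Take a sufficiently saturated $N\succ M$. Take an extension $r\in S(N)$ of $\tp(e/M)$ that is an heir over $M$; this exists by the standard compactness argument. Let $q$ be a global coheir of $r$ that is finitely satisfiable in $N$. Then $M$-invariance of $q$ follows from Fact~\ref{non-forking iff invariant}. Strictness comes from the heir property, which is dual to the coheir property, together with saturation of $N$. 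Fact~\ref{right extension} will be used to arrange models as needed.

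\item \textbf{Kim's lemma for strict Morley sequences.} Show that if $\psi(x,c)$ divides over $M$ and $(c_i)_{i<\omega}$ is a Morley sequence in a strictly invariant extension of $\tp(c/M)$, then $\{\psi(x,c_i)\}_i$ is inconsistent. Take an $M$-indiscernible sequence $(d_k)$ in $\tp(c/M)$ witnessing $k$-inconsistency. Strictness gives $(d_k)\ind_M c_0$, and inductively the whole configuration is independent. This lets me build, by an Erd\H{o}s--Rado/compactness argument, an array whose rows are copies of $(d_k)$ and whose columns are copies of the Morley sequence. From this array I conclude that $\{\psi(x,c_i)\}_i$ is inconsistent.

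\item \textbf{The Morley sequence for $\phi(x,b)$.} Take $(e_i)_{i<\omega}$ with $e_i\models q|_{M e_{<i}}$. This is $M$-indiscernible, and each $e_i\equiv_M e$. Since $\phi(x,b)$ does not divide, $\bigwedge_i\phi(x,b_i)$ is consistent; let $a$ realize it.

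\item \textbf{Pigeonhole.} Each $\phi(x,b_i)$ implies $\bigvee_j\psi_j(x,c_{j,i})$, so for each $i$ the element $a$ satisfies some $\psi_j(x,c_{j,i})$. Hence some fixed $j$ occurs for infinitely many $i$. Since any infinite subsequence of a Morley sequence is again a strict Morley sequence, step 2 then gives a contradiction.
\end{enumerate}

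The main obstacle is steps 1 and 2: constructing a strictly invariant type in NIP, and proving that strict Morley sequences witness dividing. Of these, the combinatorial array argument of step 2 is the more delicate. For step 1 I would first try the heir--coheir construction above, and check the strictness property directly via finite satisfiability in $N$.
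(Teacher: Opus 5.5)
Your outline (strictly invariant extensions, then Kim's lemma along strict Morley sequences, then pigeonhole) is the Chernikov--Kaplan architecture. The paper does not prove this Fact; it imports it from that source. Your steps 3 and 4 are fine, but steps 1 and 2 each have a genuine gap.

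\textbf{Step 1 fails as stated.} Take $T=\mathrm{DLO}$, $M=\mathbb{Q}$ and $p(x)=\{x>m:m\in\mathbb{Q}\}$.
\begin{itemize}
\item Any heir $r$ of $p$ over $N$ contains $x>n$ for every $n\in N$. Indeed, $x\leqslant n\in r$ would, by the heir property, give $x\leqslant m\in p$ for some $m\in\mathbb{Q}$.
\item Any global $q\supseteq r$ finitely satisfiable in $N$ must then contain $x<c$ for every $c>N$.
\item Choose $n\in N$ with $n>\mathbb{Q}$ and $c>N$. Then $n\equiv_{\mathbb{Q}}c$, but $q\vdash n<x<c$. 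So $q$ is not $M$-invariant, and in fact $n<x<c$ divides over $\mathbb{Q}$.
\end{itemize}
Your appeal to Fact \ref{non-forking iff invariant} is where this slips: finite satisfiability in $N$ gives non-forking over $N$, not over $M$. Your transfer of strictness from $N$ to an arbitrary $B$ by saturation also silently needs $M$-invariance of $q$.

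What does work is the following. Suppose $r\in S(N)$ extends $p$, is itself $M$-invariant, and satisfies $N\ind_M b$ for $b\models r$. Then the unique global $M$-invariant extension of $r$ (not an $N$-coheir) is strictly invariant; to see this, move finite pieces of $B$ into $N$ over $M$. An heir gives $N\ind_M b$, but it need not be $M$-invariant. For example, if $p$ is the cut of $\sqrt2$, an heir over $N$ can contain $n_1<x<n_2$ with $n_1<n_2$ in that cut of $N$, and this formula divides over $\mathbb{Q}$. Securing both properties at once is the real content here. In the paper this is the separately imported Fact \ref{existence of strictly non-forking coheirs}.

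\textbf{Step 2, as sketched, never uses NIP.} So ``from this array I conclude'' is exactly the missing argument. Since any two Morley sequences of $q$ over $M$ have the same type over $M$, it suffices to build one together with the array:
\begin{itemize}
\item Choose $c_i\models q|_{MI_{<i}}$, where $I_{<i}$ are the rows already built.
\item Strictness gives $I_{<i}\ind_M c_i$, hence non-dividing. So a $k$-inconsistent $M$-indiscernible sequence for $\psi(x,c_i)$ starting at $c_i$ can be taken indiscernible over $MI_{<i}$; call it $I_i$.
\item By induction on $i$ and $M$-invariance of $q$, every \emph{path} $(d_{i,f(i)})_i$ through the array has the same type over $M$ as $(c_i)_i$, not just the first column.
\end{itemize}
So if $\{\psi(x,c_i)\}_i$ were consistent, every path would be consistent while every row is $k$-inconsistent. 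That means $\psi$ has TP$_2$, contradicting NIP. No Erd\H{o}s--Rado is needed, but this TP$_2$ step is where the hypothesis enters, and it has to be stated explicitly.
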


Now, given a small model $M$, we say that a global type $p(x)\in S(\mathfrak{C})$ is strictly non-forking over $M$ if, for all parameter sets $C$, if $a\models p|_{(M,C)}$ then $a\ind_M C$ and $C\ind_M a$. Three crucial facts for us are the following; the first two (which give a `Kim's lemma' for NIP theories) are from \cite{chernikov_kaplan}, and the third (which is analogous to `bounded weight' in stable theories) can be found in Proposition 5.47 in \cite{simon_book}.

\begin{fact}\label{existence of strictly non-forking coheirs}
    Let $p(x)\in S(M)$ be a complete type over a small model $M$. Then there is $\tilde{p}(x)\in S(\mathfrak{C})$ a global coheir of $p(x)$ strictly non-forking over $M$. (So, for any parameter set $B$, if $a\models \tilde{p}|_{(M,B)}$ then $a\ind_M^u B$ and $B\ind_M a$.)
\end{fact}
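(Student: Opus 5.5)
The plan is to reduce the statement to a single two-sided independence condition over a sufficiently saturated model, and then to use invariance to pass from that model to arbitrary parameter sets. Fix a model $N\supseteq M$ that is $|M|^+$-saturated and $|M|^+$-strongly homogeneous, and enumerate it as a tuple $n$. The reduction rests on a standard observation: a global $M$-invariant type is determined by its restriction to $N$. Indeed, any tuple $c$ has an $M$-conjugate $c'$ inside $N$, and $M$-invariance transfers the type's formulas over $c$ to formulas over $c'$.

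\textbf{Step 1 (the key step).} I want a realisation $a\models p$ satisfying two conditions:
\begin{enumerate}
\item $\tp(a/N)$ is finitely satisfiable in $M$;
\item $\tp(n/M,a)$ does not fork over $M$.
\end{enumerate}
I would obtain $a$ by compactness. Consider the partial type $\Sigma(x)$ consisting of three families of formulas:
\begin{itemize}
\item the type $p(x)$;
\item $\neg\psi(x,n')$ for every formula $\psi(x,n')$ with $n'\subseteq N$ that has no solution in $M$;
\item $\neg\chi(x)$ for every $L(M)$-formula $\chi(x)$ such that $\theta(y,x)\wedge\chi(x)$ forces $\tp(n/M,x)$ to imply a formula dividing over $M$.
\end{itemize}
Consistency of the first two families alone is just the existence of a coheir of $p$. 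The difficulty is making the third family compatible with them.

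\textbf{Step 2 (where Kim's lemma for NIP enters).} This is the main obstacle. To handle it, I would take a global $M$-finitely satisfiable type $q(y)\supseteq\tp(n/M)$, which is in particular $M$-invariant. By Fact \ref{forking=dividing}, forking over $M$ equals dividing over $M$. Moreover, dividing over $M$ can be witnessed along a Morley sequence in a global $M$-invariant type; this is the NIP version of Kim's lemma, which I would reprove or invoke via Fact \ref{non-forking iff invariant}.

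The plan is then as follows. Take a Morley sequence $(n_i)_{i<\omega}$ in $q$ over $M$, and realise a coheir of $p$ over $M\cup\{n_i\}_i$. Indiscernibility of the sequence over $M a$ can be arranged by Ramsey and compactness. Then dividing of $\tp(n_0/M,a)$ along this sequence would contradict the finite satisfiability of $\tp(a/M n_{<\omega})$ in $M$. Finally, one copy $n_i$ can be moved back onto $n$ by an automorphism over $M$, which yields Step 1.

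I expect the delicate point to be making the indiscernible sequence both Morley in an invariant type and compatible with the coheir condition on $a$, so that the Kim's lemma argument genuinely applies.

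\textbf{Step 3.} With $a$ as in Step 1, let $\tilde p$ be the unique global $M$-invariant extension of $\tp(a/N)$. This may be taken to be any global coheir of $\tp(a/N)$ over $M$, and it is unique by saturation of $N$.

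Now let $B$ be any parameter set and $a'\models\tilde p|_{(M,B)}$. Choose $B'\subseteq N$ with $B'\equiv_M B$. By invariance there is an automorphism over $M$ carrying $(a',B)$ to a pair $(a'',B')$ with $a''\equiv_{MB'}a$. The two required independences then follow:
\begin{itemize}
\item $a'\ind^u_M B$ holds because $\tp(a/N)$ is finitely satisfiable in $M$.
\item $B\ind_M a'$ holds because $\tp(n/M,a)$ does not fork over $M$, and the non-forking passes to the subtuple $B'$.
\end{itemize}
So $\tilde p$ is a global coheir of $p$ that is strictly non-forking over $M$, as required.
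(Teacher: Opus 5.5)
The paper does not prove this statement; it imports it from \cite{chernikov_kaplan}. Your reduction is the standard one, and Steps~1 and~3 are essentially fine after two repairs. In Step~3, $N$ is only $|M|^+$-saturated, so you should move finite subtuples $b\subseteq B$ into $N$ rather than all of $B$. Both $\ind^u$ and $\ind$ have finite character, so this suffices. More seriously, the third family of $\Sigma$ as written is vacuous: a negation of an $L(M)$-formula in $x$ is either implied by or inconsistent with the complete type $p$. What you need instead are the $L(N)$-formulas $\neg\theta(\bar n,x)$, for $\theta(\bar y,x)\in L(M)$ and $\bar n$ from $N$, such that $\theta(\bar y,a)$ forks over $M$ for $a\models p$. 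This condition depends only on $p$, and a realization of the corrected $\Sigma$ gives exactly your conditions (1) and (2).

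The genuine gap is Step~2, which carries all the content and, as sketched, produces no contradiction. There are two problems.
\begin{itemize}
\item The `Kim's lemma' you invoke (dividing over $M$ is witnessed along Morley sequences of a global $M$-invariant extension) is false for arbitrary invariant types. In DLO, with $b_1<b_2$ both above $M$, the formula $b_1<x<b_2$ divides over $M$. But along a Morley sequence of the $M$-invariant type putting $b_1$ just above $M$ and $b_2$ above everything, the intervals are nested, hence consistent. The correct version, Fact~\ref{kim's lemma}, needs a \emph{strictly} non-forking extension, which is what you are constructing, so using it would be circular.
\item Your sequence is on the wrong side. Dividing of a formula $\theta(\bar y,a)\in\tp(n_0/M,a)$ concerns $M$-conjugates of $a$, whereas $(n_i)$ realizes the $\bar y$-side. $Ma$-indiscernibility of $(n_i)$ plus finite satisfiability of $\tp(a/M,n_{<\omega})$ only says that the formulas $\theta(n_i,x)$ are jointly satisfiable in $M$, which is compatible with $\theta(\bar y,a)$ dividing.
\end{itemize}

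The missing argument is short and needs no sequences. Suppose the corrected $\Sigma$ were inconsistent. Forking formulas form an ideal and the second family is closed under disjunction, so compactness gives $\chi\in p$, a single $\theta(\bar y,x)\in L(M)$ with $\theta(\bar y,a)$ forking over $M$, and $\bar n'$ from $N$ with $\models\theta(\bar n',m)$ for every $m\in\chi(M)$. Since $M\preccurlyeq\mathfrak{C}$, for any $a'_1,\dots,a'_k\models\chi$ we get $\models\exists\bar y\bigwedge_{i\leqslant k}\theta(\bar y,a'_i)$; a counterexample would be reflected into $\chi(M)$, contradicting $\bar n'$. Every $M$-conjugate of $a$ satisfies $\chi$, so $\theta(\bar y,a)$ does not divide over $M$. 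By Fact~\ref{forking=dividing} it then does not fork, a contradiction. Be aware that in \cite{chernikov_kaplan} forking $=$ dividing over models is itself obtained by way of strict Morley sequences. So this closes the gap relative to the paper's imported facts, not as an independent proof of the statement.
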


\begin{fact}\label{kim's lemma}
Let $\phi(x,b)$ be a formula and $M$ a small model, and let $q(y)=\tp(b/M)$. If $\phi(x,b)$ forks over $M$, then, for any global extension $\tilde{q}(y)$ of $q$ strictly non-forking over $M$, if $(b_i:i\in\omega)\models\tilde{q}^{\otimes\omega}|_M$ then $\bigwedge_{i\in\omega}\phi(x,b_i)$ is inconsistent.
\end{fact}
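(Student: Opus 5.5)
The plan is to follow the Chernikov--Kaplan array argument. It combines Fact \ref{forking=dividing} with the two directions of independence packaged into strict non-forking.

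\emph{Step 1: a dividing witness.} By Fact \ref{forking=dividing}, $\phi(x,b)$ divides over $M$. So there are $k$ and an $M$-indiscernible sequence $J=(c_j:j\in\omega)$ with $c_0=b$ such that $\{\phi(x,c_j)\}$ is $k$-inconsistent. Since $\tilde q$ is $M$-invariant, every Morley sequence of $\tilde q$ over $M$ is $M$-indiscernible and has a fixed type over $M$. Hence consistency of $\bigwedge_i\phi(x,b_i)$ depends only on $\tilde q$, and it suffices to build one Morley sequence for which the conjunction is inconsistent.

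\emph{Step 2: the array.} I will build, by induction on $i<n$, tuples $b_i$ and sequences $J_i=(c_{i,j}:j\in\omega)$ with $c_{i,0}=b_i$ and $J_i\equiv_M J$. At stage $i$, choose $b_i\models\tilde q|_{(M,J_{<i})}$. Strict non-forking gives $J_{<i}\ind_M b_i$, so by Fact \ref{forking=dividing} $\tp(J_{<i}/M,b_i)$ does not divide over $M$. By the standard characterisation of non-dividing, any $M$-indiscernible sequence beginning with $b_i$ and conjugate to $J$ over $M$ can be moved by an automorphism fixing $M b_i$ to one that is indiscernible over $(M,J_{<i})$; call this $J_i$. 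The resulting first column $(b_i)_{i<n}$ is then a Morley sequence in $\tilde q$ over $M$, since $b_i$ realises $\tilde q$ over $M\cup J_{<i}\supseteq M\cup b_{<i}$. I would also like the coheir half of strict non-forking, $b_i\ind^u_M J_{<i}$: it gives invariance over $M$ in the direction needed to swap rows when comparing the column to other paths through the array.

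\emph{Step 3: contradiction.} Suppose $\bigwedge_{i<n}\phi(x,b_i)$ is consistent for every $n$. Using the indiscernibility of each row over the preceding ones, I want to show that for every choice function $f$ the path $(c_{i,f(i)})_{i<n}$ has the same type over $M$ as $(b_i)_{i<n}$. This would make every path consistent. I expect this to be the main obstacle, because row $i$ is indiscernible only over earlier rows and not over later ones. The first thing I would try is to make every row indiscernible over all the others simultaneously (mutual indiscernibility). To get this I would apply the non-dividing step symmetrically: at each stage use $J_{<i}\ind_M b_i$ to re-indiscernibilise the new row, and use the coheir direction $b_i\ind^u_M J_{<i}$ together with compactness and Ramsey to re-indiscernibilise the old rows over the new one, extracting a mutually indiscernible array with the same EM-type. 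Once all paths are consistent, take a single realisation $a$ of $\bigwedge_{i}\phi(x,c_{i,f(i)})$ over a suitably chosen family of paths. By NIP, $\phi(a,y)$ alternates only boundedly often along each indiscernible row, so $a$ satisfies $\phi(x,c_{i,j})$ on a cofinite, hence of size at least $k$, set of $j$ in some row. This contradicts $k$-inconsistency of that row and proves that $\bigwedge_{i\in\omega}\phi(x,b_i)$ is inconsistent.
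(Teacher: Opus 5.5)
\emph{Gap in Step 3.} The paper gives no proof of this fact; it quotes it from \cite{chernikov_kaplan}. Your Steps 1--2 are the right start of the Chernikov--Kaplan array argument. The gap is in Step 3, which is where the real content lies.

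The claim that every path $(c_{i,f(i)})_i$ has the same type over $M$ as $(b_i)_i$ is never proved, and the route you sketch does not work, for three reasons.
\begin{enumerate}
\item The coheir condition $b_i\ind^u_M J_{<i}$ is not a hypothesis. Strict non-forking, as the paper defines it, gives only $a\ind_M C$ and $C\ind_M a$. Finite satisfiability is an extra feature of the types produced in Fact~\ref{existence of strictly non-forking coheirs}, not of an arbitrary strictly non-forking $\tilde q$.
\item Even for a coheir, it would only constrain $b_i=c_{i,0}$, not the rest of $J_i$.
\item Re-extracting a mutually indiscernible array by Ramsey and compactness keeps the new array only locally based on the old one, up to reindexing each row. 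So the new first column is based on paths of the old array rather than on its first column, which is exactly the information you are trying to secure.
\end{enumerate}

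\emph{The missing observation.} Mutual indiscernibility is not needed; what you need is already in your construction. You have $b_{i+1}\models\tilde q|_{(M,J_{\leqslant i})}$, $b_{i+2}\models\tilde q|_{(M,J_{\leqslant i+1})}$, and so on, so $b_{>i}$ realises $\tilde q^{\otimes\omega}|_{(M,J_{\leqslant i})}$. Since $\tilde q^{\otimes\omega}$ is $M$-invariant and $J_i$ is indiscernible over $(M,J_{<i})$, the row $J_i$ is automatically indiscernible over $(M,J_{<i},b_{>i})$. Now, given $f$, swap $b_0$ for $c_{0,f(0)}$, then $b_1$ for $c_{1,f(1)}$, and so on. At stage $i$ all other coordinates lie in $J_{<i}\cup b_{>i}$, so each swap preserves the type over $M$. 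This uses only $M$-invariance of $\tilde q$ and the half $C\ind_M a$ of strictness.

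\emph{The final contradiction is also wrong as written.} A realisation $a$ of one path satisfies at most $k-1$ instances from each row. Bounded alternation then only says that $\phi(a,c_{i,j})$ is eventually constant in $j$, and that constant value is false. Realising several paths at once is precisely what $k$-inconsistency rules out. So nothing produces a row with cofinitely many solutions.

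The correct ending is that consistent paths together with $k$-inconsistent rows is TP$_2$, which NIP excludes. Alternatively, argue directly as follows.
\begin{itemize}
\item Take $a\models\bigwedge_{i<n}\phi(x,b_i)$ and $S\subseteq\{0,\dots,n-1\}$.
\item For each $i\notin S$ choose $j_i\in[k]$ with $\models\neg\phi(a,c_{i,j_i})$; this exists by $k$-inconsistency of row $i$.
\item Consider the path using $c_{i,0}$ for $i\in S$ and $c_{i,j_i}$ for $i\notin S$. An automorphism over $M$ carrying it onto $(b_i)_{i<n}$ sends $a$ to some $a_S$ with $\models\phi(a_S,b_i)$ iff $i\in S$.
\end{itemize}
Hence $\phi$ has IP, a contradiction.

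(Using Fact~\ref{forking=dividing} in Step 1 is fine within this paper. In \cite{chernikov_kaplan} the dependence runs the other way: the array argument proves the dividing version, and forking $=$ dividing is deduced from it.)
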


\begin{fact}\label{bounded weight}
Suppose that $q(y)$ is a global type strictly non-forking over a small model $M$. Let $(b_\delta:\delta\in |T|^+)$ realize $q^{\otimes |T|^+}|_M$. Then, for any (finite-length) tuple $a$, there is some $\delta\in |T|^+$ with $a\ind_M b_\delta$.
\end{fact}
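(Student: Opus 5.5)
The plan is to argue by contradiction. Assuming the conclusion fails, I will build an inp-pattern, i.e.\ a TP$_2$-array, of depth $|T|^+$ in the variable $x$. By a pigeonhole argument this yields a single formula with TP$_2$, which is impossible since NIP theories are NTP$_2$.

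So suppose that $a\not\ind_M b_\delta$ for every $\delta\in|T|^+$. By Fact \ref{forking=dividing}, for each $\delta$ there is a formula in $\tp(a/M,b_\delta)$ that divides over $M$. Write it as $\phi_\delta(x,b_\delta,m_\delta)$, where $\phi_\delta(x,y,z)$ is an $L$-formula and $m_\delta\in M$. Since $q$ is $M$-invariant, $q^{\otimes I}$ makes sense for any linear order $I$. Realize $q^{\otimes(|T|^+\times\omega)}|_M$, ordered lexicographically, as an array $(b_{\delta,i}:\delta\in|T|^+,i\in\omega)$, and set $c_{\delta,i}=(b_{\delta,i},m_\delta)$. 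I then check the two properties of an inp-pattern.

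First, consider any path $f:|T|^+\to\omega$. The sequence $(b_{\delta,f(\delta)}:\delta\in|T|^+)$ is increasing in the lexicographic order. Subsequences of a realization of $q^{\otimes I}|_M$ realize $q^{\otimes J}|_M$, because each element realizes $q$ over $M$ together with all earlier elements. Hence this path sequence has the same type over $M$ as $(b_\delta:\delta\in|T|^+)$. Since $a$ satisfies $\bigwedge_\delta\phi_\delta(x,b_\delta,m_\delta)$, applying an automorphism over $M$ shows that $\{\phi_\delta(x,b_{\delta,f(\delta)},m_\delta):\delta\in|T|^+\}$ is consistent.

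Second, fix a row $\delta$. The sequence $(b_{\delta,i}:i\in\omega)$ realizes $q^{\otimes\omega}|_M$, and $q$ is a global extension of $\tp(b_\delta/M)$ that is strictly non-forking over $M$. Because $\phi_\delta(x,b_\delta,m_\delta)$ divides, hence forks, over $M$, Fact \ref{kim's lemma} gives that $\bigwedge_i\phi_\delta(x,b_{\delta,i},m_\delta)$ is inconsistent. By compactness some finite subconjunction is inconsistent. The row is an indiscernible sequence over $M$, so the row is $k_\delta$-inconsistent for some $k_\delta\in\omega$.

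To finish, note there are only $|T|$ pairs $(\phi_\delta,k_\delta)$, so there is an infinite, indeed of size $|T|^+$, set $\Delta\subseteq|T|^+$ on which $\phi_\delta=\phi$ and $k_\delta=k$ are constant. Restricting the array to the rows in $\Delta$ gives an array $(c_{\delta,i})$ for the single $L$-formula $\phi(x;yz)$ in which every row is $k$-inconsistent and every path is consistent. This says exactly that $\phi$ has TP$_2$. That contradicts NIP, since NIP implies NTP$_2$: an array with TP$_2$ yields IP for the same formula by taking a single path through it. Alternatively one can quote that the burden of a finite tuple in an NIP theory is bounded by $|T|$.

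The step needing the most care is the path-consistency claim, which relies on subsequences of the lexicographic Morley sequence again being Morley sequences of $q$. This holds because each element realizes $q$ over $M$ together with everything before it in the order. I also need the parameters $m_\delta$ from $M$ to be absorbed correctly into the rows, so that the final array genuinely witnesses TP$_2$ of a parameter-free $L$-formula.
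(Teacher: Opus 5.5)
Your proof is correct. The paper gives no argument for this fact: it is imported from Proposition 5.47 of Simon's book, so the only comparison is with standard arguments.

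What you have written is the usual `burden bounds weight' argument:
\begin{itemize}
\item Kim's lemma makes each row of the lexicographically ordered Morley array $k_\delta$-inconsistent.
\item $M$-invariance of $q$ makes every path have the same type over $M$ as $(b_\delta:\delta\in|T|^+)$, so a conjugate of $a$ realizes it.
\item Pigeonhole on the pairs $(\phi_\delta,k_\delta)$ gives TP$_2$ for a single formula.
\end{itemize}

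Within NIP there is a slightly shorter variant using dp-rank instead of burden. First move your array by an automorphism over $M$ so that $b_{\delta,0}=b_\delta$. The rows are then mutually indiscernible over $M$. Since a finite tuple has dp-rank at most $|T|$ in an NIP theory, some row $(b_{\delta,i}:i\in\omega)$ is indiscernible over $(M,a)$. Every formula of $\tp(a/M,b_\delta)$ is then satisfied by $a$ along that whole row, so by Kim's lemma none of them forks. This gives $a\ind_M b_\delta$ directly, with no contradiction hypothesis, no path-consistency and no pigeonhole. In exchange, your version uses only NTP$_2$ at its final step rather than the dp-rank consequences of NIP.

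One correction to the last step. TP$_2$ of $\phi$ does not yield IP `by taking a single path' unless the rows are $2$-inconsistent. For $k>2$, a realization of a path through $c_{\delta,1}$ may still satisfy $\phi(x,c_{\delta,0})$, so the path's parameters are not shattered.

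A correct short argument is a counting one:
\begin{itemize}
\item Take $n$ rows of length $2(k-1)$, and let $B$ be the set of their $2(k-1)n$ parameters.
\item Each path $\eta$ has a realization $a_\eta$. A given $\phi$-type over $B$ contains $\phi(x,c_{\delta,j})$ for at most $k-1$ values of $j$ in each row.
\item So a given $\phi$-type is the type of $a_\eta$ for at most $(k-1)^n$ of the $(2(k-1))^n$ paths.
\item Hence $|S_\phi(B)|\geqslant 2^n$. This contradicts the polynomial bound in $|B|$ that Sauer--Shelah gives for an NIP formula.
\end{itemize}

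Quoting directly that NIP theories are NTP$_2$ (equivalently, that finite tuples have burden at most $|T|$), as you also suggest, is of course fine.
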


Another fundamental fact about forking in NIP theories is so-called `lowness'; the following is Remark 3.33 in \cite{chernikov_kaplan}:

\begin{fact}\label{lowness} Suppose that $\phi(x,y)$ is a formula. Then there is $N\in\omega$ such that, for any indiscernible sequence $(b_k:k\in\omega)$, if $\bigwedge_{k\in\omega}\phi(x,b_k)$ is inconsistent then $\bigwedge_{k\in[N]}\phi(x,b_k)$ is already inconsistent. In particular, for any model $M$, there is a partial type $\pi(y)$ with parameters in $M$ such that, for any $b\in\mathfrak{C}^y$, we have $b\models\pi$ iff $\phi(x,b)$ divides over $M$.
\end{fact}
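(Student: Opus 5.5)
The plan is to get the uniform bound $N$ from the finiteness of the alternation number of $\phi$, which is where NIP enters. Then I would derive the type-definability of dividing by the standard description of dividing via indiscernible sequences together with compactness.

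\emph{Step 1 (the bound).} Since $T$ is NIP, the formula $\phi(x,y)$ has a finite alternation number. That is, there is $n$ such that for every indiscernible sequence $(b_k)$ and every $a$, the truth value of $\phi(a,b_k)$ changes at most $n$ times along the sequence. I claim $N=n+1$ works; the exact value does not matter.

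\begin{enumerate}
\item Let $(b_k:k\in\omega)$ be indiscernible with $\bigwedge_{k\in\omega}\phi(x,b_k)$ inconsistent. By compactness and indiscernibility there is a least $m$ such that $\{\phi(x,b_k):k<m\}$ is inconsistent. By indiscernibility, \emph{every} $m$ of the formulas $\phi(x,b_k)$ are jointly inconsistent, and every $m-1$ of them are jointly consistent.
\item Look at the $2m-1$ parameters $b_1,\dots,b_{2m-1}$. Choose $a$ realizing $\phi(x,b_{2})\wedge\phi(x,b_4)\wedge\dots\wedge\phi(x,b_{2m-2})$.
\item Suppose $a$ satisfied $\phi(x,b_j)$ for some odd $j$. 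Then $a$ would satisfy $m$ of the formulas, contradicting Step 1.1. So $\phi(a,b_k)$ holds exactly at the even indices, and the truth value alternates $2m-2$ times.
\item Hence $2m-2\leqslant n$, so $m\leqslant n+1=N$, and $\bigwedge_{k\in[N]}\phi(x,b_k)$ is already inconsistent.
\end{enumerate}

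\emph{Step 2 (type-definability of dividing).} Fix a model $M$ and use the standard characterization of dividing: $\phi(x,b)$ divides over $M$ iff there is an $M$-indiscernible sequence $(b_k:k\in\omega)$ with $b_0=b$ and $\bigwedge_k\phi(x,b_k)$ inconsistent. The direction ``$\Leftarrow$'' is immediate, taking the $k$ in $k$-inconsistency large enough. For ``$\Rightarrow$'', apply Ramsey and compactness to a $k$-inconsistent sequence of $M$-conjugates of $b$.

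By Step 1, this is equivalent to the following condition on $b$: there is an $M$-indiscernible sequence $(b_k:k\in\omega)$ with $b_0=b$ and $\neg\exists x\bigwedge_{k\in[N]}\phi(x,b_k)$. Being $M$-indiscernible is expressed by a partial type over $M$ in the variables $(y_k)$, and the inconsistency condition is a single formula. So the set of such $b$ is the projection to $y_0$ of a partial type over $M$ in countably many variables. By saturation of $\mathfrak{C}$ and compactness, this projection is defined by a partial type $\pi(y)$ over $M$: namely, the set of formulas $\exists y_1\dots y_r\,\psi(y_0,\dots,y_r)$ with $\psi$ a finite conjunction from that type.

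\emph{Main obstacle.} The only real content is Step 1.3: using the minimal inconsistency size $m$ to force $a$ to fail $\phi$ at the interleaved positions, which converts a large $m$ into many alternations. Step 2 is a routine compactness argument. The one point to take care over is that the dividing characterization genuinely needs an infinite indiscernible sequence; that is exactly why Step 1 is phrased for indiscernible sequences of order type $\omega$.
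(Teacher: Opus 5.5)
Your proof is correct. The paper gives no argument of its own for this fact and only cites Remark 3.33 of Chernikov--Kaplan; your route is the standard proof of lowness of NIP theories and of type-definability of dividing over models. It has two parts: using the minimal inconsistency size $m$ to force $2m-2$ alternations, so that the finite alternation number of $\phi$ bounds $m$ uniformly, and then projecting onto $y_0$ the partial type over $M$ that expresses $M$-indiscernibility together with $\neg\exists x\bigwedge_{k\in[N]}\phi(x,y_k)$.
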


We need a model-theoretic generalization of the `$(p,q)$-theorem' of Alon-Kleitman and Matou\v{s}ek; it was conjectured in \cite{chernikov_simon_externally_2} and proved in the distal case in \cite{boxall_kestner} and in the general NIP case in \cite{kaplan}.

\begin{fact}\label{definable p,q theorem}
    Suppose that $M$ is a model and that $\phi(x,b)$ does not fork over $M$. Then there is an $L(M)$-formula $\eta(y)\in\tp(b/M)$ such that the partial type $\{\phi(x,b'):b'\models\eta\}$ is consistent (and hence does not fork over $M$).
\end{fact}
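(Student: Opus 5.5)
This is the definable $(p,q)$-theorem of \cite{kaplan} (distal case \cite{boxall_kestner}). I would argue by contradiction, using three ingredients: compactness, the Kim's lemma for NIP (Fact \ref{existence of strictly non-forking coheirs} and Fact \ref{kim's lemma}), and the combinatorial $(p,q)$-theorem of Alon--Kleitman--Matou\v{s}ek for set systems of bounded VC-codensity. The step I am least sure of is the middle one, transferring consistency from Morley sequences to arbitrary realizations.

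\textbf{Setup.} Suppose $\phi(x,b)$ does not fork over $M$, and assume towards a contradiction that for every $\eta(y)\in q(y)=\tp(b/M)$ the family $\{\phi(x,b'):b'\models\eta\}$ is inconsistent. By compactness, for each such $\eta$ there are finitely many $b'_1,\dots,b'_n\models\eta$ with $\bigwedge_j\phi(x,b'_j)$ inconsistent. Using compactness once more, in the form of a partial type over $M$ in variables $(y_j)$, I would get some $n$ and some $b'_1,\dots,b'_n$ realizing all of $q$ with $\bigwedge_j\phi(x,b'_j)$ inconsistent. So the family $\mathcal{F}=\{\phi(\mathfrak{C},b'):b'\models q\}$ is not finitely consistent.

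\textbf{Main step: a $(p,k)$-property for $\mathcal{F}$.} Fix $\tilde q$, a global coheir of $q$ strictly non-forking over $M$, given by Fact \ref{existence of strictly non-forking coheirs}. Since $\phi(x,b)$ does not fork over $M$, Fact \ref{kim's lemma} (contrapositive) shows that $\bigwedge_i\phi(x,b_i)$ is consistent for any Morley sequence $(b_i)\models\tilde q^{\otimes\omega}|_M$. Let $k$ exceed the VC-codensity of $\phi$, and let $p$ be the corresponding parameter from the $(p,q)$-theorem. I would aim to show that among any $p$ members of $\mathcal{F}$, some $k$ have a common point. This is the main obstacle: arbitrary realizations of $q$ need not form a Morley sequence in $\tilde q$.
\begin{itemize}
\item What I would try first is to pass, via Ramsey and compactness, from an arbitrary $p$-tuple of realizations of $q$ to an $M$-indiscernible configuration.
\item I would then use finite satisfiability of $\tilde q$ in $M$, together with the uniform bound from lowness (Fact \ref{lowness}), to compare this configuration with a Morley sequence.
\item Concretely, the lowness bound $N$ says inconsistency is already witnessed by $N$ instances. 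The condition ``$\phi(x,y)$ divides over $M$'' is type-definable over $M$ (again Fact \ref{lowness}), and it fails on $q$. Hence no $k$-subset of an indiscernible sequence of realizations of $q$ can be inconsistent.
\end{itemize}
If this direct route fails, the fallback is Kaplan's approach. One replaces $q$ by a well-chosen $M$-definable neighbourhood using honest definitions, and applies the $(p,q)$-theorem to the dual family $\{\neg\phi(a,\mathfrak{C}):a\in\mathfrak{C}\}$ restricted to $q(\mathfrak{C})$.

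\textbf{Conclusion.} With the $(p,k)$-property in hand, the $(p,q)$-theorem gives finitely many points $a_1,\dots,a_m$ such that every $b'\models q$ satisfies $\bigvee_j\phi(a_j,b')$. By compactness this disjunction is implied by some $\eta\in q$. So $\eta(y)\vdash\bigvee_{j\le m}\phi(a_j,y)$, and in particular $\phi(a_j,b)$ holds for some $j$. To produce a contradiction (or the desired $\eta$ directly), I would run this argument uniformly over all $M$-conjugates of the transversal, tying it back to the Kim's-lemma consistency established in the main step. Finally, consistency of $\{\phi(x,b'):b'\models\eta\}$ implies non-forking over $M$: this family is $\mathrm{Aut}(\mathfrak{C}/M)$-invariant, so it extends to an $M$-invariant global type, and Fact \ref{non-forking iff invariant} gives the parenthetical claim.
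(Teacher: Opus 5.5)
The paper does not prove this Fact; it imports it from \cite{kaplan} (and \cite{boxall_kestner} in the distal case). Your proposal does not supply a proof either, and the first gap is already in your setup. From the negated conclusion, compactness gives, for each $\eta\in q$, some $b'_1,\dots,b'_{n_\eta}\models\eta$ with $\bigwedge_j\phi(x,b'_j)$ inconsistent. To transfer this to realizations of all of $q$ you need $n_\eta$ bounded independently of $\eta$, and nothing gives you that. In fact $\mathcal{F}=\{\phi(x,b'):b'\models q\}$ is consistent under the hypothesis alone. Indeed, $\phi(x,b)$ extends to a global type not forking over $M$; this type is $M$-invariant by Fact \ref{non-forking iff invariant}, so it contains $\phi(x,b')$ for every $b'\equiv_M b$. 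So if your compactness step were valid, the Fact would follow at once, with no $(p,q)$-theorem needed. What the negated conclusion actually gives is only that $n_\eta$ is unbounded as $\eta$ shrinks, with witnesses $b'\models\eta$ that do not realize $q$ (and for which $\phi(x,b')$ may fork). Consequently your main step targets the wrong family. The $(p,k)$-property for $\mathcal{F}$ is trivial, and the obstacle you single out (realizations of $q$ not forming a Morley sequence) is a non-issue, since invariance handles all realizations of $q$ at once. (Incidentally, consistency along a Morley sequence is the converse, not the contrapositive, of Fact \ref{kim's lemma}; it follows directly from non-dividing.)

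The conclusion step has a second, independent gap. The Alon--Kleitman--Matou\v{s}ek theorem for finite subfamilies, plus compactness, only yields points $a_1,\dots,a_m$ in an elementary extension of $\mathfrak{C}$ meeting $\phi(x,b')$ for each $b'\in q(\mathfrak{C})$. This does not give $q(y)\vdash\bigvee_j\phi(a_j,y)$, and such points need not exist even when the Fact holds. For example, take DLO with $M=(\mathbb{Q},<)$, $\phi(x,y)\equiv 0<x<y$, and $b$ a positive infinitesimal. Then $\{0<x<b':b'>0\}$ is consistent, yet for any $a_1,\dots,a_m$ the partial type $q(y)\cup\{\neg\phi(a_j,y):j\leqslant m\}$ is consistent (take $y$ positive, infinitesimal, and below every positive $a_j$).

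More fundamentally, a transversal argument can at best cover $\{\phi(x,b'):b'\models\eta\}$ by finitely many consistent subfamilies; this is essentially the covering by finitely many invariant types from \cite{chernikov_simon_externally_2}. Shrinking $\eta$ so that a single consistent family suffices is the whole content of the definable $(p,q)$-theorem. That is why it stood as a conjecture in \cite{chernikov_simon_externally_2} until \cite{boxall_kestner} and \cite{kaplan}. Your sentence about running the argument `uniformly over all $M$-conjugates of the transversal' does not supply this step. The observation that $\phi(a_j,b)$ holds for some $j$ contradicts nothing, and the fallback is only a pointer to Kaplan's proof, not an argument.
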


Finally, we also recall that $\ind$ in arbitrary theories has `left-transitivity' (see \citep{adler_forking}): for any small sets $A,B,C,D\subset\mathfrak{C}$, if $A\ind_{(B,C)} D$ and $B\ind_C D$ then $(A,B)\ind_C D$.

\subsection{Model-theoretic connected components}\label{connected components section} Let $M$ be a small model. We denote by $G^{00}_M(\mathfrak{C})$ the smallest subgroup of $G(\mathfrak{C})$ type-definable over $M$ and of `bounded index', ie of index smaller than the degree of saturation of $\mathfrak{C}$, and we denote by $G^{\infty}_M(\mathfrak{C})$ the smallest subgroup of $G(\mathfrak{C})$ `invariant over $M$', ie setwise invariant under automorphisms fixing $M$ pointwise, and of bounded index. One can think of $G^{\infty}_M$ (resp. $G^{00}_M$) as the smallest subset (resp. smallest closed subset) of $S_G(M)$ whose set of realizations $G^{\infty}_M(\mathfrak{C})$ (resp. $G^{00}_M(\mathfrak{C})$) is a subgroup of $G(\mathfrak{C})$ of index smaller than the degree of saturation of $\mathfrak{C}$, and one can show that this description is independent of the choice of monster model $\mathfrak{C}$. An explicit description of $G^{\infty}_M(\mathfrak{C})$ is as the subgroup of $G(\mathfrak{C})$ generated by $\{a^{-1}b:a,b\in G(\mathfrak{C}),a\equiv_M b\}$; see for example 8.1.4 in \cite{simon_book} for a reference.

The groups $G^{00}_M(\mathfrak{C})$ and $G^\infty_M(\mathfrak{C})$ are normal subgroups of $G(\mathfrak{C})$, and by the remark above the quotients $G(\mathfrak{C})/G^{00}_M(\mathfrak{C})$ and $G(\mathfrak{C})/G^{\infty}_M(\mathfrak{C})$ are independent of the choice of $\mathfrak{C}$, so we just write $G/G^{00}_M$ and $G/G^{\infty}_M$. One can endow the quotient $G/G^{00}_M$ with the `logic topology', where a subset is closed if and only if its preimage under the projection map $G(\mathfrak{C})\to G/G^{00}_M$ is type-definable over some small set. This makes $G/G^{00}_M$ into a compact Hausdorff topological group. On the other hand, the object $G/G^{\infty}_M$ is more mysterious; see \cite{krupinski_pillay_early} for an analysis of it.

In general, we have $G^{00}_M\supseteq G^{00}_N$ (respectively $G^{\infty}_M\supseteq G^{\infty}_N$) whenever $N\supseteq M$, but it may happen that $G^{00}_M=G^{00}_N$ (respectively $G^{\infty}_M=G^{\infty}_N$) for all $M,N$. In this case one says, somewhat confusingly, that $G^{00}$ (respectively $G^{\infty}$) `exists' and drops the subscript. From \citep{shelah_g00} and \citep{gismatullin} respectively, it is known that $G^{00}$ and $G^{\infty}$ always exist if $T$ is NIP. We will give a new proof of this result here, in Corollary \ref{existence of g000 theorem}.

\subsection{Honest definitions and the retraction map}\label{honest definitions section} An essential tool in NIP theories are the `honest definitions' of \cite{chernikov_simon_externally_1}. We will not use honest definitions explicitly, but rather a closely related feature of NIP theories, proved in \cite{simon_invariant_types}, which is the existence of a continuous retraction $F_M$ from the space of $M$-invariant global types to the space of global types finitely satisfiable in $M$, whose construction we recall here. Let $L_\mathbf{P}=L\cup\{\mathbf{P}\}$ be the language obtained by adding a single new unary predicate to $L$. Given a tuple of variables $x=(x_1,\dots,x_n)$, we write $\mathbf{P}(x)$ to abbreviate the formula $\mathbf{P}(x_1)\wedge\dots\wedge\mathbf{P}(x_n)$. Given models $M\preccurlyeq M'$ of $T$, we expand $M'$ into an $L_\mathbf{P}$-structure by interpreting $\mathbf{P}$ as the subset $M$ of $M'$, and we denote this structure by $(M',M)$.

Now fix models $M\prec^+M'$, where we mean by this that $M\prec M'$ and that $M'$ is $|M|^+$-saturated. Note that, by saturation, an $M$-invariant complete type over $M'$ extends uniquely to a global $M$-invariant type.

\begin{definition}\label{retraction construction}
    Fix an $|M'|^+$-saturated extension $(M',M)\prec^+(N',N)$ of the $L_\mathbf{P}$-structure $(M',M)$. For any global $M$-invariant $L$-type $p(x)$ from the original theory, the partial $L_\mathbf{P}$-type $p|_N(x)\wedge\mathbf{P}(x)$ implies a complete $L$-type over $M'$ that is finitely satisfiable in $M$. Denote the unique global $M$-invariant extension of this type by $F_M(p)$.
\end{definition}

\begin{fact}\label{retraction basic properties}
    $F_M$ is a retraction from $S^{\mathrm{inv}}(\mathfrak{C},M)$ to $S^{\mathrm{fs}}(\mathfrak{C},M)$; in other words, $F_M$ is continuous, and $F_M(p)=p$ whenever $p$ is finitely satisfiable in $M$. In general, we have $F_M(p)|_M=p|_M$, and $F_M$ commutes with $M$-definable functions, ie the pushforward of $F_M(p)$ under an $M$-definable function $f$ is equal to $F_M$ of the pushward of $p$ under $f$.
\end{fact}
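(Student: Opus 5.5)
The plan is to verify each clause directly from Definition \ref{retraction construction}, using NIP in only one place: showing that the definition makes sense. Throughout, fix $(M',M)\prec^+(N',N)$ as in the definition.

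\textbf{Step 1 (well-definedness; the main obstacle).} I first show that for each $b\in M'$, the partial $L_\mathbf{P}$-type $p|_N(x)\wedge\mathbf{P}(x)$ decides $\phi(x,b)$. Suppose instead that both $p|_N\wedge\mathbf{P}(x)\wedge\phi(x,b)$ and $p|_N\wedge\mathbf{P}(x)\wedge\neg\phi(x,b)$ are consistent. I would build $a_0,a_1,\dots\in N$ inductively such that:
\begin{itemize}
\item $a_i\models p|_{(M,a_{<i})}$;
\item $\models\phi(a_i,b)$ exactly when $i$ is even.
\end{itemize}
This is possible because $(M,a_{<i})\subseteq N$, so $p|_{(M,a_{<i})}\subseteq p|_N$. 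Hence the type $p|_{(M,a_{<i})}(x)\wedge\mathbf{P}(x)\wedge\phi^{\pm}(x,b)$ is consistent with the $L_\mathbf{P}$-theory of $(N',N)$ over the small set $(M',a_{<i})$, and saturation lets me realize it in $(N',N)$, landing in $N$. The sequence $(a_i)$ realizes a reversed Morley sequence of the $M$-invariant type $p$ over $M$, so it is $M$-indiscernible. The formula $\phi(a_i,b)$ alternates along it, contradicting NIP.

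Thus we get a complete $L$-type over $M'$. It is finitely satisfiable in $M$: if $\phi(x,b)$ has no solution in $M$, then $(M',M)\models\forall x(\mathbf{P}(x)\to\neg\phi(x,b))$. This transfers to $(N',N)$, so $\phi(x,b)$ cannot be in the type. By $|M|^+$-saturation of $M'$, this type has a unique global $M$-invariant extension, namely its unique global coheir over $M$, and this extension is finitely satisfiable in $M$.

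\textbf{Step 2 (the easy identities).} For $\theta(x,m)$ with $m\in M$, we have $\theta\in p|_N$ iff $\theta\in F_M(p)$, since $p|_N\wedge\mathbf{P}$ contains $\theta$ outright; hence $F_M(p)|_M=p|_M$. If $p$ is finitely satisfiable in $M$ and $\phi(x,b)\in p$ with $b\in M'$, then each finite conjunction $\psi(x,c)\wedge\phi(x,b)$ with $\psi(x,c)\in p|_N$ is satisfied in $M\subseteq N$. So $p|_N\wedge\mathbf{P}\wedge\phi(x,b)$ is consistent, giving $F_M(p)|_{M'}=p|_{M'}$ and hence $F_M(p)=p$ by uniqueness of $M$-invariant extensions. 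For an $M$-definable function $f$, note that $f$ maps $N$ into $N$ and commutes with the relevant restrictions, so $f_*(p|_N)\wedge\mathbf{P}$ and $p|_N\wedge\mathbf{P}$ correspond under $f$. Comparing the resulting types over $M'$ then gives $f_*F_M(p)=F_M(f_*p)$.

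\textbf{Step 3 (continuity).} By $M$-invariance and saturation of $M'$, it suffices to show that $U_\phi=\{p:\phi(x,b)\in F_M(p)\}$ is closed for each $b\in M'$. Indeed, for arbitrary $c$ we may pick $b\in M'$ with $b\equiv_M c$, and then $\phi(x,c)\in F_M(p)$ iff $\phi(x,b)\in F_M(p)$. Applying the same argument to $\neg\phi$ shows the complement is closed too, so $U_\phi$ is clopen.

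Closedness holds because $U_\phi$ is exactly the set of $p$ such that, for every $\psi\in p|_N$, the formula $\psi\wedge\phi(x,b)$ has a solution in $N$. Its complement is $\bigcup_\psi\{p:\psi\in p\}$, with $\psi$ ranging over $L(N)$-formulas such that $\psi\wedge\phi(x,b)$ has no solution in $N$, and this union is open.
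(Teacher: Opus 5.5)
Your proof is correct and follows the standard route for this fact, which the paper does not prove but takes from \cite{simon_invariant_types}: well-definedness comes from the NIP alternation argument along a Morley sequence of $p$ realized inside $\mathbf{P}(N')=N$, and the remaining properties are then checked directly from the definition, as you do. The points you leave implicit are harmless: $p|_N(x)\wedge\mathbf{P}(x)$ is consistent because every formula of $p|_N$ has a solution in the model $N$, and in the clause on $M$-definable functions the single inclusion you obtain suffices (realizations of $p|_N\wedge\mathbf{P}$ are sent by $f$ to realizations of $(f_*p)|_N\wedge\mathbf{P}$), since both sides are complete types over $M'$ and both global types are $M$-invariant.
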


\subsection{Topological dynamics}\label{topological dynamics section}
Here we recall the basic facts from topological dynamics that we will need. Standard references are \cite{auslander_book} and \cite{glasner_book}; however, a better reference for our purposes is Appendix A of \cite{tomasz_thesis}, which includes all of the facts that we need and uses the same language that we will use throughout this paper. In particular all of the facts cited here can be found there, and we use almost identical notation as from there; the only difference is that, instead of the standard notation $\mathcal{M},\mathcal{N}$ to denote minimal ideals, we use $\mathcal{I},\mathcal{J}$, since $M$ and $N$ are already notationally overloaded letters in our context.

Let $G$ be a discrete (abstract) group. A $G$-flow is a nonempty compact Hausdorff space $X$ together with an action of $G$ on $X$ by homeomorphisms. When the action is clear we denote the flow by $(G,X)$; a $G$-subflow of $X$ is then a closed non-empty subset of $X$ closed under the action of $G$. 

We call $X$ `point-transitive' if there is some $p\in X$ such that the orbit $Gp$ is dense in $X$. We call an open subset $U\subseteq X$ `generic' if finitely many translates of $U$ cover $X$, and we call $U$ `weakly generic' if there is a non-generic set $V$ such that $U\cup V$ is generic. We call a point $p\in X$ `weakly generic' if every open neighborhood is weakly generic. We also call an element $p\in X$ `almost periodic' if it is contained in a minimal subflow of $X$. Now the following is from \cite{newelski_1}:
\begin{fact}\label{almost periodic implies weak generic}
Let $X$ be a point-transitive $G$-flow. Then every almost periodic point of $X$ is weakly generic.
\end{fact}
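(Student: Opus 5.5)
The plan is to prove the statement directly, by building an explicit non-generic open set $V$ from the minimal subflow that contains the given point.

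Let $p\in X$ be almost periodic, and let $Y\subseteq X$ be a minimal subflow with $p\in Y$. Fix an open neighbourhood $U$ of $p$. We need an open set $V$ that is not generic such that $U\cup V$ is generic. The natural choice is $V=X\setminus Y$. This set is open because $Y$ is closed, and it is $G$-invariant because $Y$ is.

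\textbf{Step 1: finitely many translates of $U$ cover $Y$.} By minimality, every $y\in Y$ has a dense orbit in $Y$. Since $p\in U\cap Y$, the set $U\cap Y$ is a nonempty relatively open subset of $Y$, so the orbit of $y$ meets it. That is, there is $g\in G$ with $gy\in U$, i.e.\ $y\in g^{-1}U$. Hence the translates $\{gU:g\in G\}$ cover $Y$. Since $Y$ is compact, finitely many of them do, say $Y\subseteq g_1U\cup\dots\cup g_nU$.

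\textbf{Step 2: $V$ is not generic.} Since $V$ is $G$-invariant, every translate $hV$ equals $V$. So any finite union of translates of $V$ is just $V$, which misses the nonempty set $Y$. Hence no finite union of translates of $V$ covers $X$. (If $Y=X$ then $V=\varnothing$. That is still non-generic, and Step 1 already shows $U$ itself is generic.)

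\textbf{Step 3: $U\cup V$ is generic.} Using $g_iV=V$ for each $i$, we get
$$\bigcup_{i\in[n]} g_i(U\cup V)=\Big(\bigcup_{i\in[n]}g_iU\Big)\cup V\supseteq Y\cup (X\setminus Y)=X.$$

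So every open neighbourhood of $p$ is weakly generic, and therefore $p$ is weakly generic.

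There is no real obstacle here. The only substantive point is Step 1, where compactness and minimality give the finite cover. The argument in fact never uses point-transitivity; it is the hypothesis under which Newelski states the result.
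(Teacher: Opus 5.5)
Your proof is correct and complete under the paper's definitions, where genericity and weak genericity are defined for open sets, so the witness $V$ only needs to be open and non-generic. The paper gives no proof of this fact; it imports it from \cite{newelski_1}. Your argument is the standard direct one: minimality plus compactness gives $Y\subseteq g_1U\cup\dots\cup g_nU$, and the $G$-invariant open set $X\setminus Y$ serves as the non-generic witness. You are also right that point-transitivity plays no role here. It is only part of the ambient setup under which the fact is quoted.

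One small point concerns how the paper actually uses the fact. In Proposition~\ref{piecewise f-genericity coincides with weak genericity} the fact is applied to the definable set $g\phi(x,b)$ in the Stone space $S_G(\mathfrak{C})$. There weak genericity in Newelski's sense needs a \emph{definable}, i.e.\ clopen, witness, whereas $X\setminus Y$ is merely open. Your construction adapts immediately. When $U$ is clopen, the set $W=X\setminus\bigcup_{i\in[n]}g_iU$ is clopen and disjoint from $Y$, so all its translates miss $Y$ and $W$ is non-generic. Moreover, the translates of $U\cup W$ by $g_1,\dots,g_n$ and the identity cover $X$. So your argument also delivers the clopen version that the application requires.
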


The `Ellis semigroup' of the flow $(G,X)$, which we denote by $E(G,X)$ or by $E(X)$ when $G$ is clear, is the closure of $\{\pi_g:g\in G\}$ in the space $X^X$ of functions from $X$ to itself, where $\pi_g$ is the map $x\mapsto gx$ and $X^X$ is equipped with the Tychonoff topology. For the rest of this section denote $E(G,X)$ by $E$. $E$ is closed under composition of functions, and it is a `left topological semigroup' under composition, ie for any $f_0\in E$ the map $E\to E$ given by $f\mapsto ff_0$ is continuous. For $g\in G$, the map $f\mapsto\pi_g f$ is a homeomorphism of $E$, and so $E$ is naturally a $G$-flow by taking $g$ to act as $f\mapsto\pi_g f$.

By a left ideal of $E$ we mean a left ideal for the semigroup structure, ie a nonempty subset $\mathcal{I}\subseteq E$ such that $fg\in\mathcal{I}$ for all $g\in\mathcal{I}$ and $f\in E$. The minimal left ideals of $E$ correspond exactly to the minimal $G$-subflows of $E$, and if $\mathcal{I}$ is a minimal left ideal of $E$ then $\mathcal{I}=Ef=\overline{Gf}$ for all $f\in\mathcal{I}$. A fundamental theorem of Ellis is then the following:
\begin{fact}\label{ellis semigroup lemma}\begin{enumerate}
\item $E$ has minimal ideals.
\item Every minimal ideal $\mathcal{I}$ of $E$ contains an idempotent $u$, and the set $u\mathcal{I}$ is a group under the semigroup operation inherited from $E$, with identity element $u$.
\item Every minimal ideal $\mathcal{I}$ is the disjoint union of the $u\mathcal{I}$, as $u$ ranges over the idempotents in $\mathcal{I}$, and we have $qu=q$ for every $q\in\mathcal{I}$ and every idempotent $u\in\mathcal{I}$.
\item For any idempotents $u,v\in\mathcal{I}$, the map $u\mathcal{I}\to v\mathcal{I}$ defined by $x\mapsto vx$ is a group isomorphism.
\item The isomorphism type of the group $u\mathcal{I}$ is independent of the choice of $\mathcal{I}$ and (by the point above) by the choice of $u$.
\end{enumerate}\end{fact}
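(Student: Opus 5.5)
The plan is the classical Ellis argument, done entirely inside the compact left topological semigroup $E$. The one structural input is that for each $f\in E$ the map $\rho_f\colon E\to E$, $h\mapsto hf$, is continuous. Hence $Ef=\rho_f(E)$ is compact, so it is a closed left ideal.

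\textbf{Step 1 (minimal ideals and idempotents).} For (1), I would apply Zorn's lemma to closed left ideals ordered by reverse inclusion; compactness keeps chains nonempty. Let $\mathcal{I}$ be a minimal one. For $f\in\mathcal{I}$, the set $Ef\subseteq\mathcal{I}$ is a closed left ideal, so $Ef=\mathcal{I}$. Hence $\mathcal{I}$ is minimal among all left ideals, not only the closed ones.

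For the idempotent in (2), I would prove the Ellis--Numakura lemma: every nonempty closed subsemigroup $S\subseteq E$ contains an idempotent. Take $S$ minimal by Zorn and pick $u\in S$. Then $Su$ is a closed subsemigroup of $S$, so $Su=S$. Next, $\{s\in S: su=u\}$ is a nonempty closed subsemigroup of $S$, so it equals $S$; in particular $u^2=u$. Applying the lemma to $S=\mathcal{I}$ gives idempotents in $\mathcal{I}$.

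\textbf{Step 2 (algebra in $\mathcal{I}$).} Fix an idempotent $u\in\mathcal{I}$. Minimality gives $\mathcal{I}u=\mathcal{I}$, so every $q\in\mathcal{I}$ has the form $ru$. Therefore $qu=ru^2=ru=q$, which is the identity in (3).

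On $u\mathcal{I}$ the element $u$ is a left identity, since $u(up)=up$. Given $up\in u\mathcal{I}$, minimality gives $\mathcal{I}up=\mathcal{I}\ni u$, so there is $q$ with $q\,up=u$, and then $(uq)(up)=u$. A semigroup with a left identity and left inverses is a group, which completes (2).

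For the decomposition in (3), fix $q\in\mathcal{I}$. The set $\{r\in\mathcal{I}:rq=q\}$ is nonempty because $\mathcal{I}q=\mathcal{I}\ni q$, and it is a closed subsemigroup. By Ellis--Numakura it contains an idempotent $v$, so $q=vq\in v\mathcal{I}$.

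For disjointness, suppose $q\in u\mathcal{I}\cap v\mathcal{I}$. Multiplying $uq=q$ on the right by the inverse of $q$ in the group $v\mathcal{I}$ gives $uv=v$. By the right-identity property, $uv=u$. Hence $u=v$.

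\textbf{Step 3 (independence).} For (4), the right-identity property gives $xv=x$, so $vx\cdot vy=vxvy=vxy$, and $x\mapsto vx$ is a homomorphism. It is inverted by $y\mapsto uy$, because $uvx=ux=x$.

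For (5), let $\mathcal{J}$ be another minimal ideal. Then $\mathcal{J}u\subseteq\mathcal{I}$ is a left ideal, so $\mathcal{J}u=\mathcal{I}$. Symmetrically, $\mathcal{I}w=\mathcal{J}$ for any idempotent $w\in\mathcal{J}$.

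I would look for idempotents $u'\in\mathcal{I}$ and $w'\in\mathcal{J}$ with $u'w'=u'$ and $w'u'=w'$. The construction I would try first is to take $u'$ an idempotent in $\mathcal{I}\cap\{r:rw=r\}$ and $w'=wu'\cdots$, adjusting until both identities hold. I expect this matching of idempotents to be the main obstacle: it is the only place where one must work across two different ideals rather than within one.

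Once such a pair is found, $x\mapsto w'x$ from $u'\mathcal{I}$ to $w'\mathcal{J}$ is a homomorphism by the same calculation as in (4). It is inverted by $y\mapsto u'y$, since $u'w'x=u'x=x$ on one side and $w'u'y=w'y=y$ on the other. Composing with the isomorphisms from (4) then gives (5).
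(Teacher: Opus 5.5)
Your proofs of (1)--(4) follow the classical Ellis argument and are correct. The paper does not prove this fact; it cites it from Appendix A of Rzepecki's thesis. One small addition: before applying Ellis--Numakura to an \emph{arbitrary} minimal left ideal $\mathcal{I}$, note that $\mathcal{I}=Ef$ for any $f\in\mathcal{I}$, so $\mathcal{I}$ is closed.

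The genuine gap is in (5), and it is not just an unfinished step: the idempotents you are looking for cannot exist when $\mathcal{I}\neq\mathcal{J}$. Since $\mathcal{J}$ is a left ideal, $u'w'\in E\mathcal{J}\subseteq\mathcal{J}$, so $u'w'=u'$ would put $u'$ in $\mathcal{I}\cap\mathcal{J}$. Similarly $w'u'\in\mathcal{I}$, so $w'u'=w'$ would put $w'$ there. But distinct minimal left ideals are disjoint, because a nonempty intersection would be a smaller left ideal. For the same reason $\mathcal{I}\cap\{r:rw=r\}=\varnothing$. Your map $x\mapsto w'x$ sends $u'\mathcal{I}$ into $E\mathcal{I}\subseteq\mathcal{I}$, which never meets $w'\mathcal{J}$. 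The underlying point is that a product lies in the minimal left ideal of its \emph{right} factor. So to move between $\mathcal{I}$ and $\mathcal{J}$ you must multiply on the right, and the idempotents must be matched the other way round: $uv=v$ and $vu=u$.

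To construct such a $v$:
\begin{enumerate}
\item Use $\mathcal{J}u=\mathcal{I}$, which you already proved, to choose $r\in\mathcal{J}$ with $ru=u$.
\item Let $v_0$ be the idempotent of $\mathcal{J}$ with $r\in v_0\mathcal{J}$, given by (3). Then $v_0u=v_0ru=ru=u$.
\item Set $v=uv_0\in\mathcal{J}$. Then $v^2=u(v_0u)v_0=v$, $uv=v$ and $vu=u$.
\end{enumerate}
Now $x\mapsto xv$ is an isomorphism $u\mathcal{I}\to v\mathcal{J}$ with inverse $y\mapsto yu$:
\begin{itemize}
\item For $x,y\in u\mathcal{I}$ you have $vy=vuy=uy=y$. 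Hence $v(xv)=xv$, so $xv\in v\mathcal{J}$, and $(xv)(yv)=x(vy)v=xyv$. Also $xvu=xu=x$.
\item For $y\in v\mathcal{J}$ you have $uy=uvy=vy=y$. Hence $u(yu)=yu$, so $yu\in u\mathcal{I}$, and $yuv=yv=y$.
\end{itemize}
Composing with the isomorphisms from (4) then gives (5).
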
 We call these groups $u\mathcal{I}$ the `Ellis groups' or `ideal groups' of the flow $(G,X)$; we caution that, although this is the standard terminology used in model theory, it is not the standard terminology used in topological dynamics.

The Ellis groups can be endowed with a topology, called the $\tau$-topology, that make them into \textit{quasi}compact $T_1$ topological groups. This is defined as followed. Given $f\in E$ and $B\subseteq E$, we denote by $f\circ B$ the set of all limits of nets $(f_ib_i:i\in I)$, where $b_i\in B$ and $(f_i:i\in I)$ is a net of elements of form $f_i=\pi_{g_i},g_i\in G$ that converges to $f$. There are many basic identities of $\circ$, which can be found in Fact A.25 of \cite{tomasz_thesis}; the only one we will need to quote explicitly is the following:
\begin{fact}\label{composition for circ operation}
    For any $f_1,f_2\in E$ and $B\subseteq E$, both $f_1\circ(f_2B)$ and $f_1(f_2\circ B)$ are subsets of $(f_1f_2)\circ B$.
\end{fact}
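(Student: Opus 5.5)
The statement is a diagonal-net argument, and I would prove both inclusions the same way. Everything reduces to three continuity properties of $E\subseteq X^X$:
\begin{itemize}
\item right multiplication $f\mapsto ff_0$ is continuous;
\item left multiplication $f\mapsto \pi_g f$ by a group element is continuous, because $\pi_g$ is a homeomorphism of $X$ and the topology is pointwise;
\item a basic neighbourhood of a point of $X^X$ is determined by finitely many coordinates $x\in X$.
\end{itemize}
To show a point lies in $(f_1f_2)\circ B$, I would index a new net by triples $(U,V,\ast)$, where $U$ is a basic neighbourhood of $f_1f_2$, $V$ is a basic neighbourhood of the target point, and $\ast$ is an index of the original net (needed to keep it cofinal). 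For each index I produce some $h\in G$ and $c\in B$ with $\pi_h\in U$ and $\pi_h c\in V$. The resulting net then converges as required.

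For $f_1\circ(f_2B)\subseteq(f_1f_2)\circ B$, let $z\in f_1\circ(f_2B)$. Then $z=\lim_i \pi_{g_i}f_2b_i$, with $b_i\in B$ and $\pi_{g_i}\to f_1$. By continuity of right multiplication by $f_2$, we have $\pi_{g_i}f_2\to f_1f_2$. Write $f_2=\lim_l\pi_{k_l}$. Left multiplication by $\pi_{g_i}$ gives $\pi_{g_ik_l}\to\pi_{g_i}f_2$, and right multiplication by $b_i$ gives $\pi_{g_ik_l}b_i\to\pi_{g_i}f_2b_i$, both as $l\to\infty$. Given $U$ and $V$, I first choose $i$ large, beyond the prescribed index, so that $\pi_{g_i}f_2$ lies in a smaller neighbourhood $U'$ with closure inside $U$, and $\pi_{g_i}f_2b_i$ lies in a corresponding $V'$. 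Then I choose $l$ so that $\pi_{g_ik_l}\in U$ and $\pi_{g_ik_l}b_i\in V$. Taking $h=g_ik_l$ and $c=b_i$ finishes this inclusion.

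For $f_1(f_2\circ B)\subseteq (f_1f_2)\circ B$, let $y\in f_2\circ B$, so $y=\lim_i\pi_{k_i}b_i$ with $\pi_{k_i}\to f_2$, and write $f_1=\lim_j\pi_{g_j}$. The one subtle point is that $f_1$ need not be continuous on $X$, so I cannot pass $f_1$ through the limit defining $y$. I would instead approximate $f_1$ first, on the finitely many relevant coordinates:
\begin{itemize}
\item $f_1y(x)=\lim_j\pi_{g_j}(y(x))$ for each $x$, so $\pi_{g_j}y\to f_1y$;
\item $\pi_{g_j}f_2\to f_1f_2$, by continuity of right multiplication by $f_2$.
\end{itemize}
So I first pick $j$ with $\pi_{g_j}y$ close to $f_1y$ and $\pi_{g_j}f_2$ close to $f_1f_2$, on the finitely many coordinates that determine $V$ and $U$. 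Then, since $\pi_{g_j}$ is a homeomorphism of $X$, we have $\pi_{g_jk_i}b_i\to\pi_{g_j}y$ and $\pi_{g_jk_i}\to\pi_{g_j}f_2$. Picking $i$ large and setting $h=g_jk_i$, $c=b_i$ finishes this inclusion.

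The main obstacle is keeping the order of approximation right. The non-continuous map, left multiplication by $f_1$, must be approximated first. This is legitimate only after restricting to finitely many coordinates, where it is just pointwise convergence. The remaining limits are then all taken through maps that are genuinely continuous.
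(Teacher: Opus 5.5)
Your proof is correct. In both inclusions you approximate the outer element by some $\pi_g$ before taking the inner limit, so you only ever use continuity of right multiplication and of left multiplication by $\pi_g$. You also correctly identify that $f_1$ itself cannot be pushed through the limit defining $y$.

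The paper does not prove this statement; it quotes it as Fact A.25 of Rzepecki's thesis, and your diagonal-net argument is the standard proof. The auxiliary $U'$ with $\overline{U'}\subseteq U$ and the extra index $\ast$ are harmless but unnecessary, since indexing by pairs $(U,V)$ already gives the required net.
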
 Given a minimal ideal $\mathcal{I}$ of $E$, an idempotent $u\in\mathcal{I}$, and a subset $A\subseteq u\mathcal{I}$, we define $\mathrm{cl}_\tau(A)=(u\mathcal{I})\cap (u\circ A)$; $\mathrm{cl}_\tau$ defines a closure operator on $u\mathcal{I}$, and the remarkable result of Ellis is then that taking the sets of form $\mathrm{cl}_\tau(A)$ to be the closed sets defines a quasicompact $T_1$ topology on the group $u\mathcal{I}$, called the $\tau$-topology, with respect to which multiplication is continuous in each variable; we thus say that $u\mathcal{I}$ is a quasicompact $T_1$ `semitopological' group. Moreover, the group isomorphisms from point (4) above are isomorphisms with respect to the $\tau$-topology, and point (5) above holds for the isomorphism type of the semitopological group $u\mathcal{I}$. When we talk about `the' Ellis group of the flow $(G,X)$ we mean the isomorphism type of the semitopological group $u\mathcal{I}$. An important fact (Fact A.33 in \cite{tomasz_thesis}) is that the $\tau$-topology is coarser than the subspace topology, ie every $\tau$-open set $U\subseteq u\mathcal{I}$ is of form $O\cap u\mathcal{I}$ for some open set $O\subseteq E$.

The $\tau$-topology is only $T_1$, and it may or may not be Hausdorff. If it is Hausdorff, then by `Ellis' joint continuity theorem' (Fact A.6 in \cite{tomasz_thesis}), inversion is continuous and multiplication is jointly continuous, and so the Ellis group is actually a compact Hausdorff topological group. We then can apply Lemma 3.1 of \cite{tomasz_krupinski_pillay} to get the following:

\begin{fact}\label{continuity of ux} If the $\tau$-topology is Hausdorff, then for any minimal ideal $\mathcal{I}\subseteq E$ and any idempotent $u\in \mathcal{I}$, the map $\overline{u\mathcal{I}}\to u\mathcal{I}$ given by $x\mapsto ux$ is a continuous map from the subspace topology to the $\tau$-topology, where $\overline{u\mathcal{I}}$ is the closure of $u\mathcal{I}$ computed in $E$.
\end{fact}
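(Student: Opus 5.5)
The plan is to prove continuity at an arbitrary point $x\in\overline{u\mathcal{I}}$ by contradiction, via nets. The Hausdorff hypothesis enters only through Ellis' joint continuity theorem: it makes $u\mathcal{I}$ a compact Hausdorff group in the $\tau$-topology, hence a regular space.

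\emph{Step 1 (the elementary case $A\subseteq u\mathcal{I}$).} This step needs no Hausdorffness. The claim is that if $A\subseteq u\mathcal{I}$ and $x$ lies in the $E$-closure $\overline{A}$, then $ux\in\mathrm{cl}_\tau(A)=u\mathcal{I}\cap(u\circ A)$.
\begin{itemize}
\item Since $u\in E=\overline{\{\pi_g\}}$, pick a net $(\pi_{g_j})$ converging to $u$, and a net $(a_k)$ in $A$ converging to $x$.
\item Right multiplication is continuous, so $\pi_{g_j}x\to ux$.
\item Each $\pi_{g_j}$ is continuous on $E$, so $\pi_{g_j}a_k\to\pi_{g_j}x$ as $k$ varies.
\item A standard diagonal (product-directed-set) argument then produces a net $\pi_{g_{j'}}a_{k'}$ converging to $ux$, with $\pi_{g_{j'}}\to u$ and $a_{k'}\in A$.
\end{itemize}
By the definition of $\circ$ this gives $ux\in u\circ A$. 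Also $ux\in u\mathcal{I}$, because $x\in\overline{u\mathcal{I}}\subseteq\mathcal{I}$ ($\mathcal{I}$ is closed) and so $ux\in u\mathcal{I}$. In particular, taking $A=\{a\}$ for a single $a\in u\mathcal{I}$ recovers $ua=a$, which is consistent.

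\emph{Step 2 (reduction by regularity).} Suppose the map $x\mapsto ux$ is not continuous at some $x\in\overline{u\mathcal{I}}$. Then there are a $\tau$-open $V\ni ux$ and a net $x_i\to x$ in $\overline{u\mathcal{I}}$ with $ux_i\notin V$ for all $i$. By $\tau$-regularity, choose a $\tau$-open $W\ni ux$ with $\mathrm{cl}_\tau(W)\subseteq V$. Set $F=u\mathcal{I}\setminus W$, which is $\tau$-closed, so $F=\mathrm{cl}_\tau(F)$; note $ux_i\in F$ for every $i$. The goal is to show $ux\in F$, contradicting $ux\in W$. If each $x_i$ lay in $u\mathcal{I}$, then $x_i=ux_i\in F$ and Step 1 applied to $A=F$ would finish immediately.

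\emph{Step 3 (general $x_i\in\overline{u\mathcal{I}}$, the main obstacle).} Here $x_i$ is only a limit of elements of $u\mathcal{I}$, so we need to transfer ``$ux_i\notin W$'' to nearby points of $u\mathcal{I}$. This is where $\mathrm{cl}_\tau(W)\subseteq V$ is used, applied one level down.
\begin{itemize}
\item Write $x_i=\lim_k a_{ik}$ with $a_{ik}\in u\mathcal{I}$.
\item Suppose, for some $i$, cofinitely many $a_{ik}$ lay in $W$ (in the sense of the net, i.e.\ eventually in $k$). Then Step 1 gives $ux_i\in\mathrm{cl}_\tau(W)\subseteq V$, contradicting $ux_i\notin V$.
\item Hence for every $i$ we may pass to a subnet with $a_{ik}\in F$ for all $k$.
\item Diagonalizing again yields a single net from $F$ converging to $x$ in $E$, so $x\in\overline{F}$.
\item Step 1 now gives $ux\in\mathrm{cl}_\tau(F)=F$, the desired contradiction.
\end{itemize}

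The delicate points are the two diagonalizations, and checking that Step 1 is applied only to sets contained in $u\mathcal{I}$. The first diagonalization, in Step 1, is justified by $u\circ B$ being closed (Fact A.25 of \cite{tomasz_thesis}). The second, in Step 3, is a routine double-net argument.
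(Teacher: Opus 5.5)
Your proof is correct, but it takes a different route from the paper. The paper gives no argument of its own: it notes that Ellis' joint continuity theorem makes $u\mathcal{I}$ a compact Hausdorff group, and then imports the conclusion from Lemma~3.1 of \cite{tomasz_krupinski_pillay}. You instead derive the statement from two elementary ingredients:
\begin{itemize}
\item the inclusion $u\overline{A}\subseteq\mathrm{cl}_\tau(A)$ for $A\subseteq u\mathcal{I}$ (your Step~1, which needs no Hausdorffness);
\item regularity of the $\tau$-topology.
\end{itemize}

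What your route buys is a self-contained argument that isolates exactly what is used. In particular, contrary to your opening sentence, Ellis' joint continuity theorem plays no role: $u\mathcal{I}$ is already $\tau$-quasicompact, and a compact Hausdorff space is regular. The same argument, run in the quotient $u\mathcal{I}/H(u\mathcal{I})$ (which is always compact Hausdorff), even gives continuity of $x\mapsto uxH(u\mathcal{I})$ with no Hausdorff hypothesis at all. The citation route buys only brevity.

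Two small points.
\begin{itemize}
\item The justification you give at the end for the diagonalization in Step~1 is not quite right. Closedness of $u\circ A$ alone does not help, since $\pi_{g_j}x$ lies in $\overline{\pi_{g_j}A}$, not in $u\circ A$. What works is the neighborhood-indexed choice of $(j,k)$ from the body of your Step~1. Equivalently, write $u\circ A=\bigcap_U\overline{\{\pi_g a:\pi_g\in U,\ a\in A\}}$ with $U$ ranging over neighborhoods of $u$, and note that each set in this intersection is closed.
\item Steps~2--3 collapse into a direct argument. Take $\mathrm{cl}_\tau(W)\subseteq V$ and $F=u\mathcal{I}\setminus W$. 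Then $\overline{u\mathcal{I}}=\overline{W}\cup\overline{F}$, with $u\overline{W}\subseteq V$ and $u\overline{F}\subseteq F$. Since $ux\in W$, the set $\overline{u\mathcal{I}}\setminus\overline{F}$ is an open neighborhood of $x$ that is mapped into $V$. No contradiction or second diagonalization is needed.
\end{itemize}
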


All of the above is machinery from abstract topological dynamics. Given a theory $T$, a definable group $G$ of $T$, and a model $M$ of $T$, there is a natural continuous action of the $M$-points $G(M)$ (thought of as a discrete group) on the compact space of types $S_G(M)$, where $gp$ is the pushforward of $p$ under the $M$-definable map $G\to G$ given by $x\mapsto gx$. So $(G(M),S_G(M))$ is a flow; note that it is point-transitive, since the orbit of the realized type $\tp(1/M)$ is dense. The idea of studying the definable group $G$ by studying the flow $(G(M),S_G(M))$ from the perspective of topological dynamics comes from \cite{newelski_1}. The Ellis semigroup of $(G(M),S_G(M))$ is a bit complicated to describe, and a detailed description can be found in \cite{newelski_1}. Instead of $(G(M),S_G(M))$, in many applications it winds up being more natural to work with a different model-theoretic $G(M)$-flow, which is the space $S_G^{\mathrm{fs}}(\mathfrak{C},M)$ of global types concentrated on $G$ and finitely satisfiable in $M$; this flow is again point-transitive, again since the orbit of the realized type $\tp(1/\mathfrak{C})$ is dense. The Ellis semigroup of the $G(M)$-flow $S_G^{\mathrm{fs}}(\mathfrak{C},M)$ is easily described, and as a $G(M)$-flow it is naturally isomorphic to $S_G^{\mathrm{fs}}(\mathfrak{C},M)$ itself. (For this reason it is the more natural flow for us to work with.) The semigroup operation is defined by taking $pq$ to be the pushforward of the Morley product $p_x\otimes q_y$ under the multiplication map; more explicitly, for any model $N\succcurlyeq M$, we have $(pq)|_N=\tp(ab/N)$ where $b\models q|_N$ and $a\models p|_{(N,b)}$.

Given a structure $M$ and a definable group $G$ in $M$, by the `Ellis semigroup of $G(M)$' and the `Ellis group of $G(M)$' we mean the Ellis semigroup and Ellis group of the $G(M)$-flow $S_G^{\mathrm{fs}}(\mathfrak{C},M)$. (\uline{As a disclaimer, this is not standard terminology.}) By the remarks above, we will throughout this paper identify the Ellis semigroup of $G(M)$ with $S_G^{\mathrm{fs}}(\mathfrak{C},M)$, and the `Ellis groups of $G(M)$' will be the groups $u\mathcal{I}$ equipped with the $\tau$-topology, for $\mathcal{I}$ a minimal ideal of $S_G^{\mathrm{fs}}(\mathfrak{C},M)$ and $u\in\mathcal{I}$ an idempotent. We caution that the Ellis group of the flow $(G(M),S_G(M))$ may in general be a proper quotient of what we are calling the Ellis group of $G(M)$; thank you to Krzysztof Krupi\'nski for pointing this out to me.

We use the following theorem, which was proved for countable $M$ in \cite{chernikov_gannon_krupinski} and then for arbitrary $M$ in \cite{basso_zucker}; see the introduction for more discussion of this.

\begin{fact}\label{tau-topology is hausdorff} If $T$ is NIP, then the $\tau$-topology on the Ellis group of $G(M)$ is Hausdorff, and so in particular the Ellis group is a compact Hausdorff topological group.
\end{fact}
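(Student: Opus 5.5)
Since this is quoted as a black box, the plan is to reconstruct the strategy of \cite{chernikov_gannon_krupinski} and \cite{basso_zucker}. The first step is to reduce the statement to pure topological dynamics. Let $\mathcal{I}\subseteq S^{\mathrm{fs}}_G(\mathfrak{C},M)$ be a minimal ideal and $u\in\mathcal{I}$ an idempotent. I would check that the $G(M)$-flow $S^{\mathrm{fs}}_G(\mathfrak{C},M)$ is tame when $T$ is NIP. This is the observation from \cite{chernikov_simon}. For a clopen set $[\phi(x,b)]$ and $g_i\in G(M)$, the functions $p\mapsto \mathbf{1}[g_i^{-1}\phi(x,b)\in p]$ cannot form an independent sequence. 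If they did, some $\phi(g_i x,b)$, and hence $\phi(x\cdot y,z)$, would have IP, which NIP forbids. Tameness then follows from Rosenthal's dichotomy, since continuous functions are uniform limits of combinations of clopen indicators. Tameness passes to the subflow $\mathcal{I}$, and the Ellis group of $\mathcal{I}$ as a minimal flow is canonically $u\mathcal{I}$ with the same $\tau$-topology. So it suffices to prove the following: for a tame minimal $G$-flow $X$, the group $H(u\mathcal{I})=\bigcap\{\mathrm{cl}_\tau(V):V \text{ a }\tau\text{-neighbourhood of }u\}$ is trivial. Recall that $u\mathcal{I}/H(u\mathcal{I})$ is the maximal Hausdorff quotient.

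Next, assume $M$ is countable. Then $X$ is a projective limit of metrizable tame minimal factors. To see this, take countably many formulas $\phi_n(x,b_n)$ and close the resulting countable algebra under translation by $G(M)$; the Stone space of that algebra is a metrizable factor. For each such factor $Y$, the plan is to use Glasner's structure theorem for metrizable tame minimal flows \cite{glasner_tameness}. It says that $Y$ is an almost automorphic extension of its maximal equicontinuous factor $Y_{\mathrm{eq}}$, and the extension is almost one-to-one on a residual set. From this I would show that the Ellis group of $Y$ modulo its $H$ injects into the compact group of $Y_{\mathrm{eq}}$. Equivalently, every element of $H$ acts trivially on the fibres over points of unique preimage, and hence trivially on a dense set. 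The key computation is the elementary Auslander-type lemma: if $\alpha\in H(u\mathcal{I})$, then $\alpha y$ and $uy$ are proximal-regionally equal. At an almost-automorphic point this forces $\alpha y=uy$.

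Finally, the factor results have to be glued back to $X$. If $\alpha\in H(u\mathcal{I})$, its image in each metrizable factor lies in the corresponding $H$ of that factor, because the induced maps of Ellis groups are $\tau$-continuous epimorphisms. So $\alpha$ acts as $u$ on each factor, hence on $X=\varprojlim Y$, and therefore $\alpha=u$. I expect the main obstacle to be this gluing step. The factor maps on Ellis groups need not be injective, and one has to show that $H$ is compatible with inverse limits, which requires a careful description of the $\tau$-closure through nets. For uncountable $M$, metrizable factors need not capture everything, and I would follow \cite{basso_zucker}. The idea is to pass to a forcing extension in which $M$ becomes countable and tameness is preserved, apply the countable case there, and pull back non-Hausdorffness by absoluteness of the relevant statement, which is $\Pi^1$-like over the coded flow. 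This set-theoretic transfer is the step I am least able to carry out in detail.
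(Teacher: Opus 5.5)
The paper gives no proof of this statement. It imports it as a black box from \cite{chernikov_gannon_krupinski} (countable $M$) and \cite{basso_zucker} (arbitrary $M$), so your outline has to stand on its own.

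\textbf{What works.} Your outer reductions are fine: tameness of $S^{\mathrm{fs}}_G(\mathfrak{C},M)$ under NIP, passing to the minimal flow $\mathcal{I}$, and, for countable $M$, writing $\mathcal{I}$ as an inverse limit of metrizable tame minimal factors. The gluing step you single out as the main obstacle is actually routine. A $\tau$-continuous homomorphism $f$ satisfies $f(\mathrm{cl}_\tau(f^{-1}(W)))\subseteq\mathrm{cl}_\tau(W)$, so it maps $H$ into $H$. And an element acting as $u$ on every factor of the inverse system acts as $u$ on the limit.

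\textbf{The gap.} The problem is the core step. The almost-automorphic structure theorem you quote from \cite{glasner_tameness} is a theorem about abelian acting groups, and it genuinely fails without that hypothesis. Here $G(M)$ is arbitrary, and this Fact matters precisely for non-definably-amenable groups such as $\mathrm{SL}_2$. Your intermediate claim, that $u\mathcal{I}/H(u\mathcal{I})$ embeds into the Ellis group of the maximal equicontinuous factor, is false.

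\begin{itemize}
\item In the paper's own example $G=\mathrm{SL}_2$, $M=\mathbb{R}$, one has $u\mathcal{I}\cong\mathbb{Z}/2$ \cite{pillay_penazzi_gismatullin}. Being finite and $T_1$, it is discrete, so $H$ is trivial. For any equicontinuous factor, $u\mathcal{I}$ maps onto that factor's Ellis semigroup $K$, which is a compact group in which $\mathrm{SL}_2(\mathbb{R})$ has dense image. An embedding would make $K$ a two-element quotient of the perfect group $\mathrm{SL}_2(\mathbb{R})$, which is impossible.
\item A metrizable example with a countable group is $\mathrm{SL}_2(\mathbb{Z})$ acting on the circle of rays of $\mathbb{R}^2$. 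This flow is minimal, and it is tame as a factor of a type-space flow of the o-minimal field $\mathbb{R}$. Non-antipodal rays are proximal, so its maximal equicontinuous factor is a point. But distinct $u$-fixed points are distal, so they form a single antipodal pair, which the central element $-I$ swaps. The stabilizer of one of them is therefore a $\tau$-closed subgroup of index $2$. It is then $\tau$-clopen and contains $H$, so the Ellis group modulo $H$ maps onto $\mathbb{Z}/2$.
\end{itemize}

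\textbf{What is missing.} You need a mechanism that handles the proximal part of the dynamics. For general acting groups, the structure theory of tame metrizable minimal flows (Glasner's later version for arbitrary groups) passes through strongly proximal flows and proximal extensions. The Ellis group has to be controlled relative to that structure, not relative to the maximal equicontinuous factor. Your `proximal-regionally equal' computation is never made precise and does not supply this.

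\textbf{The uncountable case.} This part is only a pointer to \cite{basso_zucker}. You would have to show that minimality, tameness, and (failure of) Hausdorffness of the $\tau$-topology transfer between the ground model and the collapse extension. There the Stone space and the Ellis semigroup of the flow genuinely change, and nothing in your proposal addresses this.
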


We also need the following, which is Theorem 1.7 of \cite{chernikov_gannon_krupinski}:

\begin{fact}\label{borel definability}
    Suppose $T$ is NIP. Let $\phi(x,b)$ be any $L(\mathfrak{C})$-formula, and let $[\phi(x,b)]$ be the usual clopen subset of $S^{\mathrm{fs}}_G(\mathfrak{C},M)$ consisting of those types concentrated on $\phi(x,b)$. Let $\mathcal{I}\subseteq S^{\mathrm{fs}}_G(\mathfrak{C},M)$ be a minimal ideal and let $u\in\mathcal{I}$ be an idempotent. Then, $[\phi(x,b)]\cap u\mathcal{I}$ is a Borel set in the $\tau$-topology.
\end{fact}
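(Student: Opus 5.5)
The plan is to show that the indicator function $f\colon u\mathcal{I}\to\{0,1\}$, $p\mapsto[\phi(x,b)\in p]$, is a Baire class $1$ function for the $\tau$-topology. Its preimage $[\phi(x,b)]\cap u\mathcal{I}$ is then an $F_\sigma\cap G_\delta$ set, hence $\tau$-Borel. Fact \ref{tau-topology is hausdorff} makes $u\mathcal{I}$ a compact Hausdorff topological group, so the classical theory of Baire class $1$ functions on compacta is available. The difficulty is that $[\phi(x,b)]$ is clopen only in the subspace topology inherited from $S^{\mathrm{fs}}_G(\mathfrak{C},M)$, which is strictly finer than $\tau$ in general. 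The two topologies have to be compared, and tameness (NIP) is the ingredient that bridges them.

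\textbf{Step 1: continuous maps into the $\tau$-topology.} By Fact \ref{continuity of ux}, the map $\rho\colon\overline{u\mathcal{I}}\to u\mathcal{I}$, $x\mapsto ux$, is continuous from the subspace topology to $\tau$. For $p\in u\mathcal{I}$ we have $\rho(p)=p$. So it is enough to find a subspace-Borel set $B\subseteq\overline{u\mathcal{I}}$ such that $[\phi(x,b)]\cap u\mathcal{I}$ is the image, or better the preimage, under a $\tau$-continuous map built from $\rho$ and the semigroup operations. I would try the converse map first: use the continuity of right multiplication $q\mapsto qp_0$ on $E$ together with the fact that $u\mathcal{I}$ is $\tau$-compact Hausdorff. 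The goal is to express $\{p\in u\mathcal{I}:\phi(x,b)\in p\}$ through $\tau$-closed sets of the form $\mathrm{cl}_\tau(A)=u\mathcal{I}\cap(u\circ A)$ with $A$ definable, meaning $A=[\psi]\cap u\mathcal{I}$.

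\textbf{Step 2: NIP approximation.} For $p$ finitely satisfiable in $M$, write $p=\lim_i \tp(g_i/\mathfrak{C})$ with $g_i\in G(M)$. By NIP, the family $\{\phi(g,y):g\in G(M)\}$ has finite VC-dimension, so by the Bourgain--Fremlin--Talagrand dichotomy for tame flows, each element of $E$ is a pointwise limit of a \emph{sequence} from $G(M)$ when acting on a suitable separable factor. Concretely, I would reduce to the countable sub-language and countable submodel generated by $b$ and the relevant parameters, which gives a metrizable tame factor flow. In that factor $f$ is a pointwise limit of a sequence of functions $p\mapsto[\phi(x,g_nb)\in u\,?]$ that should be $\tau$-continuous after composing with $\rho$. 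Then, using Fact \ref{composition for circ operation} to control how $u\circ(\cdot)$ interacts with products, I would check that the $\tau$-closures of $[\phi(x,b)]\cap u\mathcal{I}$ and of its complement differ from the sets themselves only by sets obtained in countably many steps. This gives the Baire class $1$ description.

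\textbf{Main obstacle.} The hard step is Step 2: passing from "pointwise limit in the subspace topology" to a genuinely $\tau$-measurable description. The operation $u\circ A$ is not monotone in any simple way with respect to subspace-open sets. Moreover, for uncountable $M$ the flow is non-metrizable, so the reduction to a metrizable tame factor (and the check that this factor's Ellis group maps compatibly, with $\tau$-continuity, from $u\mathcal{I}$) is where I expect most of the work. If this fails, my fallback is to prove the statement directly for countable $M$ and $T$ via Glasner's structure theory for tame metrizable minimal flows, which is essentially the route of \cite{chernikov_gannon_krupinski}.
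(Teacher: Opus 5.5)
The paper does not prove this statement. It imports it as Theorem 1.7 of \cite{chernikov_gannon_krupinski}, remarking in Section \ref{local g00 section} only that the proof there resembles the Hrushovski--Pillay proof of Borel definability of invariant types.

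Your proposal has a genuine gap. Step 1 never specifies the map or the Borel set it wants. Step 2, which has to carry all the content, does not work as described.

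\emph{The approximants are not $\tau$-continuous.} On $u\mathcal{I}$, the functions you propose are indicators of sets $[\phi(x,g_nb)]\cap u\mathcal{I}$, exactly the same kind of set as the target. Such sets are in general neither $\tau$-open nor $\tau$-closed. For example, take $G=\mathrm{SO}_2$ over $M=\mathbb{R}$. One minimal ideal consists of the types $t^+$ ($t\in G(\mathbb{R})$) lying just counterclockwise of the infinitesimal neighbourhood of $t$. It is its own Ellis group, $\tau$-homeomorphic to the standard circle via $t^+\mapsto t$, and a definable arc with endpoints in $G(\mathbb{R})$ cuts out a half-open arc.

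\emph{Composing with $\rho$ cannot fix this.} $\rho$ turns $\tau$-continuous functions into Stone-continuous ones, not the other way round. A Stone-continuous function on $\overline{u\mathcal{I}}$ descends to $(u\mathcal{I},\tau)$ only if it is constant on the fibres of $\rho$, and these indicators in general are not.

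\emph{The Bourgain--Fremlin--Talagrand input is about the wrong topology.} It gives only that each single $p$ is a Stone limit of a sequence $g_n\in G(M)$, with the sequence depending on $p$. That yields no sequence of functions on $u\mathcal{I}$ converging pointwise to your $f$, let alone $\tau$-continuous ones.

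\emph{The reduction to a metrizable factor fails for uncountable $M$}, which the statement allows. A $G(M)$-factor that sees $\phi(x,b)$ must see all its $G(M)$-translates, so it is in general not metrizable. Shrinking to a countable submodel changes the acting group and gives no map between Ellis groups. Your fallback covers only countable $M$ and $T$, and it simply defers to \cite{chernikov_gannon_krupinski}.

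What is missing is a way for NIP to act on the $\tau$-closure operator itself. There is a usable handle. By Facts \ref{tau-topology is hausdorff} and \ref{continuity of ux}, $\rho$ is a continuous closed surjection from $(\overline{u\mathcal{I}},\text{Stone})$ onto $(u\mathcal{I},\tau)$. Hence $\mathrm{cl}_\tau(A)=u\overline{A}$ for every $A\subseteq u\mathcal{I}$, where $\overline{A}$ is the Stone closure.

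A natural Hrushovski--Pillay-style route is then to take alternating closures. Set $F_0=u\mathcal{I}$, and let $F_{k+1}$ be $\mathrm{cl}_\tau(F_k\cap[\phi(x,b)])$ for even $k$ and $\mathrm{cl}_\tau(F_k\cap[\neg\phi(x,b)])$ for odd $k$. These sets decrease. Once some $F_N$ is empty, $[\phi(x,b)]\cap u\mathcal{I}=\bigcup_k(F_{2k+1}\setminus F_{2k+2})$ is a finite Boolean combination of $\tau$-closed sets.

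The real work would be to turn a long nonempty chain, via nets in $G(M)$ converging to $u$, into long alternations of $\phi$ along an indiscernible sequence, contradicting NIP. Your sketch contains no step of this kind or any other link between NIP and $\tau$-closures, and that link is the whole content of the statement.
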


An essential fact, first observed in \cite{chernikov_simon}, which we will not use explicitly here but which is critical in the proof of Fact \ref{tau-topology is hausdorff}, is that if $T$ is NIP then the flows $(G(M),S_G(M))$ and $(G(M),S_G^{\mathrm{fs}}(\mathfrak{C},M))$ are `tame', in the sense of Glasner \cite{glasner_tameness}.

\subsection{VC-theory}\label{vc theory section}
Suppose $\mathcal{F}$ is a family of subsets of some ambient set $X$. For a subset $Y\subseteq X$, we define the `trace' of $\mathcal{F}$ on $Y$ as $\mathcal{F}|_Y=\{F\cap Y:F\in \mathcal{F}\}$, and we say that $\mathcal{F}$ `shatters' $Y$ if $\mathcal{F}|_Y$ is the powerset of $Y$. The VC-dimension of $\mathcal{F}$ is then the supremum over all $n\in\omega\cup\{\infty\}$ such that $\mathcal{F}$ shatters a set of size $n$. A more refined measure is the VC-density of $\mathcal{F}$. First we define the `shatter function' $\pi_\mathcal{F}$, where $\pi_\mathcal{F}(n)$ is the maximum of $|\mathcal{F}|_Y|$ over all subsets $Y\subseteq X$ of size at most $n$. The VC-density of $\mathcal{F}$ is then the infimum over all $r\in\mathbb{R}\cup\{\infty\}$ such that $\pi_\mathcal{F}(n)=O(n^r)$; in other words, the infirmum over all $r$ such that there exists a constant $C$ with $\pi_\mathcal{F}(n)\leqslant Cn^r$ for all $n$. By definition, if $\mathcal{F}$ has unbounded VC-dimension then $\pi_\mathcal{F}(n)=2^n$ for all $n\in\omega$, and so the VC-density of $\mathcal{F}$ is infinite. The famous `Sauer-Shelah' theorem gives a strong converse:
\begin{fact}\label{sauer-shelah}
    If $\mathcal{F}$ has VC-dimension at most $d$, then it has VC-density at most $d$.
\end{fact}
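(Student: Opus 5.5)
This is the Sauer–Shelah lemma, so I will prove the standard polynomial bound $\pi_\mathcal{F}(n)\leqslant\sum_{i=0}^{d}\binom{n}{i}$. The stated fact then follows at once: for $n\geqslant 1$ the sum is at most $(d+1)n^d$, so $\pi_\mathcal{F}(n)=O(n^d)$ and the VC-density is at most $d$. It also suffices to work with a fixed finite set: given $Y\subseteq X$ with $|Y|\leqslant n$, the trace $\mathcal{F}|_Y$ is a family of subsets of $Y$. Any set shattered by $\mathcal{F}|_Y$ is shattered by $\mathcal{F}$, so $\mathcal{F}|_Y$ has VC-dimension at most $d$. The main claim is therefore the following.

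\emph{Claim.} If $\mathcal{H}$ is a family of subsets of a finite set $Y$ with $|Y|=n$, then $|\mathcal{H}|\leqslant\sum_{i=0}^{d}\binom{n}{i}$, where $d$ is the VC-dimension of $\mathcal{H}$.

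I would prove the stronger ``Pajor'' form: $|\mathcal{H}|$ is at most the number of subsets of $Y$ shattered by $\mathcal{H}$. Since every shattered set has size at most $d$, this gives the claim. The proof is by induction on $n$, and the case $n=0$ is trivial. For the inductive step, fix $y\in Y$ and put $Y'=Y\setminus\{y\}$. Define
$$\mathcal{H}_1=\{H\setminus\{y\}:H\in\mathcal{H}\},\qquad \mathcal{H}_2=\{H\subseteq Y':H\in\mathcal{H}\text{ and }H\cup\{y\}\in\mathcal{H}\}.$$
The map $H\mapsto H\setminus\{y\}$ from $\mathcal{H}$ onto $\mathcal{H}_1$ is at most two-to-one, and it is two-to-one exactly over the elements of $\mathcal{H}_2$. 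Hence $|\mathcal{H}|=|\mathcal{H}_1|+|\mathcal{H}_2|$.

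Apply the induction hypothesis to $\mathcal{H}_1$ and $\mathcal{H}_2$ on $Y'$. Every set shattered by $\mathcal{H}_1$ is a subset of $Y'$ that is shattered by $\mathcal{H}$. If $S\subseteq Y'$ is shattered by $\mathcal{H}_2$, then $S\cup\{y\}$ is shattered by $\mathcal{H}$: each pattern on $S$ is realized by some $H\in\mathcal{H}_2$, and both $H$ and $H\cup\{y\}$ lie in $\mathcal{H}$, which handles both choices at $y$. The sets of the form $S\cup\{y\}$ contain $y$, so they are disjoint from the sets counted for $\mathcal{H}_1$. Summing the two bounds therefore counts distinct sets shattered by $\mathcal{H}$, which completes the induction.

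The only delicate point is this bookkeeping. The counts for $\mathcal{H}_1$ and $\mathcal{H}_2$ must be injected into disjoint collections of sets shattered by $\mathcal{H}$, and this works because the two collections are distinguished by whether they contain $y$.
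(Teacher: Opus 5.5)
Your proof is correct. The induction (splitting $\mathcal{H}$ into $\mathcal{H}_1$ and $\mathcal{H}_2$ and injecting their shattered sets into two disjoint families of sets shattered by $\mathcal{H}$, distinguished by whether they contain $y$) is a complete proof of the Sauer--Shelah bound. The passage to $\pi_\mathcal{F}(n)\leqslant (d+1)n^d$ for $n\geqslant 1$ is also fine; when $|Y|<n$ you only need that the bound is monotone in $n$. The paper itself gives no proof of this fact and simply quotes the classical Sauer--Shelah lemma, so there is nothing further to compare.
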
 Given $\mathcal{F}$, the `dual' of $\mathcal{F}$, denoted $\mathcal{F}^*$, is the set of all subsets of the power set $P(X)$ of form $\mathcal{F}_x:=\{F\in\mathcal{F}:x\in F\}$ for some $x\in X$. We then define the VC-codimension and VC-codensity of $\mathcal{F}$ as the VC-dimension and VC-density of $\mathcal{F}^*$. If $\mathcal{F}$ has VC-dimension $d$, then the VC-dimension of $\mathcal{F}^*$ is $<2^{1+d}$.

A fundamental result on VC-dimension is the so-called VC-theorem; see Theorem 6.6 in \cite{simon_book}. We actually do not need the full strength of the VC-theorem, but just an important consequence, which is the existence of $\varepsilon$-nets. This is essentially `Proposition 2.18' in \cite{hrushovski}, but we need to clarify two differences. First, as stated in \cite{hrushovski}, Proposition 2.18 is not quite right (a standard counterexample is to look at the $\sigma$-algebra generated by intervals on $\omega_1$, and to take the measure to assign value $1$ to a set if it contains an upwards ray and measure $0$ otherwise.) The correct statement requires some measureability hypotheses on certain functions associated to the set system; the right hypotheses can be found, for example, in the discussion after Theorem 6.6 of \cite{simon_book}. In particular, any finite (or even countably infinite) family of measureable sets satisfies the hypotheses needed, and we only need this case, so we state the fact just with that assumption. Second, Proposition 2.18 in \cite{hrushovski} assumes the VC-\textit{dimension} of the family is at most $d$. However, the bounds in the VC-theorem depend only on the growth of the shatter function, so we get the stronger fact.

\begin{fact}\label{epsilon nets} For any constants $C,\delta$, there is a constant $D=D(C,\delta)$ with the following property. Suppose that $(X,\mu)$ is a probability space, and that $\mathcal{F}$ is a finite family of measureable subsets of $X$. Suppose that $\pi_\mathcal{F}(n)\leqslant Cn^\delta$ for all $n$. Then, for any $n$, if we let $N\geqslant Dn^2$, then we may find $b_1,\dots,b_N\in X$ such that, for any $F\in\mathcal{F}$, if $\mu(F)\geqslant 1/n$ then there is $i\in[N]$ with $b_i\in F$.
\end{fact}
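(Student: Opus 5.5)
The plan is to use the standard random-sampling proof of the $\varepsilon$-net theorem (Haussler--Welzl), with symmetrization ('double sampling'), keeping track only of the shatter function $\pi_\mathcal{F}$. Set $\varepsilon=1/n$. Since $\mathcal{F}$ is finite and consists of measurable sets, all events below are measurable in the product space $(X^N,\mu^{\otimes N})$; this is why we restrict to finite families.

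Fix $N$ and draw $\bar b=(b_1,\dots,b_N)$ according to $\mu^{\otimes N}$. Let $A$ be the event that some $F\in\mathcal{F}$ with $\mu(F)\geqslant\varepsilon$ contains no $b_i$. We must show $\Pr(A)<1$ once $N\geqslant Dn^2$. To do this, draw an independent second sample $\bar c=(c_1,\dots,c_N)$, and let $B$ be the event that some $F$ with $\mu(F)\geqslant\varepsilon$ misses every $b_i$ but contains at least $\varepsilon N/2$ of the $c_j$.

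\emph{Step 1: $\Pr(B)\geqslant\frac12\Pr(A)$.} Condition on $\bar b\in A$ and fix a witnessing $F$. The number of $c_j$ lying in $F$ is binomial with mean at least $\varepsilon N$ and variance at most $\varepsilon N$. By Chebyshev, it is at least $\varepsilon N/2$ with probability at least $1/2$ as soon as $\varepsilon N\geqslant 8$.

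\emph{Step 2: bound $\Pr(B)$ by symmetrization.} Conditioned on the unordered multiset $\{b_1,\dots,c_N\}$ of $2N$ points, the split into $\bar b$ and $\bar c$ is a uniformly random swap of each pair $(b_i,c_i)$. This uses exchangeability of the i.i.d.\ sample, which is where independence enters. On these $2N$ points, $\mathcal{F}$ induces at most $\pi_\mathcal{F}(2N)\leqslant C(2N)^\delta$ traces. Fix a trace meeting at least $\varepsilon N/2$ points. For all of those points to land in $\bar c$, each must be on the $c$-side of its pair, and no pair may have both points in the trace. Hence this happens with probability at most $2^{-\varepsilon N/2}$. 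A union bound gives
\[
\Pr(B)\leqslant C(2N)^\delta\,2^{-N/(2n)}.
\]

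\emph{Step 3: choose $D$.} Combining the two steps, $\Pr(A)\leqslant 2C(2N)^\delta 2^{-N/(2n)}$, and we want this to be $<1$. Write $N=tn^2$ with $t\geqslant D$. The exponent is then $2^{-tn/2}$, while the polynomial factor is $(2tn^2)^\delta$. Since $tn/2$ eventually dominates $\delta\log_2(2tn^2)+\log_2(2C)$ uniformly in $n\geqslant1$ for $t$ large, some $D=D(C,\delta)$ works; taking $D\geqslant 8$ also secures $\varepsilon N\geqslant8$. Then a sample outside $A$ exists, and that is exactly the required $b_1,\dots,b_N$.

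The only delicate point is Step 2, namely justifying the swap argument measure-theoretically. With finitely many measurable sets, this reduces to Fubini on $X^{2N}$ applied to finitely many measurable indicator functions, so I expect no real obstacle. The $n^2$ (rather than $n\log n$) bound leaves ample room in Step 3.
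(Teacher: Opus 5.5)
Your argument is correct, and it takes a different route from the paper. The paper gives no proof of this Fact. It obtains it as a consequence of the VC theorem (Theorem 6.6 of Simon's book, via Hrushovski's Proposition 2.18), i.e.\ of two-sided uniform convergence of empirical frequencies, noting only that finite families meet the measurability hypotheses and that the bounds depend only on the growth of $\pi_\mathcal{F}$.

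You instead prove the one-sided $\varepsilon$-net statement directly by Haussler--Welzl double sampling. This needs only Chebyshev and the elementary $2^{-k}$ swap count, rather than a Hoeffding-type tail estimate. Your route also delivers the stated quadratic sample size with room to spare: your estimate $2C(2N)^\delta 2^{-N/(2n)}$ is already $<1$ for $N$ of order $n\log n$. By contrast, a two-sided bound of the shape $c_1\pi_\mathcal{F}(2N)e^{-c_2N\varepsilon^2}$ with $\varepsilon\approx 1/n$ and $N=Dn^2$ is of order $n^{2\delta}e^{-c_2D}$, so used naively it loses a $\log n$ factor. What the VC-theorem route gives in exchange is the stronger $\varepsilon$-approximation property, which is not needed here.

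Two small repairs are needed.

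In Step 1, the variance of $\mathrm{Bin}(N,p)$ with $p=\mu(F)\geqslant\varepsilon$ is $Np(1-p)\leqslant Np$, not at most $\varepsilon N$. But Chebyshev with deviation $Np-\varepsilon N/2\geqslant Np/2$ gives failure probability at most $4/(Np)\leqslant 4/(\varepsilon N)\leqslant 1/2$, so the step stands.

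In Step 2, the right conditioning is on the unordered pairs $\{b_i,c_i\}$, not on the unordered multiset of all $2N$ points. The clean formulation is:
\begin{itemize}
\item $\mu^{\otimes 2N}$ is invariant under each of the $2^N$ coordinate swaps $b_i\leftrightarrow c_i$;
\item so $\Pr(B)$ equals the integral of the average of $\mathbf{1}_B$ over these swaps;
\item that average is bounded pointwise by $\pi_\mathcal{F}(2N)2^{-\varepsilon N/2}$, provided you run the count over the subfamily $\{F\in\mathcal{F}:\mu(F)\geqslant\varepsilon\}$.
\end{itemize}
The restriction to this subfamily is needed because the condition $\mu(F)\geqslant\varepsilon$ is not a property of traces. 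Its shatter function is still bounded by $\pi_\mathcal{F}$.
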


A theory $T$ is NIP if, for any model $M\models T$, and any formula $\phi(x,y)$, the set system $\{\phi(M^x,b):b\in M^y\}$ has finite VC-dimension. By the VC-dimension and the VC-density of the formula $\phi(x,y)$ we mean that of the associated set system. In particular, the VC-codensity of $\phi$ is precisely the infimum over all $r$ such that $|S_\phi(B)|=O(|B|^r)$, where $B\subseteq M^y$ ranges over finite sets and $S_\phi(B)$ is the set of complete $\phi$-types over $B$.

Let us say that a theory has `bounded VC-codensity' (this is, to my knowledge, not standard terminology) if, for every tuple of variables $x$, there is a fixed $\delta$ depending only on $x$ such that every formula $\phi(x,y)$ has VC-codensity at most $\delta$. There are many examples, coming for example from the papers \cite{five_authors}, \cite{guingona_laskowski}, and \cite{basu_patel}.

\subsection{Compact groups}\label{compact groups section}
We will need the basic structure theory of compact Hausdorff groups; everything we need can be found in Chapters 1 and 2 of \cite{hofmann_morris}. Recall that, if $G$ is a compact Hausdorff group, then $G$ admits a unique normalized bi-invariant Haar measure, ie a regular Borel measure $\eta$ on $G$ such that $\eta(G)=1$ and such that $\eta(gB)=\eta(Bg)=\eta(B)$ for every Borel set $B\subseteq G$ and $g\in G$. A Hilbert module of $G$ is a Hilbert space $H$ and a homomorphism $\lambda$ from $G$ to the group $U(H)$ of unitary transformations of $H$ such that the map $G\times H\to H$ given by $(g,h)\mapsto \lambda(g)h$ is continuous. $G$ always admits a faithful Hilbert representation: let $H=L^2(G,\eta)$, and let $\lambda$ be the regular representation, so that $(\lambda(g)(f))(x)=f(g^{-1}x)$ for all $f\in L^2(G,\eta)$ and $x\in G$. The main basic result we will use is the following part of the Peter-Weyl theorem; see Corollary 2.25 in \cite{hofmann_morris}:
\begin{fact}\label{peter-weyl 1}
    Let $H$ be a Hilbert module of $G$. Then there is an orthogonal family $(H_i:i\in I)$ of finite-dimensional $G$-submodules of $H$ such that $H$ is the Hilbert space orthogonal sum of the $H_i$, ie $\bigoplus_{i\in I}H_i$ is dense in $H$.
\end{fact}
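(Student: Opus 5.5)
The plan is the standard Zorn's lemma plus compact-operator argument, which reduces the statement to one key claim: \emph{every nonzero Hilbert module of $G$ contains a nonzero finite-dimensional $G$-submodule.}

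\textbf{Reduction to the key claim.} Consider the collection of orthogonal families of finite-dimensional $G$-submodules of $H$, ordered by inclusion. Chains have unions as upper bounds, so by Zorn's lemma there is a maximal family $(H_i:i\in I)$. Let $K$ be the orthogonal complement of $\bigoplus_{i\in I}H_i$. Since each $\lambda(g)$ is unitary and preserves $\bigoplus_i H_i$, it also preserves $K$, so $K$ is a closed $G$-submodule. If $K\neq 0$, the key claim gives a nonzero finite-dimensional submodule of $K$, which could be added to the family, contradicting maximality. Hence $K=0$, which means $\bigoplus_i H_i$ is dense in $H$.

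\textbf{Proof of the key claim.} Take a unit vector $v$ in the nonzero module. Define an operator by averaging the rank-one projection onto $v$ over the group:
$$A h=\int_G \langle h,\lambda(g)v\rangle\,\lambda(g)v\,d\eta(g).$$
The map $g\mapsto \lambda(g)v$ is norm-continuous by the continuity assumption on the module. Hence $g\mapsto P_g$, the rank-one projection onto $\lambda(g)v$, is operator-norm continuous, and the integral makes sense as a norm limit of finite Riemann-type sums over finite partitions of $G$. Those sums are finite-rank operators, so $A$ is compact.

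\textbf{Properties of $A$.} We check four things.
\begin{itemize}
\item $A$ is positive and self-adjoint, since $\langle Ah,h\rangle=\int|\langle h,\lambda(g)v\rangle|^2\,d\eta\geq 0$.
\item $A$ commutes with each $\lambda(k)$. Indeed $\lambda(k)A\lambda(k)^{-1}=\int P_{kg}\,d\eta(g)=A$ by left invariance of $\eta$.
\item $A\neq 0$. We have $\langle Av,v\rangle=\int|\langle v,\lambda(g)v\rangle|^2\,d\eta$. The integrand is continuous and equals $1$ at the identity, so it is positive on an open neighbourhood of the identity. That neighbourhood has positive Haar measure by regularity and compactness, since finitely many translates cover $G$.
\item By the spectral theorem for compact self-adjoint operators, $A$ therefore has a nonzero eigenvalue $\mu$ whose eigenspace $\ker(A-\mu)$ is finite-dimensional. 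Since $A$ commutes with $\lambda(G)$, this eigenspace is a nonzero finite-dimensional $G$-submodule, which proves the claim.
\end{itemize}

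\textbf{Main obstacle.} The only delicate step is compactness of $A$. This requires the \emph{uniform} norm-continuity of $g\mapsto P_g$ on the compact group $G$, so that $A$ really is an operator-norm limit of finite-rank averages. The point to be careful about is that the continuity assumption on $G\times H\to H$ delivers this. Continuity of $g\mapsto\lambda(g)v$ for the fixed vector $v$ gives norm-continuity of $g\mapsto P_g$, since $\|P_g-P_{g'}\|\leq 2\|\lambda(g)v-\lambda(g')v\|$. Uniform continuity on compact $G$ then follows, and that is exactly what is needed.
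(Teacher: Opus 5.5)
Your argument is correct. The paper gives no proof of this fact and simply quotes it as Corollary 2.25 of \cite{hofmann_morris}, and your combination of Zorn's lemma with the averaged rank-one projection $A=\int_G P_g\,d\eta(g)$ (which is compact, commutes with $\lambda(G)$, is nonzero, and so has a nonzero finite-dimensional $G$-invariant eigenspace) is the standard proof of this part of Peter--Weyl. One small tightening: read off self-adjointness of $A$ directly from $\langle Ah,k\rangle=\int_G\langle h,\lambda(g)v\rangle\langle\lambda(g)v,k\rangle\,d\eta(g)$ rather than inferring it from $\langle Ah,h\rangle\geq 0$, since that inference is valid only over complex scalars.
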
 A fundamental consequence, for example Corollary 2.35 in \cite{hofmann_morris}, is the following:
\begin{fact}\label{peter-weyl 2}
    $G$ is an inverse limit of compact Lie groups. In particular, $G$ is Lie if and only if it is `NSS', ie if and only if there is an open neighborhood of the identity in $G$ that does not contain any non-trivial subgroup of $G$.
\end{fact}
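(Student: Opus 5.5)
The plan is to derive both assertions from Fact \ref{peter-weyl 1}, applied to the regular representation $\lambda$ of $G$ on $H=L^2(G,\eta)$.

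First I will check that $\lambda$ is faithful. Suppose $g\neq 1$. By Urysohn's lemma there is a continuous $f$ supported on a small neighborhood $V$ of $1$ with $gV\cap V=\varnothing$ and $f\neq 0$. Then $\lambda(g)f$ is supported on $gV$, which is disjoint from $V$. Since Haar measure charges every nonempty open set, $f\neq 0$ in $L^2$, and so $\lambda(g)f\neq f$ in $L^2$.

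Next, Fact \ref{peter-weyl 1} gives an orthogonal decomposition $H=\overline{\bigoplus_{i\in I}H_i}$ into finite-dimensional $G$-submodules. Each $H_i$ yields a continuous homomorphism $\rho_i:G\to U(H_i)\cong U(n_i)$. For a finite set $F\subseteq I$, I will put $N_F=\bigcap_{i\in F}\ker\rho_i$. This is a closed normal subgroup of $G$, and $G/N_F$ embeds continuously into $\prod_{i\in F}U(H_i)$. The image is compact, hence a closed subgroup of a Lie group, hence Lie by Cartan's closed subgroup theorem. Because $G/N_F$ is compact, the continuous bijection onto this image is a homeomorphism, so each $G/N_F$ is a compact Lie group.

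Now $\bigcap_F N_F$ is trivial. Indeed, an element acting trivially on every $H_i$ acts trivially on the dense subspace $\bigoplus_i H_i$, hence on all of $H$ by continuity of $\lambda(g)$, and so it is $1$ by faithfulness. The family $(N_F)$ is directed downward. Hence for any open neighborhood $U$ of $1$, the closed sets $N_F\setminus U$ form a downward-directed family of compact sets with empty intersection, and by compactness some $N_F$ is contained in $U$.

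From this, the natural map $G\to\varprojlim_F G/N_F$ is continuous and injective. Its image is dense, since it surjects onto each finite stage, and compact, so the map is surjective. It is therefore a homeomorphic isomorphism, and $G$ is an inverse limit of compact Lie groups.

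For the NSS characterization, suppose first that $U$ is a neighborhood of $1$ containing no nontrivial subgroup. Choose $N_F\subseteq U$ as above. Then $N_F$ is trivial, so $G\cong G/N_F$ is Lie. Conversely, suppose $G$ is Lie. Take a neighborhood $W$ of $0$ in the Lie algebra on which $\exp$ is a diffeomorphism, and let $U=\exp(\tfrac12 W)$. If $K\subseteq U$ is a subgroup and $x=\exp(X)\in K$ with $X\neq 0$, then some power $x^{m}=\exp(mX)$ has $mX\in W\setminus\tfrac12W$. By injectivity of $\exp$ on $W$, this power lies outside $U$, contradicting $x^m\in K\subseteq U$.

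The only step requiring real care is the faithfulness of $\lambda$ together with the density argument that makes $\bigcap_F N_F$ trivial. Everything else is standard compactness and Lie theory (Cartan's theorem).
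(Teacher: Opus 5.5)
Your argument is correct and follows the standard route behind the paper's citation (Hofmann--Morris, Cor.~2.35, which is derived from Fact~\ref{peter-weyl 1}). It also has exactly the structure of the paper's own proof of Lemma~\ref{dimension bound for arbitrary compact groups}, where the kernels of the actions on the finite sums $H_s$ form a downward-directed family with trivial intersection by density. The only detail to make explicit is in the Lie $\Rightarrow$ NSS direction: take $W$ to be an open norm ball of some radius $r$ on which $\exp$ is still a diffeomorphism. Then $\frac{1}{2}W\subseteq W$, and the least $m$ with $m\|X\|\geqslant r/2$ really gives $mX\in W\setminus\frac{1}{2}W$.
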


In Section 7 of \cite{hrushovski}, Hrushovski defines a notion of `finite Archimedean rank' and develops its basic theory. In particular, he gives several equivalent characterizations, some of which we cite below (see points (3) and (4) of Proposition 7.6 in \cite{hrushovski}):
\begin{fact}\label{finite archimedean rank definition}
    For $G$ a compact Hausdorff group, the following are equivalent:
    \begin{enumerate}
        \item The connected component $G^0$ is profinite-by-Lie, ie $G^0$ contains a closed normal profinite subgroup $N$ such that $G^0/N$ is Lie. In particular this means that $G$ is profinite-by-Lie-by-profinite.
        \item $G$ is profinite-by-Lie-by-finite, ie $G$ contains a closed normal finite-index profinite-by-Lie subgroup.
        \item There is some $d$ such that $G$ is an inverse limit of compact Lie groups of dimension at most $d$.
        \item There is some $d$ such that $G^0$ is an inverse limit of compact Lie groups of dimension at most $d$.
    \end{enumerate} We say that a group satisfying these conditions has `finite Archimedean rank'.
\end{fact}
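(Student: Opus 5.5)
The plan is to prove the cycle (1)$\Rightarrow$(3)$\Rightarrow$(2)$\Rightarrow$(1), and separately (3)$\Rightarrow$(4)$\Rightarrow$(1). Throughout I use Fact \ref{peter-weyl 2} in the following form. The family $\mathcal{N}$ of closed normal subgroups $K\trianglelefteq G$ with $G/K$ Lie is a filter base with $\bigcap\mathcal{N}=\{1\}$. It is closed under finite intersections because $G/(K\cap K')$ embeds as a closed subgroup of $G/K\times G/K'$, which is Lie. Consequently $G=\varprojlim_{K\in\mathcal{N}}G/K$, and the same holds for any cofinal subfamily.

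I also use three standard facts about compact Lie groups. A closed subgroup or a quotient of a compact Lie group is again a compact Lie group. A compact Lie group of dimension $0$ is finite, and more generally a compact Lie group has only finitely many connected components. A totally disconnected, i.e.\ profinite, closed subgroup of a compact Lie group is finite.

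For (1)$\Rightarrow$(3), let $N\trianglelefteq G^0$ be profinite with $G^0/N$ Lie of dimension $e$. I will show that every $K\in\mathcal{N}$ satisfies $\dim G/K\leqslant e$; together with the inverse limit description above, this gives (3) with $d=e$.
\begin{itemize}
\item Since $G/K$ is a compact Lie group, its identity component $(G/K)^0$ is open. Its image $G^0K/K$ is a closed connected subgroup of $(G/K)^0$, and it has full dimension because the image of $G^0$ in a Lie quotient contains the identity component (images of connected components are dense in connected components, then use compactness). So $\dim G/K=\dim G^0/(G^0\cap K)$.
\item The image of $N$ in the compact Lie group $G^0/(G^0\cap K)$ is profinite, hence finite. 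Therefore $\dim G^0/(G^0\cap K)=\dim G^0/N(G^0\cap K)\leqslant \dim G^0/N=e$.
\end{itemize}

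For (3)$\Rightarrow$(2), I use a dimension-stabilization argument, which is the key step.
\begin{itemize}
\item The dimensions $\dim G/K$ for $K$ in a cofinal family are bounded by $d$, so I can choose $K_0$ in the family with $\dim G/K_0$ maximal.
\item For any $K\subseteq K_0$ in the family, the kernel $K_0/K$ of $G/K\to G/K_0$ is a compact Lie group of dimension $\dim G/K-\dim G/K_0=0$, hence finite.
\item Thus $K_0=\varprojlim K_0/K$ is an inverse limit of finite groups, i.e.\ profinite, and it is normal in $G$.
\item Let $H$ be the preimage in $G$ of $(G/K_0)^0$. Since $G/K_0$ has finitely many components, $H$ is a closed normal subgroup of finite index. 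It is profinite-by-Lie, witnessed by $K_0\trianglelefteq H$ with $H/K_0$ Lie.
\end{itemize}

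For (2)$\Rightarrow$(1), let $H$ be as in (2) with profinite $N\trianglelefteq H$ and $H/N$ Lie. Since $H$ is closed of finite index, it is open, so it contains $G^0$. Then $G^0\cap N$ is a profinite normal subgroup of $G^0$, and $G^0/(G^0\cap N)$ is isomorphic to $G^0N/N$, a closed subgroup of the Lie group $H/N$. This gives (1). The statement that $G$ is then profinite-by-Lie-by-profinite follows because $G/G^0$ is profinite. If one wants the profinite piece to be normal in $G$, it is better to take it from the $K_0$ produced in (3)$\Rightarrow$(2).

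For (3)$\Rightarrow$(4), take $G^0=\varprojlim G^0/(G^0\cap K)$, and note that each $G^0/(G^0\cap K)\cong G^0K/K$ is a closed subgroup of $G/K$, hence Lie of dimension at most $d$. For (4)$\Rightarrow$(1), apply the (3)$\Rightarrow$(2) argument to the connected group $G^0$ in place of $G$. Connectedness forces the finite-index subgroup $H$ to be all of $G^0$, which yields (1).

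I expect the main obstacle to be the dimension-stabilization step: checking that maximality of $\dim G/K_0$ really forces every kernel $K_0/K$ to be $0$-dimensional, and hence finite, so that $K_0$ is profinite. The first bullet of (1)$\Rightarrow$(3), that $G^0$ maps onto a full-dimensional subgroup of each Lie quotient, also needs care. The remaining steps are routine bookkeeping with closed subgroups and quotients of compact Lie groups.
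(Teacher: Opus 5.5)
Your proof is correct. It is a genuinely different route only in the sense that the paper gives no proof here: the Fact is quoted from points (3) and (4) of Proposition 7.6 in \cite{hrushovski}.

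What your version buys is a self-contained derivation from Peter--Weyl and basic compact Lie theory: closed subgroups, additivity of dimension, and finitely many components. It also controls the bound explicitly.
\begin{itemize}
\item (1) with $\dim G^0/N=e$ gives (3) with $d=e$.
\item (4) with bound $d$ produces $K_0$ with $\dim G^0/K_0\leqslant d$.
\item So your chain (4)$\Rightarrow$(1)$\Rightarrow$(3) recovers Lemma \ref{archimedean rank of connected component} with the same constant.
\end{itemize}
The dimension-stabilization step is sound: $\dim G/K=\dim K_0/K+\dim G/K_0$ together with maximality forces $K_0/K$ to be finite. It also proves more than (2). Your $K_0$ is a profinite closed normal subgroup of $G$ itself with $G/K_0$ Lie. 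Hence under (3), $G$ is outright profinite-by-Lie, and for compact groups the ``-by-finite'' layer in (2) and the ``-by-profinite'' layer mentioned in (1) are redundant.

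Two small points of hygiene.
\begin{itemize}
\item In the first bullet of (1)$\Rightarrow$(3), the parenthetical ``images of connected components are dense in connected components'' is not a valid general principle. The right justification of $G^0K/K=(G/K)^0$ is the paper's Lemma \ref{connected component of quotient}: the image of $G^0$ is closed and normal, and the quotient of $G/K$ by it is a quotient of the profinite group $G/G^0$. That quotient is therefore totally disconnected, which forces $(G/K)^0\subseteq G^0K/K$.
\item In (3)$\Rightarrow$(2) you do not need the presenting family to be cofinal in $\mathcal{N}$. It suffices that the kernels form a downward-directed family with trivial intersection; if the bonding maps are not surjective, replace each factor by the image of $G$, which is still Lie of dimension at most $d$. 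Cofinality does hold, by the NSS argument of Lemma \ref{lie quotient of far group}, but you never use it.
\end{itemize}
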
For us the most convenient characterization will be (4). Let us also record a few specific easy observations:

\begin{lemma}\label{lie quotient of far group}
        Suppose that $G$ is a compact Hausdorff group and that $G$ is an inverse limit of Lie groups of dimension at most $d$. Then, for any Lie group $L$, if there is a continuous group epimorphism $\pi:G\to L$ then $L$ has dimension at most $d$.
\end{lemma}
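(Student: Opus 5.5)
Write $G=\varprojlim_{i}G_i$ with each $G_i$ a compact Lie group of dimension at most $d$, and let $\rho_i:G\to G_i$ be the canonical continuous projections. Replacing $G_i$ by $\rho_i(G)$, which is a closed subgroup of a Lie group and hence Lie, of no larger dimension, we may assume each $\rho_i$ is surjective. Then $\ker\rho_i$ is a family of closed normal subgroups, directed downwards, with $\bigcap_i\ker\rho_i=\{1\}$.

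\textbf{Step 1: the kernel of $\pi$ contains some $\ker\rho_i$.} Since $L$ is a Lie group, it is NSS, so there is an open neighbourhood $U$ of the identity in $L$ containing no nontrivial subgroup. The preimage $\pi^{-1}(U)$ is an open neighbourhood of $1$ in $G$. The sets $\ker\rho_i$ are compact, directed, and have trivial intersection, so by compactness some $\ker\rho_i$ is contained in $\pi^{-1}(U)$. Then $\pi(\ker\rho_i)$ is a subgroup of $L$ contained in $U$, hence trivial. So $\ker\rho_i\subseteq\ker\pi$.

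\textbf{Step 2: factor $\pi$ through $G_i$.} By Step 1, $\pi$ factors through $\rho_i$: there is a group epimorphism $\bar\pi:G_i\to L$ with $\pi=\bar\pi\circ\rho_i$. Since $\rho_i$ is a continuous surjection of compact Hausdorff groups, it is a quotient map, so $\bar\pi$ is continuous.

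\textbf{Step 3: compare dimensions.} $\bar\pi$ is a continuous surjective homomorphism of Lie groups, so by the open mapping theorem for $\sigma$-compact groups $L\cong G_i/\ker\bar\pi$ as Lie groups. Therefore
\[\dim L=\dim G_i-\dim\ker\bar\pi\leqslant d.\]
Equivalently, the induced Lie algebra map is surjective, since otherwise the image would be a countable union of lower-dimensional immersed submanifolds and so would have measure zero, which a surjection onto $L$ cannot.

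The only step needing any care is Step 1, the compactness argument producing $\ker\rho_i\subseteq\pi^{-1}(U)$. It follows because the complement $G\setminus\pi^{-1}(U)$ is compact and is covered by the open sets $G\setminus\ker\rho_i$, which are directed upwards, so a single one of them already covers it. The rest is standard Lie theory.
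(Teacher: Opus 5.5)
Your proof is correct and follows essentially the same route as the paper. Both use NSS of $L$ together with a compactness and directedness argument to get some $\ker\rho_i\subseteq\ker\pi$, then factor $\pi$ through the Lie group $G_i\cong G/\ker\rho_i$ and bound the dimension of its Lie quotient $L$. The differences are cosmetic: the paper phrases the inverse limit directly as a downward-directed family of closed normal subgroups $K_i$, and it identifies $L$ with a quotient of $G/K_j$ using compact Hausdorffness rather than the open mapping theorem.
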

\begin{proof}
Lie groups are NSS, so let $V\subseteq L$ be an open neighborhood of the identity containing no nontrivial subgroup of $L$ and let $U=\pi^{-1}(V)$. Thus every subgroup of $G$ contained in $U$ is contained in $\ker(\pi)$; call this ($\ast$). Since $G$ is an inverse limit of Lie groups of dimension at most $d$, let $(K_i:i\in I)$ be a downwards-directed family of closed normal subgroups of $G$ such that each $G/K_i$ is Lie of dimension at most $d$ and such that $\bigcap_{i\in I}K_i=\{1\}$. In particular, $\bigcap_{i\in I}K_i\subseteq U$. So the $G\setminus K_i$ together with $U$ form an open covering of $G$, and hence by compactness there is $s\subseteq I$ finite with $G=U\cup\bigcup_{i\in s}(G\setminus K_i)$. In other words $\bigcap_{i\in s}K_i\subseteq U$. By downwards-directedness there is some $j\in I$ with $K_j\subseteq\bigcap_{i\in s}K_i$. But now $K_j\subseteq U$, so by ($\ast$) $K_j\subseteq \ker(\pi)$. So $\pi$ induces a continuous group epimorphism $G/K_j\to L$. Since $G/K_j$ and $L$ are compact Hausdorff, this implies that $L$ is isomorphic as a topological group to a quotient of $G/K_j$ by a closed normal subgroup, and since $G/K_j$ is Lie of dimension at most $d$ thus also $L$ has dimension at most $d$.
\end{proof}

% \begin{lemma}\label{lie quotient of far group}
%     Suppose $G$ is a compact Hausdorff group and that $G$ is an inverse limit of Lie groups of dimension at most $d$. Then, for any closed normal subgroup $N$ of $G$ such that $G/N$ is a Lie group, $G/N$ has dimension at most $d$.
% \end{lemma}
% \begin{proof}
% Lie groups are NSS, so let $V\subseteq G/N$ be an open neighborhood of the identity containing no nontrivial subgroup of $G/N$ and let $U$ be the preimage of $V$ under the quotient map $G\to G/N$. Thus every subgroup of $G$ contained in $U$ is contained in $N$; call this ($\ast$). Since $G$ is an inverse limit of Lie groups of dimension at most $d$, let $(K_i:i\in I)$ be a downwards-directed family of closed normal subgroups of $G$ such that each $G/K_i$ is Lie of dimension at most $d$ and such that $\bigcap_{i\in I}K_i=\{1\}$. In particular, $\bigcap_{i\in I}K_i\subseteq U$. So the $G\setminus K_i$ together with $U$ form an open covering of $G$, and hence by compactness there is $s\subseteq I$ finite with $G=U\cup\bigcup_{i\in s}(G\setminus K_i)$. In other words $\bigcap_{i\in s}K_i\subseteq U$. By downwards-directedness there is some $j\in I$ with $K_j\subseteq\bigcap_{i\in s}K_i$. But now $K_j\subseteq U$, so by ($\ast$) $K_j\subseteq N$. Hence $G/N$ is the quotient of $G/K_j$ by the closed normal subgroup $N/K_j$, and since $G/K_j$ is Lie of dimension at most $d$ also $G/N$ has dimension at most $d$.
% \end{proof}

\begin{lemma}\label{connected component of quotient}Suppose $\pi:S\to T$ is a continuous epimorphism of compact Hausdorff groups. Then $\pi(S^0)=T^0$.
\end{lemma}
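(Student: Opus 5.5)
The inclusion $\pi(S^0)\subseteq T^0$ is immediate. $\pi$ is continuous, so $\pi(S^0)$ is a connected subset of $T$ containing the identity, and hence lies in the identity component $T^0$. All the work is in the reverse inclusion. For that I will use the standard fact, valid in any compact Hausdorff group (see Chapter 1 of \cite{hofmann_morris}), that the identity component is the intersection of all open subgroups. Equivalently, $S/S^0$ is profinite, hence totally disconnected.

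The plan is as follows. Set $N=\ker(\pi)$ and $H=\pi(S^0)$. Since $S$ is compact and $T$ is Hausdorff, $\pi$ is a closed map, so it induces a topological isomorphism $S/N\cong T$. Moreover $H$ is a closed normal subgroup of $T$, since $S^0$ is compact and normal and $\pi$ is surjective. It then suffices to show that $T/H$ is totally disconnected, because the image of the connected set $T^0$ in $T/H$ would then be a point, giving $T^0\subseteq H$.

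To see that $T/H$ is totally disconnected, note that $T/H\cong S/(S^0N)$, via the continuous surjection $S\to T\to T/H$, which is closed by compactness and has kernel $S^0N$. This group is a quotient of $S/S^0$ by the closed normal subgroup $S^0N/S^0$. A Hausdorff quotient of a profinite group by a closed normal subgroup is again profinite. Indeed, the open normal subgroups of $S/S^0$ have intersection the identity, so their images give a family of open subgroups of $S/(S^0N)$ with trivial intersection, by a compactness argument. Hence $T/H$ is profinite, and in particular totally disconnected.

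The only delicate step is that last compactness argument. Let $U_i$ be the open normal subgroups of $S$ containing $S^0$. We need $\bigcap_i U_iN=S^0N$. Given $x\in\bigcap_i U_iN$, the closed sets $xN\cap U_i$ are nonempty and downward directed, so by compactness their intersection $xN\cap\bigcap_i U_i=xN\cap S^0$ is nonempty, giving $x\in S^0N$. So the family $U_iN/(S^0N)$ consists of open subgroups of $S/(S^0N)$ with trivial intersection. Since each open subgroup is also closed, any two distinct points of $T/H$ are separated by a clopen set, and so every connected subset is a single point. This completes the plan.
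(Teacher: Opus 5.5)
Your proposal is correct and follows essentially the same route as the paper: identify $T/\pi(S^0)$ with a quotient of the profinite group $S/S^0$ by a closed normal subgroup, conclude it is totally disconnected, and deduce that the connected image of $T^0$ in it is trivial. The only difference is that you prove explicitly, via the compactness argument $\bigcap_i U_iN=S^0N$, that such a quotient is again profinite, whereas the paper takes this as a standard fact.
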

\begin{proof} Being the continuous image of a connected set, $\pi(S^0)$ is connected, so $\pi(S^0)\subseteq T^0$. For the other inclusion, note since $S,T$ are compact Hausdorff, $\pi$ is a closed map, so $\pi(S^0)$ is a closed normal subgroup of $T$, and $\pi$ induces a continuous group epimorphism $S/S^0\to T/\pi(S^0)$. But $S/S^{0}$ and $T/\pi(S^0)$ are compact Hausdorff, so $T/\pi(S^0)$ is isomorphic as a topological group to a quotient of $S/S^0$ by a closed normal subgroup. Since $S/S^0$ is profinite, thus $T/\pi(S^0)$ is profinite too, and thus in particular $T/\pi(S^0)$ is totally disconnected. But now, being the continuous image of a connected set, $T^0/\pi(S^0)\subseteq T/\pi(S^0)$ is connected, forcing $T^0/\pi(S^0)$ to be trivial and hence $T^0=\pi(S^0)$.
\end{proof}

The following is of course implicit in Section 7 of \cite{hrushovski} but it is not written explicitly anywhere, so we include it here.

\begin{lemma}\label{archimedean rank of connected component}
    Suppose $T$ is a compact Hausdorff group and that $T^{0}$ is an inverse limit of compact Lie groups of dimension at most $r$. Then $T$ is an inverse limit of compact Lie groups of dimension at most $r$.
\end{lemma}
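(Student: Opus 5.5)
The plan is to use the family of Lie quotients of $T$ given by Fact \ref{peter-weyl 2}, and to show that each of these automatically has dimension at most $r$ by looking only at its identity component.

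By Fact \ref{peter-weyl 2}, let $(N_j:j\in J)$ be a downwards-directed family of closed normal subgroups of $T$ with $\bigcap_j N_j=\{1\}$ and each $T/N_j$ a compact Lie group. It then suffices to show that each $T/N_j$ has dimension at most $r$. For a compact Lie group $L$, the identity component $L^0$ is an open subgroup. Hence $\dim L=\dim L^0$, and it is enough to bound $\dim (T/N_j)^0$.

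Fix $j$ and let $\pi_j:T\to T/N_j$ be the quotient map, which is a continuous epimorphism of compact Hausdorff groups. By Lemma \ref{connected component of quotient}, $\pi_j(T^0)=(T/N_j)^0$. So $\pi_j$ restricts to a continuous group epimorphism $T^0\to (T/N_j)^0$. The target is a closed subgroup of the Lie group $T/N_j$, hence itself a compact Lie group. By hypothesis $T^0$ is an inverse limit of compact Lie groups of dimension at most $r$. Lemma \ref{lie quotient of far group} then gives $\dim (T/N_j)^0\leqslant r$, and therefore $\dim T/N_j\leqslant r$. Thus $T$ is the inverse limit of the $T/N_j$, each of dimension at most $r$, as required.

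I do not expect a real obstacle here. The only points needing care are the standard Lie-theoretic facts used above: closed subgroups of Lie groups are Lie, and the identity component of a compact Lie group is open, so it carries the full dimension. Everything else is supplied by Lemmas \ref{connected component of quotient} and \ref{lie quotient of far group}.
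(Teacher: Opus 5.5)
Your proposal is correct and follows essentially the same route as the paper: reduce via Peter--Weyl to a single Lie quotient $T/N$, use Lemma \ref{connected component of quotient} to identify $(T/N)^0$ with the image of $T^0$, note that this identity component carries the full dimension, and conclude with Lemma \ref{lie quotient of far group}. The only difference is that you apply Lemma \ref{lie quotient of far group} directly to the restricted epimorphism $T^0\to (T/N_j)^0$. The paper instead first passes through the isomorphism $T^0/(T^0\cap N)\cong T^0N/N$; since the lemma is stated for arbitrary continuous epimorphisms, your version is a harmless streamlining.
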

\begin{proof}
By Peter-Weyl $T$ is an inverse limit of compact Lie groups, so it suffices to show that, if $N$ is any closed normal subgroup of $T$ such that $T/N$ is a Lie group, then $T/N$ has dimension at most $r$. So fix such an $N$. By Lemma \ref{connected component of quotient}, we have $T^0N/N=(T/N)^0$, so $T/N$ and its closed subgroup $T^0N/N$ have the same dimension as Lie groups.

On the other hand we have a natural continuous group isomorphism $T^0/T^0\cap N\to T^0N/N$; since all the groups in question are compact Hausdorff, this map (being a continuous bijection) is a homeomorphism, so that $T^0/T^0\cap N$ and $T^0N/N$ are isomorphic as topological groups. But now $T^0/T^0\cap N$ is a Lie quotient of $T^0$, so by Lemma \ref{lie quotient of far group}, it has dimension at most $r$, so we are done.
\end{proof}

%\newpage
\section{Piecewise f-genericity}\label{piecewise f-generic section}
Throughout this section, \uline{let $T$ be an NIP theory and $\mathfrak{C}$ a monster model, and let $G$ a $\varnothing$-definable group}. Also, \uline{assume throughout that $\dcl(\varnothing)$ is a model, which we denote throughout by $M_0$}. We recall the following definitions from \cite{hrushovski_pillay} and \cite{chernikov_simon}, which are indispensable in the study of definably amenable NIP groups:

\begin{definition}\label{f-generic definition}
A definable subset $\phi(x,b)$ of $G$ is (left) `f-generic' if there is some small model $M$ such that no left translate of $\phi(x,b)$ forks over $M$, and (left) `strong f-generic' if no left translate of $\phi(x,b)$ forks over $M_0$.
\end{definition} (In \cite{hrushovski_pillay}, what we are calling `strong f-genericity' was just called `f-genericity', and the notion was renamed in \cite{chernikov_simon}; we follow the terminological conventions of \cite{chernikov_simon}.) As usual, we say that a partial type is (strong) f-generic if it does not imply any non-(strong)-f-generic formula. By Fact \ref{forking=dividing}, as observed in \cite{chernikov_simon}, f-genericity has a purely combinatorial characterization: a definable subset $D\subseteq G$ is not f-generic if and only if it `$G$-divides', meaning that there is some $k\in\omega$ and arbitrarily long $k$-inconsistent sequences of left translates of $D$. On the other hand, strong f-genericity is really a model-theoretic notion, without a combinatorial characterization.

We summarize some fundamental results, none of which we need, but which provide context for the rest of our paper. These facts come from \cite{hrushovski_pillay}, \cite{chernikov_simon}, and \cite{stonestrom}.
\begin{fact}\label{summary of basic f-generic facts}
    The following are equivalent:
    \begin{enumerate}
    \item $G$ is definably amenable.
    \item The non-(strong)-f-generic definable subsets of $G$ are an ideal, ie a union of two non-(strong)-f-generic definable sets is still not (strong) f-generic.
    \item $G$ admits a global (strong) f-generic type.
    \end{enumerate} Moreover, if $G$ is definably amenable, then a global type is strong f-generic if and only if it is f-generic and non-forking over $M_0$.
\end{fact}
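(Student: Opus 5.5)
Since this is a summary of results from \cite{hrushovski_pillay}, \cite{chernikov_simon}, and \cite{stonestrom}, I would organize the proof as a cycle of implications. The cycle is run twice: once for f-genericity and once for strong f-genericity. The measure-theoretic direction (1)$\Rightarrow$(3) comes first, then the easy passages between (2) and (3), and finally the hard return (3)$\Rightarrow$(1).

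For (1)$\Rightarrow$(3), I would start from a left-invariant Keisler measure on $G$. Using NIP (Borel definability and averaging, as in \cite{hrushovski_pillay}), I would upgrade it to a global left-invariant measure $\mu$ that is also $M_0$-invariant. The key observation is that a formula of positive $\mu$-measure cannot divide over $M_0$. Indeed, an $M_0$-indiscernible sequence of conjugates all have the same measure $\varepsilon>0$, so no $k$-subfamily can be inconsistent once more than $k/\varepsilon$ of them are taken. By Fact \ref{forking=dividing}, such a formula therefore does not fork over $M_0$. Left invariance of $\mu$ then gives that every translate of a positive-measure set also has positive measure, so every positive-measure set is strong f-generic. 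Any global type in the support of $\mu$ is therefore strong f-generic, hence also f-generic.

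For (3)$\Leftrightarrow$(2) I would argue as follows.
\begin{itemize}
\item Given (3), a union of two non-(strong)-f-generic sets cannot lie in a (strong) f-generic type, which is what one needs once (1) is available: the zero sets of the measure from (1) form an ideal containing all non-(strong)-f-generic sets. To close the cycle cleanly, I would prove (3)$\Rightarrow$(1)$\Rightarrow$(2).
\item Conversely, given (2), the formula $x\in G$ does not fork, so the ideal is proper. By compactness, the partial type of complements of ideal members extends to a global type, which is (strong) f-generic.
\end{itemize}

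The main obstacle is (3)$\Rightarrow$(1), which is the Chernikov--Simon argument and the step I would spend the most effort on. Take a global f-generic type $p$. I would show that its left stabilizer is $G^{00}$. For this I would use that, by NIP, $p$ is invariant over some small model, and that $G^{00}$-cosets are bounded; the point is that f-genericity forces every translate $gp$ to remain non-forking, and this pins the stabilizer down to $G^{00}$. Then define
$$\mu(\phi)=h(\{gG^{00}: \phi\in gp\}),$$
where $h$ is Haar measure on $G/G^{00}$. The measurability of this set is exactly where NIP (Borel definability of invariant types) is needed. Left invariance of $\mu$ follows from invariance of $h$. For the strong f-generic version, strong f-genericity implies f-genericity, so the same argument applies.

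Finally, the ``moreover'' clause follows from the same picture. A strong f-generic type is f-generic, and it does not fork over $M_0$ because it contains no formula forking over $M_0$. Conversely, let $p$ be f-generic and $M_0$-invariant (Fact \ref{non-forking iff invariant}). Then each translate $gp$ is again f-generic, and I would argue, using the stabilizer computation above, that $gp$ remains non-forking over $M_0$. Hence no translate of any formula in $p$ forks over $M_0$, i.e. $p$ is strong f-generic.
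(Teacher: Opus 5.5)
The paper does not prove this Fact; it quotes it from \cite{hrushovski_pillay}, \cite{chernikov_simon} and \cite{stonestrom}. Your cycle has a genuine gap at (3)$\Rightarrow$(1) for plain f-genericity.

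\textbf{The main gap: (3)$\Rightarrow$(1) for f-genericity.} You claim that, by NIP, a global f-generic type $p$ is invariant over some small model. This is false. NIP turns non-forking over a \emph{fixed} small model into invariance (Fact \ref{non-forking iff invariant}). But f-genericity of $p$ only says that each formula of $p$ has no forking translate over some model depending on that formula, and $p$ itself may fork over every small model. Here is a counterexample in the group $(R^2,+)$ of Example 3.10 of \cite{chernikov_simon}, with $R$ a monster real closed field:
\begin{itemize}
\item Let $s$ fill a cut of $R$ that is invariant over no small set, let $a_1>\mathrm{dcl}(R,s)$, and let $p=\mathrm{tp}(a_1,sa_1/R)$.
\item A direct computation gives $gp=p$ for every $g$. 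Hence no formula of $p$ $G$-divides, and $p$ is f-generic.
\item The image of $p$ under $x_2/x_1$ is $\mathrm{tp}(s/R)$, which is invariant over no small set, so $p$ is not invariant over any small model.
\end{itemize}
Without small-model invariance, and in a group not yet known to be definably amenable, you have no control on the orbit of $p$. So there is no reason for $\mathrm{Stab}(p)$ to have bounded index, let alone to equal $G^{00}$. You also lose Borel definability, so the sets $\{gG^{00}:\phi\in gp\}$ need not be measurable. Passing from f-generic (as opposed to strong f-generic) data back to definable amenability is precisely the part of the Fact for which the paper cites \cite{stonestrom}, and it requires a different argument. What you sketched is really the Hrushovski--Pillay argument for the strong case. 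There, strong f-genericity makes \emph{every} translate $gp$ $M_0$-invariant, which gives a bounded orbit, $M_0$-invariance of $\mathrm{Stab}(p)$, and Borel definability. So your reduction ``strong implies f-generic, hence the same argument applies'' runs in the wrong direction.

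\textbf{Two further steps need repair.} The directions (1)$\Rightarrow$(3) and (2)$\Rightarrow$(3) are fine.
\begin{itemize}
\item In (1)$\Rightarrow$(2), knowing that the null sets of one invariant measure form an ideal \emph{containing} the non-(strong)-f-generic sets says nothing about whether the latter are closed under unions; you would need the reverse containment. By compactness, (2) is equivalent to: every (strong) f-generic set lies in some global (strong) f-generic type. A sufficient condition is that every such set has positive measure for a left-invariant measure invariant over a suitable small model. In a definably amenable NIP group this is a genuine theorem from the cited sources, not a formal consequence of having one measure.
\item For the ``moreover'' clause, the missing input is Chernikov--Simon's result that in a definably amenable NIP group every f-generic type has bounded orbit (indeed $\mathrm{Stab}(p)=G^{00}$). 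Granting it, suppose $p$ is also $M_0$-invariant. Then $\mathrm{Stab}(p)$ is an $M_0$-invariant subgroup of bounded index, so it contains $g^{-1}\sigma(g)$ for every $\sigma\in\mathrm{Aut}(\mathfrak{C}/M_0)$. Hence $\sigma(gp)=\sigma(g)p=gp$, and each translate $gp$ is $M_0$-invariant, as required. As written, though, this step rests on your own stabilizer argument, which is the broken one.
\end{itemize}
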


Our fundamental insight in this paper is to identify appropriate ideals of definable sets in an \textit{arbitrary} NIP group, which we can use to substitute the use of (strong) f-generic sets in definably amenable NIP groups. We identify three natural ideals, which we give in the definition below; my motivation for the terminology came from the notion of a `piecewise syndetic' set in ergodic theory.
\begin{notation}
    Given a definable subset $\phi(x,b)$ of $G$ and a finite subset $s\subset G(\mathfrak{C})$, we use $s\phi(x,b)$ to denote the formula $\bigvee_{g\in s}g\phi(x,b)$.
\end{notation}
\begin{definition}\label{piecewise f-generic definition}
    A definable subset $\phi(x,b)$ of $G$ is `piecewise f-generic' if there is some finite $s\subset G(\mathfrak{C})$ such that $s\phi(x,b)$ is f-generic, `piecewise strong f-generic' if there is some finite $s\subset G(\mathfrak{C})$ such that $s\phi(x,b)$ is strong f-generic, and `piecewise strong f-generic with witnesses in $M_0$' if there is some finite $s\subset G(M_0)$ such that $s\phi(x,b)$ is strong f-generic.
\end{definition}

We summarize the basic properties of these three notions and give some examples in Section \ref{basic facts and examples section} below. The main result of Section \ref{piecewise f-generic section} is to show that (the negations of) the three notions defined above all give rise to ideals in the Boolean algebra of definable subsets of $G$; Sections \ref{weak genericity section}, \ref{piecewise strong f-genericity section}, and \ref{wideness section} respectively prove each of these.

The first and easiest of these cases is to show that the non-piecewise-f-generic sets form an ideal; in fact, we will point out that piecewise f-genericity coincides with Newelski's notion of `weak genericity'. Although not observed explicitly anywhere in the literature, this result is not really new, and it follows by arguments already in Section 3.3 of \cite{chernikov_simon}.

The notion of piecewise f-genericity will however be too weak to obtain our main applications in Section \ref{existence section} and Section \ref{FAR section}, and we will really need the notions of piecewise strong f-genericity and piecewise strong f-genericity with witnesses in $M_0$; unlike in the case of piecewise f-genericity, the techniques to prove that these latter notions give rise to ideals are new and we hope of independent interest. In fact, for our applications we will really only need the strongest notion, that of piecewise strong f-genericity with witnesses in $M_0$, and for technical reasons it is cleaner for us to prove that that notion gives rise to an ideal before showing that piecewise strong f-genericity does. Nonetheless, we still record the result on piecewise strong f-genericity, because it gives a pleasant duality; namely, as remarked in the introduction, in arbitrary NIP groups one cannot hope for a translation-invariant S1 ideal. However, the piecewise strong f-generic sets will give rise to a translation-invariant ideal, and the piecewise strong f-generic sets with witnesses in $M_0$ will give rise to an S1 ideal, and we think of these two notions as somehow being the two best alternatives in the absence of a canonical translation-invariant S1 ideal.

\subsection{Basic facts and examples}\label{basic facts and examples section}
Here we will summarize some basic examples and easy properties of the notions from Definition \ref{piecewise f-generic definition}.

First, by Fact \ref{summary of basic f-generic facts}, we have the following:

\begin{remark}\label{notions collapse in definably amenable groups}
    In a definably amenable NIP group, a definable subset is piecewise f-generic if and only if it is f-generic, and it is piecewise strong f-generic if and only if it is strong f-generic, hence if and only if it is piecewise strong f-generic with witnesses in $M_0$.
\end{remark}

Let us also observe the following:

\begin{remark}\label{wide implies non-forking}
    If $\phi(x,b)$ forks over $M_0$, then so does $g\phi(x,b)$ for any $g\in G(M_0)$. Thus if $\phi(x,b)$ is piecewise strong f-generic with witnesses in $M_0$ then in particular $\phi(x,b)$ does not fork over $M_0$.
\end{remark}
The following is easy to see, but requires a little bit more setup, so we postpone its proof to Lemma \ref{piecewise f-generic coincides with piecewise strong f-generic for M0 definable sets}.
\begin{remark}
    If $D\subseteq G$ is $M_0$-definable, then $D$ is piecewise f-generic if and only if it is piecewise strong f-generic if and only if it is piecewise strong f-generic with witnesses in $M_0$.
\end{remark}
We have two other basic combinatorial examples; recall that a definable subset is (left) generic if some union of finitely many left translates of it covers $G$, and right generic if some union of finitely many right translates of it covers $G$.
\begin{remark}\label{genericity implications}
    Let $\phi(x,b)$ be a definable subset of $G$. If $\phi(x,b)$ is generic, then it is piecewise strong f-generic. If $\phi(x,b)$ is right generic, then it is f-generic and hence piecewise f-generic.
\end{remark}
\begin{proof}
    The first part is obvious, since $G$ is a strong f-generic subset of itself. The second part is folklore and can essentially be found in \cite{gismatullin}, among other places; nonetheless we give the details. Let $g_1,\dots,g_n$ be such that $G(x)\vdash \bigcup_{i\in[n]}\phi(x,b)g_i$, and let $M$ be any model containing $(b,g_1,\dots,g_n)$; it is enough to show that no translate of $\phi(x,b)$ forks over $M$. By Fact \ref{forking=dividing}, it is enough to show that no translate of $\phi(x,b)$ divides over $M$. So let $(a_i:i\in\omega)$ be an $M$-indiscernible sequence. There is now some $i\in[n]$ with $a_0^{-1}\models\phi(x,b)g_i$, and by $M$-indiscernible we have $a_k^{-1}\models \phi(x,b)g_i$ for all $k\in\omega$, so now $g_i^{-1}\in\bigcap_{k\in\omega}a_k\phi(x,b)$. So in particular $\bigcap_{k\in\omega}a_k\phi(x,b)$ is consistent, and since $(a_k:k\in\omega)$ was arbitrary it follows that no translate of $\phi(x,b)$ divides over $M$.
\end{proof}

Another easy observation is the following:
\begin{remark}\label{translation invariance of piecewise (strong) f-genericity}
    A left translate of a piecewise f-generic (resp. piecewise strong f-generic) set is still piecewise f-generic (resp. piecewise strong f-generic). A right translate of a piecewise f-generic set is still piecewise f-generic.
\end{remark}
\begin{proof}
    Suppose $s\phi(x,b)$ is (strong) f-generic for some finite $s\subset G(\mathfrak{C})$. Then, for any $a$, if we let $t=sa^{-1}=\{ga^{-1}:g\in s\}$, we have $t(a\phi(x,b))=s\phi(x,b)$, so that $a\phi(x,b)$ is also piecewise (strong) f-generic.

    On the other hand, suppose $s\phi(x,b)$ is f-generic for some finite $s\subset G(\mathfrak{C})$. By the `$G$-dividing' characterization of f-genericity, it is easy to see that any right translate $s\phi(x,b)a$ is also f-generic for any $a\in G(\mathfrak{C})$, so that $\phi(x,b)a$ is indeed piecewise f-generic.
\end{proof} 
Now let us give some examples distinguishing the notions. All of the basic kinds of behavior can already be found in definable groups in the fields $\mathbb{R}$ and $\mathbb{Q}_p$.

\begin{example} Let $M_0=(\mathbb{R},+,\cdot)$ and let $R$ be a saturated model.
    \begin{enumerate}
    \item If $G$ is definably amenable and $\phi(x,b)$ is f-generic but not strong f-generic, then by Remark \ref{notions collapse in definably amenable groups} $\phi(x,b)$ is not piecewise strong f-generic either. The simplest example of this is from Example 3.10 in \cite{chernikov_simon}: let $\phi(x_1,x_2;b_1,b_2)$ be the definable subset of $(R^2,+)$ given by $x_1\neq 0\wedge (b_1<x_2/x_1<b_2)$. Then $\phi(\bar{x},\bar{b})$ is f-generic but not piecewise strong f-generic.
    \item If $G$ is not definably amenable, there may be subsets of $G$ that are generic but not f-generic. The easiest example of this is in $G=\mathrm{SL}_2$ as a definable group in $M_0$. Indeed, consider the definable subset $D=\left\{\begin{bmatrix} a & b \\ c & d \end{bmatrix}:|a|\geqslant|c|\right\}$ of $G$. On the one hand, letting $g_n=\begin{bmatrix} 1 & 0 \\ 3n & 1 \end{bmatrix}$  for each $n\in\omega$, then we have that $g_iD\cap g_jD=\varnothing$ for each $i\neq j$, so that $D$ $G$-divides, and thus we may find some $g\in\mathrm{SL}_2(R)$ such that $gD$ forks over $M_0$. On the other hand, letting $h=\begin{bmatrix} 0 & -1 \\ 1 & 0 \end{bmatrix}$, we have $G=D\cup hD$, so that $D$ is generic. Now $gD$ is still generic, so it is piecewise strong f-generic by Remark \ref{genericity implications}, but by Remark \ref{wide implies non-forking} it is not piecewise strong f-generic with witnesses in $M_0$.
    \item Even if $G$ is definably amenable one may find f-generic sets that are not right f-generic. The easiest example is given in Example 6.4 from \cite{chernikov_simon}. Let $G$ be the semidirect product $(R^2,+)\rtimes S^1$, where the circle group $S^1$ acts by rotations. If $0<a$ is a positive infinitessimal, then the set $X_a=C_a\times S^1$ from Example 6.4 in \cite{chernikov_simon} is right f-generic but not f-generic, so $D=(X_a)^{-1}$ is f-generic but not right f-generic. Since $D$ is $M_0$-definable and f-generic, it is strong f-generic. However, it is not right f-generic, so we may find $g\in G(R)$ such that $Dg$ forks over $M_0$; then $Dg$ is not strong f-generic, so by Remark \ref{notions collapse in definably amenable groups} it is not piecewise strong f-generic. In particular the second part of Remark \ref{translation invariance of piecewise (strong) f-genericity} does not hold for piecewise strong f-genericity.
    \end{enumerate} Similar examples may be found in $\mathbb{Q}_p$.
\end{example}

\subsection{Weak genericity}\label{weak genericity section}
This section should be seen as an easy `warmup' to the substantially more difficult results in the subsequent two section. In particular, the result of this section basically follows immediately from arguments already present in Section 3.3 of \cite{chernikov_simon}. (See also Section 3 of our paper \cite{pillay_stonestrom} with Anand Pillay, where we adapted the arguments from Section 3.3 of \cite{chernikov_simon} to the setting of the automorphism group acting on the space of types.)

Recall from \cite{newelski_1} that a definable subset $\phi(x,b)$ of $G$ is said to be `weakly generic' if there is a non-generic definable subset $\psi(x,c)$ of $G$ such that $\phi(x,b)\vee\psi(x,c)$ is generic. In the language from Section \ref{topological dynamics section}, this says precisely that the clopen subset of $S_G(\mathfrak{C})$ of types concentrated on $\phi(x,b)$ is a weakly generic set in the flow $(G(\mathfrak{C}),S_G(\mathfrak{C}))$. We will observe that the weakly generic sets correspond precisely to the piecewise f-generic sets.

In one direction, we have the following, which does not require an NIP assumption (see also Lemma 3.1 in \cite{pillay_stonestrom}, where we point out the same thing in the automorphism group setting); it is exactly the content of the proof of Proposition 3.30 in \cite{chernikov_simon}.
\begin{fact}\label{weak generic implies piecewise f-generic}
    If $\phi(x,b)$ is a weakly generic definable subset of $G$, then there is some finite $s\subset G(\mathfrak{C})$ such that $s\phi(x,b)$ does not $G$-divide.
\end{fact}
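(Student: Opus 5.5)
The plan is to argue by contradiction, using only the combinatorial definition of $G$-dividing and the finite covering of $G$. Suppose $\phi(x,b)$ is weakly generic. Then there is a non-generic definable $\psi(x,c)$ and a finite $t\subset G(\mathfrak{C})$ with $G=t\phi(x,b)\cup t\psi(x,c)$. Replacing $t$ by a larger finite set does not affect this covering. So it suffices to show that for some finite $s$, the set $s\phi(x,b)$ does not $G$-divide. Suppose instead that for \emph{every} finite $s\subset G(\mathfrak{C})$ the set $s\phi(x,b)$ $G$-divides. The aim is then to show that $\psi(x,c)$ is generic, which is the desired contradiction.

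The key construction is an iteration. Start with $s_0=t$ and $D_0=s_0\phi(x,b)$. Since $D_0$ $G$-divides, there are $k_0$ and arbitrarily long sequences of left translates of $D_0$ that are $k_0$-inconsistent. From such a sequence I will extract a point, or finitely many points, lying outside most of the translates. Concretely, for a $k$-inconsistent family $g_1D,\dots,g_nD$, every $x$ lies in fewer than $k$ of the $g_iD$. Applying this to $x$ ranging over a finite set $F$, with $n$ large relative to $|F|k$, gives a single $g=g_i$ with $F\cap gD=\varnothing$.

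Using this, I will show that for any finite $F\subset G$ there is $g$ with $F\cap g\,t\phi(x,b)=\varnothing$. Then $g^{-1}F\cap t\phi=\varnothing$, so $g^{-1}F\subseteq t\psi(x,c)$, and hence $F\subseteq gt\psi(x,c)$. This shows that every finite subset of $G$ is covered by $|t|$ translates of $\psi$. By compactness, reading this as the statement that the type $\{x\in G\}\cup\{\neg\bigvee_{h\in t}y h\psi(x,c)\}$ is finitely satisfiable in $x$ for each choice of the translating parameter, this should upgrade to $G$ being covered by $|t|$ translates of $\psi$ in the monster model.

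The main obstacle is precisely this last compactness step. "Every finite set lies in one translate $g\,t\psi$" does not immediately give "finitely many translates cover $G$": one needs a uniform bound on the number of translates. What I would do first is use saturation. Consider the partial type in variable $y$ (for $g$) saying $a\in y\,t\psi(x,c)$ for all $a\in G(\mathfrak{C})$. This is not small, so instead I would take a small model $N$ containing $b,c,t$. Finite satisfiability then gives some $g$ with $G(N)\subseteq gt\psi$. From that, I would try to get genericity of $\psi$ via the standard fact that if $G(N)$ is covered by translates of an $N$-definable family, then the definable set $\{y: G\subseteq \ldots\}$ is handled elementarily. If this route fails, I would fall back on the argument in Proposition 3.30 of \cite{chernikov_simon}. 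That argument works with the Ramsey/Newelski characterization that a definable set $\psi$ is non-generic if and only if for every finite $u$ some $x$ avoids $u\psi$. It combines this with the $k$-inconsistent translates to choose $s$ by a finite greedy induction that terminates by the $k$-inconsistency bound.
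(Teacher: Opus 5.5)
There is a genuine gap, exactly at the step you flag. The statement you reduce to is: for every finite $F\subseteq G$ there is $g$ with $F\subseteq g\,t\psi(x,c)$. This does not imply that $\psi(x,c)$ is generic, so no compactness or saturation argument can close it. Its quantifiers are $\forall F\,\exists g$, whereas genericity needs $\exists g_1,\dots,g_m\,\forall x$. For instance, in a model of $\mathrm{Th}(\mathbb{Z},+,<)$ the ray $D=\{x>0\}$ is not generic, since a finite union of translates $\{x>a_i\}$ misses $\min_i a_i$. Yet every finite $F$ lies in the translate $(\min F-1)+D$. Your saturation variant, which produces one $g$ with $G(N)\subseteq g\,t\psi(x,c)$, fails for the same reason: take $g$ below all of $N$. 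Your fallback `greedy induction' is not specified enough to count as an argument.

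The information you discarded is the inconsistency itself. You used $k$-inconsistency only to bound how many translates a point lies in, and then kept a single translate avoiding $F$. Use it directly instead. Suppose $t\phi(x,b)$ $G$-divides, and take $g_1,\dots,g_k$ from a long enough witnessing sequence, so that $\bigcap_{i\in[k]}g_it\phi(x,b)=\varnothing$. Since $G=t\phi(x,b)\cup t\psi(x,c)$, also $G=g_it\phi(x,b)\cup g_it\psi(x,c)$ for each $i$. Every $x\in G$ misses some $g_it\phi(x,b)$, so it lies in $g_it\psi(x,c)$. Hence $G=\bigcup_{i\in[k]}g_it\psi(x,c)$, a union of $k|t|$ left translates of $\psi(x,c)$, contradicting non-genericity. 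So $s=t$ already works, with no iteration and no compactness. This is the argument behind the paper's citation of \cite{chernikov_simon}.
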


The other direction uses NIP essentially; it is exactly Proposition 3.33 in \cite{chernikov_simon}. Proposition 3.33 in \cite{chernikov_simon} assumes that $G$ is definably amenable, but this assumption is not used anywhere in the proof, and so the result holds without that assumption. (See also Lemma 3.2 in \cite{pillay_stonestrom}, where we point out the same thing in the automorphism group setting.)

\begin{fact}\label{finitely many almost periodic types}
    If $\phi(x,b)$ is an f-generic definable subset of $G$, then there are finitely many almost periodic types $p_1,\dots,p_n$ of the flow $(G(\mathfrak{C}),S_G(\mathfrak{C}))$ such that, for any $g\in G(\mathfrak{C})$, there is some $i\in[n]$ with $p_i\vdash g\phi(x,b)$.
\end{fact}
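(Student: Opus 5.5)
The plan is to follow the strategy of Proposition 3.33 in \cite{chernikov_simon}, which is purely topological-dynamical plus one NIP input, and to check that definable amenability is never used.

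Fix the f-generic set $\phi(x,b)$ and let $X=S_G(\mathfrak{C})$ with the $G(\mathfrak{C})$-action. Consider the family of clopen sets $[g\phi(x,b)]$ for $g\in G(\mathfrak{C})$. Suppose for contradiction that no finite set of almost periodic types meets every translate. By compactness of the minimal subflows and a saturation argument, we then build a sequence $(g_i:i\in\omega)$ of elements of $G(\mathfrak{C})$ such that the sets $[g_i\phi(x,b)]$ behave badly, in the following sense: every almost periodic type lies in only boundedly many of them, or more precisely the family $(g_i\phi(x,b))$ can be made $k$-inconsistent on the almost periodic part. The precise target is a contradiction with f-genericity via $G$-dividing. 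Since $\phi(x,b)$ is f-generic, no translate divides over a model $M\ni b$ (Fact \ref{forking=dividing}). By Ramsey and compactness, we may take the $g_i$ to form an $M$-indiscernible sequence, so $\bigcap_i g_i\phi(x,b)$ is consistent.

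The key NIP input is the following. For an indiscernible sequence $(g_i)$, NIP implies that each type $p\in X$ satisfies $g_i\phi(x,b)$ for either cofinitely many or finitely many $i$, with a uniform bound on alternations. We use this to pass from consistency of infinitely many $g_i\phi(x,b)$ to a single point of $X$ lying in eventually all of them. We then push that point into a minimal subflow. Take $p$ realizing $\bigcap_i g_i\phi(x,b)$, and choose an idempotent $u$ in a minimal left ideal of the Ellis semigroup $E(X)$; $up$ is almost periodic. Next, approximate $u$ by translates $\pi_h$. Using indiscernibility and the finite-alternation bound (the eventual truth value is constant along the sequence), we show that $up$ lies in $g_i\phi(x,b)$ for infinitely many $i$. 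This contradicts the construction of $(g_i)$, in which each almost periodic type was in only finitely many translates.

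To get finitely many $p_1,\dots,p_n$ in the first place, we run the argument as in \cite{chernikov_simon}. Consider the closed set $\mathrm{AP}$ (its closure, which is a union of minimal flows) and the clopen family $\{[g\phi(x,b)]\}$. The claim becomes that the set system $\{[g\phi]\cap \mathrm{AP}\}$ has a finite transversal. By the $(p,q)$-theorem style argument (Fact \ref{definable p,q theorem} or the Matoušek $(p,q)$-theorem for finite VC-dimension families, since $\{g\phi(x,b)\}$ has finite VC-dimension by NIP), it suffices to show that the family has the $(p,q)$-property with respect to almost periodic types: among any $p$ translates, some $q$ share an almost periodic type. That $(p,q)$-property follows from the previous paragraph, using non-$G$-dividing to find consistent large subfamilies and the Ellis-idempotent argument to move a common point into AP. We then extract the transversal by the Alon–Kleitman–Matoušek theorem, applied to the VC-family restricted to the (compact) set of almost periodic types.

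The main obstacle is the middle step: showing that applying the idempotent $u$ to a type in many translates still leaves it in many translates. Left multiplication by $u$ moves $p$ in a way not controlled by the $g_i$. I would handle this as Chernikov–Simon do, by choosing $p$ finitely satisfiable in (or invariant over) a suitable model and using the Morley-product description of $u p$, so that membership in $g_i\phi$ becomes a definable, indiscernibility-respecting condition.
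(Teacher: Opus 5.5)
Your argument has a genuine gap, and it is exactly the step you flag as the main obstacle. Write the idempotent as $u=\lim_\alpha \pi_{h_\alpha}$ and suppose $p\vdash g_i\phi(x,b)$. Then $h_\alpha p$ lies in $h_\alpha g_i\phi(x,b)$, not in $g_i\phi(x,b)$. So nothing constrains which of the fixed translates $up$ lies in. Indiscernibility of $(g_i)$, finite alternation, and choosing $p$ finitely satisfiable do not address this, because $u$ moves $p$ while the family of translates stays put.

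In fact the intermediate claim you rely on is false. That claim is that a common point of a consistent subfamily of translates can be pushed into an almost periodic type lying in the same translates. Take $\mathrm{SO}_2$ over a saturated real closed field and let $\phi$ be the closed upper half circle. It is generic, hence f-generic since the group is abelian. Its rotation by $\pi$ meets it in exactly two points. A finite subset of an infinite group is never weakly generic: if $F\cup V$ is generic with $F$ finite, then finitely many translates of $V$ cover all but finitely many points, and one further translate of that union covers the rest, so $V$ is generic. Hence, by Newelski's fact that almost periodic types are weakly generic, no almost periodic type contains this intersection. Your $(p,q)$-property relative to almost periodic types is true, but only as a pigeonhole consequence of the statement itself.

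Two other steps also fail. The reduction in your second paragraph, to a sequence in which every almost periodic type lies in boundedly many translates, is not justified. And the set of almost periodic points need not be closed, while its closure need not be a union of minimal subflows (in the full shift on $\{0,1\}^{\mathbb{Z}}$ the periodic points are dense, yet not every point is almost periodic). So a compactness argument in $\overline{\mathrm{AP}}$ only yields transversals that need not be almost periodic.

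The missing idea is to apply the $(p,q)$-theorem in the whole type space and then move the entire finite transversal at once. The paper gives no proof of its own; it cites Proposition 3.33 of \cite{chernikov_simon}, and I believe this is the route taken there. The argument runs as follows.

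\begin{enumerate}
\item Since $\phi(x,b)$ does not $G$-divide, for every $k$ there is $n$ such that among any $n$ left translates some $k$ have a common point.
\item NIP bounds the dual shatter function of the family of translates. So Matou\v{s}ek's theorem, with $k$ above the dual VC-density, gives $N$ such that every finite subfamily is pierced by $N$ points.
\item By compactness there are $q_1,\dots,q_N\in S_G(\mathfrak{C})$ such that every $g\phi(x,b)$ lies in some $q_i$.
\item Let $\mathcal{C}$ be the set of tuples $(r_1,\dots,r_N)\in S_G(\mathfrak{C})^N$ such that every left translate of $\phi(x,b)$ lies in some $r_i$. It is nonempty, closed (an intersection over $g$ of clopen sets), and invariant under the diagonal action, since $r_i\vdash h^{-1}g\phi(x,b)$ implies $hr_i\vdash g\phi(x,b)$.
\item So $\mathcal{C}$ contains a minimal subflow of $S_G(\mathfrak{C})^N$. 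Coordinate projections of a minimal subflow are minimal subflows of $S_G(\mathfrak{C})$, so any tuple in it consists of almost periodic types with the required property. Equivalently, apply one element of a minimal left ideal of the Ellis semigroup to all the $q_i$ simultaneously.
\end{enumerate}

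The covering property is invariant for the whole tuple, even though membership of a single type in a fixed translate is not. That invariance is exactly what your single-point approach lacks.
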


So altogether we get the following:

\begin{proposition}\label{piecewise f-genericity coincides with weak genericity}
    A definable subset of $G$ is piecewise f-generic if and only if it is weakly generic. In particular, the non-piecewise-f-generic definable subsets of $G$ form an ideal.
\end{proposition}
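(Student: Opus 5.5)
The plan is to prove the two directions separately and then read off the ideal property from the weakly-generic side.

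For ``weakly generic implies piecewise f-generic'', Fact \ref{weak generic implies piecewise f-generic} gives a finite $s\subset G(\mathfrak{C})$ such that $s\phi(x,b)$ does not $G$-divide. By Fact \ref{forking=dividing}, not $G$-dividing is equivalent to f-genericity: if some translate of $s\phi(x,b)$ forked over a model $M$ containing $b$ and $s$, it would divide over $M$, which would give arbitrarily long $k$-inconsistent sequences of translates. Hence $s\phi(x,b)$ is f-generic, and $\phi(x,b)$ is piecewise f-generic.

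For the converse, suppose $s\phi(x,b)$ is f-generic for some finite $s$. Apply Fact \ref{finitely many almost periodic types} to $s\phi(x,b)$ to obtain almost periodic types $p_1,\dots,p_n$ of the flow $(G(\mathfrak{C}),S_G(\mathfrak{C}))$ such that every left translate $g\,s\phi(x,b)$ is contained in some $p_i$. In particular $p_1\vdash g\,s\phi(x,b)$ for some $g$, so $p_1$ concentrates on $gh\phi(x,b)$ for some $h\in s$. The flow is point-transitive, since the orbit of $\tp(1/\mathfrak{C})$ is dense, so Fact \ref{almost periodic implies weak generic} says $p_1$ is weakly generic. Therefore every clopen neighborhood of $p_1$ is weakly generic, and in particular $[gh\phi(x,b)]$ is. Weak genericity of a clopen set is preserved under translation (just translate the witnessing non-generic set and the covering translates), so $[\phi(x,b)]$ is weakly generic. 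Finally, I would convert the topological witness back into a definable one. If $U=[\phi]$ and $V$ is an open non-generic set with $U\cup V$ generic, then by compactness finitely many translates of $U\cup V$ cover $S_G(\mathfrak{C})$. Shrinking $V$ to a finite union of clopen sets $[\psi_j]$ preserves both properties: the union stays generic by compactness, and a subset of a non-generic set is non-generic. This yields a definable non-generic $\psi$ with $\phi\vee\psi$ generic.

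For the ideal statement, I would use translation invariance together with the following standard argument for weakly generic sets. Suppose $\phi_1\vee\phi_2$ is weakly generic, witnessed by a non-generic $\psi$. If $\phi_2\vee\psi$ is generic, then $\phi_2$ is weakly generic. Otherwise $\phi_2\vee\psi$ is a non-generic set whose union with $\phi_1$ is generic, so $\phi_1$ is weakly generic. Hence the non-weakly-generic sets are closed under finite unions, and they are obviously closed under subsets. By the equivalence already proved, the non-piecewise-f-generic sets therefore form an ideal. The main point requiring care is the passage between open weakly generic subsets of $S_G(\mathfrak{C})$ and definable ones, but compactness handles it as described above.
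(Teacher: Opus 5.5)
Your proposal is correct and is essentially the paper's proof. The backward direction is Fact~\ref{weak generic implies piecewise f-generic}, with Fact~\ref{forking=dividing} identifying non-$G$-dividing with f-genericity. The forward direction takes an almost periodic type concentrating on $s\phi(x,b)$ via Fact~\ref{finitely many almost periodic types}, applies Fact~\ref{almost periodic implies weak generic}, and uses translation invariance of weak genericity. The extra details you supply are correct and simply left implicit in the paper: the compactness argument passing between weakly generic clopen subsets of $S_G(\mathfrak{C})$ and weakly generic definable sets, and the standard check that the non-weakly-generic sets form an ideal.
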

\begin{proof}
The backwards direction is exactly Fact \ref{weak generic implies piecewise f-generic}. For the forwards direction, suppose $\phi(x,b)$ is piecewise f-generic, and let $s\subset G(\mathfrak{C})$ be a finite set such that $s\phi(x,b)$ is f-generic. By Fact \ref{finitely many almost periodic types}, there is an almost periodic type $p$ of the flow $(G(\mathfrak{C}),S_G(\mathfrak{C}))$ such that $p\vdash s\phi(x,b)$. So $p\vdash g\phi(x,b)$ for some $g\in G(\mathfrak{C})$. By Fact \ref{almost periodic implies weak generic}, $g\phi(x,b)$ is weakly generic, and since the ideal of weakly generic sets is translation-invariant $\phi(x,b)$ is weakly generic too.
\end{proof}

\subsection{Wideness}\label{wideness section}
Now we move on to our main result. For technical reasons, it is convenient to first prove that the definable sets that are not piecewise strong f-generic with witnesses in $M_0$ form an ideal, before proving the analogous result about piecewise strong f-genericity, despite the fact that the former notion is stronger than the latter. We will prove the latter result in Section \ref{piecewise strong f-genericity section}. To make our terminology less cumbersome, we will use a shorthand term, which we will use throughout the rest of the paper:

\begin{notation}\label{wideness definition}
    Say that a definable subset of $G$ is `wide' if and only if it is piecewise strong f-generic with witnesses in $M_0$. Thus $\phi(x,b)$ is wide iff there is a finite subset $s\subset G(M_0)$ such that $s\phi(x,b)$ is strong f-generic, ie such that no translate of $s\phi(x,b)$ forks over $M_0$.
\end{notation}

Our starting point is the following key observation.
\begin{lemma}\label{key observation}
    Let $\psi_i(x,z):i\in[n]$ be formulas that imply $x\in G$, and let $(a_i,c_i:i\in[n])$ be a sequence such that $a_{> i}\ind_{M_0}^u (a_{\leqslant i},c_i)$ and $a_{<i}\ind_{M_0} (a_i,c_i)$ for each $i$. Suppose that $a_ig\psi_i(x,c_i)$ forks over $M_0$ for all $g\in G(M_0)$. Then, letting $b=a_1\ldots a_n$, $bg\psi_i(x,c_i)$ forks over $M_0$ for all $g\in G(M_0)$ and $i\in[n]$.
\end{lemma}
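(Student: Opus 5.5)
The plan is to fix $i\in[n]$ and $g\in G(M_0)$ and split $b=a_{<i}\,a_i\,a_{>i}$, where $a_{<i}=a_1\cdots a_{i-1}$ and $a_{>i}=a_{i+1}\cdots a_n$. We absorb the three factors in two stages. First we absorb $a_{<i}$ on the left, using the non-forking hypothesis $a_{<i}\ind_{M_0}(a_i,c_i)$. Then we absorb $a_{>i}$ in the middle, using the coheir hypothesis $a_{>i}\ind^u_{M_0}(a_{\leqslant i},c_i)$ together with lowness. The order matters: the non-forking hypothesis on $a_{<i}$ only mentions $(a_i,c_i)$ and not $a_{>i}$, so $a_{<i}$ must be handled before $a_{>i}$ enters.

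\emph{Step 1: the left factor.} We show that $a_{<i}a_ig'\psi_i(x,c_i)$ forks over $M_0$ for every $g'\in G(M_0)$. This follows from a general claim: if $a\ind_{M_0} d$ and $\chi(x,d)$ is a subset of $G$ forking over $M_0$, then $a\chi(x,d)$ forks over $M_0$. We apply it with $a=a_{<i}$, $d=(a_i,c_i)$ and $\chi(x,d)=a_ig'\psi_i(x,c_i)$; since $g'\in M_0$, $\chi$ is a formula over $M_0d$, and it forks over $M_0$ by hypothesis.

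To prove the claim, suppose instead that $a\chi(x,d)$ does not fork over $M_0$. By extension we find $e\models a\chi(x,d)$ with $e\ind_{M_0}(a,d)$. Base monotonicity, which holds for forking in any theory, gives $e\ind_{M_0a}d$. Combining this with $a\ind_{M_0}d$ by left-transitivity gives $(e,a)\ind_{M_0}d$. Hence $a^{-1}e\ind_{M_0}d$, because $a^{-1}e\in\dcl(M_0,e,a)$. But $a^{-1}e\models\chi(x,d)$, which forks over $M_0$, a contradiction.

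\emph{Step 2: the middle factor.} Put $h=a_{>i}g\in\dcl(M_0,a_{>i})$. Since $a_{>i}\ind^u_{M_0}(a_{\leqslant i},c_i)$, the type $\tp(h/M_0,a_{\leqslant i},c_i)$ is finitely satisfiable in $M_0$. Now apply Fact \ref{lowness} to the formula $\theta(x;y,w,z):=yw\psi_i(x,z)$, and combine it with Fact \ref{forking=dividing}. This gives a partial type $\pi(y,w,z)$ over $M_0$ such that $yw\psi_i(x,z)$ forks over $M_0$ iff $(y,w,z)\models\pi$.

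Suppose toward a contradiction that $a_{\leqslant i}h\psi_i(x,c_i)$ does not fork over $M_0$. Then some formula $\rho\in\pi$ fails at $(a_{\leqslant i},h,c_i)$, so $\neg\rho(a_{\leqslant i},w,c_i)\in\tp(h/M_0,a_{\leqslant i},c_i)$. By finite satisfiability this formula is realized by some $g'\in G(M_0)$, and then $a_{\leqslant i}g'\psi_i(x,c_i)$ does not fork over $M_0$. This contradicts Step 1. Therefore $bg\psi_i(x,c_i)=a_{\leqslant i}h\psi_i(x,c_i)$ forks over $M_0$, as required.

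The main point needing care is Step 2, namely that ``this translate forks over $M_0$'' is type-definable over $M_0$ in the translating parameter. That is exactly what lowness together with forking $=$ dividing over models provides. It is also why the hypothesis requires forking for all $g\in G(M_0)$, rather than only for $g=1$: the coheir $a_{>i}$ is approximated by elements of $G(M_0)$.
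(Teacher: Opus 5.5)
Your proof is correct, and it takes a genuinely different and shorter route than the paper's.

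\textbf{The paper's route.} The paper argues by contradiction through one explicit dividing pattern:
\begin{itemize}
\item It builds an $|M_0|^+$-saturated $M\supseteq(M_0,a_i,c_i)$ with $a_{>i}\ind^u_{M_0}(M,a_{<i})$ and $a_{<i}\ind_{M_0}M$.
\item Inside $M$ it places a Morley sequence $(a_{ik},c_{ik})$ in a strictly non-forking extension of $\tp(a_i,c_i/M_0)$.
\item Fact \ref{kim's lemma} together with lowness gives a single $N$ such that $\bigwedge_{k\in[N]}a_{ik}g\psi_i(x,c_{ik})$ is inconsistent for all $g\in G(M_0)$.
\item Left transitivity and Fact \ref{non-forking iff invariant} make $(a_{<i}a_{ik}a_{>i},c_{ik})_k$ indiscernible over $M_0$. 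So non-forking of $bg_0\psi_i(x,c_i)$ yields a consistent conjunction along this sequence.
\item After cancelling $a_{<i}$, the coheir hypothesis pulls $a_{>i}$ down to some $g_1\in G(M_0)$, contradicting the choice of $N$.
\end{itemize}
Thus the paper absorbs both outer factors at once by transporting a single $k$-inconsistent pattern. The uniform $N$ is what allows one formula to be pulled back into $M_0$.

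\textbf{Your route.} You decouple the two factors instead.
\begin{itemize}
\item Your Step 1 is forking calculus valid in any theory: left translation by an element independent over $M_0$ from the parameters preserves forking.
\item Your Step 2 is the type-definability trick from Facts \ref{lowness} and \ref{forking=dividing}, which the paper itself uses in Lemma \ref{union of conjugates wide implies wide} and Lemma \ref{type-definability}. You apply it to the set of $w$ for which $a_{\leqslant i}w\psi_i(x,c_i)$ forks over $M_0$. That set is type-definable over $(M_0,a_{\leqslant i},c_i)$ and contains $G(M_0)$, so it contains $a_{>i}g$. You should conjoin $w\in G$ so that the realizing element lies in $G(M_0)$, but this is harmless.
\end{itemize}

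\textbf{Comparison.} Your version avoids Kim's lemma, strictly non-forking extensions, the auxiliary saturated model, and the invariance characterization of non-forking. Its only NIP-specific input is type-definability of forking over $M_0$. The paper's version, in exchange, stays in the same Morley-sequence idiom as the surrounding lemmas.
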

\begin{proof}
    Suppose for contradiction that, for some $i\in[n]$ and $g_0\in G(M_0)$, $bg_0\psi_i(x,c_i)$ does not fork over $M_0$.
    
    We first note that we may find an $|M_0|^+$-saturated model $M\supseteq (M_0,a_i,c_i)$ such that $a_{>i}\ind_{M_0}^u(M,a_{<i})$ and $a_{<i}\ind_{M_0}M$. This is standard but we spell out the details. First, since $a_{<i}\ind_{M_0}(a_i,c_i)$, by Fact \ref{right extension} we may find a $|M_0|^+$-saturated model $M_1\supseteq (M_0,a_i,c_i)$ such that $a_{<i}\ind_{M_0}M_1$. On the other hand, since $a_{>i}\ind_{M_0}^u(a_{\leqslant i},c_i)$, again by Fact \ref{right extension} we may find a $|M_1|^+$-saturated model $M_2\supseteq (M_0,a_{\leqslant i},c_i)$ such that $a_{>i}\ind_{M_0}^u M_2$. Since $M_2$ is $|M_1|^+$-saturated and $M_2\supseteq (M_0,a_{\leqslant i},c_i)$, we may find some $M\preccurlyeq M_2$ with $M\equiv_{(M_0,a_{\leqslant i},c_i}) M_1$; since $M_1\supseteq (M_0,a_i,c_i)$ and $a_{<i}\ind_{M_0} M_1$, also $M\supseteq (M_0,a_i,c_i)$ and $a_{<i}\ind_{M_0}M$, and since $a_{>i}\ind_{M_0}^u M_2$ and $M_2\supseteq (M,a_{<i})$ also $a_{>i}\ind_{M_0}^u (M,a_{<i})$.

    By assumption, $a_ig\psi_i(x,c_i)$ forks over $M_0$ for all $g\in G(M_0)$. By Fact \ref{existence of strictly non-forking coheirs}, $\tp(a_i,c_i/M_0)$ has a global extension strictly non-forking over $M_0$. Let $(a_{ik},c_{ik}:k\in\omega)$ be a Morley sequence in it over $M_0$ such that $(a_{i0},c_{i0})=(a_i,c_i)$; since $M$ is $|M_0|^+$-saturated and $M\supseteq (M_0,a_i,c_i)$ we may choose the sequence $(a_{ik},c_{ik}:k\in\omega)$ to lie in $M$.
    
    By Fact \ref{kim's lemma}, $\bigwedge_{k\in\omega}a_{ik}g\psi_i(x,c_{ik})$ is inconsistent for every $g\in G(M_0)$. On the other hand, for fixed $g\in G(M_0)$, the family of formulas $(a_{ik}g\psi_i(x,c_{ik}):k\in\omega)$ is the family of instances of the formula $\phi(x;u,z)\equiv u\psi_i(x;z)$ along the $M_0$-indiscernible sequence $(a_{ik}g,c_{ik}:k\in\omega)$. So by Fact \ref{lowness}, there is some $N\in\omega$ such that, for every $g\in G(M_0)$, $\bigwedge_{k\in[N]}a_{ik}g\psi_i(x,c_{ik})$ is inconsistent. Call this fact ($\ast$).

    We have assumed that, for some $g_0\in G(M_0)$, $bg_0\psi_i(x,c_i)$ does not fork over $M_0$. Recall $$b=a_1a_2\ldots a_{i-1}a_ia_{i+1}\ldots a_n=a_1a_2\ldots a_{i-1}a_{i0}a_{i+1}\ldots a_n.$$ We have $a_{>i}\ind_{M_0}^u (M,a_{<i})$, hence in particular $a_{>i}\ind_{M_0}(M,a_{<i})$ and thus $a_{>i}\ind_{(M_0,a_{<i})} M$. We also have $a_{<i}\ind_{M_0}M$. By `left transitivity' of forking independence thus $a_{\neq i}\ind_{M_0}M$. The sequence $(a_{ik},c_{ik}:k\in\omega)$ is an $M_0$-indiscernible sequence lying in $M$, so by Fact \ref{non-forking iff invariant} it is also $(M_0,a_{\neq i})$-indiscernible. Thus, if we define $$b_k=a_1a_2\ldots a_{i-1}a_{ik}a_{i+1}\ldots a_n,$$ then $b_0=b$ and $(b_k,c_k:k\in\omega)$ is $M_0$-indiscernible. Since $bg\psi_i(x,c_i)= b_0g_0\psi_i(x,c_{i0})$ does not fork over $M_0$, we thus have that $\bigwedge_{k\in\omega}b_kg_0\psi_i(x,c_{ik})$ is consistent, and so in particular $\bigwedge_{k\in [N]}b_kg_0\psi_i(x,c_{ik})$ is consistent.
    
    We can write 
    \begin{align*}\bigwedge_{k\in [N]}b_kg_0\psi_i(x,c_{ik})&=\bigwedge_{k\in [N]}a_1\ldots a_{i-1}a_{ik}a_{i+1}\ldots a_n g_0\psi_i(x,c_{ik})\\&=a_1\ldots a_{i-1}\bigwedge_{k\in[N]}a_{ik}a_{i+1}\ldots a_ng_0\psi_i(x,c_{ik}),\end{align*} so the conjunction $\bigwedge_{k\in[N]}a_{ik}a_{i+1}\ldots a_ng_0\psi_i(x,c_{ik})$ is also consistent. In particular we have $$\exists x \bigwedge_{k\in [N]}a_{ik}ug_0\psi_i(x,c_{ik})\in\tp(a_{i+1}\ldots a_n/M,a_{ik},c_{ik}:k\in[N]).$$    Since $a_{>i}\ind_{M_0}^uM$, in particular $a_{i+1}\ldots a_n\ind_{M_0}^u(a_{ik},c_{ik}:k\in[N])$, so we thus may find some $g_1\in G(M_0)$ such that $$g_1\models \exists x \bigwedge_{k\in [N]}a_{ik}ug_0\psi_i(x,c_{ik}).$$This means precisely that $\bigwedge_{k\in[N]}a_{ik}g_1g_0\psi_i(x,c_{ik})$ is consistent, and since $g_1g_0\in G(M_0)$ this contradicts ($\ast$).
\end{proof}

Now we may begin working with wide and non-wide sets. The first significant consequence of Lemma \ref{key observation} is the following:
\begin{lemma}\label{union of conjugates wide implies wide}
    Let $\psi(x,c)$ be a definable subset of $G$, and suppose that, for every $(c_i:i\in\omega)$ a Morley sequence in a global coheir of $\tp(c/M_0)$ strictly non-forking over $M_0$, there is some $m\in\omega$ such that $\bigvee_{k\in[m]}\psi(x,c_k)$ is wide. Then $\psi(x,c)$ is wide.
\end{lemma}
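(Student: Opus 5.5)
We argue by contraposition. Assume $\psi(x,c)$ is not wide. We will construct one global coheir $\tilde q$ of $\tp(c/M_0)$, strictly non-forking over $M_0$, and a Morley sequence $(c_k:k\in\omega)$ in $\tilde q$ such that no finite disjunction $\bigvee_{k\in[m]}\psi(x,c_k)$ is wide. This contradicts the hypothesis. The engine is Lemma \ref{key observation}, applied with every $\psi_i=\psi$.

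\emph{Step 1: one translating element for all of $G(M_0)$.} We find $a$ such that $ag\psi(x,c)$ forks over $M_0$ for every $g\in G(M_0)$. We use two facts.
\begin{itemize}
\item[(i)] Forking formulas over $M_0$ form an ideal. Hence for finite $s\subset G(M_0)$, $a\,s\psi(x,c)$ forks over $M_0$ iff $ag\psi(x,c)$ forks for each $g\in s$.
\item[(ii)] By Fact \ref{forking=dividing} and Fact \ref{lowness}, applied to the formula $u\psi(x;z)$ with parameter variables $(u,z)$, the condition ``$ag\psi(x,c)$ forks over $M_0$'' is type-definable over $M_0$ in $(a,c)$, uniformly in $g\in G(M_0)$.
\end{itemize}
So the condition ``$ag\psi(x,c)$ forks over $M_0$ for all $g\in G(M_0)$'' is a partial type over $(M_0,c)$ in the variable $a$. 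Non-wideness says: for each finite $s\subset G(M_0)$ some translate of $s\psi(x,c)$ forks. By (i), this is exactly finite satisfiability of that partial type. Compactness then gives the required $a$.

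\emph{Step 2: building the sequence.} By Fact \ref{existence of strictly non-forking coheirs}, take a global coheir $r$ of $\tp(a,c/M_0)$ that is strictly non-forking over $M_0$. Realize $(a_1,c_1)\models r|_{M_0}$, and inductively $(a_{j},c_{j})\models r|_{(M_0,a_{<j},c_{<j})}$. Then:
\begin{itemize}
\item Each $(a_i,c_i)\equiv_{M_0}(a,c)$. Since the property of Step 1 is over $M_0$, $a_ig\psi(x,c_i)$ forks over $M_0$ for all $g\in G(M_0)$.
\item Since $r$ is finitely satisfiable in $M_0$ and the later pairs are realized successively over the earlier ones, $a_{>i}\ind^u_{M_0}(a_{\leqslant i},c_i)$.
\item Since $(a_i,c_i)\models r|_{(M_0,a_{<i},c_{<i})}$ and $r$ is strictly non-forking, $a_{<i}\ind_{M_0}(a_i,c_i)$.
\end{itemize}
These are exactly the hypotheses of Lemma \ref{key observation}. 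Moreover, the pushforward $\tilde q$ of $r$ to the $c$-coordinate is a global coheir of $\tp(c/M_0)$, and it is still strictly non-forking over $M_0$ (both properties pass to subtuples). So $(c_k:k\in\omega)$ is a Morley sequence in $\tilde q$ over $M_0$.

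\emph{Step 3: conclusion.} Fix $m$ and apply Lemma \ref{key observation} to the first $m$ pairs, with $b=a_1\cdots a_m$. It gives that $bg\psi(x,c_k)$ forks over $M_0$ for all $g\in G(M_0)$ and $k\in[m]$. By the ideal property (i), $b\,s\bigvee_{k\in[m]}\psi(x,c_k)$ forks over $M_0$ for every finite $s\subset G(M_0)$. So $s\bigvee_{k\in[m]}\psi(x,c_k)$ is never strong f-generic, and $\bigvee_{k\in[m]}\psi(x,c_k)$ is not wide, for every $m$. This is the required contradiction.

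The main obstacle is Step 1: passing from ``for each finite $s$ some translate forks'' to a single $a$ working for all $g\in G(M_0)$. This depends on the type-definability of forking coming from lowness, combined with forking being an ideal over models in NIP. The index bookkeeping in Step 2 is routine by comparison, but the direction of the independences must be checked against the hypotheses of Lemma \ref{key observation}.
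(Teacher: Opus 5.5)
Your proposal is correct and follows essentially the same route as the paper: lowness plus compactness produce a single $a$ with $ag\psi(x,c)$ forking over $M_0$ for all $g\in G(M_0)$. You then take a Morley sequence in a strictly non-forking coheir of $\mathrm{tp}(a,c/M_0)$ and apply Lemma \ref{key observation} to $b=a_1\cdots a_m$, together with the ideal property of forking. The only difference is that you explicitly check the independence hypotheses of Lemma \ref{key observation}, and that the $c$-projection is a Morley sequence in a strictly non-forking coheir of $\mathrm{tp}(c/M_0)$; the paper leaves both implicit.
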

\begin{proof} Assume for contradiction that $\psi(x,c)$ is not wide. Thus, for any finite $s\subset G(M_0)$, $s\psi(x,c)$ is not strong f-generic, whence there is some $a$ such that $as\psi(x,c)$ forks over $M_0$, ie such that $ag\psi(x,c)$ forks over $M_0$ for all $g\in s$. For a given $g\in G(M_0)$, by Fact \ref{lowness} there is a partial type $\pi_g(u,z)$ with parameters in $M_0$ such that $(a,c')\models\pi_g(u,z)$ if and only if $ag\psi(x,c')$ forks over $M_0$, and now $\bigwedge_{g\in s} \pi_g(u,c)$ is satisfiable for each finite subset $s\subset G(M_0)$. So by compactness we may find $a\models\bigwedge_{g\in G(M_0)}\pi_g(u,c)$, so that $ag\psi(x,c)$ forks over $M_0$ for all $g\in G(M_0)$.

By Fact \ref{existence of strictly non-forking coheirs}, let $(a_i,c_i:i\in\omega)$ be a Morley sequence in a global coheir of $\tp(a,c/M_0)$ strictly non-forking over $M_0$. By the assumption, there is some $m\in\omega$ such that $\bigvee_{k\in[m]}\psi(x,c_k)$ is wide. Let $s\subset G(M_0)$ be such that $s\bigvee_{k\in[m]}\psi(x,c_k)$ is strong f-generic. Let $b=a_1\ldots a_m$. Since $s\bigvee_{k\in[m]}\psi(x,c_k)$ is strong f-generic, $bs\bigvee_{k\in[m]}\psi(x,c_k)$ does not fork over $M_0$, so in particular (since the forking sets form an ideal) there is some $g_0\in s$ and $k\in[m]$ such that $bg_0\psi(x,c_k)$ does not fork over $M_0$. This contradicts Lemma \ref{key observation}.
\end{proof}

We wish to show that the family of non-wide sets is S1, in the sense of Hrushovski, meaning that is $\psi(x,c)$ is wide and $c,c'$ start an indiscernible sequence then $\psi(x,c)\wedge\psi(x,c')$ is still wide. By Lemma \ref{union of conjugates wide implies wide}, it is enough to show that an appropriate union of conjugates of $\psi(x,c)\wedge\psi(x,c')$ is wide, which we do now.

\begin{lemma}\label{union of conjunction is wide}
    Suppose that $\psi(x,c)$ is wide and that $c'\equiv_{M_0}c$. Then, for any $(c_k,c'_k:k\in\omega)$ a Morley sequence in a global extension of $\tp(c,c'/M_0)$ strictly non-forking over $M_0$, there is $m\in\omega$ such that $\bigvee_{k\in[m]}[\psi(x,c_k)\wedge\psi(x,c'_{k})]$ is wide.
\end{lemma}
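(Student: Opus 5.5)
The plan is to argue by contradiction, in the same shape as Lemma \ref{union of conjugates wide implies wide}: compactness and lowness produce one bad translate, and then bounded weight and Kim's lemma contradict wideness of $\psi(x,c)$.

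\textbf{Setup.} Fix a finite $s\subset G(M_0)$ with $s\psi(x,c)$ strong f-generic. Since $c'\equiv_{M_0}c$ and $s\subset G(M_0)$, strong f-genericity is preserved by $\mathrm{Aut}(\mathfrak{C}/M_0)$, so $s\psi(x,c')$ is strong f-generic too. Moreover $c_k\equiv_{M_0}c$ and $c'_k\equiv_{M_0}c'$ for every $k$, so each $s\psi(x,c_k)$ and $s\psi(x,c'_k)$ is strong f-generic.

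\textbf{Reduction to one bad translate.} I would aim to show that for some $m$ the set $t\bigvee_{k\in[m]}[\psi(x,c_k)\wedge\psi(x,c'_k)]$ is strong f-generic, where $t\subset G(M_0)$ is a finite set built from $s$. I would try $t=s$ first and, if that fails, $t=ss^{-1}s$ or a similar product set. Suppose this fails for every $m$ and every finite $t\subset G(M_0)$. By Fact \ref{lowness}, forking of each translate $ag[\psi(x,c_k)\wedge\psi(x,c'_k)]$ is type-definable over $M_0$ in the variable $a$. Compactness then gives, after extending the Morley sequence to length $|T|^+$ (allowed by indiscernibility), a single $a$ such that $ag[\psi(x,c_k)\wedge\psi(x,c'_k)]$ forks over $M_0$ for all $g\in G(M_0)$ and all $k<|T|^+$. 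Fact \ref{bounded weight}, applied to the strictly non-forking global type generating $(c_k,c'_k)$, gives an index $\delta$ with $a\ind_{M_0}(c_\delta,c'_\delta)$. By Fact \ref{non-forking iff invariant}, and after re-choosing the Morley sequence inside a saturated model as in the proof of Lemma \ref{key observation}, the tail $(c_k,c'_k:k>\delta)$ is then indiscernible over $(M_0,a)$.

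\textbf{Main obstacle: the intersection step.} The hard part is converting non-forking of $as\psi(x,c_\delta)$ and of $as\psi(x,c'_\delta)$ into non-forking of $ag[\psi(x,c_\delta)\wedge\psi(x,c'_\delta)]$ for a single $g$. Forking sets form an ideal under unions, not under intersections, so this does not follow directly. My first attempt would be a Kim's lemma argument along the $(M_0,a)$-indiscernible tail:
\begin{enumerate}
\item Choose $e$ realizing a global $M_0$-invariant type that contains $as\psi(x,c_\delta)$.
\item Use the strict non-forking in both directions, together with Fact \ref{existence of strictly non-forking coheirs}, to arrange $e\ind_{M_0}(a,c_\delta,c'_\delta)$.
\item Use $c'_\delta\equiv_{M_0a}c_\delta$-style information, coming from the fact that both lie in a Morley sequence over a base independent from $a$, to move $e$ so that it also lies in $ag\psi(x,c'_\delta)$ with the same $g$.
\end{enumerate}
If step 3 breaks, the fallback is the product trick of Lemma \ref{key observation}. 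I would replace $a$ by a product $b=a_1\cdots a_n$ of independent copies of $a$, which absorbs the mismatch between the two witnesses in $s$ and is why $t$ may need to be a product set. Lemma \ref{key observation} then forces forking for all $g\in G(M_0)$, contradicting the strong f-genericity of $s\psi(x,c)$.
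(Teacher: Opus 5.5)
Your reduction matches the paper's, and $t=s$ already suffices, with no product sets needed. Stretch the sequence to length $|T|^+$, use Fact \ref{lowness} and compactness to get a single $a$ with $as[\psi(x,c_\delta)\wedge\psi(x,c'_\delta)]$ forking over $M_0$ for every $\delta<|T|^+$, and use Fact \ref{bounded weight} to get $\delta$ with $a\ind_{M_0}(c_\delta,c'_\delta)$. The gap is the intersection step you flag as the crux: it is never carried out. Step 3 is only a hope, its justification is wrong, and the fallback is not an argument.

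The missing idea is short.
\begin{enumerate}
\item Since $c\equiv_{M_0}c'$, we have $c_\delta\equiv_{M_0}c'_\delta$.
\item Since $\tp(a/M_0,c_\delta,c'_\delta)$ does not fork over $M_0$, it extends to a global $M_0$-invariant type (Fact \ref{non-forking iff invariant}). Hence $(a,c_\delta)\equiv_{M_0}(a,c'_\delta)$.
\item Since $s\psi(x,c_\delta)$ is strong f-generic, $as\psi(x,c_\delta)$ does not fork over $M_0$. As forking formulas form an ideal, some $g\in s$ has $ag\psi(x,c_\delta)$ non-forking, so it lies in a global $M_0$-invariant type $p$.
\item As $g\in G(M_0)$, we get $(a,g,c_\delta)\equiv_{M_0}(a,g,c'_\delta)$. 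By invariance $p$ also contains $ag\psi(x,c'_\delta)$.
\item So $ag[\psi(x,c_\delta)\wedge\psi(x,c'_\delta)]$ does not fork over $M_0$, contradicting the choice of $a$.
\end{enumerate}
You never need non-forking of $as\psi(x,c'_\delta)$ separately. No auxiliary $e$ has to be arranged or ``moved'': any realization of $p$ over $(M_0,a,c_\delta,c'_\delta)$ is automatically independent, and the one invariant type $p$ handles both conjuncts with the same $g$.

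The reasons you give for step 3 do not work.
\begin{itemize}
\item You attribute $c_\delta\equiv_{(M_0,a)}c'_\delta$ to $c_\delta,c'_\delta$ ``both lying in a Morley sequence over a base independent from $a$''. But they are the two coordinates of a single term of the sequence, so indiscernibility over any base says nothing relating them. The equality comes only from the hypothesis $c'\equiv_{M_0}c$ together with the bounded-weight independence.
\item Your claim that the tail $(c_k,c'_k:k>\delta)$ is indiscernible over $(M_0,a)$ is unjustified, though also unnecessary. Fact \ref{bounded weight} gives independence from one term only. You also cannot re-choose the sequence inside a model independent from $a$ as in Lemma \ref{key observation}, because $a$ was produced from the sequence by compactness.
\item The fallback via Lemma \ref{key observation} is misdirected. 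That lemma takes suitably independent factors, each making every $G(M_0)$-translate fork, and concludes the product does too. It propagates forking; it gives no way to show an intersection of two translates is non-forking, which is what is needed here.
\end{itemize}
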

\begin{proof} Let $s\subset G(M_0)$ be a finite set such that $s\psi(x,c)$ is strong f-generic. We claim that, for some $m\in\omega$, $s\bigvee_{k\in[m]}[\psi(x,c_k)\wedge\psi(x,c'_k)]=\bigvee_{k\in[m]}s[\psi(x,c_k)\wedge\psi(x,c_k')]$ is strong f-generic, which will show the desired result. Thus suppose otherwise for a contradiction. Stretch the sequence $(c_k,c'_k:k\in\omega)$ into a longer indiscernible sequence $(c_\delta,c'_\delta:\delta\in|T|^+)$. By the assumption, for any finite $t\subset |T|^+$, $\bigvee_{\delta\in t}s[\psi(x,c_\delta)\wedge\psi(x,c'_\delta)]$ is not strong f-generic, so there is some $a$ such that $a\bigvee_{\delta\in t}s[\psi(x,c_\delta)\wedge\psi(x,c'_\delta)]$ forks over $M_0$, ie such that $as[\psi(x,c_\delta)\wedge\psi(x,c'_\delta)]$ forks over $M_0$ for each $\delta\in t$. By Fact \ref{lowness}, there is a partial type $\pi(u,z_1,z_2)$ such that $(a,c_1,c_2)\models\pi(u,z_1,z_2)$ iff $as[\psi(x,c_1)\wedge\psi(x,c_2)]$ forks over $M_0$, and now $\bigwedge_{\delta\in t}\pi(u,c_\delta,c'_\delta)$ is satisfiable for each finite $t\subset |T|^+$, so now by compactness there is some $a\models\bigwedge_{\delta\in|T|^+}\pi(u,c_\delta,c'_\delta)$, so that $as[\psi(x,c_\delta)\wedge\psi(x,c_\delta')]$ forks over $M_0$ for all $\delta\in |T|^+$.

On the other hand, by Fact \ref{bounded weight}, there is some $\delta\in |T|^+$ with $a\ind_{M_0}(c_\delta,c'_\delta)$. By Fact \ref{non-forking iff invariant}, it follows that $c_\delta\equiv_{(M_0,a)}c'_\delta$. On the other hand, since $s\psi(x,c)$ is strong f-generic, also $s\psi(x,c_\delta)$ is strong f-generic, so $as\psi(x,c_\delta)$ does not fork over $M_0$ and thus (since the forking sets form an ideal) there is some $g\in s$ such that $ag\psi(x,c_\delta)$ does not fork over $M_0$. Again by Fact \ref{non-forking iff invariant}, $ag\psi(x,c_\delta)$ is thus contained in a global $M_0$-invariant type, but we have $c_\delta\equiv_{(M_0,a)}c'_\delta$ and hence $(a,g,c_\delta)\equiv_{M_0} (a,g,c'_\delta)$, so any such type also contains $ag\psi(x,c'_\delta)$. This means that $ag\psi(x,c_\delta)\wedge ag\psi(x,c_\delta')=ag[\psi(x,c_\delta)\wedge\psi(x,c_\delta')]$ does not fork over $M_0$, contradicting that $as[\psi(x,c_\delta)\wedge\psi(x,c_\delta')]$ forks over $M_0$.
\end{proof} Now, as promised, we can deduce the S1 property.

\begin{theorem}\label{non-wide sets are s1}
    Suppose that $\psi(x,c)$ is wide, and that $(c_i:i\in[n])$ is a family of conjugates of $c$, ie that $c_i\equiv_{M_0}c$ for each $i\in[n]$. Then $\bigwedge_{i\in[n]}\psi(x,c_i)$ is wide.
\end{theorem}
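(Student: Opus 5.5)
The plan is to induct on $n$, reducing to the case of two conjugates handled by Lemma \ref{union of conjunction is wide} and Lemma \ref{union of conjugates wide implies wide}. The case $n=1$ is immediate, since wideness depends only on $\tp(c/M_0)$: a translate by elements of $G(M_0)$ and forking over $M_0$ are both $\mathrm{Aut}(\mathfrak{C}/M_0)$-invariant. For the inductive step, I would package the tuple. Let $\chi(x;z_1,\dots,z_{n-1}):=\bigwedge_{i<n}\psi(x,z_i)$ and $d=(c_1,\dots,c_{n-1})$. By the inductive hypothesis $\chi(x,d)$ is wide. It then suffices to show that $\chi(x,d)\wedge\psi(x,c_n)$ is wide.

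To put this in the shape of Lemma \ref{union of conjunction is wide}, I would use a common formula with two conjugate parameters. Set $\theta(x;z_1,\dots,z_n):=\bigwedge_{i\in[n]}\psi(x,z_i)$. Then $\theta(x,c,\dots,c)$ is equivalent to $\psi(x,c)$. More usefully, consider $e=(c_1,\dots,c_{n-1},c_{n-1})$ and $e'=(c_1,\dots,c_{n-1},c_n)$. Here $\theta(x,e)$ is equivalent to $\chi(x,d)$, which is wide, and $\theta(x,e)\wedge\theta(x,e')$ is exactly $\bigwedge_{i\in[n]}\psi(x,c_i)$.

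The obstacle is that Lemma \ref{union of conjunction is wide} requires $e'\equiv_{M_0}e$, which needs $(c_1,\dots,c_{n-1},c_{n-1})\equiv_{M_0}(c_1,\dots,c_{n-1},c_n)$. This is false in general. I would instead proceed in two steps.

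First, for arbitrary $c'\equiv_{M_0}c$, I would establish that $\psi(x,c)\wedge\psi(x,c')$ is wide. This is the base case $n=2$: by Lemma \ref{union of conjunction is wide}, for any Morley sequence $(c_k,c'_k)$ in a global strictly non-forking extension of $\tp(c,c'/M_0)$, some finite disjunction $\bigvee_{k\in[m]}[\psi(x,c_k)\wedge\psi(x,c'_k)]$ is wide. Applying Lemma \ref{union of conjugates wide implies wide} to the formula $\psi(x,z)\wedge\psi(x,z')$ with parameter $(c,c')$ then gives that $\psi(x,c)\wedge\psi(x,c')$ is wide. Lemma \ref{union of conjugates wide implies wide} asks for Morley sequences in strictly non-forking \emph{coheirs}, and such a sequence is a special case of what Lemma \ref{union of conjunction is wide} handles, so the two fit.

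Second, for the inductive step I would apply this $n=2$ result to a different formula, namely $\theta'(x;w):=\bigwedge_{i\in[n]}\psi(x,w_i)$ with the parameter $w=(c_1,\dots,c_{n-1},c_{n-1})$, which is wide by induction. By itself this only produces conjunctions $\theta'(x,w)\wedge\theta'(x,w')$ for $w'\equiv_{M_0}w$, so it still leaves the conjugacy problem. The cleaner route is to choose the inductive formula as $\psi_{n-1}(x;z_1,\dots,z_{n-1}):=\bigwedge_{i<n}\psi(x,z_i)$ with $d=(c_1,\dots,c_{n-1})$, and pick $d'=(c_1,\dots,c_{n-2},c_n)$ via an automorphism.

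This needs $d'\equiv_{M_0}d$, that is, $(c_1,\dots,c_{n-2},c_n)\equiv_{M_0}(c_1,\dots,c_{n-2},c_{n-1})$, which again fails in general. Because of this, I expect the real route to use the stronger forms of the lemmas, which take arbitrary conjugates and track all parameters at once. Concretely, I would generalize Lemma \ref{union of conjunction is wide} to $n$ conjugates $c^{(1)},\dots,c^{(n)}$ of $c$. The bounded weight argument runs verbatim: find $\delta$ with $a\ind_{M_0}(c^{(1)}_\delta,\dots,c^{(n)}_\delta)$, so all $c^{(j)}_\delta$ are conjugate over $(M_0,a)$. Invariance then transfers non-forking of $ag\psi(x,c^{(1)}_\delta)$ to the whole conjunction, since a single $M_0$-invariant type contains all the conjugate formulas. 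Combining this with Lemma \ref{union of conjugates wide implies wide}, applied to $\bigwedge_j\psi(x,z_j)$, finishes the proof directly, with no induction needed. The main point to check is exactly this generalization, and it is routine.
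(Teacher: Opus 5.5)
Your final argument is correct, but it takes a different route from the paper. The paper also proves the binary case first, exactly as in your first step, using Lemma \ref{union of conjunction is wide} and Lemma \ref{union of conjugates wide implies wide}. It then inducts on $n$.

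The obstruction you found to that induction is not real. You required the conjugate $d'$ of $d=(c_1,\dots,c_{n-1})$ to share $n-2$ coordinates with $d$, but you only need $c_n$ to appear somewhere in $d'$. Take $\sigma\in\mathrm{Aut}(\mathfrak{C}/M_0)$ with $\sigma(c_1)=c_n$ and set $d'=\sigma(d)$, so that $d'\equiv_{M_0}d$.
\begin{itemize}
\item By induction, $\chi(x,d)=\bigwedge_{i<n}\psi(x,c_i)$ is wide.
\item The binary case applied to $\chi$ then gives that $\chi(x,d)\wedge\chi(x,d')$ is wide.
\item This formula implies $\bigwedge_{i\in[n]}\psi(x,c_i)$.
\item Wideness is preserved under weakening: if $\phi\vdash\phi'$ then $as\phi\vdash as\phi'$, so non-forking transfers.
\end{itemize}
So the induction goes through with no extra work.

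Your alternative is also valid. You prove an $n$-ary version of Lemma \ref{union of conjunction is wide} directly and feed it into Lemma \ref{union of conjugates wide implies wide} for the formula $\bigwedge_j\psi(x,z_j)$.
\begin{itemize}
\item Bounded weight gives some $\delta$ with $a\ind_{M_0}(c^{(1)}_\delta,\dots,c^{(n)}_\delta)$.
\item Since a global non-forking extension of $\tp(a/M_0,c^{(1)}_\delta,\dots,c^{(n)}_\delta)$ is $M_0$-invariant, this gives $(a,c^{(1)}_\delta)\equiv_{M_0}(a,c^{(j)}_\delta)$ for every $j$.
\item A single $M_0$-invariant type containing $ag\psi(x,c^{(1)}_\delta)$ therefore contains every $ag\psi(x,c^{(j)}_\delta)$, which yields the contradiction.
\end{itemize}

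The trade-off is that your route avoids the induction but re-runs the bounded-weight argument for tuples. The paper's route uses the binary lemma as stated, plus the automorphism-and-monotonicity observation above.
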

\begin{proof}
    By induction on $n$ it is enough to show that, if $c'\equiv_{M_0}c$, then $\psi(x,c)\wedge\psi(x,c')$ is wide. This follows from Lemma \ref{union of conjunction is wide} and Lemma \ref{union of conjugates wide implies wide}.
\end{proof}

Now, finally, we can prove the main result.
\begin{theorem}\label{non-wide sets are an ideal}
    Suppose $\phi(x,b)$ and $\psi(x,c)$ are not wide. Then neither is $\phi(x,b)\vee\psi(x,c)$.
\end{theorem}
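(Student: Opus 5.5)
The plan is to argue by contradiction. Suppose $\phi(x,b)\vee\psi(x,c)$ is wide. We build an element $h\in G$ that translates both disjuncts into forking sets simultaneously, using Lemma \ref{key observation} with $n=2$. To keep $b$ and $c$ linked, we put both in one parameter $d=(b,c)$ and write $\theta(x,d)=\phi(x,b)\vee\psi(x,c)$. By Theorem \ref{non-wide sets are s1}, conjunctions of $M_0$-conjugates $\theta(x,d_1)\wedge\theta(x,d_2)$ are again wide, and we will compare such a conjunction with a set that we show is not wide.

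\textbf{Step 1: single witnesses.} First we find witnesses of non-wideness that work uniformly over $G(M_0)$. Exactly as at the start of the proof of Lemma \ref{union of conjugates wide implies wide}, Fact \ref{lowness} and compactness give $a$ such that $ag\phi(x,b)$ forks over $M_0$ for all $g\in G(M_0)$. Likewise we get $a'$ such that $a'g\psi(x,c)$ forks over $M_0$ for all $g\in G(M_0)$. The forking condition on $(a,b)$ is a partial type over $M_0$, so any $(a_1,d_1)\equiv_{M_0}(a,d)$ still satisfies it, and similarly for $(a_2,d_2)\equiv_{M_0}(a',d)$.

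\textbf{Step 2: arranging independence.} We need to realize these types so that the hypotheses of Lemma \ref{key observation} hold. These are $a_2\ind^u_{M_0}(a_1,d_1)$ and $a_1\ind_{M_0}(a_2,d_2)$ (the other two conditions are vacuous). To do this, take a global coheir of $\tp(a',d/M_0)$ that is strictly non-forking over $M_0$ (Fact \ref{existence of strictly non-forking coheirs}). Let $(a_2,d_2)$ realize it over $(M_0,a_1,d_1)$, where $(a_1,d_1)=(a,d)$. Then $(a_2,d_2)\ind^u_{M_0}(a_1,d_1)$ and $(a_1,d_1)\ind_{M_0}(a_2,d_2)$. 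With $h=a_1a_2$, Lemma \ref{key observation} then gives that $hg\phi(x,b_1)$ and $hg\psi(x,c_2)$ fork over $M_0$ for all $g\in G(M_0)$. Hence $hs[\phi(x,b_1)\vee\psi(x,c_2)]$ forks for every finite $s\subset G(M_0)$, so $\phi(x,b_1)\vee\psi(x,c_2)$ is not wide.

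\textbf{Step 3: contradiction, and the main obstacle.} By Theorem \ref{non-wide sets are s1}, $\theta(x,d_1)\wedge\theta(x,d_2)$ is wide, and we have
$$\theta(x,d_1)\wedge\theta(x,d_2)\vdash\phi(x,b_1)\vee\psi(x,c_2)\vee[\psi(x,c_1)\wedge\phi(x,b_2)].$$
The cross term $\psi(x,c_1)\wedge\phi(x,b_2)$ is the main obstacle. Neither $a_1$ nor $a_2$ is set up to kill it, and we cannot yet use the ideal property to discard it. My first attempt is to strengthen Step 1 so that each $a_i$ kills its formula for all conjugate parameters at once. I would use a Morley sequence $(d_k:k\in\omega)$ and the lowness/compactness argument to get $a$ with $ag\phi(x,b_k)$ forking for all $k$ and all $g\in G(M_0)$. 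I would then apply Lemma \ref{key observation} with more indices ($\psi_i$ ranging over both $\phi(x,b_k)$ and $\psi(x,c_k)$), so that $h$ kills every conjunct pattern.

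If that fails, I would use a longer product $h=a_1\cdots a_n$ over $n$ conjugates $d_1,\dots,d_n$. A point of $\bigwedge_{j\in[n]}\theta(x,d_j)$ then chooses $\phi$ or $\psi$ at each $j$. I would try to arrange, via Fact \ref{bounded weight}, that for some $j$ the relevant translate forks. This is where I expect the real work to lie.
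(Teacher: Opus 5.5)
Your Steps 1 and 2 are correct. Interleaving a $\phi$-killer with a $\psi$-killer via Lemma~\ref{key observation} is exactly how the paper's proof begins. Here a killer of $\psi(x,c)$ means an $a$ such that $ag\psi(x,c)$ forks over $M_0$ for every $g\in G(M_0)$. However, Step 3 is never carried out, and the detour through Theorem~\ref{non-wide sets are s1} does not by itself close it.

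Your first fix is circular. The element $h$ from Lemma~\ref{key observation} kills only one of $\phi(x,b_j)$, $\psi(x,c_j)$ at each index $j$. So the disjunct of $\bigwedge_j\theta(x,d_j)$ that picks, at every $j$, the formula $h$ does not kill is never accounted for. The only way around this is for $h$ to kill both $\phi(x,b_k)$ and $\psi(x,c_k)$ for a common $k$, i.e.\ to kill $\theta(x,d_k)$, and that is the theorem itself. Lemma~\ref{key observation} gives no handle on it. Placing $\phi(x,b_k)$ and $\psi(x,c_k)$ at positions $i<j$ requires the killer at position $j$, which is tied to $c_k$, to be coheir-independent over $M_0$ from $b_k$, which is correlated with $c_k$. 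When, say, $b_k=c_k$, such a killer need not exist. For example, in RCF with $c>M_0$, the non-wide set $(-c,c)\subseteq(R,+)$ has no killer $a$ with $a\ind^u_{M_0}c$: any such $a$ has $|a|<c/2$, so $a+(-c,c)$ contains $0$. Your second fix is too vague to evaluate. As stated it also uses a finite product $a_1\cdots a_n$, whereas Fact~\ref{bounded weight} needs $|T|^+$ many indices.

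The missing idea is an invariance transfer between consecutive indices, and it makes S1 and the conjunctions unnecessary. This is what the paper does.

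First, take $(a_\delta,a'_\delta,d_\delta:\delta<|T|^+)$ a Morley sequence in a global coheir of $\tp(a,a',d/M_0)$ strictly non-forking over $M_0$. The finite interleaved products $a_1a'_2a_3a'_4\cdots a_{2n-1}a'_{2n}$ together with Lemma~\ref{key observation}, Fact~\ref{lowness} and compactness give a single $w$ with the following property, for all $g\in G(M_0)$: $wg\phi(x,b_\delta)$ forks at every index $\delta=\gamma+2i+1$, and $wg\psi(x,c_\delta)$ forks at every index $\delta=\gamma+2i$, where $\gamma$ is a limit ordinal.

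Next, apply Fact~\ref{bounded weight} to the pairs $(d_{\gamma+2i},d_{\gamma+2i+1})$. This gives an even-type $\delta$ with $w\ind_{M_0}(d_\delta,d_{\delta+1})$.

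Now use wideness of the single disjunction $\theta(x,d_\delta)$. Some $g\in s$ makes $wg\theta(x,d_\delta)$ non-forking. The $\psi$-part is killed, so $wg\phi(x,b_\delta)$ is non-forking. By Fact~\ref{non-forking iff invariant}, $b_\delta\equiv_{(M_0,w)}b_{\delta+1}$, so $wg\phi(x,b_{\delta+1})$ is non-forking too. This contradicts the choice of $w$ at the odd-type index $\delta+1$.
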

\begin{proof}
    Suppose for contradiction that $\phi(x,b)\vee\psi(x,c)$ is wide. Since $\phi(x,b)$ and $\psi(x,c)$ are not wide, we may argue as in the first paragraph of the proof of Lemma \ref{union of conjugates wide implies wide} to find $u,v\in G(\mathfrak{C})$ such that $ug\phi(x,b)$ and $vg\psi(x,c)$ both fork over $M_0$ for all $g\in G(M_0)$. We will use the key observation \ref{key observation} to `combine' $u$ and $v$ in a certain way to find $w$ such that $wg\phi(x,b)$ and $wg\psi(x,c)$ both fork over $M_0$ for all $g\in G(M_0)$, which will contradict that $\phi(x,b)\vee\psi(x,c)$ is wide.

    Let $(u_\delta,v_\delta,b_\delta,c_\delta:\delta\in|T|^+)$ be a Morley sequence in a global coheir of $\tp(u,v,b,c/M_0)$ strictly non-forking over $M_0$. I claim we may find $w$ such that, for every $\delta$ of form $\gamma+2i$ for $\gamma$ a limit ordinal and $i\in\omega$, $wg\psi(x,c_\delta)$ forks over $M_0$ for all $g\in G(M_0)$, and such that, for every $\delta$ of form $\gamma+2i+1$ for $\gamma$ a limit ordinal and $i\in\omega$, $wg\phi(x,b_\delta)$ forks over $M_0$ for all $g\in G(M_0)$; call this desired condition ($\ast$). Arguing by Fact \ref{lowness} and compactness as in the first paragraphs of Lemma \ref{union of conjugates wide implies wide} and Lemma \ref{union of conjunction is wide}, it is enough to show that, for each $n\in\omega$, there is $w_n$ such that $w_ng\phi(x,b_{2i-1})$ and $w_ng\psi(x,c_{2i})$ each fork over $M_0$ for all $i\in[n]$ and $g\in G(M_0)$.

    We do this by interleaving the $u_i$ and $v_i$ and using Lemma \ref{key observation}. More precisely, fix $n\in\omega$, and let $w_n=u_1v_2u_3v_4\ldots u_{2n-1}v_{2n}$. Now the sequence $$(u_1,b_1),(v_2,c_2),(u_3,b_3),\dots,(u_{2n-1},b_{2n-1}),(v_{2n},c_{2n})$$ satisfies the hypotheses of Lemma \ref{key observation} (with the even-indexed formulas being $\psi(x,z)$ and the odd-index formulas being $\phi(x,y)$). So by Lemma \ref{key observation} each $w_ng\phi(x,b_{2i-1})$ forks over $M_0$ for all $g\in G(M_0)$ and each $w_ng\psi(x,c_{2i})$ forks over $M_0$ for all $g\in G(M_0)$, exactly as needed.

    So we indeed get $w$ satisfying condition ($\ast$). Now we apply Fact \ref{bounded weight} to the sequence of quadruples $(b_{\gamma+2i},c_{\gamma+2i},b_{\gamma+2i+1},c_{\gamma+2i+1})$ to get some limit ordinal $\gamma$ and some $i\in\omega$ with $w\ind_{M_0}(b_{\gamma+2i},c_{\gamma+2i},b_{\gamma+2i+1},c_{\gamma+2i+1})$. (More precisely, given an ordinal $\delta=\gamma+i$, where $\gamma$ is a limit ordinal and $i\in\omega$, we define $d_\delta=(b_{\gamma+2i},c_{\gamma+2i},b_{\gamma+2i+1},c_{\gamma+2i+1})$, and then $(d_\delta:\delta\in |T|^+)$ is still a strict Morley sequence over $M_0$ to which we can apply Fact \ref{bounded weight}.) For notational convenience, write $\delta=\gamma+2i$, so that $w\ind_{M_0}(b_\delta,c_\delta,b_{\delta+1},c_{\delta+1})$; call this ($\ast\ast$).

    Recall that we assumed for contradiction that $\phi(x,b)\vee\psi(x,c)$ is wide. So $\phi(x,b_\delta)\vee\psi(x,c_\delta)$ is also wide. Let $s\subset G(M_0)$ be a finite set such that $s[\phi(x,b_\delta)\vee\psi(x,c_\delta)]$ is strong f-generic. Thus $ws[\phi(x,b_\delta)\vee\psi(x,c_\delta)]$ does not fork over $M_0$, and so there is some $g\in s$ such that $wg[\phi(x,b_\delta)\vee\psi(x,c_\delta)]$ does not fork over $M_0$. Thus (since the forking sets form an ideal) one of $wg\phi(x,b_\delta)$ and $wg\psi(x,c_\delta)$ does not fork over $M_0$. Since $\delta=\gamma+2i$, by condition ($\ast$) $wg\psi(x,c_\delta)$ forks over $M_0$, so we must have that $wg\phi(x,b_\delta)$ does not fork over $M_0$. By ($\ast\ast$), we have $w\ind_{M_0}(b_\delta,b_{\delta+1})$, so by Fact \ref{non-forking iff invariant} $b_{\delta+1}\equiv_{(M_0,w)}b_\delta$, and thus $wg\phi(x,b_{\delta+1})$ also does not fork over $M_0$. But $\delta+1=\gamma+2i+1$, so this contradicts condition ($\ast$).
\end{proof}

Now combining Theorems \ref{non-wide sets are s1} and \ref{non-wide sets are an ideal} gives the following, which can be seen as one of the three main results of the paper and will be the essential technical tool in our applications. 

\begin{theorem}\label{non-wide sets are an s1 ideal}
    The non-wide definable subsets of $G$ are an S1 ideal.
\end{theorem}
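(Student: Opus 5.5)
The plan is to assemble Theorem \ref{non-wide sets are an s1 ideal} from Theorems \ref{non-wide sets are s1} and \ref{non-wide sets are an ideal}, together with a few routine closure properties. First I check that the non-wide sets form a proper ideal. If $\phi(x,b)\vdash\psi(x,c)$ and $\psi(x,c)$ is not wide, then $\phi(x,b)$ is not wide: for any finite $s\subset G(M_0)$ we have $s\phi(x,b)\vdash s\psi(x,c)$, and whenever a translate of $s\psi(x,c)$ forks over $M_0$, the same translate of $s\phi(x,b)$ forks as well. Closure under finite unions is exactly Theorem \ref{non-wide sets are an ideal}. The ideal is proper because $G$ itself is wide: every left translate of $G$ is $G$, which is $\varnothing$-definable and so does not fork over $M_0$; thus $s=\{1\}$ witnesses wideness.

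Next, the S1 property. Suppose $(c_i:i\in\omega)$ is an indiscernible sequence and $\psi(x,c_0)\wedge\psi(x,c_1)$ is not wide. I want to show that $\psi(x,c_0)$ is not wide, and I argue by contraposition. Assume $\psi(x,c_0)$ is wide. To apply Theorem \ref{non-wide sets are s1}, I need $c_1\equiv_{M_0}c_0$. Since $M_0=\dcl(\varnothing)$, every element of $M_0$ is $\varnothing$-definable, so indiscernibility over $\varnothing$ gives $c_1\equiv_{M_0}c_0$; if the paper's convention is indiscernibility over $M_0$, this holds directly. Theorem \ref{non-wide sets are s1} then says $\psi(x,c_0)\wedge\psi(x,c_1)$ is wide, a contradiction.

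Theorem \ref{non-wide sets are s1} actually gives something stronger: the ideal is invariant under $\mathrm{Aut}(\mathfrak{C}/M_0)$, and it contains any conjunction of $M_0$-conjugate instances only when a single instance already lies in it. I would remark that this subsumes the indiscernible-sequence formulation. The only point I expect to need care is matching the paper's definition of S1 (indiscernible over $\varnothing$ versus over $M_0$) with the hypothesis "$c_i\equiv_{M_0}c$" of Theorem \ref{non-wide sets are s1}. The observation that $\dcl(\varnothing)=M_0$ resolves this.
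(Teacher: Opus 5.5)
Your proposal is correct and takes the same approach as the paper: the theorem comes from combining Theorem~\ref{non-wide sets are s1} (the S1 property, applied with two $M_0$-conjugates) with Theorem~\ref{non-wide sets are an ideal} (closure under finite unions). Your extra checks are the routine details the paper leaves implicit: downward closure, properness via $G$ being wide, and the fact that indiscernibility over $\varnothing$ already gives $c_1\equiv_{M_0}c_0$ because every element of $M_0$ is $\varnothing$-definable.
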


\subsection{Piecewise strong f-genericity}\label{piecewise strong f-genericity section}
We point out here that the non-piecewise-strong-f-generic sets are also an ideal; the proof is an easy consequence of Theorem \ref{non-wide sets are an ideal}.

\begin{theorem}\label{piecewise strong f-generic sets are an ideal}Suppose $\phi(x,b)$ and $\psi(x,c)$ are not piecewise strong f-generic. Then neither is $\phi(x,b)\vee\psi(x,c)$.
\end{theorem}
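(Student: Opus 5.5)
The plan is to reduce directly to Theorem \ref{non-wide sets are an ideal}, using two easy comparisons between wideness and piecewise strong f-genericity. First, a strong f-generic set is wide: take the witness set to be $\{1\}\subset G(M_0)$. Second, a wide set is piecewise strong f-generic, since $G(M_0)\subset G(\mathfrak{C})$. We also use Remark \ref{translation invariance of piecewise (strong) f-genericity}: left translates of piecewise strong f-generic sets are again piecewise strong f-generic. Applying it to the translate by $g^{-1}$ gives the converse as well, so $g\phi(x,b)$ is piecewise strong f-generic if and only if $\phi(x,b)$ is.

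Now suppose for contradiction that $\phi(x,b)\vee\psi(x,c)$ is piecewise strong f-generic. Fix a finite $s\subset G(\mathfrak{C})$ such that $s[\phi(x,b)\vee\psi(x,c)]$ is strong f-generic. This set equals
\[\bigvee_{g\in s}g\phi(x,b)\vee\bigvee_{g\in s}g\psi(x,c).\]
By the first comparison it is wide. It is a finite union of the definable sets $g\phi(x,b)$ and $g\psi(x,c)$ for $g\in s$. By Theorem \ref{non-wide sets are an ideal}, applied inductively to this finite union, at least one of these sets is wide. So either $g\phi(x,b)$ or $g\psi(x,c)$ is wide for some $g\in s$.

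Suppose $g\phi(x,b)$ is wide (the other case is symmetric). By the second comparison, $g\phi(x,b)$ is piecewise strong f-generic. By translation invariance, $\phi(x,b)=g^{-1}(g\phi(x,b))$ is piecewise strong f-generic, which contradicts the hypothesis.

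I do not expect a real obstacle, since all the substantive work is done by Theorem \ref{non-wide sets are an ideal}. The one point to check is that translating by an arbitrary $g\in G(\mathfrak{C})$, rather than by an element of $G(M_0)$, is harmless. This holds because piecewise strong f-genericity allows witnesses anywhere in $G(\mathfrak{C})$, which is exactly what the translation-invariance remark uses.
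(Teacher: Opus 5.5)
Your proof is correct and follows the paper's argument: both observe that the strong f-generic set $t[\phi(x,b)\vee\psi(x,c)]$ is in particular wide, and then apply Theorem~\ref{non-wide sets are an ideal}. The only cosmetic difference is in the splitting. The paper splits into just the two pieces $t\phi(x,b)$ and $t\psi(x,c)$, and then notes that $st$ (with $s\subset G(M_0)$ the wideness witness) directly witnesses piecewise strong f-genericity of $\phi(x,b)$ or $\psi(x,c)$. You instead split into individual translates and undo the translation using Remark~\ref{translation invariance of piecewise (strong) f-genericity}.
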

\begin{proof}
    Suppose for contradiction that $\phi(x,b)\vee\psi(x,c)$ is piecewise strong f-generic. Thus there is a finite set $t\subset G(\mathfrak{C})$ such that $t[\phi(x,b)\vee\psi(x,c)]=t\phi(x,b)\vee t\psi(x,c)$ is strong f-generic. In particular, it is wide, and so by Theorem \ref{non-wide sets are an ideal} one of $t\phi(x,b)$ and $t\psi(x,c)$ is wide. In the first case, there is a finite set $s\subset G(M_0)$ such that $st\phi(x,b)$ is strong f-generic, in which case $\phi(x,b)$ is piecewise strong f-generic, and, in the second case, there is a finite set $s\subset G(M_0)$ such that $st\psi(x,c)$ is strong f-generic, in which case $\psi(x,c)$ is piecewise strong f-generic. Either way we get a contradiction.
\end{proof}

Theorem \ref{non-wide sets are an s1 ideal} is the driving force behind the main applications in this paper, and for the rest of this paper wideness will be the useful notion, not piecewise strong f-genericity. However, for other applications, the weaker notion of piecewise strong f-genericity is important; the reason is that the ideal of wide sets is not translation invariant, while by Remark \ref{translation invariance of piecewise (strong) f-genericity} the ideal of non-piecewise strong f-generic sets is translation invariant. As explained in the introduction, there are in general no translation-invariant S1 ideals, so we cannot hope to get both simultaneously. On the other hand, by Theorem \ref{non-wide sets are an s1 ideal} and respectively by Remark \ref{translation invariance of piecewise (strong) f-genericity} and Theorem \ref{piecewise strong f-generic sets are an ideal}, we do have two ideals: the S1 ideal of piecewise strong f-generic sets with witnesses in $M_0$, and the translation-invariant ideal of piecewise strong f-generic sets. These serve complementary roles, and in fact in our upcoming work on NIP approximate groups the latter notion will play an important role, in part because of Lemma \ref{stabilizer subgroup} below.

\subsection{Type-definability}
For future reference, we record here the basic observation that all of the ideals we have defined are type-definable, in the sense that, for any formula $\phi(x,y)$, the set of $b$ such that $\phi(x,b)$ is small is type-definable over $M_0$. It follows basically immediately from Fact \ref{lowness}.

\begin{lemma}\label{type-definability}
    For any formula $\phi(x,y)$ implying $x\in G$ and any of the three properties defined in Definition \ref{piecewise f-generic definition}, the set of $b$ such that $\phi(x,b)$ does not satisfy the property is type-definable over $M_0$. 
\end{lemma}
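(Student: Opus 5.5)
The plan is to express each of the three ``non-property'' conditions as an intersection, over parameters living in $M_0$ or over all of $\mathfrak{C}$, of conditions that are individually type-definable over $M_0$. All the type-definability comes from Fact \ref{lowness}.

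\emph{Basic building block.} Fix a finite $s$ (its elements viewed as variables, or as fixed elements of $G(M_0)$). Consider the formula $\chi_s(x;u,\bar v,y)\equiv\bigvee_{g\in\bar v}ug\phi(x,y)$, where $\bar v$ is a tuple of length $|s|$. Fact \ref{lowness} gives a partial type $\pi_s(u,\bar v,y)$ over $M_0$ such that $\chi_s(x;a,\bar g,b)$ divides over $M_0$ iff $(a,\bar g,b)\models\pi_s$. By Fact \ref{forking=dividing}, dividing may be replaced by forking.

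\emph{Wideness.} $\phi(x,b)$ is not wide iff for every finite $s\subset G(M_0)$ there is $a$ with $as\phi(x,b)$ forking over $M_0$. That is, iff $b$ satisfies $\exists u\,\pi_s(u,s,y)$ for every such $s$. An existential projection of a partial type over $M_0$ is type-definable over $M_0$ by compactness, since we work in the monster model. An intersection of type-definable sets is type-definable, so this case is done.

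\emph{Piecewise strong f-genericity.} $\phi(x,b)$ is not piecewise strong f-generic iff for each $n$ we have $b\models\forall\bar v\,\exists u\,\pi_n(u,\bar v,y)$. Here the universal quantifier over $\bar v$ is the delicate point, since a universal quantifier over a type is not automatically type-definable. To handle it, I would go back to the uniform bound in Fact \ref{lowness}: there is $N$ such that $\bigwedge_{k\in\omega}\chi(x,e_k)$ is inconsistent along an indiscernible sequence iff $\bigwedge_{k\in[N]}\chi(x,e_k)$ is. Dividing of $\chi(x,e)$ over $M_0$ then says that there is an $M_0$-indiscernible sequence starting with $e$ whose first $N$ instances are inconsistent. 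Being an $M_0$-indiscernible sequence starting at $e$ is type-definable, so this is a type-definable condition $\Pi(e)$ in which all the quantification is existential. Combining, ``$\exists u\,\forall\bar v$'' becomes ``there exist $u$ and a sequence\ldots'', and ``$\forall\bar v$'' can be absorbed by compactness by treating $\bar v$ as ranging over all of $G$ in the saturated model.

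Failing a direct argument, I would use the equivalence that $\phi(x,b)$ is not piecewise strong f-generic iff $t\phi(x,b)$ is not wide for every finite $t\subset G(\mathfrak C)$. The proof of Theorem \ref{piecewise strong f-generic sets are an ideal} shows this, since $st$ ranges over all finite subsets. The obstacle is then to show that $\{b:\forall\bar t\ (\bar t,b)\in W\}$ is type-definable, where $W$ is type-definable over $M_0$. This holds because the complement is a projection of the complement of a type-definable set, hence $M_0$-invariant and $\vee$-definable. One still has to check directly that it is closed in the relevant sense, and I expect this to be the main difficulty.

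\emph{Piecewise f-genericity.} I would use the $G$-dividing characterization instead. $\phi(x,b)$ is not piecewise f-generic iff for every $n$ and every $\bar t$ of length $n$, $\bar t\phi(x,b)$ $G$-divides. $G$-dividing with parameter $k$ is type-definable by compactness: there are arbitrarily long $k$-inconsistent sequences of translates. Uniformity in $k$ again comes from Fact \ref{lowness}, which bounds $k$ in terms of the formula.
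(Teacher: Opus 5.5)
Your overall plan matches the paper's proof. Fact \ref{lowness} with Fact \ref{forking=dividing} gives a partial type over $M_0$ saying a translate forks. Non-wideness is an intersection of existential projections of such types; your wideness paragraph is fine as written. Piecewise f-genericity goes through $G$-dividing with the uniform bound from lowness, applied to $\bigvee_{i\in[n]}u_i\phi(x,y)$ with $\bar u$ among the parameter variables, so that the bound depends only on $n$.

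However, you misdiagnose the piecewise strong f-generic case and leave the proposal unfinished at a step that is immediate. A universal quantifier over a definable domain never threatens type-definability, because it commutes with arbitrary conjunctions. That is, $\forall\bar v\,\bigwedge_{\sigma\in\Sigma}\sigma(\bar v,y)$ and $\bigwedge_{\sigma\in\Sigma}\forall\bar v\,\sigma(\bar v,y)$ define the same set. It is the existential quantifier that needs compactness, and you already handled that: with $\pi$ closed under finite conjunctions, $\exists u\bigwedge_{i\in[n]}\pi(uv_i,y)$ is equivalent to $\bigwedge_{\theta\in\pi}\exists u\bigwedge_{i\in[n]}\theta(uv_i,y)$. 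Hence the set of $b$ with $\phi(x,b)$ not piecewise strong f-generic is type-defined over $M_0$ by the formulas $\forall v_1\dots\forall v_n\exists u\bigwedge_{i\in[n]}\theta(uv_i,y)$, for $n\in\omega$ and $\theta\in\pi$. This is exactly the paper's argument. The same one-line observation disposes of the $\forall\bar t$ in your piecewise f-generic case, which you do not address.

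Two further corrections. First, in your fallback route the reason you give is already a complete proof. The complement of $\{b:\forall\bar t\,(\bar t,b)\in W\}$ is the projection of the $\bigvee$-definable set $\neg W$. Since $\exists\bar t\bigvee_\sigma\neg\sigma\equiv\bigvee_\sigma\exists\bar t\,\neg\sigma$, that complement is $\bigvee$-definable over $M_0$. So the set is, by definition, type-definable over $M_0$, and there is nothing further to ``check directly''.

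Second, drop the ``direct argument'' via indiscernible sequences. It is unnecessary, and it reverses the quantifiers: you write $\exists u\,\forall\bar v$ where the condition is $\forall\bar v\,\exists u$. The element $a$ with $at\phi(x,b)$ forking over $M_0$ must be allowed to depend on $t$. The swapped condition says that every left translate of $\phi(x,b)$ forks over $M_0$, which is a different, stronger condition.
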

\begin{proof}First we show it for piecewise strong f-genericity and wideness. By Fact \ref{lowness}, there is a partial type $\pi(u,y)$ such that $(a,b)\models\pi(u,y)$ if and only if $a\phi(x,b)$ forks over $M_0$. So now $\phi(x,b)$ is not wide if and only if $b\models \exists u\bigwedge_{g\in G(M_0)}\pi(ug,y)$, and it is not piecewise strong f-generic if and only if $b\models\bigwedge_{n\in\omega}\forall v_1\dots\forall v_n\exists u\bigwedge_{i\in[n]}\pi(uv_i,y)$. By compactness, these are both type-definable conditions. (For example, for the latter, it's equivalent to the conjunction of all formulas $\forall v_1\dots\forall v_n\exists u\bigwedge_{i\in[n]}\theta(uv_i,y)$ as $n$ ranges over the elements of $\omega$ and $\theta(u,y)$ ranges over the formulas of $\pi(u,y)$.)

For piecewise f-genericity, first we note that, for any formula $\psi(x,z)$, the set of $c$ such that $\psi(x,c)$ is not f-generic is type-definable. Indeed, by Fact \ref{lowness}, there is some $N_\psi$ such that, for any $(M,c)$-indiscernible sequence $(a_i:i\in\omega)$, if $\bigwedge_{i\in\omega}a_i\psi(x,c)$ is inconsistent then $\bigwedge_{i\in[N]}a_i\psi(x,c)$ is already inconsistent. Thus if $\pi_\psi(z)$ is the partial type containing the formula $$\exists u_1\dots \exists u_n\bigwedge_{s\subseteq[n],|s|=N}\neg\exists x\bigwedge_{i\in s}u_i\psi(x,y)$$ for every $n\in\omega$, then by Ramsey and compactness $c\models\pi$ if and only if $\psi(x,c)$ is not f-generic. For each $n$, let $\psi_n(x,u_1,\dots,u_n,y)$ be the formula $\bigvee_{i\in[n]}u_i\phi(x,y)$. Then $\phi(x,b)$ is not piecewise f-generic if and only if $b\models\bigwedge_{n\in\omega}\forall u_1\dots\forall u_n\pi_{\psi_n}(u_1,\dots,u_n,y)$.
\end{proof} Artem Chernikov has pointed out to me that Lemma \ref{type-definability} can be combined with his group chunk theorems in Section 4 of \cite{chernikov} to canonically recover an NIP group from associated type-definable data, an observation he had already made for definably amenable NIP groups using the type-definable ideal of non-f-generic sets.

We also have the following, again an easy consequence of Fact \ref{lowness}:
\begin{lemma}\label{piecewise f-generic coincides with piecewise strong f-generic for M0 definable sets}
    If $D$ is $M_0$-definable, then $D$ is piecewise f-generic if and only if it is piecewise strong f-generic if and only if it is piecewise strong f-generic with witnesses in $M_0$.
\end{lemma}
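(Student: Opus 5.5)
The implications from right to left are immediate from the definitions, since a set that is piecewise strong f-generic with witnesses in $M_0$ is piecewise strong f-generic, and strong f-genericity implies f-genericity. So the content is to show that if $D$ is $M_0$-definable and piecewise f-generic, then $D$ is wide. The plan has three steps: first reduce to the case where the witnessing finite set lies in $G(M_0)$, then show that an $M_0$-definable f-generic set is strong f-generic, and conclude.

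\emph{Step 1 (moving witnesses into $M_0$).} Suppose $s=\{g_1,\dots,g_n\}\subset G(\mathfrak{C})$ and $sD$ is f-generic. Write $D=\phi(x)$ with $\phi$ over $M_0$. By the proof of Lemma \ref{type-definability}, the set of tuples $(u_1,\dots,u_n)$ such that $\bigvee_{i\in[n]}u_i\phi(x)$ is \emph{not} f-generic is type-definable over $M_0$. Call this partial type $\pi_{\psi_n}$, which is a conjunction of formulas with parameters in $M_0$ coming from Fact \ref{lowness} and Ramsey. Since $(g_1,\dots,g_n)$ fails $\pi_{\psi_n}$, some single formula $\theta(u_1,\dots,u_n)$ of $\pi_{\psi_n}$ fails at $(g_1,\dots,g_n)$. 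Hence $\neg\theta$ is an $L(M_0)$-formula that is satisfiable, and so it is realized by some $(h_1,\dots,h_n)\in G(M_0)^n$ because $M_0$ is a model. For this tuple, $\bigvee_i h_i\phi(x)$ is not in the non-f-generic locus, so it is f-generic. Thus we may assume $s\subset G(M_0)$.

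\emph{Step 2 (f-generic implies strong f-generic for $M_0$-definable sets).} Now $E:=sD$ is $M_0$-definable and f-generic. We want no left translate $aE$ to fork over $M_0$. By Fact \ref{forking=dividing}, forking over $M_0$ equals dividing over $M_0$, so it suffices to show that $aE$ does not divide over $M_0$. Suppose instead that $aE$ divides over $M_0$. Then there is an $M_0$-indiscernible sequence $(a_i:i\in\omega)$ with $a_0=a$ such that $(a_iE)_i$ is $k$-inconsistent; here we use that $E$ is $M_0$-definable, so that conjugates of the parameter $a$ over $M_0$ give exactly the translates $a_iE$. This is precisely an instance of $E$ $G$-dividing. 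But f-genericity of $E$ is equivalent to $E$ not $G$-dividing (the combinatorial characterization stated after Definition \ref{f-generic definition}). This is a contradiction, so $E$ is strong f-generic.

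\emph{Conclusion.} Combining the two steps, $sD$ is strong f-generic with $s\subset G(M_0)$, so $D$ is wide. I expect the main obstacle to be Step 1. There one must make sure the type-definability of the non-f-generic locus, which comes from lowness and Ramsey in the proof of Lemma \ref{type-definability}, is genuinely over $M_0$, so that the failure of one formula can be witnessed in $M_0$. This holds because the defining formulas only mention $\phi$ and the bound $N_\psi$, both of which are over $M_0$.
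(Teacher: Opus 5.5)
Your proof is correct and matches the paper's argument: the paper also uses the $M_0$-type-definability of the non-f-generic locus from the proof of Lemma \ref{type-definability} to find a single formula $\theta$ that fails at $\bar g$, and realizes $\neg\theta$ by a tuple from $G(M_0)$. It then notes that an $M_0$-definable f-generic set is strong f-generic. The one difference is that the paper states this last step without proof, while your Step 2 justifies it correctly, using forking $=$ dividing over $M_0$ together with the $G$-dividing characterization of f-genericity.
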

\begin{proof}It is enough to show that, if $D$ is piecewise f-generic, then it is piecewise strong f-generic with witnesses in $M_0$. So suppose $D$ is piecewise f-generic. Then there is a finite subset $t=\{g_1,\dots,g_n\}\subset G(\mathfrak{C})$ such that $tD$ is piecewise f-generic.  By the second paragraph of the proof of the lemma above, there is a partial type $\pi(u_1,\dots,u_n)$ such that $\bar{h}\models\pi$ if and only if $h_1D\vee\dots\vee h_nD$ is not f-generic. So $\bar{g}$ does not realize $\pi$, and hence there is $\theta(\bar{u})\in \pi$ such that $\bar{g}\models\neg\theta$. Since $\theta$ is $M_0$-definable, there is some tuple $\bar{g}'$ from $G(M_0)$ such that $\bar{g}'\models\neg\theta$, and now $g_1'D\vee\dots\vee g_n'D$ is f-generic. Since it is an $M_0$-definable set, it is also strong f-generic, so we are done.
\end{proof}

By Lemma \ref{translation invariance of piecewise (strong) f-genericity}, and Proposition \ref{piecewise f-genericity coincides with weak genericity} and Theorem \ref{piecewise strong f-generic sets are an ideal}, the non-piecewise (strong) f-generic sets are a translation-invariant ideal of definable subsets of $G$. By a standard argument, it follows that, if $\phi(x,b)$ is piecewise (strong) f-generic, then the set of $g$ such that $g\phi(x,b)\triangle\phi(x,b)$ is not piecewise (strong) f-generic is a subgroup of $G$, and by Lemma \ref{type-definability} this subgroup is type-definable. So we get:
\begin{lemma}\label{stabilizer subgroup}
    If $\phi(x,b)$ is piecewise (strong) f-generic, then the set $$\mathrm{st}(\phi(x,b))=\{g:g\phi(x,b)\triangle\phi(x,b)\text{ is not piecewise (strong) f-generic}\}$$ is a type-definable subgroup of $G$.
\end{lemma}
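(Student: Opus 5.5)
The plan is to run the standard stabilizer argument for a translation-invariant ideal, and then get type-definability from Lemma \ref{type-definability}. Write $\mathcal{J}$ for the family of definable subsets of $G$ that are not piecewise (strong) f-generic; the argument is identical in the two cases. By Proposition \ref{piecewise f-genericity coincides with weak genericity} (resp. Theorem \ref{piecewise strong f-generic sets are an ideal}), $\mathcal{J}$ is an ideal. By Remark \ref{translation invariance of piecewise (strong) f-genericity}, it is invariant under left translation by arbitrary elements of $G(\mathfrak{C})$, since a left translate of a non-piecewise (strong) f-generic set is again not piecewise (strong) f-generic. Set $D=\phi(x,b)$.

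For the group axioms I check three things. First, $1\in\mathrm{st}(D)$ because $D\triangle D=\varnothing\in\mathcal{J}$. Second, for inverses, $g^{-1}D\triangle D=g^{-1}(D\triangle gD)$, which lies in $\mathcal{J}$ by left invariance whenever $gD\triangle D$ does. Third, for products, I use the identity
\[
ghD\triangle D\subseteq (ghD\triangle gD)\cup(gD\triangle D).
\]
Here $ghD\triangle gD=g(hD\triangle D)$, so it lies in $\mathcal{J}$ by left invariance, and $gD\triangle D\in\mathcal{J}$ by assumption. Since ideals are closed under finite unions and definable subsets, $ghD\triangle D\in\mathcal{J}$. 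The hypothesis that $D$ itself is piecewise (strong) f-generic is not needed for the group structure; it only ensures that $\mathrm{st}(D)$ is a nontrivial object (for instance, it guarantees that $\mathcal{J}$ does not contain $gD$).

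For type-definability, consider the formula
\[
\chi(x;u,y):=u\phi(x,y)\triangle\phi(x,y).
\]
It implies $x\in G$. By Lemma \ref{type-definability}, the set of parameters $(g,b')$ with $\chi(x;g,b')$ not piecewise (strong) f-generic is type-definable over $M_0$, say by $\Sigma(u,y)$. Then $\mathrm{st}(D)$ is defined by $\Sigma(u,b)$, which is a partial type over $M_0b$.

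The only point requiring care is that Lemma \ref{type-definability} is applied with the group element $u$ absorbed into the parameter variable, so that type-definability holds uniformly in $(u,y)$. Since the lemma was stated for arbitrary formulas, I expect no genuine obstacle here.
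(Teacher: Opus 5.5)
Your proof is correct and matches the paper's route: the paper also combines Remark \ref{translation invariance of piecewise (strong) f-genericity} with Proposition \ref{piecewise f-genericity coincides with weak genericity} and Theorem \ref{piecewise strong f-generic sets are an ideal} to get a left-invariant ideal. It then invokes the standard stabilizer argument, which you have written out, and obtains type-definability from Lemma \ref{type-definability}. The only cosmetic adjustment is to include the conjunct $u\in G$ in $\chi$ and in the partial type $\Sigma(u,b)$, so that $u$ ranges over $G$ and $\chi$ genuinely implies $x\in G$.
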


With a bit of care, everything in Section \ref{piecewise f-generic section}, including Lemma \ref{stabilizer subgroup}, goes through in the setting of $\bigvee$-definable groups $G=\langle X\rangle$ generated by a definable approximate group $X$. In my upcoming project on NIP approximate groups, the corresponding groups from Lemma \ref{stabilizer subgroup} will play an important role. I caution, however, that they need not have bounded index; indeed if $X$ is `non-laminar' in the sense of \cite{hrushovski_lascar_group}, then they cannot have bounded index. The easiest example is to take $X=\{-1,0,1\}\times\mathrm{SL}_2(\mathbb{R})$ conceived of as a subset of the definable group $\widetilde{\mathrm{SL}_2(\mathbb{R})}$ as treated in \cite{conversano_pillay} or \cite{krupinski_pillay}. We will discuss this in much greater detail in the upcoming paper. 

\subsection{Questions}
We pose here two questions that may be of interest for further investigation. The first question is quite concrete:

\begin{question}\label{ntp2 question}
    Do global piecewise f-generic and global piecewise strong f-generic types always exist in NTP$_2$ theories?
\end{question} The motivation for this question comes from two sources. First are the results of Pillay in \cite{pillay_simple}, which show that, for a group definable in a simple theory, f-genericity and strong f-genericity coincide, and global f-generic types exist. (The standard terminology for f-generic types in the study of simple theories is `generic types', but following the terminology from the NIP setting it seems more suitable to call such types f-generic.) So Question \ref{ntp2 question} has a strong positive answer in simple theories, and our results here show that it has a positive answer in NIP theories. One might thus hope to obtain a positive answer in NTP$_2$. The second motivation comes from the results of Montenegro, Onshuus, and Simon in \cite{montenegro_onshuus_simon}, which show that NTP$_2$ groups admitting a strong f-generic type have many good properties in common with definably amenable NIP groups, and apply this in particular to obtain applications to groups with strong f-generics definable in bounded PRC fields. So it seems realistic to hope that a positive answer to Question \ref{ntp2 question} might be able to lead to new and interesting theorems for definable groups in NTP$_2$.

The second question is more vague:
\begin{question}\label{automorphism group question} Is there an `automorphism group' analogue of the notions of piecewise f-genericity and piecewise strong f-genericity?
\end{question} There is a well-known `correspondence' between results on automorphism groups and results on definable groups; a pair of papers in that vein that are most relevant to our perspective here are \cite{hkp_1} and \cite{hkp_2}. One guise of this is a very useful construction from \cite{hrushovski_thesis}, later called `Construction C' in \cite{hrushovski_pillay}, which, starting from a definable group $G$, adds a new sort $S$ to the language for a right `torsor' or `principal homogeneous space' for $G$. (In other words, we add a new sort $S$, a function symbol for a right action $S\times G\to S$, axioms asserting that the action is free and transitive, and no other new additional structure.) There is then a natural $\varnothing$-definable map $S\times S\to G$, taking $(a,b)$ to the unique element $g\in G$ with $ag=b$, which we denote `$a^{-1}b$'. Given a definable subset $\phi(x,b)$ of $G$, if $y$ is a variable of sort $S$ and $a$ is an element of $S$, then the formula $\phi(a^{-1}y,b)$ is a definable subset of $S$, and left-translates of $\phi(x,b)$ give rise to automorphic images of $\phi(a^{-1}y,b)$. In particular we have the following, which holds in arbitrary theories:

\begin{fact}\label{torsor construction}
    Let $\phi(x,b)$ be a definable subset of $G$ and let $y$ be a variable of sort $S$. Then no left translate of $\phi(x,b)$ divides over $\varnothing$ if and only if, for some (every) $a\in S$, the formula $\phi(a^{-1}y,b)$ does not divide over $\varnothing$.
 \end{fact}
% \begin{proof}
%     suppose $\phi(a^{-1}y,b)$ divides witnessed by $(a_i,b_i:i\in\omega)$ a $k$-inconsistent family of instances. For each $i$ let $g_i=a_0^{-1}a_i$, so $a_i=a_0g_i$. Now $a_i^{-1}y=(a_0g_i)^{-1}y=g_i^{-1}(a_0^{-1}y)$. claim the family $\phi(g_i^{-1}x,b_i)=g_i\phi(x,b_i)$ $k$-inconsistent. else let $h$ realize. now have $\phi(g_i^{-1}h,b_i)$ for each $i$. let $c=a_0h$. Now $a_i^{-1}c=g_i^{-1}(a_0^{-1}c)=g_i^{-1}(a_0^{-1}(a_0h))=g_i^{-1}h$.

%     Conversely suppose $g\phi(x,b)$ divides witnessed by $(g_i,b_i)$. fix $a\in S$. let $a_i=ag_i$. so $(a_i,g_i)\equiv (a,g_i)\equiv (a,g)$.
% \end{proof}
In our setting, $\dcl(\varnothing)=M_0$ is a model of the original (NIP) theory, so by Fact \ref{forking=dividing} the condition that no left translate of $\phi(x,b)$ divides over $\varnothing$ is equivalent to $\phi(x,b)$ being strong f-generic. In particular the above fact says that $\phi(x,b)$ is strong f-generic if and only if, in the new theory, for some (every) $a\in S$, the formula $\phi(a^{-1}y,b)$ does not divide over $\varnothing$. (Note critically that $\varnothing$ is not an extension base in the new theory, so Fact \ref{forking=dividing} does not apply over $\varnothing$ in the new theory.)

In light of this, I believe that a good starting point for Question \ref{automorphism group question} might given by Lemma \ref{union of conjugates wide implies wide}, in the hypothesis of which we do not mention the group action. Inspired by this, we might define a notion of `piecewise non-dividing' over a set $A$, and say that a formula $\phi(x,b)$ is piecewise non-dividing over $A$ if some union of finitely many automorphic images of $\phi(x,b)$ over $A$ does not divide over $A$, ie if there are some $b_1,\dots,b_n$ with $b_i\equiv_A b$ such that $\bigvee_{i\in[n]}\phi(x,b_i)$ non-dividing over $A$. In particular, in our setting, if $\phi(x,b)$ is a wide definable subset of $G$, and $S$ is the new sort of the right torsor for $G$, then the definable subset $\phi(a^{-1}y,b)$ of the new sort $S$ is `piecewise non-dividing'. By our main theorem \ref{non-wide sets are an ideal}, when $G$ is NIP there are global wide types; fixing any $a\in S$ and taking the pushforward under the map $G\to S,g\mapsto ag$ will then give a global type concentrated on $S$ and every formula of which is piecewise non-dividing over $\varnothing$.

\section{`Existence' results}\label{existence section}
We \uline{continue assuming that $\dcl(\varnothing)=M_0$} is a model, and we continue following the terminology of Notation \ref{wideness definition}, so we use `wide' to abbreviate `piecewise strong f-generic with witnesses in $M_0$'. In this section we will give the first major application of Theorem \ref{non-wide sets are an ideal}: namely, that the Ellis group of $G(M)$ has size at most $2^{|T|}$ for any model $M$ (Theorem \ref{restriction map ellis group theorem}). As a `warmup', we will first give a new proof of Gismatullin's theorem on the `existence' of $G^{\infty}$ (Corollary \ref{existence of g000 theorem}). These two results are closed linked; indeed, as mentioned in our introduction, Krupi\'nski and Pillay observed in \cite{krupinski_pillay_early} that the Ellis group of $G(M)$ admits a canonical surjective epimorphism onto $G/G^{\infty}_M$; in particular, if the Ellis group has bounded size independent of the choice of $M$, then so too must $G/G^{\infty}_M$, and it then follows on formal grounds that $G^{\infty}$ `exists'. Our proofs are also linked, insofar as the starting point for both is the observation in Lemma \ref{key observation 2} below. However, Corollary \ref{existence of g000 theorem} is basically an immediate consequence of Lemma \ref{key observation 2}, whereas Theorem \ref{restriction map ellis group theorem} requires quite a bit more work. In particular, to apply Lemma \ref{key observation 2} to the Ellis group, we will need a way of translating our machinery of wideness to the setting of the Ellis group; inspired by the recent paper \cite{kyle_tomasz} of Gannon and Rzepecki, the translation will come via the retraction map $F_M$ from Section \ref{honest definitions section}. Combining Lemma \ref{key observation 2} with Fact \ref{definable p,q theorem} and suitably applying the retraction map, we will obtain a single main consequence for the Ellis semigroup of $G(M)$, Theorem \ref{downstairs lemma dynamical language}, from which Theorem \ref{restriction map ellis group theorem} will then follow on general topological dynamics grounds. 

Here is the key observation that will allow us to apply the machinery of Section \ref{piecewise f-generic section}.

\begin{lemma}\label{key observation 2}
    Suppose $\psi(x,c)$ is wide. For any $b\equiv_{M_0}b'$, there is some $g\in G(M_0)$ such that $bg\psi(x,c)\wedge b'g\psi(x,c)$ does not fork over $M_0$.
\end{lemma}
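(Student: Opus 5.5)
The plan is to reduce the statement to the S1 property of Theorem \ref{non-wide sets are s1}, combined with the fact that a non-forking formula lies in a global $M_0$-invariant type (Fact \ref{non-forking iff invariant}). The point is that an $M_0$-invariant type containing a formula also contains all of its $\mathrm{Aut}(\mathfrak{C}/M_0)$-conjugates. So it suffices to find a single non-forking formula which implies $bg\psi(x,c)$ and one of whose $M_0$-conjugates is $b'g\psi(x,c)$.

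First, since $b\equiv_{M_0}b'$, fix $\sigma\in\mathrm{Aut}(\mathfrak{C}/M_0)$ with $\sigma(b)=b'$, and set $e=\sigma^{-1}(c)$. Then $e\equiv_{M_0}c$, and $\sigma$ maps the formula $bg\psi(x,e)$ to $b'g\psi(x,c)$ for every $g\in G(M_0)$, because $\sigma$ fixes $g$ and $M_0$. Since $\psi(x,c)$ is wide and $e$ is an $M_0$-conjugate of $c$, Theorem \ref{non-wide sets are s1} shows that $\psi(x,c)\wedge\psi(x,e)$ is wide. So there is a finite $s\subset G(M_0)$ with $s[\psi(x,c)\wedge\psi(x,e)]$ strong f-generic. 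In particular its left translate $bs[\psi(x,c)\wedge\psi(x,e)]=\bigvee_{g\in s}bg[\psi(x,c)\wedge\psi(x,e)]$ does not fork over $M_0$. Since the formulas forking over $M_0$ form an ideal, some $g\in s$ has $bg\psi(x,c)\wedge bg\psi(x,e)$ non-forking over $M_0$.

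Next, by Fact \ref{non-forking iff invariant} (together with the usual extension of non-forking partial types to global non-forking types), choose a global $M_0$-invariant type $r$ containing $bg\psi(x,c)\wedge bg\psi(x,e)$. The tuple $(b,g,e)$ is mapped by $\sigma$ to $(b',g,c)$, so the two tuples have the same type over $M_0$. By $M_0$-invariance, since $r\vdash bg\psi(x,e)$, also $r\vdash b'g\psi(x,c)$. Hence $r$ contains $bg\psi(x,c)\wedge b'g\psi(x,c)$. Being contained in a global $M_0$-invariant type, this formula does not fork over $M_0$, again by Fact \ref{non-forking iff invariant}, and $g\in s\subset G(M_0)$ as required.

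The only genuinely non-trivial input is the S1 step, which is already Theorem \ref{non-wide sets are s1}. The step that needs care is the choice of the auxiliary parameter: one must take $e=\sigma^{-1}(c)$ rather than $\sigma(c)$. This ensures that the conjugate of the non-forking formula produced inside the single translate $b\,\cdot$ is exactly the $b'$-translate at the original parameter $c$. With that choice the rest is bookkeeping with invariance.
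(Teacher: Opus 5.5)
Your proof is correct and is essentially the paper's argument: S1 applied to an $M_0$-conjugate of $c$, a left translate of the resulting strong f-generic union, the forking ideal to pick out one $g\in s$, and then $M_0$-invariance of a global non-forking type to transfer between the two translates. The only difference is that the paper takes $c'$ with $(b,c)\equiv_{M_0}(b',c')$ and translates by $b'$ rather than $b$. This is just the mirror image of your choice $e=\sigma^{-1}(c)$, so the direction of the conjugate is not a real obstacle, only a matter of which of $b,b'$ you translate by.
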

\begin{proof}
    Let $c'$ such that $(b,c)\equiv_{M_0}(b',c')$. By Theorem \ref{non-wide sets are s1}, $\psi(x,c)\wedge\psi(x,c')$ is wide, so let $s\subset G(M_0)$ be a finite set such that $s[\psi(x,c)\wedge\psi(x,c')]$ is strong f-generic. Thus in particular $b's[\psi(x,c)\wedge\psi(x,c')]$ does not fork over $M_0$, and so (since the forking sets form an ideal) there is $g\in s$ such that $b'g[\psi(x,c)\wedge\psi(x,c')]$ does not fork over $M_0$. By Fact \ref{non-forking iff invariant}, there is a global $M_0$-invariant type $p$ containing $b'g[\psi(x,c)\wedge\psi(x,c')]$, ie containing $b'g\psi(x,c)$ and $b'g\psi(x,c')$. We have $(b',c')\equiv_{M_0}(b,c)$, and $p\vdash b'g\psi(x,c')$, so by $M_0$-invariance also $p\vdash bg\psi(x,c)$. But now $p\vdash bg\psi(x,c)\wedge b'g\psi(x,c)$, so in particular that formula does not fork over $M_0$, and we are done.
\end{proof}

\subsection{`Existence' of $G^{\infty}$}\label{existence of g000 section}
Recall from Section \ref{connected components section} that, for a small model $M$, $G^{\infty}_M$ is the group generated by the set $X_M:=\{a^{-1}b:a,b\in G,a\equiv_M b\}$. It is a theorem of Gismatullin from \cite{gismatullin} that $G^\infty_M=G^\infty_N$ for any $M,N$. The proof from \cite{gismatullin} builds on ideas from an earlier theorem from \cite{shelah_g00}, that $G^{00}_M=G^{00}_N$ for any $M,N$, which in turn builds on ideas from the `Baldwin-Saxl' theorem. We give here a new and rather different proof, using the machinery of wide types we developed in Section 3. Our proof gives actually a slightly better result, insofar as we can show $X_N\subseteq X_M^2$ for any $M,N$, whereas the proof from \cite{gismatullin} gets no such bound.

\begin{theorem}
    For any $M$, we have $X_{M_0}\subseteq X_M^G$, where by $X_M^G$ we mean the set of conjugates $\{a^g=g^{-1}ag:a\in X_M,g\in G\}$.
\end{theorem}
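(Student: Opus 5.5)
The plan is to show that for $b\equiv_{M_0}b'$ the element $b^{-1}b'$ is a conjugate of some $u^{-1}u'$ with $u\equiv_M u'$. We will realize, by compactness, a triple $(u,u',z)$ satisfying
$$u^{-1}u'=z^{-1}b^{-1}b'z\quad\text{and}\quad \theta(u)\leftrightarrow\theta(u')\ \text{for every } L(M)\text{-formula }\theta(x)\vdash x\in G.$$
Given such a triple, $u\equiv_M u'$, so $u^{-1}u'\in X_M$. Then $b^{-1}b'=z(u^{-1}u')z^{-1}$ is a conjugate of an element of $X_M$, as required. (We may enlarge $M$ to contain $M_0$ if needed. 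This enlargement only shrinks $X_M$, so it costs nothing.)

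The main step is finite satisfiability of this partial type. Fix finitely many $L(M)$-formulas $\theta_1,\dots,\theta_n$, and let $D_1,\dots,D_m$ be the atoms of the Boolean algebra they generate inside $G$. These are $M$-definable sets partitioning $G$. The set $G$ itself is strong f-generic, hence wide. By Theorem \ref{non-wide sets are an ideal}, the non-wide sets form an ideal, so some atom $D_j$ is wide. Since $b\equiv_{M_0}b'$, also $b^{-1}\equiv_{M_0}b'^{-1}$. Lemma \ref{key observation 2}, applied to the wide set $D_j$ and the pair $b^{-1},b'^{-1}$, then gives $g\in G(M_0)$ such that $b^{-1}gD_j\wedge b'^{-1}gD_j$ does not fork over $M_0$. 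In particular this set is consistent, so we pick $z$ in it.

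We then set $u=g^{-1}bz$ and $u'=g^{-1}b'z$. Both lie in $D_j$, so they agree on each $\theta_i$. Moreover $u^{-1}u'=z^{-1}b^{-1}gg^{-1}b'z=z^{-1}b^{-1}b'z$: the witness $g$ cancels. This cancellation is what makes compactness work, even though $g$ depends on the chosen finite set of formulas.

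The only delicate point is the direction of the translates. Lemma \ref{key observation 2} produces \emph{left} translates $b g\psi$, so it must be applied with $b^{-1},b'^{-1}$ in place of $b,b'$, and the $M_0$-point $g$ has to be absorbed on the left of $u,u'$ as above. The remaining steps are routine once this bookkeeping is set up.
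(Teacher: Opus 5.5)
Your proof is correct and uses the same ingredients as the paper's: a wide set obtained via Theorem~\ref{non-wide sets are an ideal}, Lemma~\ref{key observation 2}, and compactness. The differences are only in bookkeeping. The paper fixes a global wide type $p$ and realizes $b_0f(p|_M)\wedge b_1f(p|_M)$, so its conjugator $f$ is a compactness limit of the $M_0$-witnesses $g$. You instead choose a wide atom at each finite stage and put the condition $u\equiv_M u'$ directly into a single compactness, so that $g$ cancels and the conjugator is the realization $z$ itself.
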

\begin{proof}
 By Theorem \ref{non-wide sets are an ideal}, let $p$ be a global wide type. Let $b_0^{-1}b_1\in X_{M_0}$ be arbitrary, with $b_0\equiv_{M_0}b_1$. By Lemma \ref{key observation 2}, for any $\psi(x,c)\in p|_M$, there is some $g\in G(M_0)$ with $b_0g\psi(x,c)\wedge b_1g\psi(x,c)$ non-forking over $M_0$; in particular $b_0g\psi(x,c)\wedge b_1g\psi(x,c)$ is consistent. By compactness, we may thus find $f\in G(\mathfrak{C})$ (in fact $f\ind_{M_0}^u (M,b_0,b_1)$) such that $b_0f\psi(x,c)\wedge b_1f\psi(x,c)$ is consistent for every $\psi(x,c)\in p|_M$, ie such that $b_0f(p|_M)\wedge b_1f(p|_M)$ is consistent. Let $a$ be a realization. Then $f^{-1}b_0^{-1}a\models p|_M$ and $f^{-1}b_1^{-1}a\models p|_M$, so that $$X_M\ni (f^{-1}b_0^{-1}a)(f^{-1}b_1^{-1}a)^{-1}=f^{-1}b_0^{-1}b_1 f=(b_0^{-1}b_1)^f.$$ So $b_0^{-1}b_1\in X_M^{f^{-1}}$, and we are done.
\end{proof} By a standard trick (see eg Lemma 8.6 in \cite{simon_book}), we thus have:
\begin{corollary}\label{existence of g000 theorem} For any $M$ we have $X_{M_0}\subseteq X_M^2$, and hence $G^{\infty}_{M_0}= G^{\infty}_M$.
\end{corollary}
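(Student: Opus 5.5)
The corollary follows from the preceding theorem, $X_{M_0}\subseteq X_M^G$, by the standard trick of Lemma 8.6 in \cite{simon_book}; it remains to turn a conjugate of an element of $X_M$ into a product of two elements of $X_M$, and then pass to the generated groups.

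\textbf{Step 1: $X_{M_0}\subseteq X_M^2$.} Take $x\in X_{M_0}$. By the theorem, $x=f^{-1}yf$ for some $y\in X_M$ and $f\in G$. Write $y=a^{-1}b$ with $a\equiv_M b$. Then
$$x=f^{-1}a^{-1}bf=(af)^{-1}(bf).$$
Since $a\equiv_M b$, there is $\sigma\in\mathrm{Aut}(\mathfrak{C}/M)$ with $\sigma(a)=b$. Put $f'=\sigma(f)$, so that $af\equiv_M bf'$. Then
$$x=\big[(af)^{-1}(bf')\big]\,\big[(bf')^{-1}(bf)\big].$$
The first factor lies in $X_M$ because $af\equiv_M bf'$. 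For the second factor, note that $\sigma$ fixes $M$ and $\sigma(f)=f'$, so $f\equiv_M f'$. We do not get $bf'\equiv_M bf$ directly, because $b$ is not over $M$.

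To fix this, choose the data more carefully before applying $\sigma$. Use Fact \ref{right extension} (or simply saturation) to replace $f$ by some $f_1\equiv_{(M,a,b)} f$ whose type over $(M,a,b)$ is finitely satisfiable in $M$. Conjugation by $f_1$ still takes $y$ to $x$, because $x$ and $y$ are determined by $\tp(f/M,a,b)$ together with the equation. Equivalently, we may take the $f$ produced in the proof of the theorem to already satisfy $f\ind^u_{M}(a,b)$, as that proof notes is possible. With an $M$-invariant type for $f$ over $(M,a,b)$, the relation $a\equiv_M b$ gives $(a,f)\equiv_M(b,f)$. Hence $af\equiv_M bf$, so $x=(af)^{-1}(bf)\in X_M$ outright, and therefore $x\in X_M\subseteq X_M^2$, since $1\in X_M$.

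The main obstacle is exactly this point: we must arrange that $f$ is $M$-invariant over $(a,b)$ while keeping the identity $x=f^{-1}yf$. I would build this into the compactness step of the theorem's proof. That step produces $f$ realizing a type finitely satisfiable in $M_0\subseteq M$ over $(M,b_0,b_1)$. The conjugating elements are $y=(f^{-1}b_0^{-1}a)(f^{-1}b_1^{-1}a)^{-1}$, and the pair $(a',b')=(a^{-1}b_0f,\,a^{-1}b_1f)$ with $a'\equiv_M b'$ is read off inside that construction. So it is cleaner to write $x$ directly: with $a_0=f^{-1}b_0^{-1}a$ and $a_1=f^{-1}b_1^{-1}a$, both realizing $p|_M$, we have
$$b_0^{-1}b_1=f a_0a_1^{-1}f^{-1}=(a_0^{-1}f^{-1})^{-1}(a_1^{-1}f^{-1}),$$
and we check the two-step splitting through the common element $a_1^{-1}f'^{-1}$ as above.

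\textbf{Step 2: the groups agree.} Once $X_{M_0}\subseteq X_M^2$ holds, we get $G^\infty_{M_0}=\langle X_{M_0}\rangle\subseteq\langle X_M\rangle=G^\infty_M$. The reverse inclusion $G^\infty_{M}\subseteq G^\infty_{M_0}$ holds because $M\supseteq M_0$, which gives $X_M\subseteq X_{M_0}$. Hence $G^\infty_{M_0}=G^\infty_M$.
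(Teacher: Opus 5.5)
Your Step 1 has a genuine gap, though an easily closed one. The first decomposition you write down is exactly the standard trick the paper cites (Lemma 8.6 of \cite{simon_book}). You abandon it because you look for $bf'\equiv_M bf$, and that is not needed. The second factor simplifies:
\[
(bf')^{-1}(bf)=f'^{-1}b^{-1}bf=f'^{-1}f,
\]
and $f'=\sigma(f)\equiv_M f$, so $f'^{-1}f\in X_M$ directly. Hence every conjugate of an element of $X_M$ lies in $X_M^2$, i.e.\ $X_M^G\subseteq X_M^2$. Combined with the theorem $X_{M_0}\subseteq X_M^G$, this is the paper's whole proof of $X_{M_0}\subseteq X_M^2$, and your Step 2 then finishes correctly. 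The same cancellation, $(a_1^{-1}f'^{-1})^{-1}(a_1^{-1}f^{-1})=f'f^{-1}=(f'^{-1})^{-1}f^{-1}$, is what makes the splitting in your last paragraph go through.

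The repair you substitute does not work, for two reasons.
\begin{itemize}
\item Choosing $f_1\equiv_{(M,a,b)}f$ leaves the type over $(M,a,b)$ unchanged, so it cannot become finitely satisfiable in $M$. Fact \ref{right extension} only extends a coheir relation you already have; it does not create one. Moreover, such an $f_1$ only gives $f_1^{-1}yf_1\equiv_{(M,a,b)}x$, not $f_1^{-1}yf_1=x$.
\item The proof of the theorem does not provide such an $f$ either. There, $\tp(f/M,b_0,b_1)$ is finitely satisfiable in $M_0$, which is independence from the elements representing $x$. But the element of $X_M$ produced is $f^{-1}b_0^{-1}b_1f=(a^{-1}b_0f)^{-1}(a^{-1}b_1f)$. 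Its representing pair contains $f$ itself, and also $a$, which is chosen after $f$.
\end{itemize}
Had the repair worked, it would give $X_{M_0}\subseteq X_M$ outright. That is a stronger statement than the corollary, and nothing in the paper supports it. Drop the repair and keep the one-line cancellation.
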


This recovers Gismatullin's theorem. Let us note a slightly stronger consequence, which we don't need here, but which will play an important role on our upcoming paper on NIP approximate groups.

\begin{lemma}\label{setminus conjugate not wide}
    For any formula $\psi(x,c)$ and any $a\in X_{M_0}$, the formula $\psi(x,c)\setminus a^G\psi(x,c)$ is not wide, where by $a^G$ we mean the conjugacy class of $a$.
\end{lemma}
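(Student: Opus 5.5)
The proof will argue by contradiction directly from Lemma \ref{key observation 2}; no new machinery should be needed. Write $a=b_0^{-1}b_1$ with $b_0\equiv_{M_0}b_1$, which is possible since $a\in X_{M_0}$. First I note that $a^G=\{h^{-1}ah:h\in G\}$ is a definable set (with parameter $a$). Hence
$$a^G\psi(x,c)=\{ey:e\in a^G,\ y\in\psi(\mathfrak{C},c)\}$$
is definable over $(a,c)$, and so is
$$\theta(x):=\psi(x,c)\wedge\neg\exists e\,\exists y\,\bigl(e\in a^G\wedge\psi(y,c)\wedge x=ey\bigr),$$
which defines $\psi(x,c)\setminus a^G\psi(x,c)$. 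So it makes sense to ask whether $\theta$ is wide, and Lemma \ref{key observation 2} (which was stated for an arbitrary definable subset of $G$ with arbitrary parameters) applies to it.

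Suppose for contradiction that $\theta$ is wide. Apply Lemma \ref{key observation 2} to $\theta$ with the pair $b_0\equiv_{M_0}b_1$. This gives $g\in G(M_0)$ such that $b_0g\theta(x)\wedge b_1g\theta(x)$ does not fork over $M_0$, and in particular is consistent. Take a realization $x$, and write $x=b_0gy_0=b_1gy_1$ with $y_0,y_1\models\theta$.

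Then
$$y_0=g^{-1}b_0^{-1}b_1g\,y_1=(g^{-1}ag)\,y_1.$$
Here $g^{-1}ag\in a^G$ and $y_1\models\psi(x,c)$ (since $\theta$ implies $\psi(x,c)$). So $y_0\in a^G\psi(x,c)$, which contradicts $y_0\models\theta$. Hence $\theta$ is not wide.

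There is essentially no obstacle beyond checking definability of $a^G\psi(x,c)$. The only other point to be careful about is which convention for conjugation is in force, namely $a^g=g^{-1}ag$ as in the previous theorem. Since $a^G$ is the full conjugacy class, the argument works with either convention.
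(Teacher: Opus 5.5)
Your proof is correct and is essentially the paper's argument. You apply Lemma~\ref{key observation 2} to the set $\psi(x,c)\setminus a^G\psi(x,c)$, which is definable over $(a,c)$, with the pair $b_0\equiv_{M_0}b_1$, and observe that consistency of the two translates forces an element of that set to lie in $a^g\psi(x,c)\subseteq a^G\psi(x,c)$. The only differences are cosmetic: you write $a=b_0^{-1}b_1$ instead of $b_1^{-1}b_0$, and you argue with an explicit realization rather than translating by $(b_1g)^{-1}$.
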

\begin{proof}
    Suppose otherwise. Write $a=b_1^{-1}b_0$ for some $b_0\equiv_{M_0}b_1$. By Lemma \ref{key observation 2}, there is $g\in G(M_0)$ such that $$b_0g[\psi(x,c)\setminus a^G\psi(x,c)]\wedge b_1g[\psi(x,c)\setminus a^G\psi(x,c)]$$ is non-forking over $M_0$, and hence in particular consistent. Translating by $(b_1g)^{-1}$ gives in particular that $(b_1g)^{-1}b_0g\psi(x,c)\setminus a^G\psi(x,c)$ is consistent. But $(b_1g)^{-1}b_0g=a^g$, a contradiction.
\end{proof}

\begin{corollary}
    For any $a\in G(\mathfrak{C})$, we have $a\in X_{M_0}^G$ if and only if, for every formula $\psi(x,c)$, the formula $\psi(x,c)\setminus a^G\psi(x,c)$ is not wide.
\end{corollary}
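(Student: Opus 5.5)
The forward direction is exactly Lemma \ref{setminus conjugate not wide}, transported along conjugation. Suppose $a\in X_{M_0}^G$, say $a=(a')^h$ with $a'\in X_{M_0}$. The conjugacy class of $a$ equals that of $a'$, so $a^G=(a')^G$. Hence for every $\psi(x,c)$ the formula $\psi(x,c)\setminus a^G\psi(x,c)$ is literally the formula $\psi(x,c)\setminus (a')^G\psi(x,c)$. By Lemma \ref{setminus conjugate not wide} applied to $a'$, it is not wide. (Here $a^G\psi(x,c)$ is read as the union $\bigcup_{g}a^g\psi(x,c)$, as in that lemma. It is type-definable or $\bigvee$-definable rather than definable, so "not wide" should be interpreted as in the lemma's proof: no finite $s\subset G(M_0)$ makes any translate non-forking.)

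For the converse I would argue by contraposition. Suppose $a\notin X_{M_0}^G$; we want a formula $\psi(x,c)$ such that $\psi(x,c)\setminus a^G\psi(x,c)$ is wide. The natural choice is built from a global wide type $p$, which exists by Theorem \ref{non-wide sets are an ideal}. I would try to show that some $\psi(x,c)\in p$ satisfies $\psi(x,c)\cap a^g\psi(x,c)=\varnothing$ for all $g$. Granting this, $\psi(x,c)\setminus a^G\psi(x,c)$ contains $\psi(x,c)$, which is wide, and we are done.

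To find such a $\psi$, suppose no formula works. Then for every $\psi(x,c)\in p|_M$ (for a suitable small model $M\supseteq M_0$ over which $p$ is invariant) there is $g$ with $\psi(x,c)\wedge a^g\psi(x,c)$ consistent. By compactness we get a single $f$ such that $p|_M\wedge a^f(p|_M)$ is consistent. If $e$ realizes this, then $e$ and $(a^f)^{-1}e$ both realize $p|_M$. Hence $e^{-1}a^fe$, or rather $(a^f)^{-1}$ written as a quotient of two $M$-conjugates, lies in $X_M$. This mirrors the computation in the theorem preceding Corollary \ref{existence of g000 theorem}, run in reverse. So $a\in X_M^G$ for some model $M$.

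The main obstacle is then to get from $X_M^G$ down to $X_{M_0}^G$. We only know $X_{M_0}\subseteq X_M^G$, which is the wrong direction. To fix this, I would choose $M=M_0$ from the start. Wide types are $M_0$-invariant (Remark \ref{wide implies non-forking}), so working with $p|_{M_0}$ directly gives two realizations of the same type over $M_0$, and therefore an element of $X_{M_0}$. The compactness step requires that, for each $\psi(x,c)\in p|_{M_0}$ (with $c\in M_0$), the conjugacy class of $a$ meets $\psi\cdot\psi^{-1}$. This is precisely what the failure assumption gives, since the conjugates of $a$ range over the conjugates by all $g\in G(\mathfrak{C})$. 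So the argument should close with $a^f\in X_{M_0}$, that is, $a\in X_{M_0}^G$, contradicting the assumption.
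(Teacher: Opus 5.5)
Your argument is correct, and its core is the same compactness computation as the paper's: two realizations of a restriction of a global wide type $p$ that differ by left multiplication by a conjugate of $a$, which puts that conjugate in $X_M\subseteq X_{M_0}$. The organization is different.

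\begin{itemize}
\item \textbf{The paper's route.} It argues directly. If every $\psi\setminus a^G\psi$ is non-wide, then $a^G\psi\in p$ for each $\psi\in p$. To use this, it takes a small model $M$ containing $a$, so that these formulas lie in $p|_M$. It realizes $p|_M$ by some $b$ and uses compactness to write $b=a^gu$ with $u\models p|_M$.
\item \textbf{Your route.} You argue contrapositively. You only need the weaker condition $\psi\cap a^G\psi\neq\varnothing$ for every $\psi\in p|_{M_0}$ to conclude $a\in X_{M_0}^G$. Since this is a consistency statement about $\mathfrak{C}$, nothing has to be added to the model.
\item \textbf{What your route buys.} You get the slightly sharper fact that if $a\notin X_{M_0}^G$, then some $L(M_0)$-formula $\psi\in p$ is disjoint from $a^G\psi$. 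Wideness of $p$ enters only at the very end, to know that this $\psi$ is wide. The compactness step itself works for any type in $S_G(M_0)$, using only that the type is closed under conjunction to get a single $f$.
\end{itemize}

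Two of your side remarks are mistaken, though neither breaks the final argument.

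First, the ``main obstacle'' does not exist. For $M\supseteq M_0$ one has $X_M\subseteq X_{M_0}$ trivially, since $e\equiv_M e'$ implies $e\equiv_{M_0}e'$. This is exactly how the paper finishes. The nontrivial inclusion $X_{M_0}\subseteq X_M^G$ is irrelevant here. So restricting to $M=M_0$ is harmless but unnecessary, and the appeal to $M_0$-invariance of wide types plays no role.

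Second, $a^G\psi(x,c)$ is an honest definable set, not merely type- or $\bigvee$-definable. The conjugacy class $a^G$ is definable over $a$, and $a^G\psi(x,c)$ is defined by $\exists g\,\psi(g^{-1}a^{-1}gx,c)$. So the notion of wideness needs no reinterpretation in the forward direction.
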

\begin{proof}
    The forward direction is Lemma \ref{setminus conjugate not wide}. For the reverse direction, suppose $a$ is such that $\psi(x,c)\setminus a^G\psi(x,c)$ is never wide. By Theorem \ref{non-wide sets are an ideal}, let $p$ be a global wide type. Now for every $\psi(x,c)\in p$, we have $a^G\psi(x,c)\in p$. Thus, if $M$ is a small model containing $a$, and $b\models p|_M$, then $b\models a^G\psi(x,c)$ for all $\psi(x,c)\in p|_M$; by compactness there is thus $g\in G$ and $u\models p|_M$ such that $b=a^gu$. But now $a^g=bu^{-1}\in p|_Mp|_M^{-1}\subseteq X_{M_0}$, so indeed $a\in X_{M_0}^{g^{-1}}$.
\end{proof}

\begin{corollary}\label{twisted product}
    For any $a,b\in X_{M_0}$, there are $g,h\in G$ such that $a^gb^h\in X_{M_0}$.
\end{corollary}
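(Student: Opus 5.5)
The plan is to iterate the argument from the reverse direction of the previous corollary twice, peeling off one conjugate at a time from a realization of a wide type. Both conjugates come out of the compactness argument in that proof, and the final element is then of the form $dv^{-1}$ with $d\equiv_{M_0}v$.

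\textbf{Setup.} By Theorem \ref{non-wide sets are an ideal}, fix a global wide type $p$, and fix a small model $M\supseteq M_0$ containing $a$ and $b$. Since $a\in X_{M_0}$, Lemma \ref{setminus conjugate not wide} says that for every $\psi(x,c)$ the set $\psi(x,c)\setminus a^G\psi(x,c)$ is not wide, and likewise for $b$. Because $p$ is wide, it omits every non-wide formula. So whenever $\psi(x,c)\in p$, the type $p$ also contains $a^G\psi(x,c)$ and $b^G\psi(x,c)$. Here $a^G\psi(x,c)$ is the definable set $\{a^gy:g\in G,\ y\models\psi(x,c)\}$, which is defined over $(M,c)$ when $a\in M$.

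\textbf{First step.} Let $d\models p|_M$. I claim the partial type in variables $(h,u)$ consisting of $d=a^hu$ together with $\psi(u,c)$ for all $\psi(x,c)\in p|_M$ is consistent. By compactness it suffices to treat finitely many formulas, which we may assume have been conjoined into a single $\psi(x,c)\in p|_M$. Then $a^G\psi(x,c)\in p|_M$ by the setup, so $d\models a^G\psi(x,c)$, which gives some $h$ and some $u\models\psi(x,c)$ with $d=a^hu$. Realizing the full type, we obtain $h\in G$ and $u\models p|_M$ with $d=a^hu$.

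\textbf{Second step.} Now $u$ itself realizes $p|_M$ and $b\in M$, so the identical argument applied to $u$ with $b$ in place of $a$ gives $g\in G$ and $v\models p|_M$ with $u=b^gv$.

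\textbf{Conclusion.} Combining the two steps, $d=a^hb^gv$, so $a^hb^g=dv^{-1}$. Since $d$ and $v$ both realize $p|_{M_0}$, we have $d^{-1}\equiv_{M_0}v^{-1}$, and therefore $dv^{-1}=(d^{-1})^{-1}v^{-1}\in X_{M_0}$. Relabelling the conjugating elements gives the required $g,h$ with $a^gb^h\in X_{M_0}$.

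\textbf{Main obstacle.} The delicate point is the compactness in the first step. The conjugator $h$ ranges over all of $G(\mathfrak{C})$ rather than over $M$, so it must be included as a variable in the partial type, as above. One also needs $a^G\psi(x,c)$ to be a genuine formula over $M$ (it is, since $a\in M$), so that membership in $p|_M$ makes sense.
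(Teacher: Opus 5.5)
Your proof is correct and is essentially the paper's argument: the same global wide type $p$, Lemma \ref{setminus conjugate not wide}, compactness to write a realization of $p|_M$ as a conjugate times another realization, and a final quotient of two realizations of $p|_M$ landing in $X_{M_0}$. The only difference is that the paper decomposes one realization $e$ of $p|_M$ simultaneously as $e=a^{-g}u=b^hv$, applying the lemma to $a^{-1}$ (which lies in $X_{M_0}$ by symmetry), whereas you peel off the two conjugates one after the other and so never need $a^{-1}$.
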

\begin{proof} By Theorem \ref{non-wide sets are an ideal}, let $p$ be a global wide type. By Lemma \ref{setminus conjugate not wide}, for every $\psi(x,c)\in p$, also $a^{-G}\psi(x,c)\in p$ and $b^G\psi(x,c)\in p$, where by $a^{-G}$ we mean $(a^{-1})^G$. Let $M$ be a small model containing $a,b$; now if $e\models p|_M$, then $e\models x\in a^{-G}\psi(x,c)\wedge x\in b^G\psi(x,c)$ for every $\psi(x,c)\in p$, so by compactness we may find $u,v\models p|_M$ and $g,h\in G$ such that $e=a^{-g}u=b^hv$. But now $a^gb^h=uv^{-1}\in X_M\subseteq X_{M_0}$.
\end{proof}

Here is the relevant result whose analogue for $\bigvee$-definable groups will be important in our upcoming work on NIP approximate groups. It is an immediate consequence of Corollary \ref{twisted product} (just by working in the abelianization $G/[G,G]$):

\begin{theorem}
The set $X_{M_0}[G,G]$ is a subgroup of $G$, where $[G,G]$ is the derived subgroup of $G$. Since it contains $X_{M_0}$, it has bounded index in $G$.
\end{theorem}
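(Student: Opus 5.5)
The plan is to work in the abelianization and use Corollary \ref{twisted product} as the key input. Write $H=X_{M_0}[G,G]$ and let $\rho:G\to G/[G,G]$ be the quotient map onto the abelian group $A=G/[G,G]$ (an abstract group, no topology needed). Since $[G,G]$ is normal, $H=\rho^{-1}(\rho(X_{M_0}))$. So $H$ is a subgroup of $G$ if and only if $\rho(X_{M_0})$ is a subgroup of $A$, and I would verify the three subgroup axioms for $\rho(X_{M_0})$ in that order.

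\emph{Identity.} We have $1=b^{-1}b\in X_{M_0}$, so $\rho(1)$ lies in $\rho(X_{M_0})$.

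\emph{Inverses.} If $a=b_0^{-1}b_1$ with $b_0\equiv_{M_0}b_1$, then $a^{-1}=b_1^{-1}b_0$ is again of the same form. Hence $X_{M_0}$ is symmetric, and so is its image.

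\emph{Products.} This is the only step with content. Given $a,b\in X_{M_0}$, Corollary \ref{twisted product} gives $g,h\in G$ with $a^gb^h\in X_{M_0}$. Since $A$ is abelian, conjugation acts trivially after applying $\rho$: indeed $\rho(a^g)=\rho(g)^{-1}\rho(a)\rho(g)=\rho(a)$, and likewise $\rho(b^h)=\rho(b)$. Therefore
$$\rho(a)\rho(b)=\rho(a^gb^h)\in\rho(X_{M_0}).$$
Thus $\rho(X_{M_0})$ is closed under products, hence a subgroup of $A$, and $H$ is a subgroup of $G$.

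\emph{Bounded index.} $H$ contains $X_{M_0}$, so it contains the group $G^\infty_{M_0}$ generated by $X_{M_0}$, which has bounded index by Section \ref{connected components section}. Hence $H$ has bounded index as well. Alternatively, one can argue directly: if $g\equiv_{M_0}g'$ then $g^{-1}g'\in X_{M_0}\subseteq H$, so the cosets of $H$ are determined by types over $M_0$, and there are at most $|S_G(M_0)|$ of them.

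I expect no real obstacle, since all the substance is contained in Corollary \ref{twisted product}. The one point that needs care is the identification $H=\rho^{-1}(\rho(X_{M_0}))$, together with the observation that conjugation becomes trivial in the abelian quotient.
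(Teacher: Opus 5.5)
Your proof is correct and is essentially the paper's argument, which only says that the result is immediate from Corollary~\ref{twisted product} by working in the abelianization $G/[G,G]$. You spell out exactly that reduction. In $G/[G,G]$ conjugation is trivial, so Corollary~\ref{twisted product} gives closure of $\rho(X_{M_0})$ under products. Symmetry of $X_{M_0}$ and containment of $G^\infty_{M_0}=\langle X_{M_0}\rangle$, which has bounded index, handle the remaining steps.
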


\subsection{Technical lemma on the retraction map}\label{technical lemma on retraction map section}
Now we'll begin working towards the main theorem of this section. We first need a rather technical lemma on the retraction map from Section \ref{honest definitions section}; it is closely related to Fact 4.7 in \cite{kyle_tomasz}, since the $s$ in our lemma statement somewhat `resembles' a definable type with respect to the formula $\psi(v,b,c)$, by virtue of the formula $\eta(y,z)$. (In our use of this lemma, we will be getting the formula $\eta(y,z)$ from Fact \ref{definable p,q theorem}.) The fact that $s$ is not actually a definable type is remedied by the additional hypothesis that $q$ is finitely satisfiable in $M$, which is not needed in Fact 4.7 in \cite{kyle_tomasz}.

\begin{lemma}\label{retraction map lemma}Let $M\prec^+M'$ and work in the $L_\mathbf{P}$-structure $(M',M)$. Take an $|M'|^+$-saturated extension $(M',M)\prec^+(N',N)$. Let $\chi(y,v,e)$ be an $L(M')$-formula, and suppose that $\psi(y,v,c)$ is an $L(N)$-formula such that $$(N',N)\models\forall (y,v)\in\mathbf{P}[\psi(y,v,c)\to\chi(y,v,e)].$$ Now let $q(y),s(v)$ be $M$-invariant global types, and suppose that $q(y)$ is finitely satisfiable in $M$. Suppose that $\eta(y,z)$ is a formula such that (1) $q(y)\vdash\eta(y,c)$, and (2) $s(v)\vdash\psi(b,v,c')$ for all $(b,c')\models \eta$. (So in particular $s_v\otimes q_y\vdash\psi(y,v,c)$ and hence $F_M(s_v\otimes q_y)\vdash\chi(y,v,e)$.) Then $q_y\otimes F_M(s)_v\vdash\chi(y,v,e)$.
\end{lemma}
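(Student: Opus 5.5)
The plan is to unwind the definition of $q_y\otimes F_M(s)_v$ over $M'$ and argue by contradiction, using finite satisfiability of $q$ to pull a counterexample down into $M$, where the hypotheses on $\psi$ and $\eta$ apply.

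First I fix a concrete realization of $F_M(s)|_{M'}$. By Definition \ref{retraction construction}, $F_M(s)|_{M'}$ is the complete $L$-type over $M'$ implied by the partial $L_\mathbf{P}$-type $s|_N(v)\wedge\mathbf{P}(v)$. So I pass to an elementary extension $(N'',N^*)\succcurlyeq(N',N)$ containing some $v^*\in N^*$ with $v^*\models s|_N$. Then $\tp_L(v^*/M')=F_M(s)|_{M'}$. The $L$-reduct $N''$ is an elementary extension of $M'$, and since $\mathfrak{C}$ is a monster, I may embed $N''$ into $\mathfrak{C}$ over $M'$. After this identification, $v^*$, $c$, $e$ and all of $N$ live in $\mathfrak{C}$, and $v^*\models F_M(s)|_{M'}$.

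By the definition of the Morley product, $q_y\otimes F_M(s)_v\vdash\chi(y,v,e)$ is equivalent to $q(y)\vdash\chi(y,v^*,e)$, since $e\in M'$. Suppose this fails. Then $q\vdash\neg\chi(y,v^*,e)$, and by hypothesis (1) also $q\vdash\eta(y,c)$. Since $q$ is finitely satisfiable in $M$, there is some $m\in M$ with
\[\models\neg\chi(m,v^*,e)\wedge\eta(m,c).\]
I then apply hypothesis (2) to the pair $(m,c)\models\eta$. This gives $s(v)\vdash\psi(m,v,c)$. Both $m$ and $c$ lie in $N$ and $v^*\models s|_N$, so $\psi(m,v^*,c)$ holds.

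It remains to transfer this into $\chi$. We have $m\in M\subseteq N^*$ and $v^*\in N^*$, so $\mathbf{P}(m)$ and $\mathbf{P}(v^*)$ both hold in $(N'',N^*)$. The sentence $\forall(y,v)\in\mathbf{P}[\psi(y,v,c)\to\chi(y,v,e)]$ holds in $(N',N)$ and hence in its elementary extension $(N'',N^*)$. It therefore yields $\chi(m,v^*,e)$, contradicting the choice of $m$.

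The main obstacle is the bookkeeping across structures. One must check that realizing $s|_N\wedge\mathbf{P}$ in an extension of the pair really produces a realization of $F_M(s)|_{M'}$ inside $\mathfrak{C}$, with $N$ embedded compatibly. One must also check that $\psi(m,v^*,c)$ and $\neg\chi(m,v^*,e)$, which are computed in $\mathfrak{C}$, agree with their evaluation in the pair $(N'',N^*)$. Both follow because the $L$-reduct of the pair extension is an elementary extension of $M'$ and the embedding into $\mathfrak{C}$ is over $M'$. The $|M'|^+$-saturation of $N'$ is not needed for this argument itself; it is only used to make $F_M$ well defined.
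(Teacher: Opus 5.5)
Your argument is correct and is essentially the paper's proof: the same contradiction setup, the same realization of $F_M(s)|_{M'}$ by a $\mathbf{P}$-point $v^*\models s|_N$, and the same use of finite satisfiability of $q$ together with (2) and the $\mathbf{P}$-implication. The one difference is the order: you pull $\neg\chi(y,v^*,e)\wedge\eta(y,c)$ directly down to some $m\in M$ and apply the $\mathbf{P}$-implication at $(m,v^*)$, whereas the paper first realizes $q|_{(N',a)}(y)\wedge\mathbf{P}(y)$ by some $b$, applies the implication at $(b,a)$ to get $\neg\psi(b,a,c)$, and only then pulls down to $M$, so your version saves a step.

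One small fix: embed $N''$ into $\mathfrak{C}$ over $N'$, not merely over $M'$, so that $c$ and $N$ stay fixed and $v^*\models s|_N$ refers to the original $N$ (an embedding only over $M'$ may move $c$, and you would then have to invoke $M$-invariance of $q$ and $s$ to repair this).
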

\begin{proof}
    Suppose for a contradiction that $q_y\otimes F_M(s)_v\vdash\neg\chi(y,v,e)$. Let $a\models s|_N(v)\wedge\mathbf{P}(v)$, so that by definition of $F_M$ we have $a\models F_M(s)|_{M'}$. (We think of $a$ as living in $\mathbf{P}(\mathfrak{C}')=\mathfrak{C}$, where $(\mathfrak{C}',\mathfrak{C}){\succ} (N',N)$ is a monster model.) Since $q$ is finitely satisfiable in $M=\mathbf{P}(M')$, the conjunction $q|_{(N',a)}(y)\wedge\mathbf{P}(y)$ is finitely satisfiable in $(\mathfrak{C}',\mathfrak{C})$; let $b\in\mathbf{P}(\mathfrak{C}')=\mathfrak{C}$ be a realization. In particular $b\models q|_{(M',a)}$, so $(b,a)\models [q\otimes F_M(s)]|_{M'}$, and thus $(b,a)\models\neg\chi(y,v,e)$. On the other hand, $(b,a)\models\mathbf{P}(y,v)$. By the assumption on $\psi(y,v,c)$, thus $(b,a)\models\neg\psi(y,v,c)$. So $b\models\neg\psi(y,a,c)$. Also, by the assumption (1), we have $b\models\eta(y,c)$. Since $q$ is finitely satisfiable in $M$, and since $b\models q|_{(M,a,c)}$, there is thus some $m\in M^y$ with $m\models \neg\psi(y,a,c)\wedge\eta(y,c)$. So now $(m,c)\models\eta$ and $a\models\neg\psi(m,v,c)$. Since $a\models s|_N\supseteq s|_{(M,c)}$, this contradicts assumption (2).
\end{proof}

\subsection{Main consequence for the Ellis group}\label{main consequence for ellis group section}
We give here the relevant consequence of Lemma \ref{key observation 2}, whose conclusions will allow us to apply Lemma \ref{retraction map lemma}.
\begin{lemma}\label{upstairs lemma} Let $M\succcurlyeq M_0$ be a model, and let $q_1,q_2\in S_G(\mathfrak{C})$ be global $M$-invariant types with $q_1|_{M_0}=q_2|_{M_0}$. Let $\psi(x,c)$ be a wide formula. Then there is an element $g\in G(M_0)$, an $L(M_0)$-formula $\eta(y,z)$ such that $\eta(y,c)$ lies in each $q_i|_{M_0},i=1,2$, and a global $M_0$-invariant type $s(v)\in S_G(\mathfrak{C})$ such that $s(v)\vdash\psi(gbv,c')$ for all $(b,c')\models\eta$. In particular, $s(v)\otimes q_i(y)\vdash\psi(gyv,c)$ for each $i=1,2$.
\end{lemma}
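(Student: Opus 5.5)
The plan is to combine Lemma \ref{key observation 2} (applied to inverses) with the definable $(p,q)$-theorem, Fact \ref{definable p,q theorem}, so that a single $M_0$-invariant type $s$ works uniformly. Since $q_1,q_2$ need not be finitely satisfiable, I would first fix a small model $N\succcurlyeq M$ containing $c$. I would then choose $b_1\models q_1|_N$ and $b_2\models q_2|_N$. Because $q_1|_{M_0}=q_2|_{M_0}$, we have $b_1\equiv_{M_0}b_2$, and hence also $b_1^{-1}\equiv_{M_0}b_2^{-1}$.

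The first step is to apply Lemma \ref{key observation 2} to the wide formula $\psi(x,c)$ and the pair $b_1^{-1}\equiv_{M_0}b_2^{-1}$. This gives $g'\in G(M_0)$ such that $b_1^{-1}g'\psi(x,c)\wedge b_2^{-1}g'\psi(x,c)$ does not fork over $M_0$. Set $g=g'^{-1}\in G(M_0)$. Unwinding the translates, $v\models b_i^{-1}g'\psi(x,c)$ exactly when $\psi(gb_iv,c)$ holds. So the formula
$$\theta(v;b_1,b_2,c):=\psi(gb_1v,c)\wedge\psi(gb_2v,c)$$
does not fork over $M_0$, where $\theta(v;y_1,y_2,z)$ is an $L(M_0)$-formula because $g\in G(M_0)$.

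Next I would apply Fact \ref{definable p,q theorem} to $\theta$. This yields an $L(M_0)$-formula $\eta'(y_1,y_2,z)\in\tp(b_1,b_2,c/M_0)$ such that the partial type $\Sigma(v)=\{\theta(v;b_1',b_2',c'):(b_1',b_2',c')\models\eta'\}$ is consistent and does not fork over $M_0$. Any partial type non-forking over $M_0$ extends to a global type non-forking over $M_0$, and by Fact \ref{non-forking iff invariant} such a type is $M_0$-invariant. Let $s(v)$ be such a global extension of $\Sigma$ together with $G(v)$. I then symmetrize by setting
$$\eta(y,z):=\exists y_2\,\eta'(y,y_2,z)\vee\exists y_1\,\eta'(y_1,y,z).$$
If $(b,c')\models\eta$, then $b$ occurs as one coordinate of some triple satisfying $\eta'$, so $s\vdash\psi(gbv,c')$. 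Moreover $\eta(b_1,c)$ holds with witness $b_2$, and $\eta(b_2,c)$ holds with witness $b_1$. Since $b_i\models q_i|_N$ and $c\in N$, this gives $q_i\vdash\eta(y,c)$, which is the intended reading of ``$\eta(y,c)$ lies in $q_i|_{M_0}$'': the restriction is really to $M_0$ together with $c$.

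For the ``in particular'', take any model $N'$ containing $N$, $b\models q_i|_{N'}$, and $a\models s|_{(N',b)}$. Then $(b,c)\models\eta$, so $a\models\psi(gbv,c)$, which shows $s_v\otimes q_i(y)\vdash\psi(gyv,c)$. The main point needing care is the first step: wideness must be applied to the inverses $b_i^{-1}$, and the group element coming from Lemma \ref{key observation 2} must be inverted, so that the translates match the shape $\psi(gbv,\cdot)$. The second delicate point is symmetrizing $\eta$, so that one $L(M_0)$-formula works for both $q_1$ and $q_2$ at once. Both rely on $b_1\equiv_{M_0}b_2$, which holds because the two types agree on $M_0$.
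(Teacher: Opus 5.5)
Your proposal is correct and is essentially the paper's proof: realize $q_1|_N,q_2|_N$ for a model $N\supseteq(M,c)$, apply Lemma \ref{key observation 2}, then Fact \ref{definable p,q theorem}, symmetrize $\eta$ by existentially quantifying out the other coordinate, and extend the resulting non-forking partial type to a global $M_0$-invariant $s$. The one difference is in your favor. The paper applies Lemma \ref{key observation 2} to $b_1,b_2$ themselves and writes $s(v)\vdash bg\psi(v,c')$, which under its convention $g\psi(x,c)=\psi(g^{-1}x,c)$ is $\psi(g^{-1}b^{-1}v,c')$ rather than the stated $\psi(gbv,c')$. Your passage to $b_1^{-1},b_2^{-1}$ and replacement of $g'$ by $g=(g')^{-1}$ gives exactly the form used later in Lemma \ref{downstairs lemma}.
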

\begin{proof}Let $N$ be a model containing $(M,c)$, and let $b_1\models q_1|_N$ and $b_2\models q_2|_{N}$. By Lemma \ref{key observation 2}, there is some $g\in G(M_0)$ such that $b_1g\psi(x,c)\wedge b_2g\psi(x,c)$ does not fork over $M_0$. By Fact \ref{definable p,q theorem}, there is thus an $L(M_0)$-formula $\eta_0(y,y',z)\in\tp(b_1,b_2,c/M_0)$ such that $$\{bg\psi(x,c')\wedge b'g\psi(x,c'):(b,b',c')\models\eta\}$$ is consistent and non-forking over $M_0$. In particular, letting $\eta(y,z)\equiv\exists y_0\eta_0(y,y_0,z)\vee\exists y_0\eta_0(y_0,y,z)$, then $b_1\models \eta(y,c)$ and $b_2\models \eta(y,c)$, and the partial type $\{bg\psi(x,c'):(b,c')\models\eta(y,z)\}$ is consistent and non-forking over $M_0$. Thus by Fact \ref{non-forking iff invariant} we may find a global $M_0$-invariant type $s(v)$ such that $s(v)\vdash bg\psi(v,c')$ for all $(b,c')\models\eta$. Since $b_i\models q_i|_N$, in particular $b_i\models q_i|_{(M,c)}$, and so since $b_i\models\eta(y,c)$ thus $\eta(y,c)\in q_i$, and we are done.
\end{proof}

\begin{remark}\label{invariant ellis group remarks} Inspired by Gannon and Rzepecki's recent paper \cite{kyle_tomasz}, we will be using the retraction map from Section \ref{honest definitions section} to transfer Lemma \ref{upstairs lemma}, which is about global types invariant over $M$, into a statement about the Ellis semigroup of global types finitely satisfiable in $M$. If one wished to adopt the perspective of \cite{kyle_tomasz}, one could instead work directly in the `invariant Ellis semigroup' $S^{\mathrm{inv}}_G(\mathfrak{C},M)$. Here, just as in \cite{kyle_tomasz} and in Pillay's early related paper \cite{pillay_invariant_ellis_group}, the natural choice of type to work with would be `right' wide types, where we say a global type $p$ is `right' wide if $\psi(x,c)^{-1}$ is wide for all $\psi(x,c)\in p$. A right wide type is still $M_0$-invariant, and so naturally lives in $S^{\mathrm{inv}}_G(\mathfrak{C},M_0)$. By arguing as in the proof of Lemma \ref{upstairs lemma} we can get the following consequence. Suppose $p$ is a right wide type and that $M\succcurlyeq M_0$ is any model. Let $E$ be the `invariant' Ellis semigroup $S^{\mathrm{inv}}_G(\mathfrak{C},M)$. We naturally have $S^{\mathrm{inv}}_G(\mathfrak{C},M_0)\subseteq E$, and so $p\in E$. Then the proof of Lemma \ref{upstairs lemma} gives the following: for any $q_1,q_2\in E$ with $q_1|_{M_0}=q_2|_{M_0}$, and for any $\psi(x,c)\in p$, there are $g\in G(M_0)$ and $s\in S^{\mathrm{inv}}_G(\mathfrak{C},M_0)\subseteq E$ such that $gq_1s\vdash \psi(x,c)$ and $gq_2s\vdash\psi(x,c)$, where we compute $gq_1s$ and $gq_2s$ in the semigroup $E$. This could be useful if, following the themes of \cite{kyle_tomasz}, one wished to prove results about the `invariant Ellis group' in NIP groups that are not necessarily definably amenable. An obvious candidate question is whether the isomorphism type of the invariant Ellis group (which is only an abstract group, not a topological one, since there is no analogue of the $\tau$-topology) is independent of the choice of $M$. We do not pursue this question here.
\end{remark}

Combined with Lemma \ref{retraction map lemma} we can transfer Lemma \ref{upstairs lemma} to the Ellis semigroup $S^{\mathrm{fs}}_G(\mathfrak{C},M)$. Note that, if $p$ is a global wide type, then $p\in S_G^{\mathrm{inv}}(\mathfrak{C},M_0)\subseteq S_G^{\mathrm{inv}}(\mathfrak{C},M)$, so we may apply the retraction map $F_M$ from Fact \ref{retraction construction} to get an $M$-finitely satisfiable type $F_M(p)\in S_G^{\mathrm{fs}}(\mathfrak{C},M)$.

\begin{lemma}\label{downstairs lemma}
    Let $M\succcurlyeq M_0$ be a model, and let $q_1,q_2\in S_G^{\mathrm{fs}}(\mathfrak{C},M)$ be two global types finitely satisfiable in $M$ and such that $q_1|_{M_0}=q_2|_{M_0}$. Let $p\in S_G(\mathfrak{C})$ be a global wide type. For any $\chi(x,e)\in F_M(p)$, there are $g\in G(M_0)$ and $s\in S_G^{\mathrm{inv}}(\mathfrak{C},M_0)$ such that $g q_i F_M(s)\vdash\chi(x,e)$ for each $i=1,2$, where we are computing $gq_iF_M(s)$ in the Ellis semigroup $S^{\mathrm{fs}}_G(\mathfrak{C},M)$.
\end{lemma}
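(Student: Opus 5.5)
Fix a model $M\prec^+M'$ together with an $|M'|^+$-saturated extension $(M',M)\prec^+(N',N)$ of the $L_\mathbf{P}$-structure, as in Definition \ref{retraction construction}. Since $F_M(p)$ and the products $gq_iF_M(s)$ are all $M$-invariant, we may assume (after moving $e$ by an automorphism over $M$) that $e\in M'$. The condition $\chi(x,e)\in F_M(p)$ means, by definition of $F_M$, that $p|_N(x)\wedge\mathbf{P}(x)\vdash\chi(x,e)$. By compactness there is a single $L(N)$-formula $\psi(x,c)\in p$ with $(N',N)\models\forall x\in\mathbf{P}[\psi(x,c)\to\chi(x,e)]$. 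Because $p$ is wide, $\psi(x,c)$ is a wide formula.

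Next I apply Lemma \ref{upstairs lemma} to $q_1,q_2$ (which are $M$-invariant, being finitely satisfiable in $M$) and to the wide formula $\psi(x,c)$. This yields $g\in G(M_0)$, an $L(M_0)$-formula $\eta(y,z)$ with $\eta(y,c)\in q_i$ for $i=1,2$, and a global $M_0$-invariant type $s(v)$ with $s(v)\vdash\psi(gbv,c')$ for all $(b,c')\models\eta$. These $g$ and $s$ will be the witnesses.

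To verify $gq_iF_M(s)\vdash\chi(x,e)$, I will apply Lemma \ref{retraction map lemma} with the formulas $\chi'(y,v,e):=\chi(gyv,e)$ and $\psi'(y,v,c):=\psi(gyv,c)$. Since $g\in M_0$ and $\mathbf{P}$ is closed under the group operations, the universal implication for $\psi,\chi$ on $\mathbf{P}$ transfers to $\psi',\chi'$ on $\mathbf{P}$: if $y,v\in\mathbf{P}$ then $gyv\in\mathbf{P}$. Hypothesis (1) is $\eta(y,c)\in q_i$, and hypothesis (2) is exactly the conclusion of Lemma \ref{upstairs lemma}. Moreover $q_i$ is finitely satisfiable in $M$ and $s$ is $M_0$-invariant, hence $M$-invariant. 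So the lemma gives $q_i\otimes F_M(s)\vdash\chi(gyv,e)$. Pushing forward along $(y,v)\mapsto gyv$ then gives $gq_iF_M(s)\vdash\chi(x,e)$. This works because the semigroup product $q_iF_M(s)$ is the pushforward of $(q_i)_y\otimes F_M(s)_v$ under multiplication (with $y$ realized over the $v$-realization, matching the order in Lemma \ref{retraction map lemma}), and left translation by $g\in G(M_0)$ is just a further pushforward.

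The main obstacle is bookkeeping rather than substance. First, one must check that the Morley product convention in the Ellis semigroup ($b\models q|_N$, $a\models p|_{(N,b)}$ for $pq$) matches the orientation of $q_y\otimes F_M(s)_v$ in Lemma \ref{retraction map lemma}. Second, one must justify the reduction $e\in M'$: we can arrange it by $M$-invariance of all the types involved together with the saturation of $M'$ over $M$.
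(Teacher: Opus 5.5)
Your proposal is correct and follows the paper's proof essentially step for step. Both arguments use compactness to get a single $\psi(x,c)\in p|_N$ with $(N',N)\models\forall x\in\mathbf{P}[\psi(x,c)\to\chi(x,e)]$, then apply Lemma \ref{upstairs lemma} to obtain $g$, $\eta$ and $s$, then apply Lemma \ref{retraction map lemma} to $\psi(gyv,c)$ and $\chi(gyv,e)$, and finally push forward along $(y,v)\mapsto gyv$.

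The only difference is cosmetic. The paper simply chooses $M'\supseteq(M,e)$ from the start, whereas you move $e$ into a pre-fixed $M'$ using $M$-invariance of $F_M(p)$ and of the products $gq_iF_M(s)$. Both are valid.
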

\begin{proof}
    This is an immediate consequence of Lemma \ref{retraction map lemma}, Lemma \ref{upstairs lemma}, and the definition of $F_M$. We spell out the details due to the regrettable technicality of these lemmas. Let $M'\supseteq (M,e)$ be an $|M|^+$-saturated model, and take an $|M'|^+$-saturated elementary extension $(M',M)\prec^+(N',N)$ of the pair $(M',M)$. By the definition of $F_M$ (see Fact \ref{retraction construction}) we have $p|_N(x)\wedge\mathbf{P}(x)\vdash F_M(p)|_{M'}(x)$, so $p|_N(x)\wedge \mathbf{P}(x)\vdash\chi(x,e)$, and thus by compactness there is a formula $\psi(x,c)\in p|_N$ such that $(N',N)\models\forall x\in\mathbf{P}(\psi(x,c)\to\chi(x,e))$; call this ($\ast$).

    Let $g\in G(M_0)$, $\eta(y,z)\in L(M_0)$, and $s(v)\in S^{\mathrm{inv}}_G(\mathfrak{C},M_0)$ be given by Lemma \ref{upstairs lemma} applied to the wide formula $\psi(x,c)$ and the $M$-invariant types $q_1,q_2$. So now (1) $\eta(y,c)\in q_i$ for each $i=1,2$, and (2) $s(v)\vdash\psi(gbv,c')$ for all $(b,c')\models\eta$. Since $g\in G(M_0)$, we have $g\models \mathbf{P}$, so $gba\models\mathbf{P}$ for all $(b,a)\models\mathbf{P}(y,v)$, and thus by ($\ast$) we have $$(N',N)\models\forall(v,y)\in\mathbf{P}[\psi(gyv,c)\to\chi(gyv,e)].$$ For each $i=1,2$, by Lemma \ref{retraction map lemma} applied to the $L(M')$-formula $\chi(gyv,e)$ and the $L(N)$-formula $\psi(gyv,c)$ we now get $
    (q_i)_y\otimes F_M(s)_v\vdash\chi(gyv,e)$ for each $i=1,2$. By definition of the semigroup operation on $S^{\mathrm{fs}}_G(\mathfrak{C},M)$, this means exactly that $q_iF_M(s)\vdash\chi(gx,e)$ for each $i=1,2$, so indeed $gq_iF_M(s)\vdash \chi(x,e)$ for each $i=1,2$. (Note that the expression $gq_iF_M(s)$ makes sense since $g\in G(M_0)\subseteq G(M)$, and we think of $S^{\mathrm{fs}}_G(\mathfrak{C},M)$ as a $G(M)$-flow.)
\end{proof}

The statement of Lemma \ref{downstairs lemma} is a bit technical. Let us translate it into a statement purely about the Ellis semigroup $S^{\mathrm{fs}}_G(\mathfrak{C},M)$, which will be easier to work with.

\begin{theorem}\label{downstairs lemma dynamical language} Let $M\succcurlyeq M_0$ be any model, and let $E=S^{\mathrm{fs}}_G(\mathfrak{C},M)$ be the Ellis semigroup of $G(M)$. Then there is an element $p_0\in E$ such that the following holds. Suppose $q_1,q_2\in E$ with $q_1|_{M_0}=q_2|_{M_0}$. Then, for any open neighborhood $U$ of $p_0$, there are $g_U\in G(M_0)$ and $t_U\in E$ such that $g_Uq_it_U\in U$ for each $i=1,2$.
\end{theorem}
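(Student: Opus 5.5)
The plan is to take $p_0=F_M(p)$, where $p$ is a global wide type. Such a type exists by Theorem \ref{non-wide sets are an ideal}: the non-wide sets form a proper ideal, and $G$ itself is wide. Since $p$ is wide, Remark \ref{wide implies non-forking} shows that every formula in $p$ does not fork over $M_0$. By Fact \ref{non-forking iff invariant}, $p$ is therefore $M_0$-invariant, and hence $M$-invariant. So $F_M(p)$ is defined and lies in $S^{\mathrm{fs}}_G(\mathfrak{C},M)=E$.

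Next we reduce to basic open sets. Suppose $U$ is an open neighbourhood of $p_0$ in $E$. Since the clopen sets $[\chi(x,e)]\cap E$ form a basis of the topology, and a finite conjunction of formulas of $F_M(p)$ is again in $F_M(p)$, we may shrink $U$ and assume $U=[\chi(x,e)]\cap E$ for a single formula $\chi(x,e)\in F_M(p)$. Now apply Lemma \ref{downstairs lemma} to $q_1,q_2$, the wide type $p$, and this $\chi(x,e)$. This gives $g\in G(M_0)$ and $s\in S^{\mathrm{inv}}_G(\mathfrak{C},M_0)$ with $gq_iF_M(s)\vdash\chi(x,e)$ for $i=1,2$. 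We set $g_U=g$ and $t_U=F_M(s)$. Because $s$ is $M_0$-invariant it is $M$-invariant, so $F_M(s)\in E$. Each $gq_it_U$ is then a product in $E$ concentrated on $\chi(x,e)$, so $g_Uq_it_U\in U$ for $i=1,2$.

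There is essentially no obstacle left, since the real work is in Lemmas \ref{retraction map lemma}, \ref{upstairs lemma} and \ref{downstairs lemma}. The only points to verify are bookkeeping. First, the products $g_Uq_it_U$ must be computed in $E$ with $G(M)$ acting on the left, exactly as in Lemma \ref{downstairs lemma}. Second, $p_0$ must be fixed once and for all, independently of $q_1,q_2$ and $U$. This holds because $p$ was chosen before $q_1,q_2$ were given, and Lemma \ref{downstairs lemma} applies to any $q_1,q_2$ with $q_1|_{M_0}=q_2|_{M_0}$.
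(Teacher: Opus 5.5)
Your proposal is correct and follows the paper's proof essentially verbatim: you take $p_0=F_M(p)$ for a global wide type $p$, shrink $U$ to a clopen $[\chi(x,e)]$ containing $p_0$, and set $g_U=g$, $t_U=F_M(s)$ using Lemma \ref{downstairs lemma}. Your extra checks ($p$ and $s$ are $M$-invariant, and $p_0$ does not depend on $q_1,q_2,U$) are the right bookkeeping.
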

\begin{proof}By Theorem \ref{non-wide sets are an ideal}, let $p\in S^{\mathrm{inv}}_G(\mathfrak{C},M_0)$ be a global wide type. We take $p_0=F_M(p)$. Then any open neighborhood $U$ of $p_0$ contains a clopen neighborhood $[\chi(x,e)]$ of $p_0$, and then we take $g_U$ and $t_U$ to be the $g$ and $F_M(s)$ given by Lemma \ref{downstairs lemma}.
\end{proof}

\subsection{Boundedness of the Ellis group}\label{boundedness of the ellis group section} Theorem \ref{downstairs lemma dynamical language} distills all of the consequences of our machinery of wideness that we will need to prove the main result, and this section can be read in a self-contained way using Theorem \ref{downstairs lemma dynamical language} as a `black box'. For this entire section, \uline{fix a model $M\succcurlyeq M_0$, and let $E=S^{\mathrm{fs}}_G(\mathfrak{C},M)$ be the Ellis semigroup of $G(M)$}. Also, \uline{fix a minimal left ideal $\mathcal{I}$ of $E$}. Let us first note that Theorem \ref{downstairs lemma dynamical language} immediately specializes to $\mathcal{I}$.

\begin{lemma}\label{downstairs lemma for ideal}
    Then there is $p\in \mathcal{I}$ such that the following holds. Suppose $q_1,q_2\in E$ with $q_1|_{M_0}=q_2|_{M_0}$. Then, for any open neighborhood $U$ of $p$, there are $g_U\in G(M_0)$ and $t_U\in \mathcal{I}$ such that $g_Uq_i t_U\in U$ for $i=1,2$.
\end{lemma}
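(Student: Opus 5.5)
The plan is to transfer Theorem \ref{downstairs lemma dynamical language} into $\mathcal{I}$ by multiplying on the right by a fixed element of $\mathcal{I}$. Right multiplication $r\mapsto rf$ is continuous on $E$ (left topological semigroup), and $E f\subseteq\mathcal{I}$ for any $f\in\mathcal{I}$ since $\mathcal{I}$ is a left ideal.

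First I fix $p_0\in E$ as given by Theorem \ref{downstairs lemma dynamical language}, choose any $f\in\mathcal{I}$, and set $p=p_0f$. Then $p\in Ef\subseteq\mathcal{I}$.

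Now let $q_1,q_2\in E$ with $q_1|_{M_0}=q_2|_{M_0}$, and let $U$ be an open neighbourhood of $p$. By continuity of $\rho_f:r\mapsto rf$ at $p_0$, the set $V=\rho_f^{-1}(U)$ is an open neighbourhood of $p_0$. Applying Theorem \ref{downstairs lemma dynamical language} to $V$ gives $g_V\in G(M_0)$ and $t_V\in E$ with $g_Vq_it_V\in V$ for $i=1,2$. By the choice of $V$,
\[(g_Vq_it_V)f\in U \quad\text{for } i=1,2.\]
I then take $g_U=g_V$ and $t_U=t_Vf$. Associativity of the semigroup operation, together with the compatibility of the $G(M)$-action with it ($g$ acts as left multiplication by the realized type of $g$), gives $g_Uq_it_U=(g_Vq_it_V)f\in U$. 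Finally $t_U=t_Vf\in Ef\subseteq\mathcal{I}$.

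There is essentially no obstacle here. The only points to check are the continuity direction, which is right multiplication and so is fine in a left topological semigroup, and that the translation by $g$ commutes with the semigroup product. The latter holds because $S^{\mathrm{fs}}_G(\mathfrak{C},M)$ is identified with its Ellis semigroup, with $g$ corresponding to the realized type $\mathrm{tp}(g/\mathfrak{C})$.
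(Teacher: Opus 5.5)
Your proposal is correct and is essentially the paper's own proof: the paper also takes $p_0$ from Theorem \ref{downstairs lemma dynamical language}, sets $p=p_0r$ for an arbitrary $r\in\mathcal{I}$, and uses continuity of $x\mapsto xr$ to pull $U$ back to a neighbourhood $V$ of $p_0$. It then takes $g_U=g$ and $t_U=t_0r$, exactly as you do.
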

\begin{proof}
    Let $p_0\in E$ be given by Theorem \ref{downstairs lemma dynamical language}. Let $r\in\mathcal{I}$ be arbitrary. Now $p_0r\in\mathcal{I}$, and we claim that taking $p=p_0r$ works. This is an immediate consequence of the continuity of the map $E\to E$, $x\mapsto xr$. Indeed, let $U$ be an open neighborhood of $p$. By continuity of the map $x\mapsto xr$, there is an open neighborhood $V$ of $p_0$ such that $Vr\subseteq U$. By Theorem \ref{downstairs lemma dynamical language}, there are $g\in G(M_0)$ and $t_0\in E$ such that $gq_it_0\in V$ for each $i=1,2$. Now we just take $g_U=g$ and $t_U=t_0r$.
\end{proof}

From hereonout, \uline{fix $p\in\mathcal{I}$ given by Lemma \ref{downstairs lemma for ideal}}. We will now show that, for any idempotent $u\in\mathcal{I}$, the restriction map $u\mathcal{I}\to S_G(M_0)$ taking an element $q\in u\mathcal{I}$ to the type $q|_{M_0}$ is injective; this will give the main result. Let us recall some standard notation, eg from \cite{auslander_book}:

\begin{notation}Given an element $a\in\mathcal{I}$, by Fact \ref{ellis semigroup lemma} there is a unique idempotent $w\in\mathcal{I}$ with $a\in w\mathcal{I}$. By $a^{-1}$ we mean the inverse of $a$ in the group $w\mathcal{I}$. So $a^{-1}\in w\mathcal{I}$ and $a^{-1}a=w=aa^{-1}$.
\end{notation}

We need one lemma about the $\tau$-topology, which is Lemma 12 in Chapter 14 of \cite{auslander_book}; the language there there is somewhat different than ours, so we include the proof here for the reader's convenience.

\begin{fact}\label{auslander lemma}Let $(P,<)$ be a downwards-directed poset. Let $u\in\mathcal{I}$ be an idempotent. Suppose that $(a_i:i\in P)$ is a net of elements in $u\mathcal{I}$ and that $(f_i:i\in P)$ is a net of elements in $G(M)$, where $G(M)$ is thought of as a subset of $E$, and suppose that $(f_i:i\in P)$ converges to some element $f\in E$ and that $(f_ia_i:i\in P)$ converges to some element $b\in \mathcal{I}$. Then some subnet of $(a_i:i\in P)$ converges to $u(fu)^{-1}b$ in the $\tau$-topology.
\end{fact}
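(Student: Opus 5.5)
The plan is to show that $c:=u(fu)^{-1}b$ lies in the $\tau$-closure of every tail of the net $(a_i)$, and then extract the subnet by pure topology. For $i\in P$ put $A_i=\{a_j:j\leqslant i\}\subseteq u\mathcal{I}$. If $c\in\mathrm{cl}_\tau(A_i)$ for every $i$, then every $\tau$-open neighbourhood of $c$ meets every tail, so $c$ is a $\tau$-cluster point of $(a_i)$. The standard construction then gives a subnet $\tau$-converging to $c$: index by pairs (tail index, $\tau$-neighbourhood of $c$) and choose an element of the tail lying in the neighbourhood. This uses only the definition of closure in an arbitrary topological space, so we need neither Hausdorffness nor quasicompactness here.

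To show $c\in\mathrm{cl}_\tau(A_i)=(u\mathcal{I})\cap(u\circ A_i)$, I would proceed in four steps.

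\begin{enumerate}
\item Since $(f_j:j\leqslant i)$ is still a net of elements of $G(M)$ converging to $f$, and $f_ja_j\to b$ along it, the definition of $\circ$ gives $b\in f\circ A_i$.
\item Every element of $A_i$ lies in $u\mathcal{I}$, so $A_i=uA_i$. Fact \ref{composition for circ operation} (the part $f_1\circ(f_2B)\subseteq(f_1f_2)\circ B$) then yields $b\in f\circ(uA_i)\subseteq(fu)\circ A_i$.
\item Applying the other half of Fact \ref{composition for circ operation} with $f_1=u(fu)^{-1}$ gives
\[ c=u(fu)^{-1}b\in u(fu)^{-1}\bigl((fu)\circ A_i\bigr)\subseteq\bigl(u(fu)^{-1}fu\bigr)\circ A_i. \]
\item Simplify the coefficient. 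Since $\mathcal{I}$ is a left ideal and $u\in\mathcal{I}$, we have $fu\in\mathcal{I}$, so $fu\in w\mathcal{I}$ for a unique idempotent $w\in\mathcal{I}$, and $(fu)^{-1}(fu)=w$. By Fact \ref{ellis semigroup lemma}(3), $qw=q$ for every $q\in\mathcal{I}$, so $uw=u$. Hence $u(fu)^{-1}fu=u$, and therefore $c\in u\circ A_i$.
\end{enumerate}

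Finally, $(fu)^{-1}b\in\mathcal{I}$, so $c=u\cdot(fu)^{-1}b\in u\mathcal{I}$. Thus $c\in\mathrm{cl}_\tau(A_i)$ for every $i$, as required.

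The main obstacle is step 4. The points to get right are which inverse is meant: $(fu)^{-1}$ is taken in $w\mathcal{I}$, not in $u\mathcal{I}$. One also has to check that the element $u(fu)^{-1}fu$ collapses to $u$ using only Fact \ref{ellis semigroup lemma}(3), which is why we must first insert the factor $u$ in step 2. The other steps are direct applications of the definition of $\circ$ and of Fact \ref{composition for circ operation}.
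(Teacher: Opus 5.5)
Your proposal is correct and follows the paper's proof (adapted from Lemma 12 in Chapter 14 of Auslander) step for step. The paper also shows $u(fu)^{-1}b\in\mathrm{cl}_\tau(A_k)$ for each tail $A_k$ by the same chain: first $b\in f\circ A_k=f\circ(uA_k)\subseteq(fu)\circ A_k$, then the other half of Fact \ref{composition for circ operation}, then $uw=u$. Your only addition is to spell out the standard extraction of a convergent subnet from a cluster point, which the paper leaves implicit by saying ``it is enough to show.''
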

\begin{proof}We follow the proof of Lemma 12 in Chapter 14 of \cite{auslander_book}. For each $k\in P$, let $A_k=\{a_i:i\leqslant k\}$; it is enough to show that $u(fu)^{-1}b\in\mathrm{cl}_\tau(A_k)$ for every $k\in P$. So fix such $k\in P$. Note that $f_ia_i\in f_i A_k$ for every $i\leqslant k$, and since $(f_i:i\leqslant k)$ converges to $f$ and $(f_ia_i:i\leqslant k)$ converges to $b$ thus by definition of the operator $\circ$ we have $b\in f\circ A_k$. Since $A_k\subseteq u\mathcal{I}$, we have $A_k=uA_k$, and thus by Fact \ref{composition for circ operation} $f\circ A_k=f\circ (uA_k)\subseteq (fu)\circ A_k$. So now $b\in (fu)\circ A_k$, so $u(fu)^{-1}b\in u(fu)^{-1}[(fu)\circ A_k]$, and so again using Fact \ref{composition for circ operation}, letting $w\in\mathcal{I}$ denote the idempotent of $\mathcal{I}$ with $fu\in w\mathcal{I}$, we have $$u(fu)^{-1}b\in u(fu)^{-1}[(fu)\circ A_k]\subseteq (u(fu)^{-1}(fu))\circ A_k=(uw)\circ A_k=u\circ A_k.$$ On the other hand, $u(fu)^{-1}b\in u\mathcal{I}$, so $u(fu)^{-1}b$ lies in $u\mathcal{I}\cap (u\circ A_k)$, which is by definition $\mathrm{cl}_\tau(A_k)$.
\end{proof}

So now Lemma \ref{downstairs lemma for ideal} has the following consequence:

\begin{lemma}\label{convergent nets in different ellis groups}
    Let $u,u'\in\mathcal{I}$ be any idempotents, and let $q\in u\mathcal{I}$ and $q'\in u'\mathcal{I}$. Suppose that $q|_{M_0}=q'|_{M_0}$. Then there is $f\in E$, in fact $f\in S^\mathrm{fs}_G(\mathfrak{C},M_0)$, and a net $(t_i:i\in P)$ of elements of $\mathcal{I}$, such that $(qt_i:i\in P)$ converges to $u(fu)^{-1}p$ in the $\tau$-topology on $u\mathcal{I}$ and $(q't_i:i\in P)$ converges to $u'(fu')^{-1}p$ in the $\tau$-topology on $u'\mathcal{I}$.
\end{lemma}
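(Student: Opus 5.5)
Apply Lemma \ref{downstairs lemma for ideal} to the pair $q_1=q$, $q_2=q'$, whose restrictions to $M_0$ agree by hypothesis. Index by the downwards-directed poset $P$ of open neighborhoods $U$ of $p$, ordered by reverse inclusion. For each $U\in P$ the lemma supplies $g_U\in G(M_0)$ and $t_U\in\mathcal{I}$ with $g_Uqt_U\in U$ and $g_Uq't_U\in U$. Then both nets $(g_Uqt_U:U\in P)$ and $(g_Uq't_U:U\in P)$ converge to $p$ in $E$.

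Next we get a convergent net of translates. The net $(g_U:U\in P)$ lies in $G(M_0)$, viewed inside $S^{\mathrm{fs}}_G(\mathfrak{C},M_0)\subseteq S^{\mathrm{fs}}_G(\mathfrak{C},M)=E$. The space $S^{\mathrm{fs}}_G(\mathfrak{C},M_0)$ is a closed, hence compact, subset of $E$. So we can pass to a subnet along which $g_U\to f$ for some $f\in S^{\mathrm{fs}}_G(\mathfrak{C},M_0)$. Reindexing by that subnet's poset, which we still call $P$, the nets $g_iqt_i$ and $g_iq't_i$ still converge to $p$. We write $f_i=g_i$, and these elements lie in $G(M)$ as required by Fact \ref{auslander lemma}.

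Now set $a_i=qt_i$ and $a'_i=q't_i$. Since $q\in u\mathcal{I}$ and $t_i\in\mathcal{I}$, we have $a_i\in u\mathcal{I}$: indeed $u(qt_i)=(uq)t_i=qt_i$, and $qt_i\in\mathcal{I}$ because $\mathcal{I}$ is a left ideal. Likewise $a'_i\in u'\mathcal{I}$. We apply Fact \ref{auslander lemma} to $(a_i)$, $(f_i)$ with limit $f$ and $f_ia_i\to p\in\mathcal{I}$. This gives a subnet of $(qt_i)$ converging in the $\tau$-topology on $u\mathcal{I}$ to $u(fu)^{-1}p$.

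The second net needs care, because the conclusion requires a single net $(t_i)$ that works for both $q$ and $q'$. We pass to the subnet produced in the first application and then apply Fact \ref{auslander lemma} again, now to $(q't_i)$ with idempotent $u'$. The hypotheses survive passing to a subnet: $f_i\to f$ and $f_iq't_i\to p$ still hold. So some further subnet of $q't_i$ converges to $u'(fu')^{-1}p$ in the $\tau$-topology on $u'\mathcal{I}$.

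Along this further subnet, $qt_i$ still $\tau$-converges to $u(fu)^{-1}p$, since subnets of convergent nets converge to the same limit. The main point to check is exactly this: Fact \ref{auslander lemma} only promises a subnet, so the order of passing to subnets matters. We fix $f$ first, then refine for $q$, then refine for $q'$.
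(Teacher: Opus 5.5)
Your proposal is correct and follows the paper's proof essentially step for step. You apply Lemma \ref{downstairs lemma for ideal} along the poset of neighborhoods of $p$, pass to a subnet so that $g_U\to f\in S^{\mathrm{fs}}_G(\mathfrak{C},M_0)$, and then apply Fact \ref{auslander lemma} twice in succession, first to $(qt_U)$ and then to $(q't_U)$ along the resulting subnet; your explicit checks that $qt_i\in u\mathcal{I}$ and that the hypotheses of Fact \ref{auslander lemma} survive passing to subnets fill in details the paper leaves implicit.
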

\begin{proof}
    By Lemma \ref{downstairs lemma for ideal}, for every open set $U\subseteq E$ with $p\in U$, there are $g_U\in G(M_0)$ and $t_U\in\mathcal{I}$ such that $g_Uqt_U\in U$ and $g_Uq't_U\in U$. Let $P$ be the downwards-directed poset of open neighborhoods of $p$ in $E$. Then clearly the nets $(g_Uqt_U:U\in P)$ and $(g_Uq't_U:U\in P)$ each converge to $p$. So if we define $a_U:=q t_U$ and $a'_U:=q't_U$ then the nets $(g_Ua_U:U\in P)$ and $(g_Ua'_U:U\in P)$ each converge to $p$. Since $q\in u\mathcal{I}$ we have $a_U\in u\mathcal{I}$, and since $q'\in u'\mathcal{I}$ we have $a'_U\in u'\mathcal{I}$.
    
    Consider $G(M_0)$ as a subset of $S^{\mathrm{fs}}_G(\mathfrak{C},M_0)\subseteq E$; then by compactness there is a subnet $(g_{U'}:U'\in P')$ of $(g_U:U\in P)$ that converges to some $f\in S^{\mathrm{fs}}_G(\mathfrak{C},M_0)$. Now by Fact \ref{auslander lemma} we get a subnet $(a_{U''}:U''\in P'')$ of $(a_{U'}:U'\in P')$ such that $(a_{U''}:U''\in P'')$ converges to $u(fu)^{-1}p$ in the $\tau$-topology, and again by Fact \ref{auslander lemma} we get a subnet $(a'_{U'''}:U'''\in P''')$ of $(a'_{U''}:U''\in P'')$ such that $(a'_{U'''}:U'''\in P''')$ converges to $u'(fu')^{-1}p$ in the $\tau$-topology. So the element $f$ and the net $(t_{U'''}:U'''\in P''')$ have the desired properties.
\end{proof}

Now we can get the main theorem. Recall the crucial Fact \ref{tau-topology is hausdorff}, that the $\tau$-topology on $u\mathcal{I}$ is Hausdorff for every idempotent $u\in\mathcal{I}$.

\begin{theorem}\label{restriction map ellis group theorem}
    Let $u\in\mathcal{I}$ be any idempotent. Then the restriction map $u\mathcal{I}\to S_G(M_0)$ taking an element $q\in u\mathcal{I}$ to the type $q|_{M_0}$ is injective. In particular, $|u\mathcal{I}|\leqslant |S_G(M_0)|\leqslant 2^{|T|}$.
\end{theorem}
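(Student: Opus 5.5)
Take $q,q'\in u\mathcal{I}$ with $q|_{M_0}=q'|_{M_0}$; the goal is $q=q'$. I apply Lemma \ref{convergent nets in different ellis groups} with $u'=u$, obtaining $f\in S^{\mathrm{fs}}_G(\mathfrak{C},M_0)$ and a net $(t_i:i\in P)$ in $\mathcal{I}$. Both nets $(qt_i)$ and $(q't_i)$ then lie in $u\mathcal{I}$ and converge in the $\tau$-topology to the same element $c:=u(fu)^{-1}p$. The idea is to cancel the common right factor $t_i$ using the group structure and the Hausdorffness of Fact \ref{tau-topology is hausdorff}.

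First I pass to the group. Since $qt_i\in u\mathcal{I}$ and $u\mathcal{I}$ is a group, I set $r_i:=ut_i\in u\mathcal{I}$. Because $q=qu$, we have $qt_i=q(ut_i)=qr_i$ and similarly $q't_i=q'r_i$. So in the compact Hausdorff topological group $u\mathcal{I}$ (Fact \ref{tau-topology is hausdorff} together with Ellis' joint continuity theorem), $qr_i\to c$ and $q'r_i\to c$.

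Next I cancel. Left multiplication by $q^{-1}$ is a $\tau$-homeomorphism, so $r_i\to q^{-1}c$, and likewise $r_i\to q'^{-1}c$. Since the $\tau$-topology is Hausdorff, limits of nets are unique, so $q^{-1}c=q'^{-1}c$. Hence $q=q'$. If one only wants to use semitopological continuity, left translation still suffices, since it is continuous in each variable separately. The bound $|u\mathcal{I}|\leqslant|S_G(M_0)|\leqslant 2^{|T|}$ then follows because $|M_0|=|T|$ (as $M_0=\dcl(\varnothing)$ is a model, it has size at most $|T|$).

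The main obstacle is essentially absorbed by Lemma \ref{convergent nets in different ellis groups} and Fact \ref{auslander lemma}. The point to verify carefully is that the two limits really coincide when $u'=u$: both are $u(fu)^{-1}p$ with the same $f$, because the same subnet is used for both. One must also check that $qt_i=q(ut_i)$ holds in the semigroup, which follows from Fact \ref{ellis semigroup lemma}(3) since $q\in u\mathcal{I}$ gives $uq=q$, so $qu=q$. Hausdorffness is used exactly for uniqueness of the limit $r_i\to q^{-1}c=q'^{-1}c$.
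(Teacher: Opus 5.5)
Your argument is correct and is essentially the paper's proof: apply Lemma \ref{convergent nets in different ellis groups} with $u'=u$, rewrite $qt_i=q(ut_i)$ and $q't_i=q'(ut_i)$ with $ut_i\in u\mathcal{I}$, and use Fact \ref{tau-topology is hausdorff} only for uniqueness of $\tau$-limits. The one cosmetic difference is in the cancellation: the paper uses $\tau$-compactness to pass to a subnet with $ut_{i'}\to t$ and then cancels in $qt=q't$, whereas you left-translate by $q^{-1}$ and $q'^{-1}$ (continuous in the semitopological group) and cancel in $q^{-1}c=q'^{-1}c$, which avoids the compactness step.
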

\begin{proof}
    Suppose we have $q,q'\in u\mathcal{I}$ with $q|_{M_0}=q'|_{M_0}$. By Lemma \ref{convergent nets in different ellis groups}, there is $f\in E$ and a net $(t_i:i\in P)$ of elements of $\mathcal{I}$ such that $(qt_i:i\in P)$ and $(q't_i:i\in P)$ each converge to $u(fu)^{-1}p$ in the $\tau$-topology. We have $ut_i\in u\mathcal{I}$, so by compactness of the $\tau$-topology there is a subnet $(ut_{i'}:i'\in P')$ of $(ut_i:i\in P)$ and an element $t\in u\mathcal{I}$ such that $(ut_{i'}:i'\in P')$ converges to $t$ in the $\tau$-topology. But now $q=qu$, so $qt_i=(qu)t_i=q(ut_i)$, and similarly $q't_i=q'(ut_i)$. Since $q,q'$ and the $ut_i$ all lie in $u\mathcal{I}$, thus $(qt_{i'}:i'\in P')$ and $(q't_{i'}:i'\in P')$ converge respectively to $qt$ and $q't$ in the $\tau$-topology. (This does not use Fact \ref{tau-topology is hausdorff} yet, just the fact that $u\mathcal{I}$ is a semitopological group.) But now, by Fact \ref{tau-topology is hausdorff}, limits in the $\tau$-topology in $u\mathcal{I}$ are unique, and so we must have $qt=u(fu)^{-1}p$ and $q't=u(fu)^{-1}p$. So $qt=q't$, and now since $u\mathcal{I}\ni q,q',t$ is a group we have $q=q'$.
\end{proof}

\section{VC-sets and finite Archimedean rank}\label{vc sets far section}
This section can be seen as an interlude of the paper; we will prove in it some general facts about VC-sets in compact Hausdorff sets, which we will then apply to the Ellis group in Section \ref{FAR section} and to $G/G^{00}_\phi$ in Section \ref{local g00 section}. The reader may also wish to compare with Simon's paper \cite{simon_vc_sets}, which studies VC-sets in locally compact Hausdorff group, though with very different goals. The results themselves are completely self-contained and do not require any knowledge of model theory or Ellis groups; I hope they will perhaps be of independent interest. Let us remark at the outset that our results here are greatly inspired by the techniques of Hrushovski in \cite{hrushovski}. Nevertheless there are some novelties to our approach. The main one is to use the pseudometrics associated to Borel subsets of compact Hausdorff groups to substitute certain metric spaces that Hrushovski works with in \cite{hrushovski}, which is how we adapt his arguments to the setting of arbitrary compact groups. After this insight is made the material of the two subsections Section \ref{far orbit section} and Section \ref{vc sets and packing dimension section} become direct adaptations of arguments of Hrushovski's. On the other hand, our main result, Theorem \ref{general far theorem}, is quite different than anything that appears in \cite{hrushovski}. The idea in the proof there to work directly with the Haar measure on the connected component, though it might seem obvious to the reader in hindsight, was actually quite an important one, and in our applications in Section \ref{FAR section} and Section \ref{local g00 section} it is exactly what allows us to avoid the technical need from Hrushovski's results in Section 7 of \cite{hrushovski} to restrict to parameters from a single complete Shelah strong type.

Let us recall the following definition, following the terminology of \cite{hrushovski}:

\begin{definition}
    Let $(X,d)$ be a (pseudo)metric space of finite diameter. We say that $(X,d)$ has `packing dimension at most $\delta$' if there are at most $O(n^\delta)$ pairwise disjoint $d$-balls in $X$ of radius $1/n$.
\end{definition} Two basic observations are the following; the first is Lemma 7.9 in \cite{hrushovski}, and the second follows immediately from the definition.

\begin{fact}\label{lipschitz map packing dimension}
    If $f:Y\to X$ is a surjective Lipschitz map between metric spaces, then the packing dimension of $X$ is at most the packing dimension of $Y$.
\end{fact}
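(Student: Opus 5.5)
The plan is to prove the statement directly from the definition of packing dimension, by pushing disjoint balls in $X$ back to disjoint balls in $Y$ with a fixed rescaling of the radius. Let $f:Y\to X$ be surjective and $L$-Lipschitz, so $d_X(f(y),f(y'))\leqslant L\,d_Y(y,y')$. We may assume $L\geqslant 1$. Suppose $Y$ has packing dimension at most $\delta$, so there is a constant $C$ with at most $Cn^\delta$ pairwise disjoint $d_Y$-balls of radius $1/n$ in $Y$ for every $n$. We also note that $X$ has finite diameter, since $\mathrm{diam}(X)\leqslant L\,\mathrm{diam}(Y)$ by surjectivity.

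Next take a family of pairwise disjoint balls $B_X(x_j,1/n)$, $j\in[k]$, in $X$. By surjectivity choose $y_j\in Y$ with $f(y_j)=x_j$, and set $m=\lceil L\rceil n$. We claim the balls $B_Y(y_j,1/m)$ are pairwise disjoint. Indeed, if $z$ lay in two of them, then $d_X(f(z),x_j)\leqslant L/m\leqslant 1/n$ for both indices. This would put $f(z)$ in both $X$-balls, at least when the balls are taken open (for closed balls one uses $m=\lceil L\rceil n+1$ or $2\lceil L\rceil n$, with strict inequality, to the same effect). Hence $k\leqslant Cm^\delta\leqslant C\lceil L\rceil^\delta n^\delta$, which is $O(n^\delta)$ with a constant independent of $n$. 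So $X$ has packing dimension at most $\delta$.

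The only point needing care, and the one I expect to be the main obstacle, is the open versus closed convention for balls at the boundary radius. It is handled by the slack in the choice of $m$, since any fixed multiple of $n$ changes only the constant in the $O(n^\delta)$ bound. The same argument works verbatim for pseudometric spaces.
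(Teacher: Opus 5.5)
Your argument is correct. The paper gives no proof of its own; it simply quotes Lemma 7.9 of Hrushovski's paper, and pulling back disjoint $1/n$-balls along chosen preimages to disjoint balls of radius $1/(\lceil L\rceil n)$ is exactly the standard proof of that lemma. (Minor point: closed balls need no extra slack either, since $d_X(f(z),x_j)\leqslant L/m\leqslant 1/n$ already puts $f(z)$ in both closed $X$-balls; slack only matters if the open/closed convention differs between $X$ and $Y$.)
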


\begin{remark}\label{sqrt metric}
    Suppose $(X,d)$ is a pseudometric space of packing dimension at most $\delta$. Then the pseudometric space $(X,\sqrt{d})$ has packing dimension at most $2\delta$.
\end{remark}
% \begin{proof}
% Let $k$ be such that there are at most $kn^\delta$ disjoint $d$-balls of radius $1/n$. Note, suppose $a_1,\dots,a_s$ have pw disjoint $\sqrt{d}$-balls of radius $1/n$. We have $\sqrt{d}(a_i,b)\leqslant 1/n$ iff $d(a_i,b)\leqslant 1/n^2$. So the $a_i$ have pw disjoint $d$-balls around then. Hence $s\leqslant k(n^2)^\delta=kn^{2\delta}$.
% \end{proof}

Throughout this section, we will work with the pseudometrics discussed in Appendix A. Thus:

\begin{definition} For $G$ a compact Hausdorff group with normalized Haar measure $\eta$, and $B\subseteq G$ a Borel subset, we define $d_B:G\times G\to [0,1]$ by $d_B(g,h)=\eta(gB\triangle hB)$, and we define $\ker(d_B):=\{g:d_B(g,1)=0\}$.
\end{definition}So, by Lemma \ref{pseudometric and closed subgroup lemma}, $d_B$ is a left-invariant pseudometric on $G$, and $\ker(d_B)$ is a closed subgroup of $G$.

\subsection{Lemmas on compact groups}\label{far orbit section}
We begin by isolating some useful arguments from \cite{hrushovski} that apply to any connected compact group. Recall the basic facts from the representation theory of compact groups recalled in Section \ref{compact groups section}. The following is Lemma 7.10 from \cite{hrushovski}:

\begin{fact}\label{dimension bound for lie groups} Let $G$ be a connected compact Lie group and let $H$ be a Hilbert module of $G$. Let $X\subseteq H$ be an orbit of $G$ and suppose that $G$ acts faithfully on $X$. Let $\delta$ be the packing dimension of $X$ as a metric subspace of $H$. Then $\dim(G)\leqslant\delta^2$.
\end{fact}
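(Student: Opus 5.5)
The plan is to reduce to a self-similar situation at a well-chosen scale. First I would identify the dimension of $G$ with that of its orbit $X$ and then count packings of $X$ against packings of $G$ equipped with a left-invariant metric.

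\textbf{Step 1: reduce to a free, homogeneous situation.} Fix $x_0\in X$ with stabilizer $K$, so that $X\cong G/K$ via the orbit map $g\mapsto gx_0$. The action on $H$ is unitary, so the norm metric on $X$ is $G$-invariant. I would use faithfulness as follows. Pick finitely many points $x_1,\dots,x_m\in X$ whose stabilizers intersect trivially; this is possible because closed subgroups of a compact Lie group satisfy the descending chain condition. Then $G$ embeds, via $g\mapsto(gx_1,\dots,gx_m)$, as an orbit in $H^m$. The packing dimension of this orbit is at most that of $X^m$, which is at most $m\delta$. That bound is too weak, so I need something better.

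\textbf{Step 2: linearize at small scales.} Write $\mathfrak g$ for the Lie algebra and $\rho$ for the derivative representation. Near $x_0$, the orbit $X$ is a smooth compact submanifold of dimension $k=\dim G-\dim K$. Since it sits inside the finite-dimensional subspace spanned by the relevant isotypic components, its packing dimension equals $k$ exactly. So $\delta\ge \dim(G/K)$ for every $K$ arising as a point stabilizer. The remaining task is to bound $\dim G$ by $\dim(G/K)^2$ up to the right constant. Here I would use the mixed orbit $G\cdot(x_0\text{ and derivative data})$. Concretely, consider the map $X\to \mathrm{Hom}(\mathfrak g,H)$, $gx_0\mapsto (\xi\mapsto \rho(\xi) gx_0)$. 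Faithfulness forces the isotropy action of $K$ on the tangent spaces of $X$ along the orbit to be faithful in a suitable sense. For a connected compact $G$ acting faithfully on a homogeneous space $G/K$ of dimension $k$, one gets $\dim G\le \dim(G/K)+\dim O(k)\le k+k(k-1)/2\le k^2$. This is the classical isometry-group bound, since $G$ acts faithfully by isometries on the Riemannian manifold $G/K$, using an invariant metric.

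\textbf{Step 3: assemble.} By Step 2, $\delta\ge k$. Then $G$ embeds into the isometry group of the compact connected Riemannian manifold $X$ with its induced metric. This embedding is faithful because $G$ acts faithfully on $X$. The classical bound on the dimension of the isometry group of a $k$-manifold gives $\dim G\le k(k+1)/2\le k^2\le\delta^2$.

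The main obstacle is the claim $\delta\ge k$, that is, that the packing dimension of the orbit in the possibly infinite-dimensional $H$ is at least its manifold dimension. To handle it, I would project orthogonally onto a finite-dimensional $G$-submodule $H_0$ on which $K$ is still the stabilizer of the image of $x_0$ and the orbit dimension is unchanged; such a submodule exists by Fact \ref{peter-weyl 1} and compactness. This projection is $1$-Lipschitz and surjective onto the projected orbit, so Fact \ref{lipschitz map packing dimension} reduces the claim to a compact $k$-dimensional embedded submanifold of Euclidean space. There, a standard volume comparison shows that it contains $\gtrsim n^k$ disjoint $1/n$-balls.
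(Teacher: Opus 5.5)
The paper does not prove this statement; it quotes it as Lemma 7.10 of \cite{hrushovski}. Once you discard the exploratory and incorrect parts, your proposal is a correct self-contained proof. Its key reduction is the device the paper itself uses for Lemma \ref{dimension bound for arbitrary compact groups}: Peter--Weyl projection onto a finite-dimensional submodule, together with Fact \ref{lipschitz map packing dimension}.

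The load-bearing chain is as follows.
\begin{enumerate}
\item Pick a finite $s_0$ with $\mathrm{Stab}(\pi_{s_0}x_0)=K$.
\item Then $\pi_{s_0}(X)=G\cdot\pi_{s_0}(x_0)\cong G/K$ is a compact embedded $k$-dimensional submanifold of $H_{s_0}$. $G$ still acts faithfully on it, since the stabilizer is still $K$, and it has packing dimension $k$ by volume comparison.
\item Since $\pi_{s_0}$ is a $1$-Lipschitz surjection $X\to\pi_{s_0}(X)$, this gives $k\leqslant\delta$.
\item Put a $G$-invariant Riemannian metric on the connected manifold $G/K$. An isometry fixing $eK$ with trivial derivative is the identity, so faithfulness makes the isotropy representation $K\to O(k)$ injective. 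Hence $\dim G\leqslant k+k(k-1)/2\leqslant k^2\leqslant\delta^2$.
\end{enumerate}
This actually yields the sharper bound $\dim G\leqslant\delta(\delta+1)/2$. Step 1 and the map into $\mathrm{Hom}(\mathfrak g,H)$ play no role. The latter is not even defined for non-smooth vectors.

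Two things need fixing.

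First, several claims in Steps 2--3 are not merely unproved but false when $H$ is infinite-dimensional. The orbit need not be a smooth submanifold, need not lie in a finite-dimensional subspace, and its packing dimension can be strictly larger than $k$. The paper's own application already shows this: for $G=\mathbb{R}/\mathbb{Z}$ and $B=[0,1/2)$, the orbit of $\mathbf{1}_B$ in $L^2(G)$ satisfies $\|\mathbf{1}_{g+B}-\mathbf{1}_B\|_2=\sqrt{2|g|}$ for $|g|\leqslant 1/2$. This is a snowflaked circle of packing dimension $2$, while $k=1$. So drop ``packing dimension equals $k$ exactly''. Replace ``the Riemannian manifold $X$ with its induced metric'' by an abstract invariant metric on $G/K$, or by the induced metric on $\pi_{s_0}(X)$. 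Only the inequality $\delta\geqslant k$ is needed, and your last paragraph proves it.

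Second, the existence of $s_0$ does not follow from compactness. The stabilizers $K_s$ of $\pi_s(x_0)$ form a downward-directed family of closed subgroups with $\bigcap_s K_s=K$, because $\pi_s(x_0)\to x_0$ along an increasing sequence of finite $s$. What makes this family have a least element, necessarily $K$, is the descending chain condition for closed subgroups of a compact Lie group, which you invoked in Step 1. In a profinite group the analogous claim fails.
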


We observe here that Fact \ref{dimension bound for lie groups} implies an analogous result for arbitrary connected compact Hausdorff groups. This is an easy consequence of Peter-Weyl, and in fact the proof of Fact \ref{dimension bound for lie groups} in \cite{hrushovski} basically already shows it.

\begin{lemma}\label{dimension bound for arbitrary compact groups} Let $G$ be a connected compact Hausdorff group and let $H$ be a Hilbert module of $G$. Let $X\subseteq H$ be an orbit of $G$ and suppose that $G$ acts faithfully on $X$. Let $\delta$ be the packing dimension of $X$ as a metric subspace of $H$. Then $G$ is an inverse limit of compact Lie groups of dimension at most $\delta^2$.
\end{lemma}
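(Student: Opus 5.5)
By Fact \ref{peter-weyl 2}, $G$ is an inverse limit of compact Lie groups. So, exactly as in the proof of Lemma \ref{archimedean rank of connected component}, it suffices to show that every Lie quotient $G/N$ has dimension at most $\delta^2$, where $N$ ranges over closed normal subgroups of $G$ with $G/N$ Lie. So fix such an $N$. Then $L:=G/N$ is a connected compact Lie group, being a continuous image of the connected group $G$. The plan is to produce a Hilbert module of $L$ containing an orbit on which $L$ acts faithfully, and which is a $1$-Lipschitz image of $X$. Fact \ref{lipschitz map packing dimension} together with Fact \ref{dimension bound for lie groups} then gives $\dim L\leqslant\delta^2$.

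For the construction, let $H^N=\{h\in H:\lambda(n)h=h\text{ for all }n\in N\}$ be the closed subspace of $N$-fixed vectors, and let $P:H\to H^N$ be the orthogonal projection. Since $N$ is normal, $H^N$ is $G$-invariant. Hence $H^N$ is a Hilbert module of $L$. Since $G$ acts by unitaries preserving $H^N$, it also preserves $(H^N)^\perp$, so $P$ commutes with the $G$-action. Being an orthogonal projection, $P$ is $1$-Lipschitz. So if $X=Gx$, then $P(X)=G\,Px$ is an $L$-orbit in $H^N$ and a Lipschitz image of $X$. By Fact \ref{lipschitz map packing dimension}, its packing dimension is at most $\delta$.

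It remains to make the action faithful, and this is the main obstacle. We need the kernel of the $G$-action on $P(X)$ to be exactly $N$, but the projection of a single vector may lose information. To address this, I would first use Peter-Weyl to reduce to finitely many isotypic pieces. Since $L$ is Lie, it is NSS, and as in the proof of Lemma \ref{lie quotient of far group}, compactness shows the following. Let $K$ be the kernel of the action on $P(X)$, a closed normal subgroup containing $N$. Then $G/K$ is a quotient of $L$, and we need $K=N$.

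Faithfulness of $G$ on $X$ says that the orbit map separates $G$. Consider $Y=\overline{\mathrm{span}}(X)$. The vectors $P(gx)$ for $g\in G$ span a dense subspace of $P(Y)$. An element $k$ acting trivially on $P(X)$ therefore acts trivially on $P(Y)$, which equals $Y^N=Y\cap H^N$ since $P$ commutes with the action.

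So the key claim becomes: if $k$ acts trivially on $Y^N$, then $k\in N$. I would try to get this from Peter-Weyl (Fact \ref{peter-weyl 1}) applied to $Y$. Since $G$ acts faithfully on $Y$, the finite-dimensional summands separate points of $G$. Hence the $N$-invariant vectors in these summands should separate $G/N$. I expect to need to replace $X$ by the orbit of a suitably chosen vector to make this go through, or alternatively to exploit that faithfulness on $Y$ plus normality gives a faithful $L$-action on $Y^N$. If this direct argument fails, the fallback is to pass to finitely many orbits, i.e.\ a product, whose packing dimension is still controlled by $\delta$ up to constants irrelevant for the bound.
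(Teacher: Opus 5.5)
There is a genuine gap, and it is exactly at the step you flag as the main obstacle. Your key claim, that if $k$ acts trivially on $Y^N$ then $k\in N$, is false, so projecting onto the $N$-fixed vectors cannot work. Take $G=S^1$ acting on $H=\mathbb{C}$ by multiplication, $X$ the unit circle (a faithful orbit of packing dimension $1$), and $N=\{\pm 1\}$. Then $-1$ acts as $-\mathrm{id}$, so $H^N=0$ and $P(X)=\{0\}$. The kernel of the action on $P(X)$ is therefore all of $G$, even though $G/N\cong S^1$ has dimension $1$. The same happens for $SU(2)$ on $\mathbb{C}^2$ with $N$ the centre, where $G/N\cong SO(3)$.

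Your inference ``the summands separate $G$, hence their $N$-invariant vectors separate $G/N$'' is where it breaks. A summand on which $N$ acts through a nontrivial character contributes no $N$-fixed vectors at all. The fallbacks do not rescue this. The orbit of a different vector need not be a Lipschitz image of $X$, so you have no bound on its packing dimension. Passing to a product of several orbits generally adds their packing dimensions, which changes the exponent in the bound rather than an irrelevant constant.

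The fix is to stop insisting on $K=N$. You only need a Lie quotient $G/K$ with $K\subseteq N$ that acts faithfully on a Lipschitz image of $X$. The paper gets this by projecting onto finite sums $H_s$ of the Peter--Weyl summands of $H$ (Fact \ref{peter-weyl 1}) rather than onto $H^N$. Let $\pi_s$ be the orthogonal projection onto $H_s$ and $K_s$ the kernel of the action on the orbit $\pi_s(X)$. Then $G/K_s$ is a connected compact Lie group (a quotient of the image of $G$ in $U(H_s)$) that acts faithfully on $\pi_s(X)$ by construction. Since $\pi_s(X)$ has packing dimension at most $\delta$, Fact \ref{dimension bound for lie groups} gives $\dim(G/K_s)\leqslant\delta^2$. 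Density of $\bigoplus_i H_i$ together with faithfulness on $X$ gives $\bigcap_s K_s=\{1\}$, and $K_{s\cup t}\subseteq K_s\cap K_t$, so $G$ is directly the inverse limit of the $G/K_s$.

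If you want to keep your reduction to an arbitrary Lie quotient $G/N$, apply the NSS-plus-compactness argument you allude to (as in Lemma \ref{lie quotient of far group}) to this directed family. It yields some $s$ with $K_s\subseteq N$, whence $\dim(G/N)\leqslant\dim(G/K_s)\leqslant\delta^2$.
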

\begin{proof}By Peter-Weyl, there is an orthogonal family $(H_i:i\in I)$ of finite-dimensional $G$-invariant subspaces of $H$ such that $\bigoplus_{i\in I}H_i$ is dense in $H$. For each finite $s\subseteq I$, let $H_s=\bigoplus_{i\in s}H_i$, and let $\pi_s:H\to H_s$ denote the orthogonal projection onto $H_s$. Since $H_s$ is $G$-invariant, $\pi_s$ commutes with the action of $G$ (see eg Lemma 2.18 in \cite{hofmann_morris}), and so $\pi_s(X)$ is an orbit of $G$ in $H_s$. Let $K_s$ be the kernel of the action of $G$ on $\pi_s(X)$, ie let $K_s$ be the set of $g\in G$ such that $g$ acts as the identity on $\pi_s(X)$; now $K_s$ is a closed normal subgroup of $G$ and $G/K_s$ acts faithfully on $\pi_s(X)$, and $\pi_s(X)$ is an orbit of $G/K_s$. Let $H'$ be the subspace of $H_s$ generated by $\pi_s(X)$; since $\pi_s(X)$ is an orbit of $G$, $H'$ is still a $G$-invariant subspace of $H$, and $K_s$ acts as the identity on $H'$, so $H'$ is a $G/K_s$-module.

Since $G$ is connected, so is $G/K_s$. By Fact \ref{lipschitz map packing dimension}, the packing dimension of $\pi_s(X)$ is at most that of $X$, and hence at most $\delta$. On the other hand, each $G/K_s$ is a compact Lie group. (Indeed, if $N_s$ is the kernel of the action of $G$ on $H_s$, then $K_s\supseteq N_s$, so $G/K_s$ is a quotient of $G/N_s$, and now $G/N_s$ acts faithfully on the finite-dimensional vector space $H_s$, hence is a compact Lie group, whence $G/K_s$ is a compact Lie group too.) So $G/K_s$ is a connected compact Lie group acting faithfully on its orbit $\pi_s(X)\subseteq H'$, and hence by Fact \ref{dimension bound for lie groups} we have $\dim(G/K_s)\leqslant\delta^2$.

So the $G/K_s$ are all compact Lie groups of dimension at most $\delta^2$. On the other hand, note that $\bigcap_sK_s=\{1\}$; indeed, suppose $g\in\bigcap_sK_s$. Since $G$ acts faithfully on $X$, to show $g=1$ it is enough to show that $g$ acts as the identity on $X$. So fix an arbitrary $x\in X$. By density of $\bigoplus_{i\in I}H_i$, there are $s_i,i\in\omega$ and $x_i\in \pi_{s_i}(X)$ such that $\lim_i x_i= x$. Since $g\in K_{s_i}$, $gx_i=x_i$ for each $i$, and so by continuity $gx=x$. So indeed $g=1$. Also, we have $K_s\cap K_t\supseteq K_{s\cup t}$ for each $s,t$. So $G$ is the inverse limit of the $G/K_s$ and the result follows.
\end{proof}

Now we isolate one of the main abstract arguments from the proof of Theorem 7.12 in \cite{hrushovski}, but adapted to our more general setting.

\begin{lemma}\label{far bound assuming action on orbit is faithful}
    Let $G$ be a connected compact Hausdorff group with normalized Haar measure $\eta$. Let $B\subseteq G$ be a Borel subset, and suppose that the action of $G$ on the space of left cosets of $\ker(d_B)$ is faithful, and that the pseudometric space $(G,d_B)$ has packing dimension at most $\delta$. Then $G$ is an inverse limit of compact Lie groups of dimension at most $(2\delta)^2$.
\end{lemma}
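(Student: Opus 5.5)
The plan is to realize $(G,d_B)$ isometrically, up to a square root, as an orbit in a Hilbert module, and then apply Lemma \ref{dimension bound for arbitrary compact groups}. Take $H=L^2(G,\eta)$ with the left regular representation $\lambda$, which is a Hilbert module of $G$ as recalled in Section \ref{compact groups section}. Let $\chi_B\in H$ be the indicator function of $B$, and let $X=\{\lambda(g)\chi_B:g\in G\}$ be its orbit. Since $(\lambda(g)\chi_B)(x)=\chi_B(g^{-1}x)=\chi_{gB}(x)$, we get $X=\{\chi_{gB}:g\in G\}$, and
$$\|\chi_{gB}-\chi_{hB}\|_2^2=\eta(gB\triangle hB)=d_B(g,h).$$
So the map $g\mapsto\chi_{gB}$ is an isometry from the pseudometric space $(G,\sqrt{d_B})$ onto $X\subseteq H$. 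In particular it is surjective and Lipschitz (with constant $1$) onto its image.

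Next I bound the packing dimension of $X$. By Remark \ref{sqrt metric}, $(G,\sqrt{d_B})$ has packing dimension at most $2\delta$. Fact \ref{lipschitz map packing dimension} is stated for metric spaces. Since the map is an isometry, it is easier to argue directly: a family of pairwise disjoint balls of radius $1/n$ in $X$ pulls back to a family of pairwise disjoint $\sqrt{d_B}$-balls of radius $1/n$ in $G$ with the same number of members. Hence $X$ has packing dimension at most $2\delta$. The diameter is finite because $d_B\leqslant 1$.

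Then I check faithfulness. The stabilizer of $\chi_B$ under $\lambda$ is $\{g:\chi_{gB}=\chi_B\text{ in }L^2\}=\{g:\eta(gB\triangle B)=0\}=\ker(d_B)$. So the orbit $X$ is $G$-equivariantly identified with the coset space $G/\ker(d_B)$, via $g\ker(d_B)\mapsto\chi_{gB}$. This map is well defined and injective precisely because $\ker(d_B)$ is the stabilizer. The hypothesis that $G$ acts faithfully on the left cosets of $\ker(d_B)$ therefore says exactly that $G$ acts faithfully on $X$.

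Finally, $G$ is connected, $X$ is a $G$-orbit in the Hilbert module $H$ on which $G$ acts faithfully, and $X$ has packing dimension at most $2\delta$. Lemma \ref{dimension bound for arbitrary compact groups} then gives that $G$ is an inverse limit of compact Lie groups of dimension at most $(2\delta)^2$.

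The only step needing care is the identification of the stabilizer with $\ker(d_B)$ together with the passage from pseudometric to metric. Both become routine once one notes that $L^2$ already identifies functions agreeing almost everywhere. This identification is exactly what turns the pseudometric $\sqrt{d_B}$ into the genuine metric on $X$. The rest is bookkeeping with the definition of packing dimension.
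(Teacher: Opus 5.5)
Your proposal is correct and follows the paper's proof essentially step for step. It embeds $(G,\sqrt{d_B})$ isometrically onto the orbit $\{\mathbf{1}_{gB}:g\in G\}$ in $L^2(G,\eta)$, bounds the packing dimension by $2\delta$ via Remark \ref{sqrt metric}, checks faithfulness, and applies Lemma \ref{dimension bound for arbitrary compact groups}. The one cosmetic difference is that you obtain faithfulness by orbit--stabilizer, noting that the stabilizer of $\mathbf{1}_B$ is exactly $\ker(d_B)$. The paper instead shows directly that any element fixing the orbit pointwise lies in every conjugate of $\ker(d_B)$.
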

\begin{proof}
By Lemma \ref{pseudometric and closed subgroup lemma}, $\ker(d_B)$ is a closed subgroup of $G$. Let $H=L^2(G,\eta)$ and let $\lambda:G\to U(H)$ be the regular representation. Define the map $i:G\to H$ by taking $i(g)$ to be (the $L^2$-equivalence class of) the indicator function of $gB$. Note that, for any $g,h\in G$, $|i(g)-i(h)|$ is identically $1$ on $gB\triangle hB$ and identically $0$ outside it, so we have 
$$||i(g)-i(h)||_2^2=\int_{G}(i(g)-i(h))^2d\eta=\eta(gB\triangle hB)=d_B(g,h),\ \ \text{(}\ast\text{)}$$ so that $i$ is an isometry of (pseudo)metric spaces $(G,\sqrt{d_B})\to H$. In particular, the packing dimension of the metric subspace $i(G)\subseteq H$ is the same as that of $(G,\sqrt{d_B})$, and hence by Remark \ref{sqrt metric} at most $2\delta$.

Note that $\lambda(g)(i(h))=i(gh)$ for all $g,h\in G$, so that $i(G)$ is a single orbit of $G$. By assumption, the action of $G$ on the space of left cosets $G/\ker(d_B)$ is faithful, or in other words the intersection of all conjugates of $\ker(d_B)$ in $G$ is trivial. It follows that the action of $G$ on $i(G)$ is faithful; indeed, suppose $\lambda(g)(i(x))=i(x)$ for all $x\in G$, ie that $i(gx)=i(x)$ for all $x\in G$. By ($\ast$), we hence have $d_B(gx,x)=0$ for all $x\in G$, and thus by left-invariance $d_B(x^{-1}gx,1)=0$ for all $x\in G$. In other words, $x^{-1}gx\in\ker(d_B)$ for all $x\in G$, so $g$ lies in every conjugate of $\ker(d_B)$, and hence $g=1$.

So now $i(G)\subseteq H$ is an orbit of $G$ of packing dimension at most $2\delta$ and on which $G$ acts faithfully. The claim follows from Lemma \ref{dimension bound for arbitrary compact groups}.
\end{proof}

\subsection{VC-sets and packing dimensions}\label{vc sets and packing dimension section}
Following the terminology of \cite{simon_vc_sets}, let us say that a subset of a group is a `VC-set' if the family of left translates of the set has finite VC-dimension. If $G$ is a compact group and $B$ is a Borel subset, and $B$ is VC-set, we are going to apply the facts from Section \ref{vc theory section} to obtain a bound on the packing dimension of the associated pseudometric space $(G,d_B)$. The argument is from Proposition 3.27 in \cite{hrushovski}, which attributes it to Lovász and Szegedy.

\begin{lemma}\label{packing dimension lemma}
    Let $G$ be a compact Hausdorff group with normalized Haar measure $\eta$, and let $B\subseteq G$ be a Borel subset. Suppose that the family of left translates of $B$ has VC-density at most $\delta_0$. Then the pseudometric space $(G,d_B)$ has packing dimension at most $2\delta_0$.
\end{lemma}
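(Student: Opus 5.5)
We follow the Lovász--Szegedy style argument, turning a large packing into a large set of points on which the translates of $B$ have many distinct traces, and then compare with the shatter-function bound. Fix $C$ and $\delta_1>\delta_0$ (arbitrarily close to $\delta_0$) with $\pi_{\mathcal{F}}(m)\leqslant Cm^{\delta_1}$ for all $m$, where $\mathcal{F}=\{gB:g\in G\}$. Note first that $d_B(g,h)=\eta(gB\triangle hB)$. Suppose $g_1,\dots,g_k\in G$ have pairwise disjoint $d_B$-balls of radius $1/n$. Then $d_B(g_i,g_j)>1/n$ for $i\neq j$: otherwise $g_j$ would lie in both balls. So each symmetric difference $g_iB\triangle g_jB$ has measure at least $1/n$.

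Next we apply Fact \ref{epsilon nets}, not to $\mathcal{F}$ itself but to the finite family $\mathcal{F}'=\{g_iB\triangle g_jB: i,j\in[k]\}$ of Borel sets. The shatter function of symmetric differences is controlled by that of $\mathcal{F}$. For any finite $Y$, the trace of $g_iB\triangle g_jB$ on $Y$ is determined by the pair of traces $(g_iB\cap Y,\, g_jB\cap Y)$. Hence $\pi_{\mathcal{F}'}(m)\leqslant\pi_{\mathcal{F}}(m)^2\leqslant C^2m^{2\delta_1}$. Fact \ref{epsilon nets}, with constant $D=D(C^2,2\delta_1)$, then gives $N=\lceil Dn^2\rceil$ points $b_1,\dots,b_N\in G$ such that every member of $\mathcal{F}'$ of measure at least $1/n$ contains some $b_l$. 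Let $Y=\{b_1,\dots,b_N\}$. For $i\neq j$, some $b_l$ lies in $g_iB\triangle g_jB$, so the traces $g_iB\cap Y$ and $g_jB\cap Y$ are distinct.

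Therefore
\[k\leqslant|\mathcal{F}|_Y|\leqslant\pi_{\mathcal{F}}(N)\leqslant C(Dn^2+1)^{\delta_1}=O(n^{2\delta_1}).\]
This shows the packing dimension is at most $2\delta_1$ for every $\delta_1>\delta_0$.

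The main obstacle is getting exactly $2\delta_0$ rather than $2\delta_1$ for all $\delta_1>\delta_0$. The definition of VC-density is an infimum, so $\pi_{\mathcal{F}}(m)=O(m^{\delta_0})$ need not hold. I would resolve this in one of two ways. If the paper's packing-dimension notion is likewise read as an infimum of exponents, the argument above suffices as is. Otherwise, I would assume that the VC-density bound is attained, in the sense $\pi_{\mathcal{F}}(m)\leqslant Cm^{\delta_0}$, and run the argument with $\delta_1=\delta_0$. In that case the only inputs are Fact \ref{epsilon nets} applied to the finite family $\mathcal{F}'$ (finite families of measurable sets satisfy its hypotheses) and the squaring estimate on the shatter function.
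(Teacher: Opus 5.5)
Your argument is correct and is essentially the paper's proof: both apply Fact~\ref{epsilon nets} to a family of differences of translates whose shatter function is at most $\pi_{\mathcal{F}}^2$, and then bound the packing by $\pi_{\mathcal{F}}(N)$ with $N=O(n^2)$. The paper differs only in using one-sided differences $a_iB\setminus a_jB$ (hence its threshold $1/2n$ and $N=4Dn^2$) where you use symmetric differences, and in taking $K$ with $\pi_{\mathcal{F}}(n)\leqslant Kn^{\delta_0}$ outright, i.e.\ tacitly assuming the infimum in the VC-density is attained, which is your second resolution; your $\delta_1>\delta_0$ version is the more careful statement and costs nothing in the later dimension bounds, since those survive letting $\delta_1\downarrow\delta_0$.
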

\begin{proof} Let $\mathcal{F}=\{gB:g\in G\}$, so by assumption $\mathcal{F}$ has VC-density at most $\delta_0$. Let $K$ be such that $\pi_\mathcal{F}(n)\leqslant Kn^{\delta_0}$ for all $n$. Define a new set system $\mathcal{F}':=\{gB\setminus hB:g,h\in G\}$. It is easy to see that the VC-density of $\mathcal{F}'$ is at most $2\delta_0$. Indeed, more precisely, given a subset $S\subseteq G$, there is a natural surjection from $\mathcal{F}|_S\times\mathcal{F}|_S$ to $\mathcal{F}'|_S$, taking $(gB\cap S,hB\cap S)$ to $(gB\cap S)\setminus (hB\cap S)=(gB\setminus hB)\cap S$, so $|\mathcal{F}'|_S|\leqslant|\mathcal{F}|_S|^2$, and hence $\pi_{\mathcal{F}'}(n)\leqslant\pi_\mathcal{F}(n)^2\leqslant K^2n^{2\delta_0}$ for all $n$. In other words, if we let $C=K^2$ and $\delta=2\delta_0$, then $\pi_{\mathcal{F}'}(n)\leqslant Cn^{\delta}$ for all $n$. Let $D$ be given by Fact \ref{epsilon nets} for $C$ and $\delta$.

Now, our aim is to show that $(G,d_B)$ has packing dimension at most $2\delta_0$. So suppose that $a_1,\dots,a_s\in G$ are surrounded by pairwise disjoint $d_B$-balls of radius $1/n$; we need to show $s=O(n^{2\delta_0})$. For each $i\neq j$ we have $d_B(a_i,a_j)\geqslant 1/n$, ie $\eta(a_iB\triangle a_jB)\geqslant 1/n$, and so one of $\eta(a_iB\setminus a_jB)\geqslant 1/2n$ and $\eta(a_jB\setminus a_iB)\geqslant 1/2n$ holds. Call this ($\ast$).

Let $\mathcal{F}'_0=\{a_iB\setminus a_jB:i,j\in[s],i\neq j\}$. Now $\mathcal{F}'_0$ is a subset of the family $\mathcal{F}'$, so $\pi_{\mathcal{F}'_0}(n)\leqslant\pi_{\mathcal{F}'}(n)\leqslant Cn^\delta$ for all $n$, and $\mathcal{F}'_0$ is also finite, so applying Fact \ref{epsilon nets} to $(G,\eta,\mathcal{F}'_0)$ we may find $b_1,\dots, b_N\in G$, with $N=D(2n)^2=4Dn^2$, such that, for all $i\neq j$, if $\eta(a_iB\setminus a_jB)\geqslant 1/2n$ then there is some $b_k\in a_iB\setminus a_jB$. But the $a_iB$ comes from the set system $\mathcal{F}$, so it follows that $s\leqslant\pi_{\mathcal{F}}(N)\leqslant KN^{\delta_0}=4^{\delta_0}KD^{\delta_0}n^{2\delta_0}$, and the desired result follows.
\end{proof}

\subsection{Main result}\label{general borel lemma section}
Now we can give the main consequences.

\begin{lemma}\label{general local far result}
    Suppose that $G=G^0$ is a connected compact Hausdorff group, and that $B\subseteq G$ is a Borel subset such that the family of translates of $B$ has VC-density at most $\delta$. Let $N$ be the kernel of the action of $G$ on the space of left cosets of $\ker(d_B)$, ie the intersection of all conjugates of $\ker(d_B)$. Then $G/N$ is an inverse limit of compact Lie groups of dimension at most $(4\delta)^2$.
\end{lemma}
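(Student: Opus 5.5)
The plan is to reduce to Lemma \ref{far bound assuming action on orbit is faithful}, applied not to $G$ but to the quotient $\bar G=G/N$ together with the image $\bar B$ of $B$. First, $N$ is a closed normal subgroup, being an intersection of conjugates of the closed subgroup $\ker(d_B)$ (Lemma \ref{pseudometric and closed subgroup lemma}). So $\bar G$ is a compact Hausdorff group, and it is connected as a continuous image of the connected group $G$. Rather than push $B$ forward directly, which need not preserve Borelness or measure, I would replace $B$ by an $N$-invariant Borel set with the same pseudometric.

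The key observation is that $N\subseteq\ker(d_B)$, so for $n\in N$ we have $\eta(nB\triangle B)=0$. Since $N$ is normal, also $\eta(gnB\triangle gB)=0$ for every $g$. Consequently $d_B(gn,g)=0$, so $d_B$ descends to a well-defined left-invariant pseudometric $\bar d$ on $\bar G$. Two routes then seem available.

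\textbf{Route (a).} Realise $\bar d$ as $d_{\bar B}$ for a Borel $\bar B\subseteq\bar G$, by setting $\bar B=\{\bar x:\eta_N(\{n:xn\in B\})>1/2\}$ (averaging over $N$ with its Haar measure $\eta_N$). One would then check via Fubini that $B$ and $\pi^{-1}(\bar B)$ differ by a null set. This uses $\eta(nB\triangle B)=0$ for all $n$, which makes the function $n\mapsto 1_B(xn)$ essentially constant for a.e.\ $x$. The pushforward of $\eta$ is then the Haar measure on $\bar G$, and $d_{\bar B}=\bar d$.

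\textbf{Route (b).} Avoid constructing $\bar B$ and instead redo the proof of Lemma \ref{far bound assuming action on orbit is faithful} directly. Use the orbit $i(G)\subseteq L^2(G,\eta)$ of indicator functions of translates $gB$: it is an orbit of $G$, and its pointwise stabiliser is exactly $N$ by the computation in that lemma. So $\bar G$ acts faithfully on it and it is an orbit of $\bar G$; the Hilbert module $L^2(G,\eta)$ restricted to the closed span of $i(G)$ becomes a $\bar G$-module, since $N$ acts trivially there. I prefer route (b), since it sidesteps the measurability bookkeeping.

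Next comes the packing dimension bound. By Lemma \ref{packing dimension lemma} applied to $B$ in $G$, $(G,d_B)$ has packing dimension at most $2\delta$. By Remark \ref{sqrt metric}, $(G,\sqrt{d_B})$ has packing dimension at most $4\delta$. Since $i$ is an isometry from $(G,\sqrt{d_B})$ onto $i(G)$, the orbit $i(G)$ has packing dimension at most $4\delta$. Then Lemma \ref{dimension bound for arbitrary compact groups}, applied to the connected group $\bar G$ acting faithfully on the orbit $i(G)$ in that Hilbert module, gives that $\bar G$ is an inverse limit of compact Lie groups of dimension at most $(4\delta)^2$. This matches the bound in the statement: Lemma \ref{far bound assuming action on orbit is faithful} yields $(2\delta')^2$ with $\delta'=2\delta$.

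The main obstacle is justifying that the action of $\bar G$ on the module is continuous and genuinely a Hilbert module. Continuity of $G\times H\to H$ passes to the quotient because $G\to\bar G$ is open and the action factors, so I expect it to be routine. The faithfulness computation is already in the proof of Lemma \ref{far bound assuming action on orbit is faithful}.
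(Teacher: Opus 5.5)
Your proposal is correct, and your preferred route (b) differs from the paper's; the paper's proof is essentially your route (a).

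The paper invokes the appendix Lemma \ref{haar measure descends}. That lemma is built from the averaging operator $T(\mathbf{1}_B)(Ng)=\int_N\mathbf{1}_B(ng)\,d\mu(n)$, the quotient integral formula and Fubini--Tonelli, and it produces a Borel set $C\subseteq G/N$ with $d_C(gN,hN)=d_B(g,h)$. The paper then observes two things:
\begin{itemize}
\item $(G/N,d_C)$ has the same packing dimension as $(G,d_B)$, hence at most $2\delta$ by Lemma \ref{packing dimension lemma};
\item $\ker(d_C)=\ker(d_B)/N$, so $G/N$ acts faithfully on the cosets of $\ker(d_C)$.
\end{itemize}
Lemma \ref{far bound assuming action on orbit is faithful}, applied to $G/N$ and $C$, then gives the bound $(2\cdot 2\delta)^2=(4\delta)^2$.

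Your route (b) descends the representation instead of the set. Since $i(gx)=i(x)$ if and only if $x^{-1}gx\in\ker(d_B)$, the pointwise stabiliser of the orbit $i(G)$ is exactly $N$. Make both inclusions explicit: $N$ acting trivially is what makes the closed span of $i(G)$ a $G/N$-module, and nothing outside $N$ acting trivially is what gives faithfulness. Both follow from that single equivalence. Continuity of the induced action holds because $\pi\times\mathrm{id}$ is an open continuous surjection, hence a quotient map. The bound then follows from Lemma \ref{dimension bound for arbitrary compact groups} exactly as you describe.

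Your route bypasses Appendix A entirely and in effect inlines Lemma \ref{far bound assuming action on orbit is faithful}. It also shows that what matters is the orbit in a Hilbert module, not a Borel set in the quotient. The paper's version instead keeps Lemma \ref{far bound assuming action on orbit is faithful} as a black box about Borel sets and isolates the descent as a reusable measure-theoretic fact. The cost is the measurability bookkeeping you rightly flagged.
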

\begin{proof} Note that $N$ is a closed normal subgroup of $G$; indeed it is the intersection of all the conjugates of the closed subgroup $\ker(d_B)$. By Lemma \ref{haar measure descends}, there is a Borel subset $C\subseteq G/N$ such that $d_C(gN,hN)=d_B(g,h)$ for all $g,h\in G$. In particular, the packing dimension of the pseudometric space $(G/N,d_C)$ is the same as that of $(G,d_B)$, and hence by Lemma \ref{packing dimension lemma} is at most $2\delta$. Also, again since $d_C(gN,hN)=d_B(g,h)$ for all $g,h$, we have $\ker(d_C)=\ker(d_B)/N$. So $G/N$ acts faithfully on the space of left cosets of $\ker(d_C)$. Since $G/N$ is connected, we are done by Lemma \ref{far bound assuming action on orbit is faithful}.
\end{proof}

\begin{theorem}\label{general far theorem}Suppose $G$ is a compact Hausdorff group, and suppose there is a countable family $(B_i:i\in I)$ of Borel subsets of $G$ such that (1) every open subset of $G$ can be written as a union of some of the $B_i$, and (2) there is some $\delta$ such that, for each $i\in I$, the family of left translates $(gB_i:g\in G)$ has VC-density at most $\delta$. Then $G$ is an inverse limit of compact Lie groups of dimension at most $(4\delta)^2$.
\end{theorem}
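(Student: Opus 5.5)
By Lemma \ref{archimedean rank of connected component}, it is enough to show that $G^0$ is an inverse limit of compact Lie groups of dimension at most $(4\delta)^2$. The plan is to pass to $G^0$, apply Lemma \ref{general local far result} to countably many suitable Borel sets, and then use condition (1) to check that the resulting kernels intersect trivially.

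For the first step, restrict to $G^0$ and let $\eta^0$ be its normalized Haar measure. For each $i\in I$ and each $g\in G$, put $B_{i,g}:=g^{-1}B_i\cap G^0$, a Borel subset of $G^0$. For $h\in G^0$ we have $h B_{i,g}=hg^{-1}B_i\cap G^0$, so the translates of $B_{i,g}$ are traces on $G^0$ of translates of $B_i$. Hence the shatter function of $(hB_{i,g}:h\in G^0)$ is bounded by that of $(gB_i:g\in G)$, and its VC-density is at most $\delta$.

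Lemma \ref{general local far result} then gives, for each such set, a closed normal subgroup $N_{i,g}$ of $G^0$ with $G^0/N_{i,g}$ an inverse limit of compact Lie groups of dimension at most $(4\delta)^2$. Here $N_{i,g}$ is the intersection of the conjugates of $\ker(d_{B_{i,g}})$. The restriction to countably many sets matters only through condition (1); there is no need to choose just one $g$ per $i$, and the index set can be taken to be all pairs $(i,g)$.

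Next I would show that $\bigcap_{i,g}N_{i,g}=\{1\}$; this is the main obstacle. Let $k\neq1$ lie in $G^0$. By regularity, choose an open neighbourhood $W$ of $1$ in $G^0$ small enough that $kW\cap W=\varnothing$. Take $V$ open in $G$ with $V\cap G^0=W$. By (1), $V$ is a union of sets $B_i$. Since $\eta^0(W)>0$ and $W=\bigcup_i (B_i\cap G^0)$ is a countable union, some $B_i$ with $B_i\subseteq V$ has $\eta^0(B_i\cap G^0)>0$. This is exactly where countability is used: a positive-measure set cannot be covered by countably many null sets.

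Now compare $k(B_i\cap G^0)$ with $B_i\cap G^0$. Both lie inside $kW$ and $W$ respectively, which are disjoint, so $d_{B_{i,1}}(k,1)=2\eta^0(B_i\cap G^0)>0$. Thus $k\notin\ker(d_{B_{i,1}})$, and so $k\notin N_{i,1}$. In this step only $g=1$ is needed. It may even be that the countable family $(B_i\cap G^0)$ alone suffices, which would make the general $g$ unnecessary; I would check this once the argument is written out.

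Finally, assemble the pieces. The groups $N_s:=\bigcap_{(i,g)\in s}N_{i,g}$, for $s$ finite, form a downward-directed family of closed normal subgroups with trivial intersection. By compactness, $G^0$ is the inverse limit of the quotients $G^0/N_s$. Each $G^0/N_s$ embeds as a closed subgroup of $\prod_{(i,g)\in s}G^0/N_{i,g}$, and a priori its dimension could grow with $|s|$, so this does not immediately give the bound.

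The cleaner route is to take any Lie quotient $G^0/K$ and show it has dimension at most $(4\delta)^2$. Since Lie groups are NSS, choose a neighbourhood $U$ of $1$ in $G^0$ whose image in $G^0/K$ contains no nontrivial subgroup. By compactness, as in Lemma \ref{lie quotient of far group}, some finite $N_s$ lies in $U$, hence in $K$. I need a single $N_{i,g}$ inside $U$, rather than a finite intersection, and I would get it with a strengthened separation argument. For a compact set $F=G^0\setminus U$, cover $F$ by finitely many neighbourhoods. Then choose one Borel set, built from a small $W$ around $1$ inside some $B_i$, that separates all of $F$ at once, namely $d_{B_{i,1}}(k,1)>0$ for every $k\in F$. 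The same disjointness argument works as long as $W$ is small enough that $kW\cap W=\varnothing$ for every $k\in F$, which compactness of $F$ provides.

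Then $N_{i,1}\subseteq U$. By NSS and normality, $N_{i,1}\subseteq K$. Since $G^0/N_{i,1}$ has the required bound, Lemma \ref{lie quotient of far group} gives $\dim G^0/K\le(4\delta)^2$. Peter–Weyl then finishes the case of $G^0$, and Lemma \ref{archimedean rank of connected component} finishes the case of $G$.
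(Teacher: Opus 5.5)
Your proof is correct, and once you take the ``cleaner route'' it is essentially the paper's proof: you reduce to $G^0$ and apply Lemma~\ref{general local far result} to the sets $B_i\cap G^0$. Then, for a Lie quotient $G^0/K$, you use NSS and countable additivity to find some $B_i\cap G^0$ of positive Haar measure inside a small $W\ni 1$ with $WW^{-1}$ in the NSS neighbourhood, so that $\ker(d_{B_i\cap G^0})\subseteq WW^{-1}$ forces $N_i\leqslant K$, and Lemma~\ref{lie quotient of far group} finishes. As you suspected, the translates $g^{-1}B_i\cap G^0$ for general $g$ and the separate trivial-intersection argument are unnecessary, since only $g=1$ is ever used.
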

\begin{proof}
    By Lemma \ref{archimedean rank of connected component}, it is enough to show that $G^0$ is an inverse limit of compact Lie groups of dimension at most $(4\delta)^2$. Let $\eta_0$ be the normalized Haar measure on $G^0$. For each $i\in I$, let $B^0_i=B_i\cap G^0$. Clearly $B_i^0$ is a Borel subset of $G^0$ and the family $(gB^0_i:g\in G^0)$ has VC-density at most $\delta$. Thus let $d_i$ be the pseudometric $d_{B^0_i}$ on $G^0$, and let $N_i$ be the intersection of all the conjugates of $\ker(d_{B^0_i})$ by elements of $G^0$. Then $N_i$ is a closed normal subgroup of $G^0$ and by Lemma \ref{general local far result} $G^0/N_i$ is an inverse limit of compact Lie groups of dimension at most $(4\delta)^2$.

    Now, we wish to show that $G^0$ is an inverse limit of compact Lie groups of dimension at most $(4\delta)^2$. By Peter-Weyl, $G^0$ is an inverse limit of compact Lie groups, and so by Lemma \ref{lie quotient of far group} and the above paragraph it is enough to show that, if $N$ is any closed normal subgroup of $G^0$ such that $G^0/N$ is a Lie group, then there is some $i\in I$ with $N_i\leqslant N$.

    So fix such an $N$. Lie groups are NSS, so let $O\subseteq G^0/N$ be an open neighborhood of the identity containing no nontrivial subgroup of $G^0/N$. Let $V\subseteq G^0/N$ be an open neighborhood of the identity such that $VV^{-1}\subseteq O$. Let $U\subseteq G^0$ be the preimage of $V$ under the quotient map $G^0\to G^0/N$. Then any subgroup of $G^0$ contained in $UU^{-1}$ is contained in $N$; call this ($\ast$).

    Since $U\subseteq G^0$ is open in $G^0$, let $U'\subseteq G$ be an open subset of $G$ with $U=G^0\cap U'$. Now by assumption (1) there is some $J\subseteq I$ with $U'=\bigcup_{i\in J}B_i$. In particular, $U=\bigcup_{i\in J}G^0\cap B_i=\bigcup_{i\in J}B_i^0$. Since $U$ is non-empty open, $\eta_0(U)>0$, and so since $J$ is countable by countable additivity there is thus some $i\in J$ with $\eta_0(B_i^0)>0$.

    It then follows that $\ker(d_i)\subseteq B_i^0(B_i^0)^{-1}$; indeed suppose $g\in\ker(d_i)$. By definition then $\eta_0(gB_i^0\triangle B_i^0)=0$. Since $\eta_0(B_i^0)>0$ it follows in particular that $gB_i^0\cap B_i^0\neq\varnothing$, so that indeed $g\in B_i^0(B_i^0)^{-1}$. But now $\ker(d_i)\subseteq UU^{-1}$, so by ($\ast$) $\ker(d_i)\leqslant N$, and since $N_i\leqslant\ker(d_i)$ we are done.
\end{proof}

\section{The finer structure of the Ellis group}\label{FAR section}
In this section we will prove a strong structural theorem about the Ellis group; under a strengthening of NIP of `bounded VC-codensity', we will be able to prove that, if $T$ and $M_0$ are countable, then the Ellis group of $G(M_0)$ is of `finite Archimedean rank' in the sense of Section \ref{compact groups section}, ie it is profinite-by-Lie-by-profinite; this is Theorem \ref{FAR main theorem}. Our proof has three ingredients; the first is the structural results about the Ellis group proved by Chernikov, Gannon, and Krupi\'nski in \cite{chernikov_gannon_krupinski}, the second is Theorem \ref{general far theorem} above (which as discussed in the beginning of Section \ref{vc sets far section} is greatly inspired by Hrushovski's techniques from \cite{hrushovski}), and the third is Theorem \ref{restriction map ellis group theorem}.

\uline{Throughout this section, continue to assume that $\dcl(\varnothing)=M_0$ is a model and that $T$ is NIP. Additionally, let $E$ be the Ellis semigroup $S_G^{\mathrm{fs}}(\mathfrak{C},M_0)$, let $\mathcal{I}$ be a minimal left ideal of $E$, and let $u\in\mathcal{I}$ be an idempotent. Finally, let $\mathcal{G}=u\mathcal{I}$ equipped with the $\tau$-topology.} So by Fact \ref{tau-topology is hausdorff} $\mathcal{G}$ is a compact Hausdorff group. To ease some notational strain, we give the following:

\begin{notation}
    Given an $L(\mathfrak{C})$-formula $\psi(x,c)$, let $[\psi(x,c)]$ denote the clopen subset of $E$ of types concentrated on $\psi(x,c)$, and let $\langle\psi(x,c)\rangle=[\psi(x,c)]\cap u\mathcal{I}$.
\end{notation} So Fact \ref{borel definability}, proved in \cite{chernikov_gannon_krupinski}, says that each $\langle\psi(x,c)\rangle$ is a Borel subset of $u\mathcal{I}$ in the $\tau$-topology. A core insight of \cite{chernikov_gannon_krupinski} is that, by virtue of this, the Haar measure on the Ellis group of an arbitrary NIP group can in many applications substitute translation-invariant measures of a definably amenable NIP group. We are greatly influenced by this idea.

Our first step will be to use Theorem \ref{restriction map ellis group theorem} to show that the $\langle\theta\rangle$ for $\theta(x)$ an $L(M_0)$-formula behave like a `basis' for the $\tau$-topology, in the sense that every $\tau$-open subset of $u\mathcal{I}$ is a union of sets of that form. (They are only Borel, not open, so they are not actually a basis.) This is Corollary \ref{tau-open is countable union}.

\subsection{`Second countability'}
Let $\overline{u\mathcal{I}}$ be the closure of $u\mathcal{I}$ in $E$, and let $f:\overline{u\mathcal{I}}\to u\mathcal{I}$ be the map $x\mapsto ux$. By Fact \ref{continuity of ux}, $f$ is a continuous map from the Stone topology inherited from $E$ to the $\tau$-topology.
\begin{lemma}\label{preimage is countable union}
    For every $\tau$-open set $U\subseteq u\mathcal{I}$, $f^{-1}(U)$ is a union of sets of form $[\theta(x)]\cap\overline{u\mathcal{I}}$ for $\theta(x)$ an $L(M_0)$-formula.
\end{lemma}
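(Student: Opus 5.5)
Fix a $\tau$-open $U\subseteq u\mathcal{I}$ and a point $x\in f^{-1}(U)$, so $x\in\overline{u\mathcal{I}}$ and $ux\in U$. The goal is an $L(M_0)$-formula $\theta(x)$ with $x\in[\theta]$ and $[\theta]\cap\overline{u\mathcal{I}}\subseteq f^{-1}(U)$. By Fact \ref{continuity of ux}, $f^{-1}(U)$ is open in the Stone topology on $\overline{u\mathcal{I}}$, so the only issue is that Stone-open sets are generated by $L(\mathfrak{C})$-formulas. We must shrink to formulas over $M_0$, and the tool for this is Theorem \ref{restriction map ellis group theorem}: here $M=M_0$, so the restriction $u\mathcal{I}\to S_G(M_0)$ is injective.

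The key step is a compactness argument. Write $K=\overline{u\mathcal{I}}\setminus f^{-1}(U)$, which is closed in $E$ and hence compact. I claim that no $y\in K$ has $y|_{M_0}=x|_{M_0}$. Once this holds, for each $y\in K$ there is an $L(M_0)$-formula $\theta_y\in x|_{M_0}$ with $\neg\theta_y\in y$. Compactness of $K$ then gives finitely many such formulas whose negations cover $K$, and their conjunction $\theta$ works.

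To prove the claim, suppose $y\in\overline{u\mathcal{I}}$ with $y|_{M_0}=x|_{M_0}$. I want to show $ux=uy$, which gives $uy\in U$ and contradicts $y\in K$. Since $ux,uy\in u\mathcal{I}$, Theorem \ref{restriction map ellis group theorem} makes it enough to show $(ux)|_{M_0}=(uy)|_{M_0}$. So I need that, for $u\in E=S^{\mathrm{fs}}_G(\mathfrak{C},M_0)$, the restriction $(uq)|_{M_0}$ depends only on $q|_{M_0}$. This follows from the Morley product description: $(uq)|_{M_0}=\tp(ab/M_0)$ where $b\models q|_{M_0}$ and $a\models u|_{(M_0,b)}$. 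Because $u$ is $M_0$-invariant (finitely satisfiable), $\tp(a,b/M_0)$ is determined by $\tp(b/M_0)$. Hence $q|_{M_0}=q'|_{M_0}$ implies $(uq)|_{M_0}=(uq')|_{M_0}$.

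I expect this invariance observation to be the main point to check carefully, since everything else is formal compactness. Note that it uses $M=M_0$ essentially: the semigroup is $S^{\mathrm{fs}}_G(\mathfrak{C},M_0)$, so $u$ is $M_0$-invariant and restriction to $M_0$ is compatible with left multiplication. Taking the union over all $x\in f^{-1}(U)$ of the sets $[\theta]\cap\overline{u\mathcal{I}}$ gives the lemma.
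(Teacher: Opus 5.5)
Your proof is correct and follows essentially the same route as the paper's. The paper likewise shows that $f$ factors through restriction to $M_0$: it uses that $(qp)|_{M_0}$ depends only on $p|_{M_0}$, by the definition of the product on $S^{\mathrm{fs}}_G(\mathfrak{C},M_0)$, together with the injectivity from Theorem \ref{restriction map ellis group theorem}; it then separates the point from the Stone-compact set $\overline{u\mathcal{I}}\setminus f^{-1}(U)$, which is compact by Fact \ref{continuity of ux}, using finitely many $L(M_0)$-formulas.
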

\begin{proof}
    By Theorem \ref{restriction map ellis group theorem}, the restriction map $S^{\mathrm{fs}}_G(\mathfrak{C},M_0)\to S_G(M_0)$ is injective on $u\mathcal{I}$. It follows that, for any $p,p'\in\overline{u\mathcal{I}}$, if $p|_{M_0}=p'|_{M_0}$, then $f(p)=f(p')$. Indeed, by definition of the semigroup operation on $E=S^{\mathrm{fs}}_G(\mathfrak{C},M_0)$, the assumption $p|_{M_0}=p'|_{M_0}$ gives that $(qp)|_{M_0}=(qp')|_{M_0}$ for every $q\in E$, so in particular $(up)|_{M_0}=(up')|_{M_0}$. But $up,up'\in u\mathcal{I}$, so by injectivity of the restriction map $up=up'$, ie $f(p)=f(p')$, as claimed. Thus, for any $p\in f^{-1}(U)$, if $p'\in \overline{u\mathcal{I}}$ and $p'|_{M_0}=p|_{M_0}$ then $p'\in f^{-1}(U)$. Call this observation ($\ast$).

    We wish to show that $f^{-1}(U)$ is a union of sets of form $[\theta(x)]\cap\overline{u\mathcal{I}}$, with $\theta(x)\in L(M_0)$. So fix $p\in f^{-1}(U)$; we need to find $\theta(x)\in L(M_0)$ such that $p\vdash\theta(x)$ and $[\theta(x)]\cap\overline{u\mathcal{I}}\subseteq f^{-1}(U)$. By observation ($\ast$), for any $q\in\overline{u\mathcal{I}}\setminus f^{-1}(U)$ we have $q|_{M_0}\neq p|_{M_0}$, so there is $\theta_q(x)\in L(M_0)$ with $p\vdash\theta_q(x)$ and $q\vdash\neg\theta_q(x)$. Now the $([\neg\theta_q(x)]:q\in\overline{u\mathcal{I}}\setminus f^{-1}(U))$ are a Stone-open cover of $\overline{u\mathcal{I}}\setminus f^{-1}(U)$.

    On the other hand, by Fact \ref{continuity of ux}, $f^{-1}(U)$ is a Stone-open subset of $\overline{u\mathcal{I}}$, so $\overline{u\mathcal{I}}\setminus f^{-1}(U)$ is a Stone-closed subset of $E$, and is hence in particular Stone-compact. Thus there are finitely many $q_1,\dots,q_n\in\overline{u\mathcal{I}}\setminus f^{-1}(U)$ with $\overline{u\mathcal{I}}\setminus f^{-1}(U)\subseteq\bigcup_{i\in[n]}[\neg\theta_{q_i}(x)]$. Let $\theta(x)\equiv\bigwedge_{i\in[n]}\theta_{q_i}(x)$. Now $p(x)\vdash\theta(x)$. On the other hand, if $q\in\overline{u\mathcal{I}}\setminus f^{-1}(U)$, there there is $i\in[n]$ with $q\vdash\neg\theta_{q_i}(x)$, and hence $q\vdash\neg\theta(x)$. So in other words $\overline{u\mathcal{I}}\cap [\theta(x)]\subseteq f^{-1}(U)$, as needed.
\end{proof}
\begin{corollary}\label{tau-open is countable union}
    Every $\tau$-open set $U\subseteq u\mathcal{I}$ is a union of sets of form $\langle\theta(x)\rangle$ for $\theta(x)$ an $L(M_0)$-formula.
\end{corollary}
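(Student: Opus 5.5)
The plan is to pull back along $f$ and then push forward again. Let $U\subseteq u\mathcal{I}$ be $\tau$-open. Lemma \ref{preimage is countable union} gives $L(M_0)$-formulas $\theta_j(x)$, $j\in J$, with
$$f^{-1}(U)=\bigcup_{j\in J}\big([\theta_j(x)]\cap\overline{u\mathcal{I}}\big).$$
I intersect both sides with $u\mathcal{I}$ and check that each side turns into the required expression.

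\emph{Left side.} I claim $f^{-1}(U)\cap u\mathcal{I}=U$. If $q\in u\mathcal{I}$, then $q=uq'$ for some $q'\in\mathcal{I}$, so $uq=uuq'=uq'=q$. Hence $f(q)=q$ for every $q\in u\mathcal{I}$. Since $u\mathcal{I}\subseteq\overline{u\mathcal{I}}$, a point $q\in u\mathcal{I}$ lies in $f^{-1}(U)$ exactly when $q\in U$. This gives $f^{-1}(U)\cap u\mathcal{I}=U$.

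\emph{Right side.} Again using $u\mathcal{I}\subseteq\overline{u\mathcal{I}}$, we get $[\theta_j(x)]\cap\overline{u\mathcal{I}}\cap u\mathcal{I}=[\theta_j(x)]\cap u\mathcal{I}=\langle\theta_j(x)\rangle$.

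Putting the two sides together gives $U=\bigcup_{j\in J}\langle\theta_j(x)\rangle$, which is the claim.

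The real work was already done in Lemma \ref{preimage is countable union}, which uses Theorem \ref{restriction map ellis group theorem} and Fact \ref{continuity of ux}. The only point needing care here is that $f$ restricts to the identity on $u\mathcal{I}$, and that follows from $u$ being idempotent. For the intended application in Theorem \ref{general far theorem}, note that when $T$ and $M_0$ are countable there are only countably many $L(M_0)$-formulas. So the family $(\langle\theta(x)\rangle:\theta\in L(M_0))$ is countable, and by Fact \ref{borel definability} each member is Borel.
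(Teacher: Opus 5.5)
Your proof is correct and is essentially the paper's argument: the paper also takes the $L(M_0)$-formulas $\theta_i$ from the preceding lemma and shows $U=\bigcup_i\langle\theta_i(x)\rangle$ by two inclusions, using $f(p)=up=p$ for $p\in u\mathcal{I}$ and $u\mathcal{I}\subseteq\overline{u\mathcal{I}}$, which is the same as your computation of intersecting both sides with $u\mathcal{I}$. Your check that $uq=q$ via idempotence of $u$ just writes out a step the paper uses without comment.
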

\begin{proof}
    By Lemma \ref{preimage is countable union}, let $\theta_i(x)$ be $L(M_0)$-formulas with $f^{-1}(U)=\bigcup_i([\theta_i(x)]\cap\overline{u\mathcal{I}})$. We claim that $U=\bigcup_{i}\langle\theta_i(x)\rangle$, ie that $U=\bigcup_{i}([\theta_i(x)]\cap u\mathcal{I})$. To see the inclusion $\subseteq$, suppose $p\in U$. Since $p\in u\mathcal{I}$, $p=up=f(p)$, so $p\in f^{-1}(U)$, and hence there is $i$ with $p\in [\theta_i(x)]\cap\overline{u\mathcal{I}}$, so in particular $p\in[\theta_i(x)]$ and hence $p\in[\theta_i(x)]\cap u\mathcal{I}$. To see the inclusion $\supseteq$, suppose $p\in[\theta_i(x)]\cap u\mathcal{I}$. So in particular $p\in [\theta_i(x)]\cap \overline{u\mathcal{I}}$, and so $p\in f^{-1}(U)$, ie $f(p)\in U$. But $p\in u\mathcal{I}$, so again $f(p)=up=p$, as needed.
\end{proof}

Note that, if $T$ and $M_0$ are countable, then there are only countably many $L(M_0)$-formulas, and so the previous two lemmas tell us that the sets $\langle\theta\rangle$ behave like a countable basis for the $\tau$-topology, in the sense that every $\tau$-open subset of $\mathcal{G}$ is a union of sets of that form. (Again, as remarked above, note that they are not actually a countable basis, since they are only Borel and not open.) This is what will allow us to apply Theorem \ref{general far theorem}.

\subsection{Main result}\label{far main result section}
To apply Theorem \ref{general far theorem}, we need to show that the subsets $\langle\theta\rangle$ of $\mathcal{G}$ are VC-sets. Thus is an immediate consequence of NIP. Indeed:

\begin{lemma}\label{theta is vc set in ellis group}
    Let $\theta(x)$ be an $L(M_0)$-formula, and let $\mathcal{F}$ be the family of left translates $\{p\langle\theta\rangle: p\in \mathcal{G}\}$. If the formula $\theta(xy)$ has VC-codensity at most $\delta$, then $\mathcal{F}$ has VC-density at most $\delta$. In fact, if $C,\delta$ are such that the number of $\phi$-types over a subset of $G(\mathfrak{C})$ of size $n$ is at most $Cn^\delta$ for all $n$, then $\pi_\mathcal{F}(n)\leqslant Cn^\delta$ for all $n$.
\end{lemma}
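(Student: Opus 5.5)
We reduce the trace of $\mathcal{F}$ on a finite subset of $\mathcal{G}$ to $\theta(xy)$-types over finitely many realizations in $\mathfrak{C}$. Fix a finite set $Y=\{q_1,\dots,q_n\}\subseteq\mathcal{G}$. We must bound the number of distinct traces $p\langle\theta\rangle\cap Y$ as $p$ ranges over $\mathcal{G}$. The first step is to rewrite membership: $q_j\in p\langle\theta\rangle$ iff $p^{-1}q_j\in\langle\theta\rangle$ (inverse computed in the group $u\mathcal{I}$), i.e. iff $p^{-1}q_j\vdash\theta(x)$. Writing $r=p^{-1}\in\mathcal{G}$, the trace is determined by the set $\{j: rq_j\vdash\theta(x)\}$. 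So it suffices to show that, as $r$ ranges over $E$ (or even just over $\mathcal{G}$), the tuple $(\mathbb{1}[rq_j\vdash\theta]:j\in[n])$ takes at most $Cn^\delta$ values.

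Next, unpack the semigroup operation. Choose a small model $N\succcurlyeq M_0$ and realizations $b_j\models q_j|_{N}$ for each $j$; more carefully, take $b_1,\dots,b_n$ realizing the $q_j$ over $N$ (jointly, in any fashion, e.g.\ by choosing $N$ first and each $b_j$ separately). By definition, $(rq_j)|_{M_0}=\tp(ab_j/M_0)$ where $a\models r|_{(N,b_1,\dots,b_n)}$; crucially, since $r$ is a single global type, the same realization $a$ of $r$ over $(N,b_1,\dots,b_n)$ works simultaneously for all $j$, because $(rq_j)|_{M_0}$ only depends on $\tp(a/N,b_j)=r|_{(N,b_j)}$ together with $b_j\models q_j|_N$ (independence of the choice of $N$ and realizations in the Morley product). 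Hence $rq_j\vdash\theta(x)$ iff $\models\theta(ab_j)$. So the vector above is exactly determined by the $\theta(xy)$-type of $a$ over the finite set $B=\{b_1,\dots,b_n\}\subseteq G(\mathfrak{C})$, with $\theta(xy)$ regarded as a formula in $x$ with parameter variable $y$.

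Therefore the number of traces is at most $|S_{\theta(xy)}(B)|\leqslant C|B|^\delta\leqslant Cn^\delta$, giving $\pi_\mathcal{F}(n)\leqslant Cn^\delta$, and the VC-density bound follows by taking infima. The only point needing care, which I expect to be the main (though mild) obstacle, is justifying that a single $a$ computes all products $rq_j$ at once: one must check that choosing $N$ containing nothing special and realizing each $q_j|_N$ by $b_j$, then $a\models r|_{(N,\bar b)}$, gives $\tp(ab_j/N)=(rq_j)|_N$ for each $j$, which holds since $r|_{(N,\bar b)}$ restricts to $r|_{(N,b_j)}$ and the Morley product over $N$ is well defined. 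Also note the $b_j$ may coincide, which only helps.
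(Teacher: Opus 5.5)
Your proof is correct and is essentially the paper's argument: the paper likewise fixes $b_i\models q_i|_{M_0}$, realizes each $p_k^{-1}$ over $(M_0,b_1,\dots,b_m)$ by a single element $a_k$, and reads off the trace $p_k\langle\theta\rangle\cap B$ from the $\theta(xy)$-type of $a_k$ over $\{b_1,\dots,b_m\}$. The only difference is that you pass to a larger model $N\succcurlyeq M_0$. This is harmless but unnecessary, since the Morley-product description of the semigroup operation on $E=S^{\mathrm{fs}}_G(\mathfrak{C},M_0)$ already applies with $N=M_0$.
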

\begin{proof}
    Recall that $\pi_\mathcal{F}(n)$ is the maximal possible size of $\mathcal{F}|_B:=\{F\cap B:F\in\mathcal{F}\}$ as $B$ ranges over the subsets of $\mathcal{G}$ of size at most $n$. So fix some $B\subseteq\mathcal{G}$ of size at most $n$; write $B=\{q_1,\dots,q_m\}$, where $m\leqslant n$. Recalling that $\mathcal{G}\subseteq E= S^{\mathrm{fs}}_G(\mathfrak{C},M_0)$, for each $q_i\in B$ let $b_i\models q_i|_{M_0}$. We will show that distinct sets in $\mathcal{F}|_B$ give rise to distinct $\theta(xy)$-types over $\{b_1,\dots,b_m\}$, which will give the result.

    Let $p_k\in\mathcal{G},k=1,\dots,s$ be such that the $p_k\langle\theta\rangle\cap B$ are pairwise distinct. For each $k\in [s]$, let $p_k^{-1}$ denote the inverse of $p_k$ computed in $\mathcal{G}$. So each $p_k^{-1}$ is still an $M_0$-invariant global type. For each $k\in[s]$, let $a_k$ realize $p_k^{-1}|_{(M_0,b_1,\dots,b_m)}$. By definition of the semigroup operation on $E$, we have $a_kb_i\models (p_k^{-1}q_i)|_{M_0}$ for each $i\in[m]$ and $k\in[s]$. In particular, $a_kb_i\models\theta(x)$ iff $p_k^{-1}q_i\vdash\theta(x)$ iff $p_k^{-1}q_i\in\langle\theta\rangle$ iff $q_i\in p_k\langle\theta\rangle$. Since the $p_k\langle\theta\rangle\cap B$ are pairwise distinct, it follows that the $a_k$ have pairwise distinct $\theta(xy)$-types over $\{b_1,\dots,b_m\}$, so that indeed $s\leqslant Cm^\delta\leqslant Cn^\delta$.
\end{proof}

Our main result now follows.

\begin{theorem}\label{FAR main theorem}
    Assume that $T$ and $M_0$ are countable. Suppose there is a fixed $\delta>0$ such that, for any $M_0$-definable subset $\theta(x)$ of $G$, the VC-codensities of the formula $\theta(xy)$ is at most $\delta$. Then the Ellis group $\mathcal{G}$ of $G(M_0)$ has finite Archimedean rank. More precisely, $\mathcal{G}$ is an inverse limit of compact Lie groups of dimension at most $(4\delta)^2$.
\end{theorem}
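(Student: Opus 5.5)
The plan is to apply Theorem \ref{general far theorem} to the compact Hausdorff group $\mathcal{G}$, using the countable family $(\langle\theta\rangle:\theta(x)\text{ an }L(M_0)\text{-formula implying }x\in G)$. By Fact \ref{tau-topology is hausdorff}, $\mathcal{G}=u\mathcal{I}$ with the $\tau$-topology is a compact Hausdorff group, so the theorem applies once its two hypotheses are checked. Since $T$ and $M_0$ are countable, there are only countably many $L(M_0)$-formulas, so the family is countable. Each member is Borel in the $\tau$-topology by Fact \ref{borel definability}. Formulas not implying $x\in G$ can be replaced by their conjunction with $x\in G$, since every type in $E$ concentrates on $G$ and so this does not change $\langle\theta\rangle$.

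For hypothesis (1), I will invoke Corollary \ref{tau-open is countable union} directly. It says that every $\tau$-open subset of $u\mathcal{I}$ is a union of sets of the form $\langle\theta(x)\rangle$ with $\theta\in L(M_0)$, which is exactly the required covering property. This is where Theorem \ref{restriction map ellis group theorem} enters: it underlies Lemma \ref{preimage is countable union}. Here we apply that theorem with $M=M_0$, which the statement of the theorem allows.

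For hypothesis (2), fix an $M_0$-definable $\theta(x)\subseteq G$. By assumption the formula $\theta(xy)$ has VC-codensity at most $\delta$. Lemma \ref{theta is vc set in ellis group} then gives that the family of left translates $\{p\langle\theta\rangle:p\in\mathcal{G}\}$ has VC-density at most $\delta$. There is one subtlety I expect to be the main point of care. VC-codensity at most $\delta$ only means $O(n^{\delta+\epsilon})$ growth for every $\epsilon>0$, not necessarily $Cn^\delta$. So I would either note that VC-density is itself defined as an infimum, in which case the lemma's first sentence gives density at most $\delta$ with no loss, or apply everything with $\delta+\epsilon$ in place of $\delta$. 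In either case Theorem \ref{general far theorem} yields that $\mathcal{G}$ is an inverse limit of compact Lie groups of dimension at most $(4\delta)^2$; in the second case I would let $\epsilon\to0$, using that Lie dimensions are integers. Finite Archimedean rank then follows by Fact \ref{finite archimedean rank definition}(3).

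Given the earlier results, the proof is essentially an assembly. The only real obstacle is making sure the hypotheses match exactly. One point is the ideal over $M_0$ itself: the Ellis semigroup $E=S^{\mathrm{fs}}_G(\mathfrak{C},M_0)$ is the case $M=M_0$ of Section \ref{boundedness of the ellis group section}. The other is the density bound being uniform over all $\theta$, which is precisely what the bounded VC-codensity hypothesis supplies.
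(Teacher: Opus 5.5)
Your proposal is correct and is the paper's own proof: apply Theorem \ref{general far theorem} to $\mathcal{G}$ with the countable family $\langle\theta\rangle$, $\theta\in L(M_0)$, taking Borelness from Fact \ref{borel definability}, the covering property (1) from Corollary \ref{tau-open is countable union} (which rests on Theorem \ref{restriction map ellis group theorem} with $M=M_0$), and the uniform VC-density bound (2) from Lemma \ref{theta is vc set in ellis group}. Your $\delta+\epsilon$ remark is sensible extra care, since the proof of Lemma \ref{packing dimension lemma} tacitly uses a bound $\pi_\mathcal{F}(n)\leqslant Kn^{\delta_0}$ rather than just density at most $\delta_0$; the limit argument works as you say, because by Lemma \ref{lie quotient of far group} every Lie quotient of $\mathcal{G}$ then has dimension at most $(4\delta+4\epsilon)^2$ for every $\epsilon>0$.
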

\begin{proof}
    This is an immediate consequence of all of our ingredients. Consider the subsets $\langle\theta\rangle\subseteq\mathcal{G}$, for $\theta(x)$ an $L(M_0)$-formula. By Fact \ref{borel definability}, each $\langle\theta\rangle$ is Borel. By Lemma \ref{theta is vc set in ellis group}, for any $\theta(x)$, the family of left translates of $\langle\theta\rangle$ has VC-density at most $\delta$. By Corollary \ref{tau-open is countable union}, every open subset of $\mathcal{G}$ is a union of sets of this form. Since $T$ and $M_0$ are countable, there are only countably many $\theta(x)$, and so we are done by Theorem \ref{general far theorem}.
\end{proof}

\begin{remark} By the respective results of \cite{five_authors} and \cite{basu_patel}, the VC-codensity hypothesis of Theorem \ref{FAR main theorem} is satisfied for o-minimal theories, the theories of the $p$-adic fields, and the theories of algebraically closed valued fields.
\end{remark}

% Recall that by the `Ellis group of $G(M_0)$', ie by $\mathcal{G}$, we mean the Ellis group of the tame flow $(G(M_0),S^{\mathrm{fs}}_G(\mathfrak{C},M_0))$. The point of Theorem \ref{FAR main theorem} is that, when $T$ and $M_0$ are countable and $T$ has bounded VC-codensity, Theorem \ref{general far theorem} naturally applies to this Ellis group. However we are using the specific details of the situation very heavily for this. As I mention in the AI disclosure, I am quite ignorant of topological dynamics, but motivated by \cite{basso_zucker} it seems to me it might be interesting to know if Theorem \ref{general far theorem} might apply naturally to the Ellis groups of some other tame flows that satisfy some natural topological dynamical restrictions. So I leave the following as a very naive and vague question:

% \begin{question}Are there other natural classes of tame flows to whose Ellis groups Theorem \ref{general far theorem} naturally applies?
% \end{question}

\section{`Local $G/G^{00}$'}\label{local g00 section}
We will now give some consequences of our techniques to `local $G/G^{00}$' in NIP theories, inspired by Question 7.18(1) in \cite{hrushovski}. First we need to give some relevant definitions; the best reference in the literature for our purposes is \cite{conant_pillay}, in which Conant and Pillay develop a theory of `local fsg' for pseudofinite groups. Unlike in the rest of the paper, \uline{in this section, do \textbf{not} assume $T$ is NIP}. We continue working in a monster model $\mathfrak{C}$ of $T$ and we continue assuming $G$ is a $\varnothing$-definable group; as usual we identify $G$ with $G(\mathfrak{C})$.

Let $\phi(x,y)$ be a formula that implies $x\in G$. We say $\phi(x,y)$ is `left-invariant' if every left translate of an instance of $\phi$ is equivalent to another instance of $\phi$; more precisely if, for every $b\in\mathfrak{C}^y$ and $g\in G(\mathfrak{C})$, there is $b'\in\mathfrak{C}^y$ with $\models\forall x[\phi(gx,b)\leftrightarrow\phi(x,b')]$. We define `right-invariant' and `bi-invariant' analogously. Given any formula $\phi(x)$ defining a subset of $G$, we can define the `left-stratification' $\phi^l(x,u)\equiv (u\in G)\wedge \phi(ux)$ and the `right-stratification' $\phi^r(x,u)\equiv (u\in G)\wedge\phi(xu)$, which are then respectively left-invariant and right-invariant. By slight abuse of notation we will often write the left and right stratifications of $\phi(x)$ as just $\phi(yx)$ and $\phi(xy)$, respectively.

Given a formula $\phi(x,y)$, by a `$\phi$-formula' we mean a Boolean combination of instances of $\phi(x,y)$, and as usual we let $S_\phi(\mathfrak{C})$ denote the collection of all complete global $\phi$-types, equipped with the Stone topology. If $\phi(x,y)$ is left-invariant and $H$ is a bounded-index subgroup of $G$ type-definable by $\phi$-formulas, then each left coset of $H$ is also type-definable by $\phi$-formulas, and every type in $S_\phi(\mathfrak{C})$ is concentrated on a (unique) left coset of $H$. So we get a canonical map from $S_\phi(\mathfrak{C})$ to the space $G/H$ of left cosets of $H$ in $G$. Moreover, since the relation of lying in the same left coset of $H$ is a bounded type-definable equivalence relation on $G$, we can equip the space $G/H$ with the (compact, Hausdorff) logic topology, where a subset of $G/H$ is closed if and only if its preimage under the canonical map $G\to G/H$ is type-definable. When $H$ is normal, so that $G/H$ is actually a group, then by Lemma 2.7 in \cite{pillays_conjecture} the logic topology makes $G/H$ into a topological group. Now, by Section 4 of \cite{conant_pillay} we have:

\begin{fact}\label{local logic topology}
    Let $\phi(x,y)$ be a left-invariant formula, and let $H$ be a bounded-index subgroup of $G$ type-definable by $\phi$-formulas. Equip the space of left cosets $G/H$ with the logic topology. Then the preimage of any closed subset of $G/H$ under the canonical map $G\to G/H$ is type-definable by $\phi$-formulas, and so the canonical map $S_\phi(\mathfrak{C})\to G/H$ is continuous.
\end{fact}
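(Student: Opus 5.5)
The plan is to separate each point outside the preimage from the preimage by a single $\phi$-formula, with compactness of $G/H$ in the logic topology doing the separating. Write $\pi:G\to G/H$ for the quotient map. Fix a closed $C\subseteq G/H$ and let $X=\pi^{-1}(C)$. By definition of the logic topology, $X$ is type-definable by arbitrary $L$-formulas over some small set, and $X=XH$. The goal is to show that
$$X=\bigcap_{g\in G\setminus X}\neg\psi_g(g^{-1}x)$$
for suitable $\phi$-formulas $\psi_g(x)$ containing $H$. By left-invariance of $\phi$, each $\psi_g(g^{-1}x)$ is again (equivalent to) a $\phi$-formula, so this exhibits $X$ as type-definable by $\phi$-formulas.

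First I fix a family $(\psi_i:i\in I)$ of $\phi$-formulas with $H=\bigcap_i\psi_i$, closed under finite conjunctions. Next I check that $\bigcap_i\pi(\psi_i)=\{H\}$ in $G/H$.
\begin{itemize}
\item Since $H\cdot H\subseteq H$ and $H$ is a type-defined group, compactness gives for each $i$ some $j$ with $\psi_j^{-1}\psi_j\subseteq\psi_i$. This uses closure under conjunctions, and it is here that one works inside the family of $\phi$-formulas.
\item If $aH$ meets every $\psi_j$, pick $h_j\in H$ with $ah_j\in\psi_j$. Since $h_j\in H\subseteq\psi_j$, we get $a\in\psi_j\psi_j^{-1}\subseteq\psi_i$ (using the symmetric version of the same trick). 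Hence $a\in\bigcap_i\psi_i=H$.
\end{itemize}
Each $\pi(\psi_i)$ is closed in $G/H$, because its preimage $\psi_iH$ is type-definable. Translating, $\bigcap_i\pi(g\psi_i)=\{gH\}$ for every $g$.

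Now let $g\notin X$, so that $gH\notin C$. The sets $\pi(g\psi_i)\cap C$ are closed in the compact space $G/H$, they are directed downward, and their intersection is empty. Hence some single $i$ has $\pi(g\psi_i)\cap C=\varnothing$. Pulling back, $g\psi_i\cap X=\varnothing$, so $X\subseteq\neg\psi_i(g^{-1}x)$, while $g$ itself satisfies $\psi_i(g^{-1}x)$ because $1\in H\subseteq\psi_i$. Taking $\psi_g:=\psi_i$ gives the displayed equality.

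Continuity of $S_\phi(\mathfrak{C})\to G/H$ then follows directly. We are given that each global $\phi$-type lies on a unique left coset of $H$, since the cosets are type-defined by the $\phi$-formulas $\psi_i(a^{-1}x)$. So the preimage of a closed $C$ consists exactly of the $\phi$-types containing all the $\phi$-formulas that define $X$, which is a closed set in the Stone topology.

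I expect the main obstacle to be the step $\bigcap_i\pi(\psi_i)=\{H\}$, namely obtaining $\psi_j^{-1}\psi_j\subseteq\psi_i$ with $\psi_j$ again a $\phi$-formula. If that fails directly, I would instead use that $H$ is type-defined by the $\phi$-formulas $\psi_i$, so that $x^{-1}y\in H$ is implied by the pair types $\bigwedge_i\psi_i(x),\bigwedge_i\psi_i(y)$, and extract the needed finite $\psi_j$ by compactness.
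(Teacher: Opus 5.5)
The paper gives no proof of this fact; it simply imports it from Section 4 of \cite{conant_pillay}. Your argument is a correct self-contained proof. The step you flagged as the main obstacle is routine. From $H(x)\wedge H(y)\vdash\psi_i(x^{-1}y)$, compactness and closure of the family under finite conjunctions give a single $\psi_j$ with $\psi_j^{-1}\psi_j\subseteq\psi_i$, and likewise $\psi_j\psi_j^{-1}\subseteq\psi_i$. You never leave the family of $\phi$-formulas.

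\textbf{The intersection must be small.} Your displayed intersection is indexed by $g\in G\setminus X$, which is as large as $\mathfrak{C}$. As written it is therefore not a type-definition, since that needs a small set of formulas.
\begin{itemize}
\item For continuity this is harmless, because Stone-closed sets may be arbitrary intersections of clopens.
\item It does matter for the first assertion of the Fact, and for its later use in passing to reducts in Lemma \ref{local logic topology refinement}.
\end{itemize}
The repair uses what you already have. Since $H\subseteq\psi_{i_g}$, the set $g\psi_{i_g}$ contains the whole coset $gH$. So it suffices to let $g$ range over a set of representatives of the boundedly many cosets of $H$ lying outside $X$.

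\textbf{The assertion about types.} Your claim that a type on a coset $aH\subseteq X$ contains each $\neg\psi_{i_g}(g^{-1}x)$ needs one compactness step. From $aH\cap g\psi_{i_g}=\varnothing$ you get $a\psi_j\cap g\psi_{i_g}=\varnothing$ for some $j$.

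\textbf{A shortcut.} The detour through $\bigcap_i\pi(\psi_i)=\{H\}$ and compactness of $G/H$ can be skipped. $X$ is type-definable by the definition of the logic topology, and it is disjoint from the type-definable set $gH=\bigcap_i g\psi_i$. Compactness in $\mathfrak{C}$, together with closure under conjunctions, then gives a single $i$ with $g\psi_i\cap X=\varnothing$ directly.
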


We additionally have the following, which is folklore, and for which we do not give details.

\begin{fact}\label{local g00 exists}
    Let $\phi(x,y)$ be a bi-invariant formula, and assume additionally that $\phi(x,y)$ is NIP. Then there is a smallest bounded-index subgroup of $G$ type-definable by $\phi$-formulas, which we denote $G^{00}_\phi$. $G^{00}_\phi$ is a normal subgroup of $G$, and the quotient $G/G^{00}_\phi$ equipped with the logic topology is a compact Hausdorff topological group. Moreover, $G^{00}_\phi$ is type-definable by a countable intersection of $\phi$-formulas.
\end{fact}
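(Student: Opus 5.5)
The plan is the standard Baldwin--Saxl/Shelah-style argument, localized to $\phi$-formulas. Using bi-invariance, I would first reduce to subgroups of a special form: for a $\phi$-formula $\theta(x)$ (a Boolean combination of instances of $\phi$) defining a subset of $G$, consider the stabilizer-type sets
\[\mathrm{Stab}_\theta=\{g\in G: g\theta \text{ and } \theta \text{ are `close'}\},\]
or better, work with the family $\mathcal{H}$ of all bounded-index subgroups type-definable by $\phi$-formulas. The first step is to check that $\mathcal{H}$ is closed under finite and small intersections and under conjugation. Closure under intersections is immediate. For conjugation, note that if $H$ is type-defined by $\phi$-formulas, then so is $g^{-1}Hg$: by bi-invariance, left and right translates of instances of $\phi$ are again instances of $\phi$, hence translates of $\phi$-formulas are $\phi$-formulas. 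So it suffices to show that the intersection of \emph{all} members of $\mathcal{H}$ still has bounded index; this intersection will then be $G^{00}_\phi$, and it is automatically normal.

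The key step is to bound the index uniformly. Each $H\in\mathcal{H}$ is an intersection of $\phi$-definable sets, and by compactness together with bounded index, $H$ is an intersection of $\phi$-definable sets $D$ with $D\supseteq H$, where $D$ is `generic' in the sense that boundedly many (in fact finitely many) translates cover $G$. I would then use NIP of $\phi$ via the Baldwin--Saxl principle: for a uniformly definable family of $\phi$-definable subgroups (or of approximate subgroups $D^{-1}D$), any finite intersection equals a sub-intersection of bounded size $N$, depending only on $\phi$. Concretely, I would show that the index of $H$ is bounded by $2^{|L|+\aleph_0}$ independently of $H$. The argument runs as follows: $G/H$ with the logic topology is a compact Hausdorff group, and by Fact~\ref{local logic topology} the map $S_\phi(\mathfrak{C})\to G/H$ is continuous and surjective. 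NIP for $\phi$ should give that $S_\phi(M)$, for a countable model, controls $G/H$. Precisely, every $\phi$-type over $\mathfrak{C}$ of an element is determined by its restriction to a small model via honest definitions or definability of coheirs, so $|G/H|\le |S_\phi(M)|$ for a fixed model $M$ over which all the relevant cosets are invariant. This yields a bound independent of $H$, and hence the intersection of $\mathcal{H}$ has bounded index.

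Once the bounded index is established, the remaining claims follow routinely. Normality comes from conjugation-closure. The fact that $G/G^{00}_\phi$ is a compact Hausdorff topological group follows from Lemma 2.7 of \cite{pillays_conjecture} as recalled above. For countable type-definability, I would take $G^{00}_\phi=\bigcap_n D_n$: since the quotient is compact Hausdorff, $G^{00}_\phi$ is an intersection of $\phi$-definable sets, and a countable subfamily suffices because each $\phi$-formula containing $G^{00}_\phi$ is implied by a finite subintersection, and the $\phi$-formulas involved can be taken over a countable set. I expect the main obstacle to be the uniform bound on the index, i.e.\ showing that $G$-invariance of cosets over some fixed small set holds for every $H\in\mathcal{H}$ with only local NIP available. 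For that step I would first try adapting Shelah's original proof that $G^{00}$ exists, using for each $H$ an indiscernible sequence of coset representatives and deriving a contradiction with finite VC-dimension of $\phi$ if too many cosets exist.
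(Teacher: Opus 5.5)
Your reduction is sound. $\mathcal H$ is closed under small intersections and, by bi-invariance, under conjugation. A uniform bound on $[G:H]$ for $H\in\mathcal H$ would force $\bigcap\mathcal H$ to coincide with a small subintersection, and hence to lie in $\mathcal H$. Your treatment of normality and of the logic topology (Lemma 2.7 of \cite{pillays_conjecture}) is the paper's.

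The gap is the uniform bound itself, which is the entire content of the existence claim, and the argument you give for it fails.
\begin{itemize}
\item A global $\phi$-type is not determined by its restriction to a small model. Take $\phi(x,y):=x<y$ in a divisible ordered abelian group, which is bi-invariant and NIP. Two distinct elements above $M$ have the same $\phi$-type over $M$ but different global $\phi$-types.
\item The phrase `a fixed model $M$ over which all the relevant cosets are invariant' presupposes the conclusion. What is true is Lemma~\ref{local logic topology refinement}: for a single $H$ type-defined by $\phi$-formulas over $B$, there is a model $M\supseteq B$ whose $\phi$-types determine the cosets of $H$. But $M$ depends on $H$, and one small set serving every $H\in\mathcal H$ is exactly what is at stake. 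If every $H$ were $M$-invariant, each would be type-definable over $M$, so $\mathcal H$ would be a small family and no index bound would be needed.
\item Baldwin--Saxl concerns uniformly definable families of subgroups, not type-definable $H$. The `approximate subgroup' variant you state is false: finitely many strips $\{x\in\mathbb{R}^2:|\langle x,v_i\rangle|<1\}$ with pairwise non-parallel $v_i$ have no proper subfamily with the same intersection.
\end{itemize}
The paper gives no new argument at this point. It invokes Shelah's proof of the existence of $G^{00}$ (see Proposition 6.1 of \cite{hrushovski_peterzil_pillay}), carried out with $\phi$-formulas. The engine there is a product trick. Given conjugates $H_{a_0},\dots,H_{a_{n-1}}$ of $H$ and elements $g_j\in\bigcap_{i\neq j}H_{a_i}\setminus H_{a_j}$, the product $\prod_{j\in J}g_j$ lies in $H_{a_i}$ iff $i\notin J$. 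After the separating formulas are chosen uniformly, this contradicts NIP of a fixed Boolean combination of instances of $\phi$. Your last sentence points in this direction, but this argument is what is missing from the proposal.

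Your countability step has a similar hole. Knowing that $G^{00}_\phi=\bigcap\Sigma$ for some set $\Sigma$ of $\phi$-formulas does not make any subfamily countable. The assertion that `the $\phi$-formulas involved can be taken over a countable set' is the claim itself. The missing idea, which is the paper's argument, is canonicity plus a reduct:
\begin{itemize}
\item Pass to a countable sublanguage in which $G$, its multiplication and $\phi$ are definable; this changes neither $\mathcal H$ nor $G^{00}_\phi$.
\item There, $G^{00}_\phi$ is type-definable and invariant under all automorphisms, hence type-definable over $\varnothing$ by countably many formulas $\chi_n$.
\item By compactness, each $\chi_n$ is implied by a finite $\Sigma_n\subseteq\Sigma$, and $\bigcup_n\Sigma_n$ is the required countable family.
\end{itemize}
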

\begin{proof}
    That there is a smallest such subgroup is proved analogously as the `existence' of $G^{00}$ in NIP theories, proved in \cite{shelah_g00}, though see Proposition 6.1 in \cite{hrushovski_peterzil_pillay} for a much clearer writeup of that result. That $G^{00}_\phi$ is normal in $G$ follows from bi-invariance of $\phi$. As mentioned above, that $G/G^{00}_\phi$ is a topological group in the logic topology follows from Lemma 2.7 in \cite{pillays_conjecture}.

    To see that $G^{00}_\phi$ is definable by a countable intersection of $\phi$-formulas, note that $G^{00}_\phi$ as computed in $\mathfrak{C}$ is the same as it is computed in any reduct of $\mathfrak{C}$ to a subglanguage of $L$ in which $(G,\cdot)$ and $\phi(x,y)$ are definable, so without loss we may assume that $L$ is countable. Now, $G^{00}_\phi$ is type-definable, and it is clearly invariant under all automorphisms of $\mathfrak{C}$, so by a standard argument (eg Exercise 6.1.10 in \cite{tent_ziegler}) $G^{00}_\phi$ is type-definable over $\varnothing$. Thus $G^{00}_\phi$ can be expressed as an intersection of formulas without parameters, of which there are only countably many, and $G^{00}_\phi$ can be expressed as an intersection of $\phi$-formulas. By a standard compactness argument it follows that $G^{00}_\phi$ can be expressed as a countable intersection of $\phi$-formulas.
\end{proof} Now inspired by Section 7 of \cite{hrushovski} one can ask the following:

\begin{question}\label{local g/g00 question}
    Suppose $\phi(x,y)$ is bi-invariant and NIP. Then does $G/G^{00}_\phi$ have finite Archimedean rank, depending only on the VC-codensity of $\phi$?
\end{question} This question is related to Theorem 7.15 and to Question 7.18(1) in \cite{hrushovski}, though there are some important differences. The situation is a bit subtle and we will take a minute to explain the differences.

First let us describe the setting. In Section 7.14 of \cite{hrushovski}, Hrushovski works with an NIP formula $\phi(x,y)$ such that $G$ is definably amenable with respect to $\phi$-formulas; in other words, he assumes the existence of a definable translation-invariant Keisler measure $\mu$ on the Boolean algebra of definable subsets of $G$ generated by the left translates of instances of $\phi$. In fact, it is important to note that, by Example 3.7 in \cite{conant_pillay}, the left-stratification of even a right-invariant NIP formula need not be NIP, and because of this behavior it seems to me that in Theorem 7.15 of \cite{hrushovski} it is perhaps necessary, or at least very convenient, to assume that $\phi(x,y)$ is bi-invariant. So let us assume that $\phi(x,y)$ is bi-invariant, although this is not an assumption made in \cite{hrushovski}. Hrushovski defines an object he calls $G^{00}_\phi$, but which to avoid conflict of notation (and since it depends on $\mu$) I will call $G^{00}_{\phi,\mu}$, which is the set of all $g$ such that $\mu(\phi(gx,b)\triangle\phi(x,b))=0$ for all $b$. By definability of $\mu$, $G^{00}_{\phi,\mu}$ is type-definable, and, by left-invariance of $\phi$, $G^{00}_{\phi,\mu}$ is normal. One can check that $G^{00}_{\phi,\mu}$ has bounded-index.

Hrushovski's aim is to prove that $G/G^{00}_{\phi,\mu}$, equipped with the logic topology, has finite Archimedean rank, but he is not able to achieve this goal, and instead only gets a partial result (Theorem 7.15 in \cite{hrushovski}), which shows that, for any complete Shelah strong type $q$, the quotient $G/G^{00}_{\phi,\mu,q}$ has finite Archimedean rank, where $G^{00}_{\phi,\mu,q}$ is defined as $G^{00}_{\phi,\mu}$ but only looking at formulas $\phi(x,b)$ for $b\models q$. While working on the material of Section \ref{FAR section}, I was initially getting stuck needing to make a similar restriction as Hrushovski's restriction to a single Shelah strong type, and I eventually found the way around this by working with the Haar measure on the connected component of the Ellis group rather than with the Haar measure on the Ellis group itself. As such I began writing this present section with the hope of using my results or techniques to answer Hrushovski's question, ie to prove that $G/G^{00}_{\phi,\mu}$ always has finite Archimedean rank. However in working on this I have realized that my methods do not really (or at least do not obviously) seem to lead to a path to doing this. The main issue is that the group that I am calling $G^{00}_{\phi}$ (in line with the terminology from \cite{conant_pillay}) may be strictly larger than the group that Hrushovski calls $G^{00}_\phi$, ie the group $G^{00}_{\phi,\mu}$; indeed it is not difficult to see that $G^{00}_{\phi,\mu}\subseteq G^{00}_{\phi}$, but there is no reason that we should have $G^{00}_{\phi}\subseteq G^{00}_{\phi,\mu}$, ie no reason that $G^{00}_{\phi,\mu}$ need be type-definable by $\phi$-formulas. 

If $\mu$ is generically stable, then, again recalling the assumption that $\phi(x,y)$ is bi-invariant, $G^{00}_{\phi,\mu}$ does indeed coincide with $G^{00}_\phi$, by virtue of Conant and Pillay's results in \cite{conant_pillay} (see Remark 1.3 in \cite{conant_pillay}).\footnote[1]{On the other hand, even in the stable setting this can fail for almost trivial reasons if $\phi(x,y)$ is only left-invariant and not bi-invariant, even though the definition of $G^{00}_{\phi,\mu}$ still makes sense here. For instance, if $H$ is a $\varnothing$-definable finite-index subgroup of $G$ and $\phi(x,y)$ is the formula $x\in yH$, then the $\phi$-definable subsets of $G$ are just the finite unions of cosets of $H$, and so $G^{00}_\phi=H$, but if $\mu$ is the lift of the normalized counting measure on $G/H$, then $G^{00}_{\phi,\mu}$ will be the intersection of all conjugates of $H$. So if $H$ is not normal then already the two disagree. Thanks to Gabe Conant for pointing this out to me.} However, in general, even with the assumption that $\phi(x,y)$ is bi-invariant, the two do not coincide. I am very grateful to Devrim Pekmezci for pointing this out to me. Pekmezci is working towards proving a local analogue of Fact \ref{newelski conjecture for definably amenable nip groups}, and among numerous positive results he also has an example showing that the naive analogue fails; his counterexample also provides an example of a bi-invariant NIP formula $\phi(x,y)$ and a measure $\mu$ where $G^{00}_{\phi,\mu}\subsetneq G^{00}_\phi$. We will not describe the details here and instead refer to his upcoming papers.

So Question \ref{local g/g00 question} is in that sense weaker than Hrushovski's question of whether the $G/G^{00}_{\phi,\mu}$ are of finite Archimedean rank. However, Question \ref{local g/g00 question} is also more general than Hrushovski's question, since it does not make any definable amenability assumption, whereas Hrushovski's question requires a definable amenability assumption even to be formulated, since the definition of $G^{00}_{\phi,\mu}$ depends on $\mu$. 

Those are the principal differences between Hrushovski's question and our Question \ref{local g/g00 question}. From now on we will just focus on our question and not on Hrushovski's. We will show that, under a global NIP assumption, we can obtain a positive answer to Question \ref{local g/g00 question} as a consequence of Lemma \ref{general local far result} along with Fact \ref{local logic topology}. Let us work towards proving this. 

We first need a slight refinement of Fact \ref{local logic topology}, which is standard. (It is a small variation of the proof that Kim-Pillay types coincide with types over models.) \begin{lemma}\label{local logic topology refinement}Let $\phi(x,y)$ be a left-invariant formula and let $H$ be a bounded-index subgroup of $G$ type-definable by $\phi$-formulas over some parameter set $B$. Then there is $B'\supseteq B$ with $|B'|=|B|+\aleph_0$ and such that (1) every complete $\phi$-type over $B'$ determines a left coset of $H$, and (2) the induced map $S_\phi(B')\to G/H$ to the space of left cosets is continuous.
\end{lemma}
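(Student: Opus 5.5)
We will follow the proof that Kim–Pillay strong types are coarser than types over models. The idea is to choose a small model containing $B$ and show that a $\phi$-type over it already pins down the coset of $H$.

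\textbf{Choice of $B'$.} By downward Löwenheim–Skolem, take $B'$ to be a model $N\preccurlyeq\mathfrak{C}$ with $B\subseteq N$ and $|N|=|B|+|L|+\aleph_0$. If one insists on $|B'|=|B|+\aleph_0$ exactly, first pass to a sublanguage in which $G$, the group law, $\phi$ and the $\phi$-formulas defining $H$ are definable. There are only $|B|+\aleph_0$ of the latter relevant formulas, so we may assume $|L|\leqslant|B|+\aleph_0$, as in the proof of Fact \ref{local g00 exists}.

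\textbf{Proof of (1).} Let $E(x,x')$ be the relation ``$x^{-1}x'\in H$''. It is type-definable over $B$ and has boundedly many classes. Suppose $a,a'\in G$ have the same $\phi$-type over $N$; we want $aH=a'H$.
\begin{itemize}
\item Build a long sequence $(c_i:i<\lambda)$, with $\lambda$ larger than the number of cosets of $H$, such that each $c_i$ realizes $\mathrm{tp}_\phi(a/N)$.
\item Arrange the sequence so that each pair $(c_i,c_j)$ for $i<j$, and also the pairs $(a,c_0)$ and $(a',c_0)$, have $\phi$-types over $N$ that agree with a fixed coheir-type pattern. Concretely, take $c_i$ realizing a global $\phi$-type finitely satisfiable in $N$ that extends $\mathrm{tp}_\phi(a/N)$, restricted to $N\cup\{a,a',c_{<i}\}$. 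Such a global $\phi$-type exists since $\phi$-types over a model have coheirs.
\item By pigeonhole, some $c_i,c_j$ with $i<j$ lie in the same coset of $H$.
\item Show that the $H$-coset relation between two elements depends only on the $\phi$-type of the pair over $N$; this is where left-invariance of $\phi$ enters. Then every pair $(c_i,c_j)$ lies in $E$, and hence $(a,c_0)\in E$ and $(a',c_0)\in E$, giving $aH=a'H$.
\end{itemize}

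The main obstacle is precisely this last transfer. We need ``$x^{-1}x'\in H$'' to be expressible by conditions on $\phi$-types of pairs.

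\textbf{Handling the obstacle.} Use Fact \ref{local logic topology}: $H$ and its left cosets are type-definable by $\phi$-formulas. Since $\phi$ is left-invariant, the statement $x'\in xH$ becomes, after translating by $x^{-1}$, a condition of the form $\phi(x',b)$ with $b$ depending definably on $x$ and the parameters. If this is too delicate, a cleaner alternative is available: show directly that the map $S_\phi(\mathfrak{C})\to G/H$ from Fact \ref{local logic topology} factors through restriction to $N$.
\begin{itemize}
\item Suppose two global $\phi$-types $p,p'$ with $p|_N=p'|_N$ landed in different cosets.
\item By continuity (Fact \ref{local logic topology}), there would be disjoint $\phi$-formulas $\theta,\theta'$ in $p,p'$ respectively, whose sets of cosets are disjoint closed sets of $G/H$ that cover the images of neighbourhoods.
\item By compactness together with $\mathrm{Aut}(\mathfrak{C}/N)$-invariance of the coset structure, this is impossible. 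The automorphisms fixing $N$ act on the bounded set $G/H$, and a model $N$ is large enough that every bounded $B$-invariant equivalence relation refined by $\phi$-types over $N$ is respected.
\end{itemize}
I would try the coheir/indiscernible argument first.

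\textbf{Proof of (2).} Given (1), write the map $S_\phi(N)\to G/H$ as the composite of the restriction inverse with the continuous map $S_\phi(\mathfrak{C})\to G/H$. Restriction $S_\phi(\mathfrak{C})\to S_\phi(N)$ is a continuous surjection between compact Hausdorff spaces, hence a quotient map. By (1) the continuous map to $G/H$ is constant on its fibres, so it descends to a continuous map $S_\phi(N)\to G/H$, which is what (2) asserts.
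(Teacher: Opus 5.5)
Your choice of $B'$ (a model $N\supseteq B$, after passing to a small sublanguage) and your proof of (2) are the same as the paper's. The gap is in (1), at exactly the step you flag, and neither of your proposed fixes closes it.

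Let $\Sigma$ be a conjunction-closed set of $\phi$-formulas over $B$ type-defining $H$, and let $dH$ be the coset on which your global $\phi$-coheir $q$ concentrates. Read your construction generously, so that $c_j$ realizes $q$ on the left translates $\psi(c_i^{-1}x)$ and $\psi(a^{-1}x)$ for $\psi\in\Sigma$. These are $\phi$-formulas by left-invariance, and that is all your remark about ``$x'\in xH$'' gives. Even so, the condition $c_j\in c_iH$ just says $c_i\in dH$, and $c_0\in aH$ just says $a\in dH$. So pigeonhole only tells you that \emph{some} $c_i$ lies in the coset of $q$. Passing from that to $a$ and $a'$ (or even to all $c_i$) is precisely the assertion that elements with the same $\phi$-type over $N$ lie in the same coset, which is the lemma itself.

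Indiscernibility does not rescue this. The $c_i$ realize only $\phi$-types. Even a complete coheir of $\mathrm{tp}(a/N)$ would give $\mathrm{tp}(a,c_0/N)=\mathrm{tp}(c_0,c_1/N)$, but nothing about $a'$, whose complete type over $N$ may differ from that of $a$. Your ``cleaner alternative'' is circular in the same way. $\mathrm{Aut}(\mathfrak{C}/N)$-invariance only shows that \emph{complete} types over the model $N$ determine the coset, and getting from complete types down to $\phi$-types is the whole content of the statement.

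What is missing is a way to use finite satisfiability to move a separating parameter into $N$; left-invariance then turns the separating set into a $\phi$-formula over $N$. The paper argues by contradiction, with its $M$ in place of your $N$.
\begin{itemize}
\item Suppose $g,h$ have the same $\phi$-type over $N$ but $gH\neq hH$. Take a \emph{complete} global type $p$ finitely satisfiable in $N$ extending this $\phi$-type, and a long Morley sequence $(a_i)$ of $p$ over $(N,g,h)$.
\item If $a_k\in a_iH$ for some $i<k$, then, say, $a_k\notin gH$. Using $H=H^{-1}$, some $\psi(x,c)\in\Sigma$ has $a_k\models\psi(x^{-1}a_i,c)\wedge\neg\psi(x^{-1}g,c)$.
\item Since $\mathrm{tp}(a_k/N,g,a_i)$ is finitely satisfiable in $N$, some $m\in G(N)$ satisfies this formula. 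Then $m\psi(x,c)$ is a $\phi$-formula over $N$ containing $a_i$ but not $g$, contradicting $\mathrm{tp}_\phi(a_i/N)=\mathrm{tp}_\phi(g/N)$.
\item So the $a_i$ lie in pairwise distinct cosets, contradicting bounded index.
\end{itemize}
Note that $\psi(x^{-1}a_i,c)$ need not be a $\phi$-formula in $x$, so this route needs a complete coheir rather than your $\phi$-coheir.

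Your $q$ can also be used directly, with no sequence at all. Suppose $a\notin dH$, witnessed by $\psi_0\in\Sigma$.
\begin{itemize}
\item By compactness choose $\psi_2,\psi_3\in\Sigma$ with $\psi_2\psi_2\subseteq\psi_0$ and $\psi_3^{-1}\psi_3\subseteq\psi_2$ (as subsets of $G$).
\item Since $(\psi_2\wedge\psi_3)(d^{-1}x)\in q$ and $q$ is finitely satisfiable in $N$, there is $m\in G(N)$ with $d^{-1}m\in\psi_2\wedge\psi_3$.
\item The $\phi$-formula $m\psi_2$ over $N$ then belongs to $q$, because $\psi_3^{-1}\psi_3\subseteq\psi_2$.
\item But $a$ does not satisfy $m\psi_2$, since otherwise $d^{-1}a\in\psi_2\psi_2\subseteq\psi_0$. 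This contradicts $q|_N=\mathrm{tp}_\phi(a/N)$.
\end{itemize}
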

\begin{proof}
    Let $\Sigma(x)$ be a collection of $\phi$-formulas with parameters from $B$ closed under conjunction and type-defining $H$.

    Note that, by Fact \ref{local logic topology}, the logic topology on $G/H$ remains the same in any reduct to a sublanguage in which $(G,\cdot)$ and $\phi$ are still definable; thus we may assume without loss that $L$ is countable. Now by Lowenheim-Skolem there is $M\prec\mathfrak{C}$ with $B\subseteq M$ and $|M|=|B|+\aleph_0$. We will show that any complete $\phi$-type over $M$ determines a coset of $H$.

    %Let $\mathfrak{C}^*$ be the reduct of $\mathfrak{C}$ to a countable sublanguage $L^*\subseteq L$ such that $(G,\cdot)$ and $\phi(x,y)$ are still definable in $L^*$. From hereonout we will always work in the reduct. By Lowenheim-Skolem, we may find an $L^*$-elementary substructure $M^*\prec \mathfrak{C}^*$ such that $A\subseteq M^*$ and $|M^*|=|A|+\aleph_0$. We will show that any complete $\phi$-type over $M^*$ determines a coset of $H$.

    Suppose otherwise that $g,h$ have the same $\phi$-type over $M$ but that they lie in different cosets of $H$. Let $p\in S_G(\mathfrak{C})$ be any complete global type finitely satisfiable in $M$ and extending the (shared) $\phi$-type of $g$ and $h$ over $M$. Let $(a_i:i\in\kappa)$ be a long Morley sequence of $p$ over $(M,g,h)$. We will show that the $a_i$ lie in distinct cosets of $H$, contradicting that $H$ has bounded-index.

    Suppose instead that $a_k\in a_iH$ for some $i<k$. Since $g,h$ lie in distinct cosets of $H$, at least one of $a_k\notin gH$ and $a_k\notin hH$ holds; suppose without loss that $a_k\notin gH$. Now we have $a_k\models\Sigma(a_i^{-1}x)$ and $a_k\not\models\Sigma(g^{-1}x)$. On the other hand, since $H$ is closed under inverses, $\Sigma(x)\vdash\Sigma(x^{-1})$, so also $a_k\models\Sigma(x^{-1}a_i)$ and $a_k\not\models\Sigma(x^{-1}g)$. So there is a formula $\psi(x,c)\in\Sigma$ such that $a_k\models\psi(x^{-1}a_i,c)\wedge\neg\psi(x^{-1}g,c)$. Since $p$ is finitely satisfiable in $M$, we have $a_k\ind^u_{M}(g,a_i)$, and thus there is some $m\in G(M)$ with $m\models\psi(x^{-1}a_i,c)\wedge\neg\psi(x^{-1}g,c)$.  So $a_i\models\psi(m^{-1}x,c)$ and $g\models\neg\psi(m^{-1}x,c)$. But now $\psi(x,c)$ is a $\phi$-formula with parameters from $B$, hence in particular with parameters from $M$, and $\psi(m^{-1}x,c)=m\psi(x,c)$ is a left translate of $\psi(x,c)$ by an element of $G(M)$, hence (since $\phi$ is left-invariant) itself a $\phi$-formula with parameters from $M$. But now $a_i$ and $g$ have different $\phi$-types over $M$, contradicting that $p$ extends the $\phi$-type of $g$ over $M$.

    To see continuity, recall that, by Fact \ref{local logic topology}, the natural map $S_\phi(\mathfrak{C})\to G/H$ is continuous, and it is the composition of the restriction map $S_\phi(\mathfrak{C})\to S_\phi(M)$ with the natural map $S_\phi(M)\to G/H$. But the restriction map $S_\phi(\mathfrak{C})\to S_\phi(M)$ is a continuous map between compact Hausdorff spaces, and is hence a closed map, so the claim follows.
\end{proof}

We also need an easy observation about VC-codensity; compare with Proposition 3.12(a) in \cite{conant_pillay} and with Lemma 2.8 in \cite{five_authors}.

\begin{lemma}\label{vc codensity lemma}
    Suppose $\phi(x,y)$ is a right-invariant formula of VC-codensity at most $\delta$. Then, for any $\phi$-formula $\theta(x)$ implying $x\in G$, the right-stratification $\theta(xy)$ has VC-codensity at most $\delta$.
\end{lemma}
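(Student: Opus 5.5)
The plan is to bound the number of $\theta(xy)$-types over a finite set of parameters by the number of $\phi$-types over a finite set of parameters of comparable size. Write $\theta(x)$ as a Boolean combination of instances $\phi(x,c_1),\dots,\phi(x,c_k)$, with $c_1,\dots,c_k$ fixed. Then $\theta(xy)$ is the same Boolean combination of the formulas $\phi(xy,c_j)$, viewed as formulas in $x$ with parameter $y$. For $x$ and $y$ in $G$, right-invariance of $\phi$ says that $\phi(xb,c_j)$ is equivalent to $\phi(x,c')$ for some $c'$, which depends only on $b$ and $c_j$. So for each $b\in G(\mathfrak{C})$ and $j\in[k]$ we will fix such a parameter $c'_{j,b}$ with $\models\forall x[\phi(xb,c_j)\leftrightarrow\phi(x,c'_{j,b})]$. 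Outside $G$ both sides are false, since $\phi$ and $\theta$ imply $x\in G$, and $\theta(xy)$ is understood with $y\in G$.

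Next, let $B\subseteq G(\mathfrak{C})$ be finite of size $n$, and set $B'=\{c'_{j,b}:b\in B,j\in[k]\}$, which has size at most $kn$. For a point $a$, the $\theta(xy)$-type of $a$ over $B$ is the set of $b\in B$ with $\models\theta(ab)$. Whether $\theta(ab)$ holds is a fixed Boolean function of the truth values of $\phi(ab,c_j)$ for $j\in[k]$, and these equal the truth values of $\phi(a,c'_{j,b})$. So the $\theta(xy)$-type of $a$ over $B$ is determined by the $\phi$-type of $a$ over $B'$. This gives a surjection from $S_\phi(B')$ onto $S_{\theta(xy)}(B)$, so $|S_{\theta(xy)}(B)|\leqslant|S_\phi(B')|\leqslant C(kn)^{\delta'}$ for any $\delta'>\delta$. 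The right-hand side is $O(n^{\delta'})$, because $k$ is fixed, so the codensity of $\theta(xy)$ is at most $\delta'$ for every $\delta'>\delta$, hence at most $\delta$. If the parameters are taken in a model $M$ rather than in $\mathfrak{C}$, the same argument works, since $c'_{j,b}$ can be chosen in $M$ by elementarity.

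The only delicate step is bookkeeping. The $\phi$-types in the bound must be taken over parameter sets of size linear in $n$, and the parameters of $\theta$ must be absorbed into the new parameters $c'_{j,b}$ rather than counted separately. I expect no real obstacle beyond stating this carefully.
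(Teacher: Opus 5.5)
Your proposal is correct and is essentially the paper's own argument: write $\theta$ as a Boolean combination of instances $\phi(x,c_j)$, use right-invariance to replace each $\phi(xb,c_j)$ by some $\phi(x,c'_{j,b})$, and bound the number of $\theta(xy)$-types over $B$ by the number of $\phi$-types over a set of size at most $k|B|$. Your use of an arbitrary $\delta'>\delta$ is actually slightly more careful than the paper, which tacitly assumes a bound $O(n^\delta)$ at exactly the infimum exponent $\delta$.
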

\begin{proof} Let $k$ be such that, for any finite $B\subset \mathfrak{C}^y$, we have $|S_\phi(B)|\leqslant k|B|^{\delta}$. Since $\theta(x)$ is a $\phi$-formula, let $\phi(x,b_1),\dots,\phi(x,b_s)$ be a fixed collection of instances of $\phi$ such that $\theta(x)$ is a Boolean combination of them. For any $g\in G(\mathfrak{C})$, since $\phi$ is right-invariant, there are $b_1^g,\dots,b_s^g$ such that each $\phi(xg,b_i)$ is equivalent to $\phi(x,b_i^g)$. In particular, then $\theta(xg)$ is equivalent to a Boolean combination of the $\phi(x,b_1^g),\dots,\phi(x,b_s^g)$. Thus, for any $C\subseteq G(\mathfrak{C})$, a $\theta(xy)$-type over $C$ is determined by the $\phi$-type over $\{b_i^g:i\in [s],g\in C\}$ that it induces. This latter set has size at most $s|C|$, and so the number of $\theta(xy)$-types over $C$ is at most $k(s|C|)^{\delta}=ks^{\delta}|C|^{\delta}$, so we are done.
\end{proof}

Now, from hereonout, \uline{assume $\phi(x,y)$ is a bi-invariant NIP formula}. So by Fact \ref{local g00 exists} $G^{00}_\phi$ `exists' and is type-definable by countably many $\phi$-formulas. By Lemma \ref{local logic topology refinement}, \uline{fix $B$ a countable parameter set such that (1) every coset of $G^{00}_\phi$ is type-definable by $\phi$-formulas with parameters in $B$, and (2) the induced map $S_\phi(B)\to G/G^{00}_\phi$ is continuous}. Finally, by adding constant symbols, \uline{assume that $\dcl(\varnothing)$ is a model $M_0$ containing $B$}. (Note that, even though $B$ is countable, $T$ may not be countable, so $M_0$ may not be countable.) Finally, \uline{let $E$ be the Ellis semigroup $S^{\mathrm{fs}}_G(\mathfrak{C},M_0)$, let $\mathcal{I}$ be a minimal ideal of $E$, let $u\in\mathcal{I}$ be an idempotent, and let $\mathcal{G}=u\mathcal{I}$ equipped with the $\tau$-topology}.

We are not making a global NIP assumption, so $\mathcal{G}$ need not be Hausdorff. However, by Proposition 3.1 and Theorem 0.1(1) in \cite{krupinski_pillay_early}, we do have:

\begin{fact}\label{tau topology to logic topology preliminary}
    The natural map $\mathcal{G}\to G/G^{00}_{M_0}$ (obtained by restricting the natural map $S_G(\mathfrak{C})\to G/G^{00}_{M_0}$ to the subset $\mathcal{G}\subseteq E\subseteq S_G(\mathfrak{C})$) is a continuous group epimorphism, where $\mathcal{G}$ has the $\tau$-topology and $G/G^{00}_{M_0}$ has the logic topology.
\end{fact}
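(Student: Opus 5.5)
The statement is quoted from Krupiński--Pillay, so the plan is to reconstruct why the natural map $\pi:\mathcal{G}\to G/G^{00}_{M_0}$ is a group homomorphism, surjective, and $\tau$-continuous. Write $\rho:S_G(\mathfrak{C})\to G/G^{00}_{M_0}$ for the map sending $p$ to the coset of any realization; it is well defined because $G^{00}_{M_0}$ has bounded index and its cosets are type-definable. It is Stone-continuous by the definition of the logic topology.

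First I would show that $\rho$ is a semigroup homomorphism on $E=S^{\mathrm{fs}}_G(\mathfrak{C},M_0)$. For $p,q\in E$, $pq$ is realized by $ab$ with $b\models q$ and $a\models p|_{(\mathfrak{C},b)}$, so $\rho(pq)=aG^{00}_{M_0}\cdot bG^{00}_{M_0}=\rho(p)\rho(q)$. Here one should check that the coset of $a$ is already determined by $p$: it is determined by $p|_{M'}$ for a small model $M'$ containing representatives of all cosets. Restricted to the group $u\mathcal{I}$, $\rho$ is therefore a group homomorphism with $\rho(u)$ idempotent, hence trivial.

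For surjectivity, $\rho(\mathcal{I})\supseteq\rho(E\,r)$ for $r\in\mathcal{I}$. Since $G(M_0)$ is dense in $E$ and $\rho$ is continuous, $\rho(E)$ is closed and contains $\rho(G(M_0))$. The image of $G(M_0)$ is dense in $G/G^{00}_{M_0}$, because every nonempty open set is the image of a set whose preimage contains a nonempty $M_0$-definable-ish neighbourhood; this step would use the fact that $G^{00}_{M_0}$ is $M_0$-type-definable. So $\rho(E)$ is everything, and hence $\rho(Er)=\rho(E)\rho(r)$ is everything. Then $\rho(u\mathcal{I})=\rho(u)\rho(\mathcal{I})=\rho(\mathcal{I})$ is onto.

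The main obstacle is $\tau$-continuity. Since $\rho$ is Stone-continuous, for closed $C\subseteq G/G^{00}_{M_0}$ the set $D=\rho^{-1}(C)\cap E$ is Stone-closed. I must show that $A=D\cap u\mathcal{I}$ is $\tau$-closed, i.e.\ that $u\circ A\cap u\mathcal{I}\subseteq A$. Take $x=\lim g_ia_i$ with $g_i\to u$ and $a_i\in A$. Then $\rho(x)=\lim\rho(g_i)\rho(a_i)$, and passing to subnets $\rho(g_i)\to\rho(u)=1$ by continuity of $\rho$ on the Stone topology, while $\rho(a_i)\in C$. Joint continuity of multiplication in the compact group $G/G^{00}_{M_0}$, together with closedness of $C$, then gives $\rho(x)\in C$, so $x\in A$. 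The delicate point is the identity $\rho(g_ia_i)=\rho(g_i)\rho(a_i)$ for $g_i\in G(M_0)$, but that is just left translation, so this route should go through.
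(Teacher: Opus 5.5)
Your proof is correct, and it is self-contained where the paper is not. The paper gives no argument for this statement; it imports it from \cite{krupinski_pillay_early} (Proposition 3.1 and Theorem 0.1(1)).

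\textbf{What your route uses.} You isolate a single structural input: $\rho$ is a Stone-continuous semigroup homomorphism from $E=S^{\mathrm{fs}}_G(\mathfrak{C},M_0)$ onto a compact Hausdorff group. From this, $\tau$-continuity on $u\mathcal{I}$ follows directly from the definition of $u\circ A$. The argument works in the flow the paper actually uses, not in some other flow that would then need transferring. It also uses no NIP. That matters, because this statement is invoked in the section on local $G/G^{00}$, where the global NIP assumption is explicitly dropped.

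\textbf{What the citation buys.} It comes with the finer factorization through $u\mathcal{I}/H(u\mathcal{I})$ and $G/G^{\infty}_{M_0}$. The paper does not need that refinement at this point.

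\textbf{Suggested tightenings.}
\begin{enumerate}
\item The coset of a realization of $p$ is already determined by $p|_{M_0}$, since $a\equiv_{M_0}b$ gives $a^{-1}b\in G^{\infty}_{M_0}\subseteq G^{00}_{M_0}$. You therefore do not need a model $M'$ containing representatives of all cosets. This also makes surjectivity immediate: $\rho(E)=G/G^{00}_{M_0}$ follows by taking, for each $g$, a global coheir of $\tp(g/M_0)$. That replaces your somewhat vague density step, which is right in substance but loosely phrased.
\item The continuity step needs no subnets. Since $\rho(u)=1$, you have $\rho(a_i)=\rho(g_i)^{-1}\rho(g_ia_i)\to\rho(x)$, so $\rho(x)\in C$ because $C$ is closed.
\item The reduction to showing $\mathrm{cl}_\tau(A)\subseteq A$ is legitimate because $\mathrm{cl}_\tau$ is a closure operator. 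Hence the $\tau$-closed sets are exactly the fixed points of $\mathrm{cl}_\tau$; it is worth saying so explicitly.
\end{enumerate}
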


On the other hand, $G^{00}_\phi$ is bounded-index and type-definable over $M_0$, so $G^{00}_{M_0}\subseteq G^{00}_\phi$ and $G^{00}_\phi/G^{00}_{M_0}$ is a closed normal subgroup of $G/G^{00}_{M_0}$, and the induced quotient map $G/G^{00}_{M_0}\to G/G^{00}_\phi$ is a continuous group epimorphism between the logic topologies. So altogether, by composing the two maps:

\begin{fact}\label{tau topology to logic topology}
    The natural map $\pi:\mathcal{G}\to G/G^{00}_\phi$ is a continuous group epimorphism, where $\mathcal{G}$ has the $\tau$-topology and $G/G^{00}_\phi$ has the logic topology.
\end{fact}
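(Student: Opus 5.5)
The statement is a composition of two maps already constructed, so the plan is to check the pieces and compose them. By Fact \ref{tau topology to logic topology preliminary}, the natural map $\rho:\mathcal{G}\to G/G^{00}_{M_0}$ is a continuous group epimorphism from the $\tau$-topology to the logic topology. Let $\sigma:G/G^{00}_{M_0}\to G/G^{00}_\phi$ be the quotient map $aG^{00}_{M_0}\mapsto aG^{00}_\phi$. I will take $\pi=\sigma\circ\rho$ and verify three things: $\pi$ is the natural map, $\sigma$ is a well-defined epimorphism, and $\sigma$ is continuous.

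For $\sigma$ to be well defined we need $G^{00}_{M_0}\subseteq G^{00}_\phi$. Since $M_0\supseteq B$, the group $G^{00}_\phi$ is type-definable over $M_0$, and it has bounded index. Minimality of $G^{00}_{M_0}$ then gives the inclusion. Both subgroups are normal, with $G^{00}_\phi$ normal by bi-invariance of $\phi$ (Fact \ref{local g00 exists}). Hence $\sigma$ is a surjective group homomorphism, and $\sigma\circ\rho$ is an epimorphism.

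Continuity of $\sigma$ is a direct check from the definition of the logic topology. Let $F\subseteq G/G^{00}_\phi$ be closed. Its preimage in $G(\mathfrak{C})$ is type-definable over a small set. That preimage is exactly the preimage of $\sigma^{-1}(F)$ under $G\to G/G^{00}_{M_0}$, because both quotient maps factor through $G$. So $\sigma^{-1}(F)$ is closed in the logic topology on $G/G^{00}_{M_0}$. The identification of $\pi$ with the natural map is the same kind of check: a type $q\in\mathcal{G}\subseteq S_G(\mathfrak{C})$ lies in a unique coset $aG^{00}_{M_0}$, hence in the coset $aG^{00}_\phi$. This is also the coset determined by the $\phi$-part of $q$, and the $\phi$-part suffices since cosets of $G^{00}_\phi$ are type-definable by $\phi$-formulas.

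There is no real obstacle here. The only points needing care are the inclusion $G^{00}_{M_0}\subseteq G^{00}_\phi$, which uses $B\subseteq M_0$, and remembering that continuity is from the $\tau$-topology, which is handled entirely by Fact \ref{tau topology to logic topology preliminary}.
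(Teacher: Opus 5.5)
Your proposal is correct and follows the paper's argument. The paper also composes the continuous epimorphism of Fact \ref{tau topology to logic topology preliminary} with the quotient map $G/G^{00}_{M_0}\to G/G^{00}_\phi$, which is well defined because $G^{00}_\phi$ has bounded index and is type-definable over $M_0\supseteq B$; your direct check of continuity between the logic topologies fills in what the paper asserts in one line.
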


On the other hand, we also have the following. Recall the notational convention from Section \ref{FAR section} that, for a formula $\theta(x)$, we use $[\theta]$ to denote the clopen subset of $E=S^{\mathrm{fs}}_G(\mathfrak{C},M_0)$ corresponding to $\theta(x)$, and use $\langle\theta\rangle$ to denote the subset $[\theta]\cap\mathcal{G}$ of $\mathcal{G}$.

\begin{lemma}\label{preimage of logic-open is countable union}
    Let $\pi:\mathcal{G}\to G/G^{00}_\phi$ be the natural map. For any open subset $U\subseteq G/G^{00}_\phi$, there are $\phi$-formulas $\theta_i(x)$ with parameters in $B$ such that $\pi^{-1}(U)=\bigcup_{i}\langle\theta_i(x)\rangle$. Since $B$ is countable, this is a countable union.
\end{lemma}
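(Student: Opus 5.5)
The plan is to factor $\pi$ through the restriction map to $S_\phi(B)$ and pull open sets back along it. Each $q\in\mathcal{G}$ is a global type in $E$, so it has a complete $\phi$-type over $B$, namely $r(q)=q|_\phi\restriction B\in S_\phi(B)$. By choice of $B$, property (1), this $\phi$-type determines a unique coset of $G^{00}_\phi$; call the induced map $\rho:S_\phi(B)\to G/G^{00}_\phi$.

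The first step is to check that $\pi=\rho\circ r$ on $\mathcal{G}$. By definition, $\pi(q)$ is the coset of $G^{00}_\phi$ on which $q$ concentrates, that is, the coset containing any realization of $q$. That coset is type-definable by $\phi$-formulas over $B$, so $q$ concentrates on it exactly when $r(q)$ contains those defining formulas. Hence $\pi(q)=\rho(r(q))$.

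Now fix $U\subseteq G/G^{00}_\phi$ open. By property (2), $\rho$ is continuous, so $\rho^{-1}(U)$ is open in $S_\phi(B)$. A basis for $S_\phi(B)$ consists of clopen sets $[\theta]$ with $\theta$ a $\phi$-formula over $B$, so $\rho^{-1}(U)=\bigcup_i[\theta_i]_{S_\phi(B)}$ for some such $\theta_i$. Since every $\phi$-formula over $B$ is a formula over $M_0\supseteq B$, each $\theta_i$ is an $L(M_0)$-formula.

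It follows that $q\in\pi^{-1}(U)$ iff $r(q)\in\rho^{-1}(U)$ iff $r(q)\ni\theta_i$ for some $i$ iff $q\vdash\theta_i$ for some $i$ iff $q\in\langle\theta_i\rangle$ for some $i$. This gives $\pi^{-1}(U)=\bigcup_i\langle\theta_i(x)\rangle$. For countability: $B$ is countable and $\phi$ is a single formula, so there are only countably many $\phi$-formulas with parameters in $B$, and the union is countable.

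The only delicate point is the first step, the compatibility $\pi=\rho\circ r$. One has to make sure that the coset assigned to $q$ by the natural map $S_G(\mathfrak{C})\to G/G^{00}_{M_0}\to G/G^{00}_\phi$ is the same coset that property (1) reads off from the $\phi$-type over $B$. This holds because both are determined by which coset's defining type $q$ contains. Notably, no Hausdorffness of $\mathcal{G}$ is needed, since the argument is purely set-theoretic once the continuity of $\rho$ is granted.
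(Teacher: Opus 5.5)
Your proof is correct and takes essentially the same route as the paper: you factor $\pi$ as the (non-continuous) restriction map $\mathcal{G}\to S_\phi(B)$ followed by the continuous map $S_\phi(B)\to G/G^{00}_\phi$, write the preimage of $U$ in $S_\phi(B)$ as a union of basic clopen sets $[\theta_i]$, and pull these back to get $\bigcup_i\langle\theta_i\rangle$. Your explicit check that $\pi$ equals the composite is a detail the paper takes for granted, and you are right that no Hausdorffness of $\mathcal{G}$ is used.
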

\begin{proof}
    Recalling that $\mathcal{G}\subseteq S_G^{\mathrm{fs}}(\mathfrak{C},M_0)$ is a set of complete global types, let $\rho:\mathcal{G}\to S_\phi(B)$ be the natural restriction map. Given a $\phi$-formula $\theta(x)$ with parameters in $B$, by abuse of notation let $[\theta(x)]$ denote the associated clopen subset of both $S^{\mathrm{fs}}(\mathfrak{C},M_0)$ and of $S_\phi(B)$. Then clearly $\rho^{-1}([\theta(x)])=[\theta(x)]\cap\mathcal{G}=\langle\theta(x)\rangle$. Note that $\rho$ is \textbf{not} a continuous map, since $\mathcal{G}$ is equipped with the $\tau$-topology, not the Stone subspace topology
    
    Recall that we chose $B$ so that every element of $S_\phi(B)$ determines a coset of $G/G^{00}_\phi$ and so that the associated map $\iota:S_\phi(B)\to G/G^{00}_\phi$ is continuous. Thus $\iota^{-1}(U)$ is an open subset of $S_\phi(B)$, and hence of form $\bigcup_i[\theta_i(x)]$ for $\theta_i$ some $\phi$-formulas with parameters in $B$. But we have $\pi=\iota\circ\rho$, and so $\pi^{-1}(U)=\rho^{-1}(\iota^{-1}(U))=\rho^{-1}(\bigcup_{i}[\theta_i(x)])=\bigcup_i\langle\theta_i(x)\rangle$.
\end{proof}

This gives us the following essential consequence.

\begin{lemma}\label{nss consequence for g00}
    Suppose that the $\tau$-topology on $\mathcal{G}$ is Hausdorff and that the sets $\langle\theta\rangle\subseteq\mathcal{G}$ are Borel. Let $\pi:\mathcal{G}\to G/G^{00}_\phi$ be the natural map. Let $N$ be a closed normal subgroup of $(G/G^{00}_\phi)^0$ such that $(G/G^{00}_\phi)^0/N$ is a Lie group. Then there is a $\phi$-formula $\theta(x)$ with parameters in $B$ such that, if $d_\theta$ denotes the pseudometric on $\mathcal{G}^0$ associated to the Borel subset $\langle\theta\rangle\cap\mathcal{G}^0$, and $N_\theta$ is the intersection of all of the conjugates of $\ker(d_\theta)$ in $\mathcal{G}^0$, then $\pi$ induces a continuous epimorphism $\mathcal{G}^0/N_\theta\to (G/G^{00}_\phi)^0/N$.
\end{lemma}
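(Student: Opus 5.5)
The idea is to mimic the proof of Theorem \ref{general far theorem}, with $G/G^{00}_\phi$ in the role of the Lie quotient. By Lemma \ref{connected component of quotient}, $\pi(\mathcal{G}^0)=(G/G^{00}_\phi)^0$. Composing with the quotient map gives a continuous epimorphism $\sigma:\mathcal{G}^0\to (G/G^{00}_\phi)^0/N$. It therefore suffices to find a $\phi$-formula $\theta$ with parameters in $B$ such that $N_\theta\leqslant\ker(\sigma)$. Since $\mathcal{G}^0$ is compact Hausdorff, $\sigma$ is a closed map, so the induced map $\mathcal{G}^0/N_\theta\to(G/G^{00}_\phi)^0/N$ is automatically a continuous epimorphism.

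First, use the NSS property of the Lie group $(G/G^{00}_\phi)^0/N$. Pick an identity neighbourhood $O$ in it containing no nontrivial subgroup, and an open identity neighbourhood $V$ with $VV^{-1}\subseteq O$. Let $W\subseteq G/G^{00}_\phi$ be an open set with $W\cap(G/G^{00}_\phi)^0$ equal to the preimage of $V$ in $(G/G^{00}_\phi)^0$. This is possible because $(G/G^{00}_\phi)^0$ is closed, so it carries the subspace topology. Set $U=\pi^{-1}(W)\cap\mathcal{G}^0=\sigma^{-1}(V)$. This is a nonempty $\tau$-open subset of $\mathcal{G}^0$, containing $u$. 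Any subgroup of $\mathcal{G}^0$ contained in $UU^{-1}$ maps under $\sigma$ into a subgroup contained in $VV^{-1}\subseteq O$. That image is therefore trivial, so the subgroup lies in $\ker(\sigma)$; call this ($\ast$).

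Next, apply Lemma \ref{preimage of logic-open is countable union} to $W$. This gives $\pi^{-1}(W)=\bigcup_i\langle\theta_i\rangle$, a countable union with each $\theta_i$ a $\phi$-formula over $B$, and hence $U=\bigcup_i(\langle\theta_i\rangle\cap\mathcal{G}^0)$. Each piece is Borel by hypothesis. Now $U$ is nonempty and open in the compact group $\mathcal{G}^0$, so it has positive Haar measure $\eta_0(U)>0$. By countable additivity, some $\theta=\theta_i$ has $\eta_0(\langle\theta\rangle\cap\mathcal{G}^0)>0$.

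Finally, argue as in the last paragraph of the proof of Theorem \ref{general far theorem}. Write $B_\theta=\langle\theta\rangle\cap\mathcal{G}^0$. If $g\in\ker(d_\theta)$, then $\eta_0(gB_\theta\triangle B_\theta)=0$ while $\eta_0(B_\theta)>0$, so $gB_\theta\cap B_\theta\neq\varnothing$. Hence $g\in B_\theta B_\theta^{-1}\subseteq UU^{-1}$. Now $\ker(d_\theta)$ is a subgroup by Lemma \ref{pseudometric and closed subgroup lemma}, so ($\ast$) gives $\ker(d_\theta)\leqslant\ker(\sigma)$. Therefore $N_\theta\leqslant\ker(d_\theta)\leqslant\ker(\sigma)$, as required.

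The main point needing care is the interplay between the topologies. $U$ must be $\tau$-open in $\mathcal{G}^0$, and it is, since $\pi$ is continuous by Fact \ref{tau topology to logic topology}. The Haar-positivity step needs only countability, which comes from the countability of $B$ rather than of $T$. Beyond that, the argument is a routine transcription of the earlier proof.
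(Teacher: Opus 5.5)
Your proposal is correct and takes essentially the same route as the paper's proof. Both arguments restrict $\pi$ to $\mathcal{G}^0$ via Lemma \ref{connected component of quotient}, use NSS to get an identity neighbourhood whose $UU^{-1}$ contains no subgroup escaping $N$, split its preimage via Lemma \ref{preimage of logic-open is countable union} into countably many Borel pieces $\langle\theta_i\rangle\cap\mathcal{G}^0$, pick one of positive Haar measure, and conclude $\ker(d_\theta)\subseteq B_\theta B_\theta^{-1}$. The only difference is cosmetic: you phrase the NSS step through the composite $\sigma$ and apply it to subgroups of $\mathcal{G}^0$, whereas the paper pushes $N_\theta$ forward and checks $\pi(N_\theta)\subseteq UU^{-1}$.
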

\begin{proof} By Fact \ref{tau topology to logic topology}, $\pi$ is a continuous epimorphism, so by Lemma \ref{connected component of quotient} the restriction of $\pi$ to $\mathcal{G}^0$ yields a continuous epimorphism $\mathcal{G}^0\to (G/G^{00}_\phi)^0$, and thus we just need to show that there is some $\theta$ with $\pi(N_\theta)\leqslant N$. We will argue similarly as in Theorem \ref{general far theorem}. As in Theorem \ref{general far theorem}, by NSS of Lie groups we may find $U\subseteq (G/G^{00}_\phi)^0$ an open neighborhood of the identity such that any subgroup of $(G/G^{00}_\phi)^0$ contained in $UU^{-1}$ is contained in $N$. So we just need to find some $\theta$ with $\pi(N_\theta)\subseteq UU^{-1}$. Let $U'\subseteq G/G^{00}_\phi$ be open such that $U=U'\cap (G/G^{00}_\phi)^0$.

By Lemma \ref{preimage of logic-open is countable union}, there are $\phi$-formulas $\theta_i(x):i\in\omega$ with parameters in $B$ such that $\pi^{-1}(U')=\bigcup_{i\in\omega}\langle\theta_i\rangle$. Note that, since $\pi(\mathcal{G}^0)=(G/G^{00})_\phi^0$ and $U=U'\cap (G/G^{00}_\phi)^0$, we have $\pi^{-1}(U)=\pi^{-1}(U')\cap\mathcal{G}^0$ and thus $\pi^{-1}(U)=\bigcup_{i\in\omega}\langle\theta_i\rangle\cap\mathcal{G}^0$. By continuity of $\pi$, $\pi^{-1}(U)$ is a (non-empty) open subset of $\mathcal{G}^0$, so if $\eta_0$ denotes the normalized Haar measure on $\mathcal{G}^0$ then $\eta_0(\pi^{-1}(U))>0$. Since the $\langle\theta_i\rangle\cap\mathcal{G}^0$ are all Borel, by countable additivity there is thus some $i\in\omega$ with $\eta_0(\langle\theta_i\rangle\cap\mathcal{G}^0)>0$. 

As in Theorem \ref{general far theorem}, denote $\langle\theta_i\rangle_0=\langle\theta_i\rangle\cap\mathcal{G}^0$. So $\eta_0(\langle\theta_i\rangle_0)>0$. Arguing as in the proof of Theorem \ref{general far theorem}, it follows that $\ker(d_{\theta_i})\subseteq \langle\theta_i\rangle_0\langle\theta_i\rangle_0^{-1}$, and hence in particular $N_{\theta_i}\subseteq\langle\theta_i\rangle_0\langle\theta_i\rangle_0^{-1}$. But $\langle\theta_i\rangle_0\subseteq\pi^{-1}(U)$, so $N_{\theta_i}\subseteq\pi^{-1}(U)\pi^{-1}(U)^{-1}$, whence $\pi(N_{\theta_i})\subseteq UU^{-1}$ and we are done.
\end{proof}

On the other hand, by Lemma \ref{general local far result} we have the following.
\begin{lemma}\label{local ellis group lemma}
    Suppose that the $\tau$-topology on $\mathcal{G}$ is Hausdorff. Suppose also that $\theta(x)$ is a formula implying $x\in G$ and such that the right-stratification $\theta(xy)$ has VC-codensity at most $\delta$, and suppose that the subset $\langle\theta(x)\rangle$ of $\mathcal{G}$ is Borel. If $d_\theta$ is the pseudometric on $\mathcal{G}^0$ corresponding to the Borel subset $\langle\theta\rangle\cap\mathcal{G}^0$, and $N_\theta$ is the intersection of all of the conjugates of $\ker(d_\theta)$ by elements of $\mathcal{G}^0$, then $\mathcal{G}^0/N_\theta$ is an inverse limit of compact Lie groups of dimension at most $(4\delta)^2$.
\end{lemma}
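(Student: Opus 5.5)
The plan is to reduce directly to Lemma \ref{general local far result}, applied to the connected compact Hausdorff group $\mathcal{G}^0$ and the Borel subset $\langle\theta\rangle_0:=\langle\theta\rangle\cap\mathcal{G}^0$. Since the $\tau$-topology is Hausdorff, $\mathcal{G}$ is a compact Hausdorff topological group by Ellis' joint continuity theorem, so $\mathcal{G}^0$ is a connected compact Hausdorff group. Also, $\langle\theta\rangle_0$ is Borel in $\mathcal{G}^0$ because $\langle\theta\rangle$ is Borel and $\mathcal{G}^0$ is closed. With these in hand, the only remaining hypothesis of Lemma \ref{general local far result} is that the family of left translates $\{p\langle\theta\rangle_0:p\in\mathcal{G}^0\}$ has VC-density at most $\delta$. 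Once that is checked, the kernel $N$ produced by Lemma \ref{general local far result} is by definition the intersection of the $\mathcal{G}^0$-conjugates of $\ker(d_\theta)$, which is exactly $N_\theta$, and the conclusion follows.

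For the VC-density bound I will first treat the family of translates of $\langle\theta\rangle$ by elements of all of $\mathcal{G}$, rerunning the argument of Lemma \ref{theta is vc set in ellis group}. That lemma did not use the global NIP assumption or countability anywhere; it only used the description of the semigroup operation on $E=S^{\mathrm{fs}}_G(\mathfrak{C},M_0)$ and the group structure of $u\mathcal{I}$. Concretely, fix $q_1,\dots,q_m\in\mathcal{G}$ and choose $b_i\models q_i|_{M_0}$. Given $p_1,\dots,p_s\in\mathcal{G}$ whose translates $p_k\langle\theta\rangle$ have pairwise distinct traces on $\{q_1,\dots,q_m\}$, pick $a_k\models p_k^{-1}|_{(M_0,b_1,\dots,b_m)}$, with $p_k^{-1}$ the inverse in $\mathcal{G}$. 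Then $a_kb_i\models(p_k^{-1}q_i)|_{M_0}$. Since $\theta$ has parameters in $M_0$ in our setting (here $\theta$ is a $\phi$-formula over $B\subseteq M_0$, or in general an $L(M_0)$-formula), this gives
\[
a_kb_i\models\theta \iff p_k^{-1}q_i\in\langle\theta\rangle \iff q_i\in p_k\langle\theta\rangle .
\]
So the $a_k$ realize pairwise distinct $\theta(xy)$-types over $\{b_1,\dots,b_m\}$. The VC-codensity hypothesis on $\theta(xy)$ then bounds $s$ by $Cm^\delta$ for a constant $C$.

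Next I pass to $\mathcal{G}^0$. The family $\{p\langle\theta\rangle_0:p\in\mathcal{G}^0\}$ is the trace on $\mathcal{G}^0$ of a subfamily of $\{p\langle\theta\rangle:p\in\mathcal{G}\}$, because $p\langle\theta\rangle\cap\mathcal{G}^0=p\langle\theta\rangle_0$ when $p\in\mathcal{G}^0$. Passing to a subfamily and taking traces on a subset can only decrease the shatter function. Hence this family satisfies $\pi(n)\le Cn^\delta$, and its VC-density is at most $\delta$.

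The main point needing care is the VC step: the statement of Lemma \ref{theta is vc set in ellis group} asks that $\theta$ be over $M_0$ and is phrased in the global NIP section. I need $\theta$ to have parameters in $M_0$ so that $q_i\vdash\theta$ is determined by $q_i|_{M_0}$. In the present setting this holds because $B\subseteq M_0=\dcl(\varnothing)$, and if $\theta$ is only assumed to imply $x\in G$, I will read it as an $L(M_0)$-formula. Everything else is formal.
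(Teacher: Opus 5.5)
Your proposal is correct and follows the paper's proof essentially verbatim. Like the paper, you rerun the argument of Lemma~\ref{theta is vc set in ellis group} (which uses no global NIP) to bound the VC-density of the translates of $\langle\theta\rangle$, pass to traces on $\mathcal{G}^0$, and apply Lemma~\ref{general local far result} to $\langle\theta\rangle\cap\mathcal{G}^0$; your check that $\theta$ has parameters in $M_0$, so that membership in $\langle\theta\rangle$ is decided by restrictions to $M_0$, is a point the paper leaves implicit and you handle it correctly.
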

\begin{proof}
    By the proof of Lemma \ref{theta is vc set in ellis group}, which does not actually use the global NIP assumption at all, the family of translates of $\{g\langle\theta\rangle:g\in\mathcal{G}\}$ has VC-density at most $\delta$. So, letting $\langle\theta\rangle_0=\langle\theta\rangle\cap\mathcal{G}^0$, the same applies to the family $\{g\langle\theta\rangle_0:g\in\mathcal{G}^0\}$. Now the claim follows by applying Lemma \ref{general local far result} to the Borel subset $\langle\theta\rangle_0$ of $\mathcal{G}^0$.
\end{proof}

The previous two lemmas give the main result.

\begin{theorem}\label{local g/g00 main theorem}
    Suppose $T$ is NIP. Then $G/G^{00}_\phi$ is of finite Archimedean rank. More precisely, if $\phi(x,y)$ has VC-codensity at most $\delta$, then $G/G^{00}_\phi$ is an inverse limit of compact Lie groups of dimension at most $(4\delta)^2$.
\end{theorem}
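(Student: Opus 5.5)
By Lemma \ref{archimedean rank of connected component} it suffices to show that $(G/G^{00}_\phi)^0$ is an inverse limit of compact Lie groups of dimension at most $(4\delta)^2$. Since $(G/G^{00}_\phi)^0$ is compact Hausdorff, Peter--Weyl (Fact \ref{peter-weyl 2}) already gives that it is an inverse limit of compact Lie groups. So the real task is a dimension bound: for every closed normal $N\trianglelefteq (G/G^{00}_\phi)^0$ with $(G/G^{00}_\phi)^0/N$ Lie, I will show that this Lie quotient has dimension at most $(4\delta)^2$. Restricting the inverse system to such quotients then gives the claim.

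I work in the setting fixed just before the statement. $B$ is countable, $\dcl(\varnothing)=M_0\supseteq B$, and $\mathcal{G}=u\mathcal{I}\subseteq S^{\mathrm{fs}}_G(\mathfrak{C},M_0)$. Because $T$ is globally NIP, Fact \ref{tau-topology is hausdorff} makes the $\tau$-topology on $\mathcal{G}$ Hausdorff, and Fact \ref{borel definability} makes every $\langle\theta\rangle$ Borel. These are exactly the hypotheses of Lemma \ref{nss consequence for g00} and Lemma \ref{local ellis group lemma}. Note that adding constants for $M_0$ does not change $G^{00}_\phi$, which is type-definable over $\varnothing$, nor the VC-codensity of $\phi$, so this reduction is harmless.

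Now fix $N$ as above. By Lemma \ref{nss consequence for g00} there is a $\phi$-formula $\theta(x)$ with parameters in $B$ such that $\pi$ induces a continuous epimorphism $\mathcal{G}^0/N_\theta\to (G/G^{00}_\phi)^0/N$. Since $\phi$ is bi-invariant, it is in particular right-invariant with VC-codensity at most $\delta$. Lemma \ref{vc codensity lemma} then gives that the right-stratification $\theta(xy)$ has VC-codensity at most $\delta$. Two points need a brief check: $\theta$ implies $x\in G$, which holds because $\phi$ does; and Lemma \ref{vc codensity lemma} is applied in the current expanded language, which is fine since VC-codensity of a fixed formula only depends on its instances in $\mathfrak{C}$.

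Lemma \ref{local ellis group lemma} now shows that $\mathcal{G}^0/N_\theta$ is an inverse limit of compact Lie groups of dimension at most $(4\delta)^2$. Applying Lemma \ref{lie quotient of far group} to the continuous epimorphism $\mathcal{G}^0/N_\theta\to (G/G^{00}_\phi)^0/N$, whose target is Lie, gives $\dim\big((G/G^{00}_\phi)^0/N\big)\leqslant(4\delta)^2$, which finishes the argument.

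The step I expect to need the most care is the compatibility of constants in the first reduction: adding the constants for $M_0$ must leave both $G^{00}_\phi$ and the VC-codensity bound unchanged, and I only sketched why. Everything substantive is already packaged in Lemmas \ref{nss consequence for g00} and \ref{local ellis group lemma}, so beyond that check the proof is an assembly of earlier results.
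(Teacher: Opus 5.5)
Your proposal is correct and is essentially the paper's own proof. It uses the same reduction to Lie quotients of $(G/G^{00}_\phi)^0$ via Lemma \ref{archimedean rank of connected component} and Peter--Weyl, and the same chain Lemma \ref{nss consequence for g00}, Lemma \ref{vc codensity lemma}, Lemma \ref{local ellis group lemma}, Lemma \ref{lie quotient of far group}, with Facts \ref{tau-topology is hausdorff} and \ref{borel definability} supplying the hypotheses.

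One small quibble: your reason that $\theta$ implies $x\in G$ is only valid if complements are taken inside $G$, since a negated instance of $\phi$ need not imply $x\in G$. This does not matter, because replacing $\theta$ by $\theta\wedge x\in G$ leaves $\langle\theta\rangle$ unchanged (elements of $\mathcal{G}$ are concentrated on $G$), and the counting in the proof of Lemma \ref{vc codensity lemma} goes through verbatim.
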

\begin{proof}By Lemma \ref{archimedean rank of connected component}, it suffices to show that $(G/G^{00}_\phi)^0$ is an inverse limit of compact Lie groups of dimension at most $(4\delta)^2$. By Peter-Weyl, $(G/G^{00}_\phi)^0$ is an inverse limit of compact Lie groups, so it suffices to show that, if $N$ is any closed normal subgroup of $(G/G^{00}_\phi)^0$ such that $(G/G^{00}_\phi)^0/N$ is a Lie group, then $(G/G^{00}_\phi)^0/N$ has dimension at most $(4\delta)^2$. So fix such an $N$.

Let $\pi:\mathcal{G}\to G/G^{00}_\phi$ be the natural map. Since $T$ is NIP, by Fact \ref{tau-topology is hausdorff} and Fact \ref{borel definability} the $\tau$-topology on $\mathcal{G}$ is Hausdorff and every set $\langle\theta\rangle$ is Borel. So by Lemma \ref{nss consequence for g00} there is a $\phi$-formula $\theta(x)$ such that, letting $d_\theta$ be the pseudometric on $\mathcal{G}^0$ associated to the Borel subset $\langle\theta\rangle\cap\mathcal{G}^0$, if $N_\theta$ is the intersection of all of the conjugates of $\ker(d_\theta)$ in $\mathcal{G}^0$ then $\pi$ induces a continuous epimorphism $\mathcal{G}^0/N_\theta\to(G/G^{00}_\phi)^0/N$. By Lemma \ref{vc codensity lemma}, the VC-codensity of the right-stratification $\theta(xy)$ is at most $\delta$, and so by Lemma \ref{local ellis group lemma} $\mathcal{G}^0/N_\theta$ is an inverse limit of compact Lie groups of dimension at most $(4\delta)^2$, and so by Lemma \ref{lie quotient of far group} we are done.
\end{proof}

Note that, by Fact \ref{tau topology to logic topology preliminary} and by Theorem \ref{FAR main theorem}, if $T$ has bounded VC-codensity then $G/G^{00}$ itself has finite Archimedean rank. The point of Theorem \ref{local g/g00 main theorem} is to get a `local' version. Due to this it is quite unsatisfying that we still have to make a global NIP assumption, and it would be much better to remove the global NIP assumption and hence answer Question \ref{local g/g00 question} in full generality. Let me give some speculative remarks on the possibility of doing this.

The most naive way to attempt this would be to try to apply Theorem \ref{general far theorem} directly to $G/G^{00}_\phi$. Recall that (by virtue of Lemma \ref{local logic topology refinement}) we have a countable parameter set $B$ and a natural continuous surjection $\pi:S_\phi(B)\to G/G^{00}_\phi$. Now, for any $\phi$-formula $\theta(x)$ with parameters in $B$, the pushforward of the clopen subset $[\theta(x)]\subseteq S_\phi(B)$ under $\pi$ gives a closed subset $C_\theta\subseteq G/G^{00}_\phi$, and any open subset of $G/G^{00}_\phi$ will be a union of sets of form $C_\theta$. Since $B$ is countable, there are only countably many $C_\theta$, and these are the obvious candidate sets to try to apply Theorem \ref{general far theorem} to. However, it is not at all clear if the family of left translates of $C_\theta$ has bounded VC-density.\begin{question}\label{vc-density question}
    Suppose $\phi(x,y)$ is a bi-invariant NIP formula. Given a $\phi$-formula $\theta(x)$, let $C_\theta$ be the closed subset of $G/G^{00}_\phi$ described above. Does $(gC_\theta:g\in G/G^{00}_\phi)$ have bounded VC-density, depending only on the VC-codensity of $\phi(x,y)$?
\end{question} Question \ref{vc-density question} is related in spirit to an open question of Chernikov, Pillay, and Simon from \cite{chernikov_pillay_simon}, of whether naming $G^{00}$ by a predicate in an NIP theory preserves NIP. More recently, Chernikov gave a `local' strengthening of this question in Question 3.40 of \cite{chernikov}, asking whether $G^{00}_\phi$ is externally definable, and this is extremely closely related to Question \ref{vc-density question}. It is not at all clear to me how to address any of those questions.

A more promising approach to Question \ref{local g/g00 question} comes from the already-mentioned recent work of Devrim Pekmezci. Pekmezci has shown in \cite{devrim} that, in an arbitrary theory, one can define the `local Ellis group' associated to a bi-invariant formula $\phi(x,y)$, and that if $\phi(x,y)$ is NIP then the $\tau$-topology on the local Ellis group is Hausdorff. It seems promising to me to obtain a positive answer to Question \ref{local g/g00 question} using these local Ellis groups. Indeed the proof of Theorem \ref{local g/g00 main theorem} use the global NIP assumption only to have access to Fact \ref{tau-topology is hausdorff}, of which \cite{devrim} gives the local analogue, and Fact \ref{borel definability}. So what is needed is to make the proof of Theorem \ref{local g/g00 main theorem} work without the global NIP assumption is give a local analogue of Fact \ref{borel definability}. The proof of Fact \ref{borel definability} in \cite{chernikov_gannon_krupinski} resembles the proof of Borel definability of invariant types in NIP theories from \cite{hrushovski_pillay}, so perhaps a starting point might be to try to modify the techniques of Simon's theorem from \cite{simon_rosenthal} on the Borel-definability of invariant $\phi$-types for an NIP formula $\phi$. We do not pursue this here.

A stronger result one might hope for, in analogue with Theorem \ref{FAR main theorem}, is the following: \begin{question}\label{local ellis group question}
    Does the local Ellis group associated to a bi-invariant NIP formula $\phi(x,y)$ have finite Archimedean rank, depending only on the VC-codensity of $\phi$?
\end{question}Pekmezci shows the local analogue of Fact \ref{tau topology to logic topology preliminary} in \cite{devrim}, so a positive answer to this question would immediately imply a positive answer to Question \ref{local g/g00 question}. However, adapting the methods from the proof of Theorem \ref{FAR main theorem} to the local setting seems very challenging to me, since without the global NIP assumption we do not have access to Theorem \ref{restriction map ellis group theorem} and hence do not have access to the essential Corollary \ref{tau-open is countable union}. It is completely unclear to me if there is hope of modifying these results to the local setting.

\appendix
\section{Haar measure in the quotient group}
In this section we record some basic facts about certain pseudometrics associated to Borel subsets of a compact Hausdorff group. This material is surely standard, and it is closely related to a technique that already appears implicitly in Hrushovski's proof of Theorem 7.12 in \cite{hrushovski} (though without details), but we could not find a reference, so we include it in this appendix. Let $G$ be a compact Hausdorff group, and let $\eta$ be the (unique, bi-invariant) normalized Haar measure on $G$. We define the following:

\begin{definition}\label{pseudometric definition}Given a Borel subset $B\subseteq G$, define a map $d_B:G\times G\to [0,1]$ by taking $d_B(g,h)=\eta(gB\triangle hB)$ for all $g,h\in G$.
\end{definition}

Now we have the following, eg by Theorem A in Chapter XII of \cite{halmos}:
\begin{fact}
    For any Borel subset $B\subseteq G$, the map $g\mapsto d_B(g,1)$ is continuous.
\end{fact}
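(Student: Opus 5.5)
The plan is to reduce the statement to continuity of the left regular representation on $L^1(G,\eta)$, via the standard density-of-continuous-functions argument. Write $\lambda(g)f(x)=f(g^{-1}x)$. Since $|\mathbf{1}_{gB}-\mathbf{1}_B|=\mathbf{1}_{gB\triangle B}$, we have
$$d_B(g,1)=\|\lambda(g)\mathbf{1}_B-\mathbf{1}_B\|_1 .$$
By the reverse triangle inequality,
$$|d_B(g,1)-d_B(h,1)|\leqslant\|\lambda(g)\mathbf{1}_B-\lambda(h)\mathbf{1}_B\|_1 .$$
It therefore suffices to show that $g\mapsto\lambda(g)\mathbf{1}_B$ is continuous from $G$ into $L^1(G,\eta)$.

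First step: fix $\varepsilon>0$. Since $\eta$ is a regular Borel probability measure on the compact Hausdorff space $G$, continuous functions are dense in $L^1(G,\eta)$ (Lusin / Riesz representation). So choose $f\in C(G)$ with $\|\mathbf{1}_B-f\|_1<\varepsilon$. Because $\eta$ is left-invariant, $\lambda(g)$ is an isometry of $L^1$. Hence
$$\|\lambda(g)\mathbf{1}_B-\lambda(h)\mathbf{1}_B\|_1\leqslant 2\varepsilon+\|\lambda(g)f-\lambda(h)f\|_1\leqslant 2\varepsilon+\|\lambda(g)f-\lambda(h)f\|_\infty .$$

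Second step, the only point needing care: show that $h\mapsto\lambda(h)f$ is continuous in sup norm, i.e. $\sup_x|f(g^{-1}x)-f(h^{-1}x)|\to 0$ as $h\to g$. This is uniform continuity of a continuous function on a compact group. I would prove it by the usual compactness argument. For each $x$, choose a symmetric neighbourhood $W_x$ of $1$ with $|f(wx)-f(x)|<\varepsilon/2$ for $w\in W_x^2$. Then choose symmetric $V_x$ with $V_x^2\subseteq W_x$, cover $G$ by finitely many sets $V_{x_j}x_j$, and take $V=\bigcap_j V_{x_j}$. This yields $|f(vy)-f(y)|<\varepsilon$ for all $y\in G$ and $v\in V$. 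Applying this with $y=g^{-1}x$ and $v=h^{-1}g$ (so that $vy=h^{-1}x$) gives $\|\lambda(h)f-\lambda(g)f\|_\infty<\varepsilon$ whenever $h^{-1}g\in V$, i.e. for $h$ in the neighbourhood $gV^{-1}$ of $g$.

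Combining the two steps, $|d_B(g,1)-d_B(h,1)|<3\varepsilon$ for $h$ near $g$, which gives continuity. Nothing here is specific to the model-theoretic setting, and I expect no real obstacle. The one thing to check carefully is that the translation side in the uniform continuity matches the side used in $\lambda$, which the choice $v=h^{-1}g$ handles.
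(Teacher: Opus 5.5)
Your proof is correct. The identity $d_B(g,1)=\|\lambda(g)\mathbf{1}_B-\mathbf{1}_B\|_1$ holds, and so does the reduction via the reverse triangle inequality. The $L^1$-approximation by some $f\in C(G)$ also works, using that each $\lambda(g)$ is an $L^1$-isometry and that $\eta(G)=1$. Finally, the uniform-continuity lemma with $v=h^{-1}g$ checks out. The paper does not prove this fact itself; it only cites \cite{halmos} (Theorem A of Chapter XII). So your argument is a self-contained substitute, not a reconstruction of anything in the text.

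For comparison, the other standard proof works directly with sets:
\begin{enumerate}
\item By regularity of $\eta$, choose a compact $C$ and an open $U$ with $C\subseteq B\subseteq U$ and $\eta(U\setminus C)<\varepsilon$.
\item By compactness of $C$, choose a neighbourhood $V$ of $1$ with $VC\subseteq U$.
\item For $g\in V$ we have $gC\triangle C\subseteq (U\setminus C)\cup(U\setminus gC)$ and $\eta(U\setminus gC)=\eta(U\setminus C)$. Hence $\eta(gB\triangle B)\leqslant\eta(g(B\setminus C))+\eta(gC\triangle C)+\eta(B\setminus C)<4\varepsilon$.
\item Continuity at an arbitrary point follows from $|d_B(g,1)-d_B(h,1)|\leqslant d_B(g,h)=d_B(h^{-1}g,1)$.
\end{enumerate}
That route needs only inner/outer regularity and the compactness fact $VC\subseteq U$. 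Yours packages the same regularity-plus-compactness content as density of $C(G)$ in $L^1$ together with uniform continuity. In exchange, your argument proves more: it works verbatim with any $F\in L^1(G,\eta)$ in place of $\mathbf{1}_B$. It therefore shows that the left regular representation is strongly continuous on $L^1(G,\eta)$.
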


So we get an easy consequence:
\begin{lemma}\label{pseudometric and closed subgroup lemma}For every Borel subset $B\subseteq G$, $d_B$ defines a left-invariant pseudometric on $G$, and the set $\ker(d_B):=\{g:d_B(g,1)=0\}$ is a closed subgroup of $G$.
\end{lemma}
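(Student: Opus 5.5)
The plan is to verify the pseudometric axioms directly from properties of symmetric difference and of the Haar measure. Then I will obtain closedness of $\ker(d_B)$ from the continuity fact just quoted (Theorem A in Chapter XII of \cite{halmos}).

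\textbf{Step 1: pseudometric axioms.} Nonnegativity is clear, and $d_B(g,g)=\eta(\varnothing)=0$. Symmetry holds because $gB\triangle hB=hB\triangle gB$. For the triangle inequality I use the set inclusion $gB\triangle kB\subseteq (gB\triangle hB)\cup(hB\triangle kB)$ together with subadditivity of $\eta$. All the sets involved are Borel, since left translation is a homeomorphism.

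\textbf{Step 2: left-invariance.} For $f,g,h\in G$ we have $fgB\triangle fhB=f(gB\triangle hB)$. Left-invariance of $\eta$ then gives $d_B(fg,fh)=d_B(g,h)$.

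\textbf{Step 3: $\ker(d_B)$ is a closed subgroup.} By the quoted fact, $g\mapsto d_B(g,1)$ is continuous, so $\ker(d_B)$ is the preimage of $\{0\}$ and hence closed. It contains $1$. For closure under products, take $g,h\in\ker(d_B)$. The triangle inequality and left-invariance give
\[
d_B(gh,1)\leqslant d_B(gh,g)+d_B(g,1)=d_B(h,1)+d_B(g,1)=0.
\]
For inverses, left-invariance gives $d_B(g^{-1},1)=d_B(1,g)=d_B(g,1)=0$.

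There is no real obstacle; the only nontrivial input is the continuity of $g\mapsto\eta(gB\triangle B)$, which is the cited fact, and everything else is elementary.
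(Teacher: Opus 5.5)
Your proposal is correct and follows the same approach as the paper. Both proofs get left-invariance from left-invariance of $\eta$, the triangle inequality from a symmetric-difference inclusion, and closedness of $\ker(d_B)$ from the cited continuity of $g\mapsto d_B(g,1)$. The only difference is cosmetic: the paper uses $B\triangle (gh)B\subseteq (B\triangle gB)\cup g(B\triangle hB)$, whereas you use the general three-point inclusion and then verify closure under products and inverses explicitly, which the paper leaves implicit.
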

\begin{proof}
     Left-invariance follows from left-invariance of the Haar measure, and as usual the triangle inequality follows from left-invariance of the Haar measure and the identity $B\triangle (gh)B\subseteq (B\triangle gB)\cup (gB\triangle ghB)=(B\triangle gB)\cup g(B\triangle hB).$
\end{proof}

We will need to use the interaction of these pseudometrics with the quotient groups of $G$. Let $N$ be a closed normal subgroup of $G$, let $\mu$ be the normalized Haar measure on $N$, and let $\nu$ be the normalized Haar measure on $G/N$. Since $G$, $N$, and $G/N$ are compact groups, these Haar measures are bi-invariant, and all bounded Borel functions on the respective groups are integrable. Also, (eg by Theorem C in Chapter XII of \cite{halmos}), if $\pi:G\to G/N$ denotes the quotient map then $\nu(C)=\eta(\pi^{-1}(C))$ for all Borel subsets $C\subseteq G/N$. Given a bounded Borel function $f:G\to\mathbb{C}$, we define $T(f)$ a bounded Borel function on $G/N$ by taking $T(f)(Ng)=\int_Nf(ng)d\mu(n)$. Then we have the `quotient integral formula' (eg Theorem 2.49 in \cite{folland}): for any $f\in L^1(G)$, $\int_G fd\eta=\int_{G/N}T(f)d\nu$. %Also, since compact groups are unimodular, the Haar measure is bi-invariant, and thus .

\begin{lemma}
    Suppose that $B\subseteq G$ is Borel and that $N$ is contained in $\ker(d_B)$, ie that $\eta(nB\triangle B)=0$ for all $n\in N$. Let $\mathbf{1}_B:G\to[0,1]$ be the indicator function of $B$. Then $T(\mathbf{1}_B)(Ng)=\mathbf{1}_B(g)$ for almost all $g\in G$.
\end{lemma}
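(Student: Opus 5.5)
The plan is to compute the $L^1$ norm of $T(\mathbf{1}_B)\circ\pi-\mathbf{1}_B$ on $G$ and show that it vanishes. Here $\pi:G\to G/N$ is the quotient map, so $T(\mathbf{1}_B)\circ\pi(g)=\int_N\mathbf{1}_B(ng)\,d\mu(n)$. Because $N$ is normal we have $Ng=gN$, so this can be rewritten in whichever form is convenient. Note that $\mathbf{1}_B(ng)=\mathbf{1}_{n^{-1}B}(g)$. The hypothesis says $\eta(nB\triangle B)=0$ for all $n\in N$, and replacing $n$ by $n^{-1}\in N$ also gives $\eta(n^{-1}B\triangle B)=0$.

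First I will show that the function $F(n,g)=\mathbf{1}_B(ng)$ is Borel on $N\times G$, hence jointly measurable for $\mu\times\eta$. This holds because it is the composition of the continuous multiplication map with a Borel indicator. Then I will apply Tonelli to the nonnegative function $|F(n,g)-\mathbf{1}_B(g)|$:
\[
\int_G\int_N|\mathbf{1}_B(ng)-\mathbf{1}_B(g)|\,d\mu(n)\,d\eta(g)=\int_N\eta(n^{-1}B\triangle B)\,d\mu(n)=0 .
\]
Next I will use $|T(\mathbf{1}_B)(Ng)-\mathbf{1}_B(g)|\leqslant\int_N|\mathbf{1}_B(ng)-\mathbf{1}_B(g)|\,d\mu(n)$, which holds because $\mu$ is a probability measure. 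Integrating this over $G$ against $\eta$ gives $\int_G|T(\mathbf{1}_B)\circ\pi-\mathbf{1}_B|\,d\eta=0$. Since the integrand is nonnegative, it vanishes $\eta$-almost everywhere, which is exactly the claim.

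I expect the main obstacle to be measurability. Product measurability of sets in $N\times G$ can be subtle for non-metrizable compact groups, because the Borel $\sigma$-algebra of the product may be strictly larger than the product of the Borel $\sigma$-algebras. If Tonelli cannot be applied directly to Borel sets, my first fallback is to approximate $B$ in $\eta$-measure by Baire sets, or by sets that depend on only countably many coordinates, using regularity of Haar measure. Alternatively, I would approximate $\mathbf{1}_B$ in $L^1(\eta)$ by continuous functions $f_k$, for which joint continuity makes Fubini unproblematic. For continuous $f$, the map $f\mapsto\int\!\!\int|f(ng)-f(g)|\,d\mu\,d\eta$ is $L^1$-continuous, since $\int_G|f(ng)|\,d\eta(g)=\|f\|_1$ by invariance. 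Passing to the limit should then carry the computation over to $\mathbf{1}_B$, with the inner integral $\int_G|\mathbf{1}_B(ng)-\mathbf{1}_B(g)|\,d\eta(g)=\eta(n^{-1}B\triangle B)=0$ holding for every individual $n$.
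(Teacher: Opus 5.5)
Your proposal is correct and follows essentially the same route as the paper. You write $\mathbf{1}_B(g)=\int_N\mathbf{1}_B(g)\,d\mu(n)$, bound $|T(\mathbf{1}_B)(Ng)-\mathbf{1}_B(g)|$ by $\int_N|\mathbf{1}_B(ng)-\mathbf{1}_B(g)|\,d\mu(n)$, swap the integrals by Fubini--Tonelli, and finish with $\mathbf{1}_B(ng)=\mathbf{1}_{n^{-1}B}(g)$ and $\eta(n^{-1}B\triangle B)=0$. The paper cites Fubini--Tonelli without comment; your measurability worry is legitimate in the non-metrizable case, but the Fubini--Tonelli theorem for the Radon product of finite Radon measures on compact spaces covers nonnegative Borel functions on $N\times G$, so your fallback approximations are not needed.
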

\begin{proof}
    Define $f:G\to [0,1]$ by $f(g)=T(\mathbf{1}_B)(Ng)=\int_N\mathbf{1}_B(ng)d\mu(n)$; we wish to show that $f(g)=\mathbf{1}_B(g)$ for almost all $g\in G$. It suffices to show that $\int_G|f-\mathbf{1}_B|d\eta=0$. Since $\mu(N)=1$, we trivially have $\mathbf{1}_B(g)=\int_N\mathbf{1}_B(g)d\mu(n)$. So now:
    \begin{align*}\int_G|f(g)-\mathbf{1}_B(g)|d\eta(g) &=\int_G\left|\int_N\mathbf{1}_B(ng)d\mu(n)-\int_N\mathbf{1}_B(g)d\mu(n)\right|d\eta(g) \\
    &\leqslant\int_G\left[\int_N|\mathbf{1}_B(ng)-\mathbf{1}_B(g)|d\mu(n)\right]d\eta(g)\\
    &=\int_N\left[\int_G|\mathbf{1}_B(ng)-\mathbf{1}_B(g)|d\eta(g)\right]d\mu(n) \\
    &=\int_N\left[\int_G|\mathbf{1}_{n^{-1}B}(g)-\mathbf{1}_B(g)|d\eta(g)\right]d\mu(n)\\
    &=\int_N\eta(n^{-1}B\triangle B)d\mu(n)=0,
    \end{align*}where the third line is Fubini-Tonelli, the second-to-last last equality just uses the standard identity $|\mathbf{1}_{n^{-1}B}-\mathbf{1}_B|=\mathbf{1}_{n^{-1}B\triangle B}$, and the last equality uses $N\subseteq\ker(d_B)$.
\end{proof} In particular, $T(\mathbf{1}_B)(Ng)$ is equal to $0$ or $1$ for almost all $g\in G$, hence for almost all $Ng\in G/N$. The main result we will need is the following:

\begin{lemma}\label{haar measure descends}
    Suppose that $B\subseteq G$ is Borel and that $N$ is contained in $\ker(d_B)$. Then there is a Borel subset $C\subseteq G/N$ such that $d_C(gN,hN)=d_B(g,h)$ for all $g,h\in G$.
\end{lemma}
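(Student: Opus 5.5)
We take $C=\{Ng\in G/N: T(\mathbf{1}_B)(Ng)=1\}$, which is Borel because $T(\mathbf{1}_B)$ is a bounded Borel function on $G/N$. Let $\pi:G\to G/N$ be the quotient map. By the preceding lemma, $T(\mathbf{1}_B)(Ng)=\mathbf{1}_B(g)$ for $\eta$-almost all $g$, so $\mathbf{1}_{\pi^{-1}(C)}(g)=\mathbf{1}_B(g)$ for $\eta$-almost all $g$. In other words,
\[
\eta(\pi^{-1}(C)\triangle B)=0.
\]
Only this almost-equality of $B$ with a $N$-saturated set is needed; the value of $T(\mathbf{1}_B)$ off the full-measure set does not matter.

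Now fix $g,h\in G$. Since $\pi$ is a homomorphism and $\pi^{-1}(C)$ is a union of cosets of $N$, we have $\pi^{-1}(gN\cdot C)=g\,\pi^{-1}(C)$ and similarly for $h$. Hence
\[
\pi^{-1}\bigl(gNC\triangle hNC\bigr)=g\pi^{-1}(C)\triangle h\pi^{-1}(C).
\]
Using $\nu(D)=\eta(\pi^{-1}(D))$ for Borel $D\subseteq G/N$, this gives
\[
d_C(gN,hN)=\eta\bigl(g\pi^{-1}(C)\triangle h\pi^{-1}(C)\bigr).
\]
By left-invariance of $\eta$, $g\pi^{-1}(C)$ differs from $gB$ by a null set, namely $g(\pi^{-1}(C)\triangle B)$, and likewise $h\pi^{-1}(C)$ differs from $hB$ by a null set. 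Symmetric differences are stable under null modifications, since $(X\triangle Y)\triangle(X'\triangle Y')\subseteq(X\triangle X')\cup(Y\triangle Y')$. Therefore the last expression equals $\eta(gB\triangle hB)=d_B(g,h)$.

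The only delicate step is producing a genuinely Borel, $N$-saturated set almost equal to $B$. The previous lemma together with the quotient measure identity $\nu(D)=\eta(\pi^{-1}(D))$ supplies exactly this. The rest is bookkeeping with invariance of Haar measure.
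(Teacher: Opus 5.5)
Your proof is correct and follows essentially the same route as the paper: the same set $C=T(\mathbf{1}_B)^{-1}(1)$, the almost-everywhere identification of $B$ with the $N$-saturated Borel set $\pi^{-1}(C)$, and then $\nu(D)=\eta(\pi^{-1}(D))$ together with left-invariance of $\eta$. The differences are cosmetic: you do the final computation with sets and null modifications rather than the paper's chain of integrals, and you go straight to $\eta(\pi^{-1}(C)\triangle B)=0$ without the intermediate remark that $T(\mathbf{1}_B)$ is $\{0,1\}$-valued $\nu$-almost everywhere.
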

\begin{proof}
    By the previous lemma, $T(\mathbf{1}_B)(Ng)=\mathbf{1}_B(g)$ for almost all $g\in G$; as remarked above thus in particular $T(\mathbf{1}_B)(Ng)$ is $0$ or $1$ for almost all $Ng\in G/N$. Let $$C=T(\mathbf{1}_B)^{-1}(1)=\{Ng:T(\mathbf{1}_B)(Ng)=1\}.$$ Since $T(\mathbf{1}_B)(Ng)$ is $0$ or $1$ almost everywhere, we have $T(\mathbf{1}_B)(Ng)=\mathbf{1}_C(Ng)$ for almost all $Ng\in G/N$, and hence $\mathbf{1}_B(g)=\mathbf{1}_C(Ng)$ for almost all $g\in G$. The desired result now follows easily; indeed, for $g,h\in G$, we have
\begin{align*}
        d_B(g,h)&=\eta(gB\triangle hB) \\
        &=\int_G|\mathbf{1}_{gB}(x)-\mathbf{1}_{hB}(x)|d\eta(x) \\
        &= \int_G|\mathbf{1}_B(g^{-1}x)-\mathbf{1}_B(h^{-1}x)|d\eta(x)\\
        &=\int_{G}|\mathbf{1}_C(Ng^{-1}x)-\mathbf{1}_C(Nh^{-1}x)|d\eta(x) \\
        &=\int_{G}|\mathbf{1}_{(Ng)C}(Nx)-\mathbf{1}_{(Nh)C}(Nx)|d\eta(x), \\
        &=\int_{G/N}|\mathbf{1}_{(Ng)C}(Nx)-\mathbf{1}_{(Nh)C}(Nx)|d\nu(Nx) \\
        &=\nu((Ng)C\triangle (Nh)C)=d_C(Ng,Nh),
    \end{align*} where the fourth equality uses that $\mathbf{1}_B(g)=\mathbf{1}_C(Ng)$ for almost all $g\in G$ and the third-to-last equality is by the quotient integral formula.
\end{proof}

\newpage

%\bibliographystyle{plain}
%\bibliography{references}

\begin{thebibliography}{9}
%\bibitem{adler_nip}Hans Adler. Introduction to theories without the independence property. \textit{Archive of Mathematical Logic}, 2008.

\bibitem{adler_forking}Hans Adler. A geometric introduction to forking and thorn-forking. \textit{Journal of Mathematical Logic}, Vol. 9, No. 1 (2009), pp. 1-20.

\bibitem{adler_ntp2}Hans Adler. Kim's lemma for NTP$_2$ theories. Preprint (2014).

%\bibitem{bays_kaplan_simon}Martin Bays, Itay Kaplan, Pierre Simon. Density of compressible types and some consequences. \textit{Journal of the EMS}, 2022.

\bibitem{five_authors} Matthias Aschenbrenner, Alf Dolich, Deirdre Haskell, Dugald Macpherson, Sergei Starchenko. Vapnik-Chervonenkis density in some theories without the independence property, I. \textit{Transactions of the AMS}. Vol. 368, No. 8 (2016). pp 5889–5949.

\bibitem{auslander_book}Joseph Auslander. \textit{Minimal Flows and Their Extensions}. North Holland (1988).

\bibitem{basso_zucker}Gianluca Basso and Andy Zucker. Topological groups with tractable minimal dynamics. Preprint. arXiv:2412.05659.

\bibitem{basu_patel}Saugata Basu and Deepam Patel. VC density of definable families over valued fields. \textit{Journal of the EMS} 23 (2021), pp 2361–2403.

\bibitem{berarducci_otero_peterzil_pillay}Alessandro Berarducci, Margarita Otero, Ya'acov Peterzil, and Anand Pillay. A descending chain condition for
groups in o-minimal structures. \textit{Annals of Pure and Applied Logic} 134 (2005). pp 303–313.

\bibitem{boxall_kestner}Gareth Boxall and Charlotte Kestner. The definable $(p,q)$-theorem for distal theories. \textit{The Journal of Symbolic Logic}, Vol. 83, No. 1 (2018). pp. 123-127

\bibitem{chernikov}Artem Chernikov. Externally definable fsg groups in NIP theories. Preprint. arXiv:2506.23265.

\bibitem{chernikov_gannon_krupinski}Artem Chernikov, Kyle Gannon, and Krzysztof Krupi\'{n}ski. Definable convolution and idempotent Keisler measures III. Generic stability, generic transitivity, and revised Newelski's conjecture. \textit{Journal of the LMS}, 2026.

\bibitem{chernikov_kaplan}Artem Chernikov, Itay Kaplan. Forking and dividing in NTP$_2$ theories. \textit{The Journal of Symbolic Logic}, Vol. 77, No. 1 (2012), pp. 1-20.

\bibitem{chernikov_pillay_simon}Artem Chernikov, Anand Pillay, Pierre Simon. External definability and groups in NIP theories. \textit{Journal of the London Mathematical Society}, Vol. 90, Iss. 1 (2014), pp. 213-240.

\bibitem{chernikov_simon_externally_1}Artem Chernikov and Pierre Simon. Externally definable sets and dependent pairs. \textit{Israel Journal of Mathematics}, Vol. 194, no. 1 (2013). pp 409-425.

\bibitem{chernikov_simon_externally_2}Artem Chernikov and Pierre Simon. Externally definable sets and dependent pairs II. Transactions of the AMS 367 (2015). pp. 5217-5235.

\bibitem{chernikov_simon}Artem Chernikov, Pierre Simon. Definably amenable NIP groups. \textit{Journal of the AMS}, Vol. 31, No. 3 (2018), pp. 609-641.

\bibitem{conant_pillay}Gabriel Conant and Anand Pillay. Pseudofinite groups and VC-dimension. \textit{Journal of Mathematical Logic}, 21(2). (2021).

\bibitem{conant_pillay_terry}Gabriel Conant, Anand Pillay, Caroline Terry. Structure and regularity for subsets of groups with finite VC-dimension. \textit{Journal of the EMS}, Vol 24, No 2 (2022). pp 583–621.


\bibitem{conversano_1}Annalisa Conversano. Maximal compact subgroups in the o-minimal setting. \textit{Journal of Mathematical
Logic}, 13 (2013). pp. 1-15

\bibitem{conversano_2}Annalisa Conversano. A reduction to the compact case for groups definable in o-minimal structures.
\textit{Journal of Symbolic Logic}, 79 (2014). pp. 45-53.

\bibitem{conversano_pillay}Annalisa Conversano and Anand Pillay. Connected components of definable groups and o-minimality I. \textit{Advances in Mathematics}. 
Volume 231, Issue 2. (2012). pp.605-623

\bibitem{folland}Gerald Folland. A Course in Abstract Harmonic Analysis. CRC Press. (1995)

\bibitem{kyle_tomasz}Kyle Gannon and Tomasz Rzepecki. Upside down and backwards. preprint. arxiv:2512.03576.

\bibitem{gismatullin}Jakub Gismatullin. Model-theoretic connected components of groups. \textit{Israel Journal of Mathematics}, Vol. 184 (2011), pp. 251-274.

\bibitem{glasner_book}Eli Glasner. Proximal Flows. Springer (1976).

\bibitem{glasner_tameness}Eli Glasner. The structure of tame minimal dynamical systems. \textit{Ergodic Theory and Dynamics Systems}, Vol. 27 Iss. 6 (2007), pp. 1819-1837.

\bibitem{guingona_laskowski}Vince Guingona and Michael Laskowski. On VC-minimal theories and variants. \textit{Archive for Mathematical Logic}, 52, (2013). pp 743-758.

\bibitem{halmos}Paul Halmos. Measure Theory. Springer. (1950)

\bibitem{hofmann_morris}Karl Hofmann and Sidney Morris. The Structure of Compact Groups. De Gruyter. (2020).

\bibitem{hrushovski_lascar_group}Ehud Hrushovski. Beyond the Lascar Group. Preprint. arXiv: arXiv:2011.12009.

\bibitem{hrushovski_thesis}Ehud Hrushovski. Contributions to Stable Model Theory. PhD thesis.

\bibitem{hrushovski}Ehud Hrushovski. Approximate Equivalence Relations. \textit{Model Theory}, Vol. 3, No. 2 (2024).

\bibitem{hrushovski_stable_groups}Ehud Hrushovski. Stable group theory and approximate subgroups. \textit{Journal of the AMS}. Vol. 25, No. 1 (2012). pp. 189-243.

\bibitem{hkp_1}Ehud Hrushovski, Krzysztof Krupi\'{n}ski, and Anand Pillay. Amenability, connected components, and definable actions. \textit{Selecta Mathematica} (28), 16 (2022)

\bibitem{hkp_2}Ehud Hrushovski, Krzysztof Krupi\'{n}ski, and Anand Pillay. On first order amenability. \textit{Selecta Matheamtica} (32), 23 (2026)

\bibitem{hrushovski_peterzil_pillay}Ehud Hrushovski, Ya'acov Peterzil, Anand Pillay. Groups, measures, and the NIP. \textit{Journal of the AMS}, Vol. 21, No. 2 (2008), pp. 563-596.

\bibitem{hpp_2}Ehud Hrushovski, Ya'acov Peterzil, Anand Pillay. On central extensions and definably compact groups in o-minimal structures. \textit{Journal of Algebra}, Vol 327, Iss. 1 (2011), pp. 71-106.

\bibitem{hrushovski_pillay}Ehud Hrushovski, Anand Pillay. On NIP and invariant measures. \textit{Journal of the EMS}, Vol. 13, No. 4 (2011), pp. 1005-1061.

\bibitem{jagiella}Grzegorz Jagiella. The Ellis Group Conjecture and variants of definable amenability. \textit{The Journal of Symbolic Logic}, Vol. 83, No. 4 (2018). pp. 1376-1390 

\bibitem{kaplan}Itay Kaplan. A definable $(p,q)$-theorem for NIP theories. Itay Kaplan. \textit{Advances in Mathematics}, Vol 436 (2024).

\bibitem{kirk}Thomas Kirk. Definable Topological Dynamics of $\mathrm{SL}_2(\mathbb{C}((t)))$. Preprint. 	arXiv:1903.03570.

\bibitem{krupinski_pillay_early}Krzysztof Krupi\'{n}ski and Anand Pillay. Generalized Bohr compactification and model-theoretic connected components. \textit{Math. Proc. Cambridge} 163.2 (2017), pp. 219–249.

\bibitem{krupinski_pillay}Krzysztof Krupi\'{n}ski and Anand Pillay. Generalized locally compact models for approximate groups. \textit{Advances in Mathematics}, 2026.

\bibitem{tomasz_krupinski_pillay}Krzysztof Krupi\'{n}ski, Anand Pillay, and Tomasz Rzepecki. Topological dynamics and the complexity of strong types. \textit{Israel Journal of Mathematics} 228 (2018). pp 863–932.


\bibitem{krupinski_rzepecki}Krzysztof Krupi\'{n}ski and Tomasz Rzepecki. Galois groups as quotients of Polish groups. \textit{Journal of Math Logic}, Vol. 20 (2020).

\bibitem{amador_daniel_1} Amador Martín-Pizarro and Daniel Palacín. Stabilizers, measures and IP-sets.
\textit{Notre Dame Journal of Formal Logic}. Vol. 66 No. 2 (2025), pp. 189-204.

\bibitem{amador_daniel_2}Amador Martín-Pizarro and Daniel Palacín. Complete type amalgamation for nonstandard finite groups. \textit{Model Theory}. Vol. 3 No. 1 (2024).

\bibitem{montenegro_onshuus_simon}Samaria Montenegro, Alf Onshuus, Pierre Simon. Stabilizers, NTP$_2$ Groups with f-Generic, and PRC Fields. \textit{Journal of the Institute of Mathematics of Jussieu}, Vol. 19 Iss. 3 (2020). pp.821-853.

\bibitem{newelski_1}Ludomir Newelski. Topological dynamics of definable group actions. \textit{The Journal of Symbolic Logic}, Vol. 74, No. 1 (2009), pp. 50-72.

\bibitem{newelski_2}Ludomir Newelski. Model theoretic aspects of the ellis semigroup.
\textit{Israel Journal of Mathematics}, Vol. 190 (2011), pp. 477-507.

\bibitem{newelski_petrykowski}Ludomir Newelski and Marcin Petrykowski. Weak generic types and coverings of groups I. \textit{Fundamenta Matematicae}, Vol. 191 Iss 3 (2006), pp. 201-225.

\bibitem{palacin}Daniel Palacín. On compactifications and product-free sets. \textit{Journal of the LMS}, Vol. 101, Iss. 1 (2020). pp. 156-174.

\bibitem{devrim}Devrim Pekmezci. Local revised Ellis group conjecture. In progress.

\bibitem{peterzil}Ya'acov Peterzil. Pillay's conjecture and its solution—a survey. In \textit{Logic colloquium 2007}. Cambridge University Press (2011).


\bibitem{pillay_penazzi_gismatullin}Jakub Gismatullin, Davide Penazzi, and Anand Pillay. Some model theory of $\mathrm{SL}_2(\mathbb{R})$. \textit{Fundamenta Mathematicae}, Vol. 229, Iss. 2 (2015). pp 117-128


\bibitem{pillay_penazzi_yao}Davide Penazzi, Anand Pillay, and Ningyuan Yao. Some model theory and topological dynamics of $p$-adic algebraic groups. \textit{fundamenta mathematicae}, Vol. 247 (2019). pp 191-216.

\bibitem{pillay_invariant_ellis_group}Anand Pillay. Topological dynamics and definable groups. \textit{The Journal of Symbolic Logic} 78.2 (2013), pp. 657–666.

\bibitem{pillays_conjecture}Anand Pillay. Type-definability, compact Lie groups, and o-minimality. \textit{Journal of Mathematical Logic}, 4
(2004). pp 147–162.

\bibitem{pillay_groups}Anand Pillay. Model theory and groups, in `Groups and Model Theory, GAGTA Book 2'. \textit{de Gruyter} (2021).

\bibitem{pillay_simple}Anand Pillay. Definability and definable groups in simple theories. The Journal of Symbolic Logic. Vol. 63, No. 3 (1998). pp. 788-796.

\bibitem{pillay_stonestrom}Anand Pillay and Atticus Stonestrom. On forking and invariant measures in NIP theories. \textit{Fundamenta Mathematicae} 273 (2026). pp 81-92

\bibitem{rosenthal}Haskell Rosenthal. A Characterization of Banach Spaces Containing $\ell^1$. \textit{Proceedings of the National Academy of Science, USA}, Vol. 71, No. 6 (1974). pp 2411-2413.

\bibitem{tomasz_thesis}Tomasz Rzepecki. Bounded Invariant Equivalence Relations. PhD thesis. arXiv:1810.05113

\bibitem{shelah_forking}Saharon Shelah. Dependent first order theories, continued. \textit{Israel Journal of Mathematics}, Vol. 173, No. 1 (2009), pp. 1-60.

\bibitem{shelah_strong}Saharon Shelah. Strongly dependent theories. \textit{Israel Journal of Mathematics}, Vol. 204 (2014), pp. 1-83.

\bibitem{shelah_g00}Saharon Shelah. Minimal bounded index subgroup for dependent theories. \textit{Proceedings of the AMS}, Vol. 136, No. 3 (2008), pp. 1087-1091.

%\bibitem{shelah_g000}Saharon Shelah. Definable groups for dependent and 2-dependent theories. 2009.

\bibitem{simon_book}Pierre Simon. A guide to NIP theories. \textit{Cambridge University Press} (2015).

\bibitem{simon_invariant_types}Pierre Simon. Invariant types in NIP theories.
\textit{Journal of Math. Logic}, Volume 15-2 (2015).

\bibitem{simon_rosenthal}Pierre Simon. Rosenthal Compacta and NIP formulas. \textit{Fundamenta Mathematicae}. Vol. 231 Iss. 1 (2015). pp 81-92

\bibitem{simon_vc_sets}Pierre Simon. VC-sets and generic compact domination. \textit{Israel Journal of Mathematics}, Vol. 218 (2017). pp 27-41.

\bibitem{stonestrom}Atticus Stonestrom. On f-generic types in NIP groups. Accepted at \textit{Advances in Mathematics}.

\bibitem{tent_ziegler}Katrin Tent and Martin Ziegler. A Course in Model Theory. Cambridge University Press, 2012.

\bibitem{wagner}Frank Wagner. Stable Groups. \textit{Cambridge University Press} (2013).

\bibitem{yao_zhang}Ningyuan Yao and Zhentao Zhang. Newelski's conjecture for o-minimal and $p$-adic Groups. Preprint. arXiv:2602.01810

\bibitem{yao_long}Ningyuan Yao and Dongyang Long. Topological dynamics for groups definable in real closed field. \textit{Annals of Pure and Applied Logic}. Volume 166, Issue 3 (2015). pp 261-273

%\bibitem{simon_distal}Pierre Simon. Distal and non-distal NIP theories. \textit{Annals of Pure and Applied Logic}, 2013.

%\bibitem{simon_compressible}Pierre Simon. Type decomposition in NIP theories. \textit{Journal of the EMS}, 2016.

%\bibitem{usvyatsov}Alexander Usvyatsov. Morley sequences in dependent theories. Preprint. 2008.


\end{thebibliography}

%} %small commented out
\end{document}